\theoremstyle{plain}
\newtheorem{thm}{Theorem}[section]
\newtheorem*{thm*}{Theorem}
\newtheorem{lem}[thm]{Lemma}
\newtheorem{prop}[thm]{Proposition}
\newtheorem{cor}[thm]{Corollary}
\theoremstyle{definition}
\newtheorem{defn}[thm]{Definition}
\newtheorem{exa}[thm]{Example}
\theoremstyle{remark}
\newtheorem{rem}[thm]{Remark}
\numberwithin{equation}{section}
\newcommand{\parderv}[2] {\frac{\partial #1}{\partial #2}}
\renewcommand{\d} {\mathrm{d}}
\newcommand{\ol} [1] {\overline{#1}}
\newcommand{\wh} [1]{\widehat{#1}}
\newcommand{\wt} [1]{\widetilde{#1}}
\newcommand{\mr}[1] {\mathring{#1}}
\newcommand{\mbf} [1]{\mathbf{#1}}
\newcommand{\mc} [1]{\mathcal{#1}}
\renewcommand{\i} {\mathrm{i}}
\newcommand{\e} {\mathrm{e}}
\newcommand{\Id} {\mathrm{Id}}
\renewcommand{\bigwedge}{\scaleobj{1.2}{\wedge}}
\renewcommand{\bigodot}{\scaleobj{1.2}{\odot}}
\newcommand{\hook}{\makebox[7pt]{\rule{6pt}{.3pt}\rule{.3pt}{5pt}}\,}
\newcommand{\Riem} {\mathsf{R}}
\newcommand{\Weyl} {\mathsf{W}}
\newcommand{\Ric} {\mathsf{Ric}}
\newcommand{\Sc} {\mathsf{Sc}}
\newcommand{\WbA} {\mathsf{A}}
\newcommand{\CQ} {\mathsf{Q}}
\newcommand{\CT} {\mathsf{T}}
\newcommand{\Rho} {\mathsf{P}}
\newcommand{\Bach} {\mathsf{B}}
\newcommand{\Cot} {\mathsf{Y}}
\newcommand{\pr} {\mathrm{pr}}
\newcommand{\Ann} {\mathrm{Ann}}
\newcommand{\Z} {\mathbf{Z}}
\newcommand{\R} {\mathbf{R}}
\newcommand{\C} {\mathbf{C}}
\newcommand{\SU}{\mathbf{SU}}
\begin{document}
	
	\title[Perturbations of Fefferman spaces over CR three-manifolds]{Perturbations of Fefferman spaces\\ over CR three-manifolds}
	
	\author{Arman Taghavi-Chabert}
	\address{Institute of Physics, {\L}\'{o}d\'{z} University of Technology, W\'{o}lcza\'{n}ska 217/221, 90-005 {\L}\'{o}d\'{z}, Poland; and Department of Geometry, Faculty of Mathematics and Computer Science, University of {\L}\'{o}d\'{z}, Stefana Banacha 22, 90-238 {\L}\'{o}d\'{z}, Poland}
	\email{arman.taghavi-chabert@p.lodz.pl}
	\thanks{The research leading to these results has received funding from the Norwegian Financial Mechanism 2014-2021 UMO-2020/37/K/ST1/02788.}
	
	\subjclass[2020]{Primary 53C18, 32V05, 53C50, 83C15; Secondary 32V10, 32V30, 53B30}
	
	\date{}
	
	\dedicatory{In memory of Jerzy Lewandowski (1959--2024)}
	
	\begin{abstract}
		We introduce a generalisation of Fefferman's conformal circle bundle over a contact Cauchy--Riemann three-manifold. These can be viewed as exact `perturbations' of Fefferman's structure by a semi-basic one-form, which encodes additional data on the CR three-manifold.
		
		We find conditions on the Weyl tensor and the Bach tensor for a Lorentzian conformal four-manifold equipped with a twisting non-shearing congruence of null geodesics to be locally conformally isometric to such a perturbed Fefferman space.
		
		We investigate the existence of metrics in the perturbed Fefferman conformal class satisfying appropriate subsystems of the Einstein equations, such as the so-called pure radiation equations. These metrics are defined only off cross-sections of Fefferman's circle bundle, and are conveniently expressed in terms of densities that generalise Gover's notion of almost Einstein scales. Our setup allows us to reduce the prescriptions on the Ricci tensor to the underlying CR three-manifold in terms of differential constraints on a complex-valued density and the CR data of the perturbation one-form. One such constraint turns out to arise from a non-linear, or gauged, analogue of a second-order differential operator on densities. A solution thereof provides a criterion for the existence of a CR function and, under certain conditions, for the realisability of the CR three-manifold. These findings are consistent with previous works by Lewandowski, Nurowski, Tafel, Hill, and independently, by Mason.
		
		We also provide an analysis of the Weyl curvature of such conformal structures in terms of the underlying CR data. In particular, we arrive at a CR formulation of the asymptotic Einstein condition by viewing conformal infinity as a cross-section of Fefferman's circle bundle.
	\end{abstract}
	
	\maketitle
	\tableofcontents
	
	\section{Introduction}\label{sec:intro}
	The focus of this article is the interaction between Lorentzian conformal geometry in dimension four and Cauchy--Riemann (CR) geometry in dimension three, and is motivated by two different approaches. On the one hand, there is Fefferman's classical construction \cite{Fefferman1976a}, which associates to a given contact CR three-manifold $(\mc{M},H,J)$ a conformal structure $\wt{\mbf{c}}$ of Lorentzian signature on the total space  of a circle bundle $\wt{\mc{M}}$ over $(\mc{M},H,J)$. In the formulation of \cite{Lee1986,Cap2008}, each contact form $\theta$ with Levi form $h$ and associated induced Weyl connection one-form $\wt{\omega}^{W}_{\theta}$ gives rise to a \emph{Fefferman metric}
	\begin{subequations}
		\begin{align}\label{eq:Fefmet}
			\wt{g}_{\theta} & = 4 \theta \odot \wt{\omega}^{W}_{\theta} + h \, ,
		\end{align}
		on $\wt{\mc{M}}$. A change of contact form induces a conformal change of Fefferman metric. In addition, $\wt{\mbf{c}}$ admits a \emph{null conformal Killing field}, vertical with respect to the fibration $\wt{\mc{M}} \rightarrow \mc{M}$.
		
		On the other hand, many exact solutions to the vacuum Einstein's field equations admit a so-called \emph{non-shearing congruence of null geodesics} --- the terminology \emph{shear-free} in place of non-shearing is also used. These are characterised by the property that the local leaf space of such a congruence is a CR three-manifold. Those congruences for which the CR three-manifold is contact are said to be \emph{twisting}. We may therefore rightly see our Fefferman space as a special case of such Lorentzian geometries, and one of the questions we will answer in this article is to what extent these two constructions differ.
		
		When it comes to the existence of \emph{global} Einstein metrics in the Fefferman class, the answer is rather disappointing: there are none. However, there \emph{can} be local ones defined off two cross-sections of $\wt{\mc{M}} \rightarrow \mc{M}$. These are given in the form
		\begin{align}\label{eq:FefEinsmet}
			\wt{g} & = \sec^2 \phi \cdot \wt{g}_{\theta} \, ,
		\end{align}
	\end{subequations}
	where $\theta$ is a contact form on $\mc{M}$, $\wt{g}_{\theta}$ is its associated Fefferman metric, and $\phi$ is a local fibre coordinate. Clearly $\wt{g}$ is defined only  when $\phi = \pm \frac{\pi}{2}$, but unfortunately, it turns out that for $\wt{g}$ to be Einstein, it must also be \emph{conformally flat} \cite{Lewandowski1988}. This also means that the underlying CR structure is locally CR equivalent to the Heisenberg group or the three-sphere \cite{Cap2008}.
	
	But there are of course many other, non-conformally flat, algebraically special Einstein metrics. In local coordinates $(u, \zeta, \ol{\zeta}, r)$, broadly following \cite{Debney1969,Stephani2003}, they may be cast as
	\begin{subequations}\label{eq:Debmet}
		\begin{align}
			\wt{g} & = 4 \mr{\varrho} \cdot \omega \odot \wt{\lambda}{}'_{\omega} + 2 (r^2 + \mr{\varrho}^2)   \cdot \omega^{1} \odot \ol{\omega}{} ^{\bar{1}}  \, .
		\end{align}
		where
		\begin{align}
			\omega & = \tfrac{1}{2 \mr{\varrho} } \left(\d u + Z \d \zeta + \ol{Z} \d \ol{\zeta} \right) \, , & \omega^1 & = \d \zeta \, , &
			\wt{\lambda}{}'_{\omega} & = \d r + \wt{W} \omega^1 + \ol{\wt{W}} \ol{\omega}{}^{\bar{1}} + \wt{U} \omega \, .
		\end{align}
	\end{subequations}
	Here, the coordinate $r$ is an affine parameter along the null geodesics of the congruence defined by the null vector field $\wt{k}$, and $(u, \zeta, \ol{\zeta})$ are coordinates on the leaf space, with $u$ real, and $\zeta$ complex. The nowhere vanishing complex-valued function $Z$ and real-valued function $\mr{\varrho}$ are defined on $\mc{M}$, and the one-forms $\omega$ and $\omega^1$ define the contact CR structure on the leaf space, with $\d \omega = \i \omega^1 \wedge \ol{\omega}{}^{\bar{1}} \pmod{\omega}$. The $r$-dependence of the complex-valued function $\wt{W}$ and real-valued function $\wt{U}$ is fully determined by virtue of the algebraic degeneracy of the Weyl tensor and (a subset of) Einstein's field equations, the	so-called \emph{reduced Einstein equations}. This form of the metric has the advantage of being amenable to asymptotic analysis as shown in \cite{Mason1998}.
	
	A more convenient form for these metrics was obtained by Lewandowski and Nurowski in \cite{Lewandowski1990}, who found that any algebraically special pure radiation metric can be cast in the form
	\begin{align}\label{eq:LNmet}
		\wt{g} & = \mr{\varrho}^2 \sec^2 (\phi - \mr{\phi} )\cdot \left( 4 \theta \odot \wt{\lambda} + 2 \theta^1 \odot \ol{\theta}{}^{\bar{1}} \right) \, ,
	\end{align}
	where $\phi$ is an affine parameter along the geodesics of the congruence, the one-forms $\theta$ and $\theta^1$ define the CR structure on the leaf space $\mc{M}$, with $\d \theta = \i \theta^1 \wedge \ol{\theta}^1$, and $\mr{\varrho}$ and $\mr{\phi}$ are smooth functions on $\mc{M}$, but, as argued in \cite{TaghaviChabert2022}, both $\mr{\varrho}$ and $\mr{\phi}$ can be eliminated by a rescaling of the contact form $\theta$ and a reparametrisation of the geodesics respectively. The components of the null one-form $\wt{\lambda}$ all admit finite Fourier expansions in $\phi$, so their coefficients are functions on $\mc{M}$. The coordinates $\phi$ and $r$ from our previous form are related by the transformation $r = \mr{\varrho} \tan (\phi - \mr{\phi})$. We can then see at once that each of the metrics \eqref{eq:LNmet} and \eqref{eq:FefEinsmet} with \eqref{eq:Fefmet} live on a circle bundle with some cross-sections removed, and this strongly suggests that they only differ from each other by some `perturbation' semi-basic one-form $\wt{\xi}$, i.e.\ $\wt{\lambda} = \wt{\omega}^W_{\theta} + \wt{\xi}$. One of the aims of this article will be to establish this fact formally. From the relation between the two coordinates $r$ and $\phi$, we see that $r \rightarrow \pm \infty$ as $\phi \rightarrow \pm \tfrac{\pi}{2}$, so one can interpret the hypersurfaces where the metric $\wt{g}$ blows up as \emph{conformal infinities}. To accommodate these singularity sets, we shall put Gover's notion of \emph{almost Einstein scale} into good use. These are certain densities $\wt{\sigma}$ on $\wt{\mc{M}}$ satisfying a conformally invariant second-order differential equation, which encapsulates the existence of an Einstein metric in the conformal class, but defined only off the singularity set of $\wt{\sigma}$, if non-empty.
	
	A by-product of the investigation of these non-shearing congruences of null geodesics, which is the focus of \cite{Lewandowski1990a,Hill2008}, is the discovery of analytic properties of CR three-manifolds arising from some subset of Einstein's field equations: these concern the existence of \emph{CR functions}, of which \emph{Kerr's complex coordinate} $\zeta$ in \eqref{eq:Debmet} is an example, or even the realisability of the CR manifold, under suitable conditions.
	
	More recently, the authors of \cite{Schmalz2019} revisited the work of \cite{Lewandowski1991,Hill2008} by focussing on a variant of Fefferman's construction, which admits non-conformally flat solutions of a certain subset of Einstein's field equations, and by appealing to Jacobowitz's work on the realisability of CR manifolds \cite{Jacobowitz1987}. The present article will delve further into these ideas.
	
	The overall aim of the article is two-fold:
	\begin{enumerate}
		\item To introduce and study CR invariant differential equations for contact CR manifolds, in connection with the existence of CR functions and CR embeddability. These naturally occur in the investigation of algebraically special Einstein Lorentzian four-manifolds.
		\item To show that algebraically special Lorentzian four-manifolds subject to Ricci curvature prescriptions can find a natural formulation in the context of Fefferman spaces. To be precise, these belong to the same conformal class as `perturbed' Fefferman metrics, i.e.\ they are of the form $\wt{g} = \sec^2 \phi \cdot \left( \wt{g}_{\theta} + 4 \theta \odot \wt{\xi} \right)$ where $\wt{g}_{\theta}$ is a Fefferman metric for some contact form $\theta$, $\wt{\xi}$ is a semi-basic one-form, determined by some CR data, and $\phi$ a local fibre coordinate arising from a distinguished trivialisation of the Fefferman bundle.
	\end{enumerate}	
	
	While the present paper remains by and large very much self-contained, it refers to some crucial results found in the companion article (or prequel) \cite{TaghaviChabert2023b}, which deals with perturbed Fefferman spaces in arbitrary even dimensions, with a focus on dimensions greater than four. It is also noteworthy to mention that the present results depend on curvature computations carried out by the author in arbitrary even dimensions in \cite{TaghaviChabert2022}.
	
	We presently outline the structure of the paper while highlighting its main results. Section \ref{sec:CR_geom} gives a review of CR geometry in any odd dimensions, but new interpretations are also obtained in connection with the realisability of \emph{contact} CR structures. In particular, we recast two results due to Jacobowitz \cite{Jacobowitz1987} in terms of solutions to a CR invariant second-order differential equation in Proposition	\ref{prop:New_Jacobowitz} and Theorem \ref{thm:real2dens}. In fact, this equation is related to a so-called \emph{first BGG operator} as pointed out in Remarks \ref{rem:BGG} and \ref{rem:CR-Eins}. A non-linear analogue of this equation, which we term a \emph{Webster--Weyl structure} provides, in dimension three, a criterion for the existence of CR functions --- Theorem \ref{thm:Webster--Weyl}. Theorem \ref{thm:lambda_exact} shows that this non-linear equation can always be `linearised' in a certain sense.
	
	Section \ref{sec:conf_geom} contains a review of conformal geometry and so-called optical geometries which are at the heart of non-shearing congruences of null geodesics. In Definitions \ref{defn:redEins} and \ref{defn:aESc}, we introduce formal definitions of certain metrics with prescribed Ricci curvature, which we term \emph{weakly half-Einstein}, \emph{half-Einstein} and \emph{pure radiation} metrics, and their \emph{almost Lorentzian scale} analogues as generalisations of almost Einstein scales --- Proposition \ref{prop:scale2metric}. We then give a brief revision of the notion of \emph{principal null directions of the Weyl tensor} and the \emph{Petrov types}, and in Theorems \ref{thm:GS} and \ref{thm:GSalpha}, we revisit two versions of the Goldberg--Sachs theorem using almost Lorentzian scales.
	
	We briefly recall the Fefferman construction in Section \ref{sec:Fefferman}, and define their perturbations by a semi-basic one-form and associated \emph{CR data} understood as a tuple of densities on the CR manifold.
	
	In Section \ref{sec:algsp}, we start our investigation of optical geometries with twisting non-shearing null geodesics in dimension four and their relation to perturbed Fefferman spaces. In particular, we prove Theorem \ref{thm:PetrovII+Bach}, which can be summarised as follows:
	\begin{thm*}
		Let $(\wt{\mc{M}},\wt{\mbf{c}})$ be an oriented and time-oriented Lorentzian conformal four-manifold with twisting non-shearing congruence of null geodesics $\wt{\mc{K}}$. Suppose that for any generator $\wt{k}$ of $\wt{\mc{K}}$, the Weyl tensor and the Bach tensor of $\wt{\mbf{c}}$ satisfy
		\begin{align*}
			\wt{\Weyl}(\wt{k},\wt{v},\wt{k},\cdot) & = 0 \, , & \mbox{for any $\wt{v} \in \Gamma(\langle \wt{k} \rangle^\perp)$,} \\
			\wt{\Bach}(\wt{k},\wt{k}) & = 0 \, ,
		\end{align*}
		respectively. Then $(\wt{\mc{M}},\wt{\mbf{c}})$ is locally conformally isometric to a perturbation of a Fefferman space over a contact CR three-manifold.
	\end{thm*}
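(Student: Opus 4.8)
The plan is to pass to the contact CR leaf space, work in a null coframe adapted to the congruence, integrate the radial structure equations using the Weyl hypothesis, and let the Bach hypothesis remove the residual fibre freedom so that a representative of $\wt{\mbf{c}}$ attains the perturbed Fefferman form. Fix a generator $\wt{k}$ of $\wt{\mc{K}}$. Since $\wt{\mc{K}}$ is a twisting non-shearing congruence of null geodesics, the optical-geometry correspondence recalled in Section~\ref{sec:conf_geom} identifies its local leaf space with a contact CR three-manifold $(\mc{M},H,J)$, the contact (equivalently, twisting) condition being the nonvanishing of the twist. I would choose a contact form $\theta$ with adapted coframe $(\theta,\theta^1,\ol{\theta}{}^{\bar{1}})$ on $\mc{M}$, pull it back along the fibration $\wt{\mc{M}}\to\mc{M}$, and complete it to an optical null coframe $(\wt{\lambda},\theta,\theta^1,\ol{\theta}{}^{\bar{1}})$ in which a representative reads $4\,\theta\odot\wt{\lambda}+2\,\theta^1\odot\ol{\theta}{}^{\bar{1}}$, with $\wt{\lambda}=\d r+\dots$ and $r$ an affine parameter along the null geodesics. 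This is the Debney-type normal form underlying \eqref{eq:Debmet}.

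Second, I would unravel the Weyl hypothesis in this frame. The condition $\wt{\Weyl}(\wt{k},\wt{v},\wt{k},\cdot)=0$ for all $\wt{v}\in\Gamma(\langle\wt{k}\rangle^\perp)$ says precisely that $\wt{k}$ is a repeated principal null direction, i.e.\ $\wt{\mbf{c}}$ is of Petrov type II or more degenerate along $\wt{\mc{K}}$; equivalently, the two leading Newman--Penrose Weyl scalars $\Psi_0,\Psi_1$ vanish. In the spirit of the Goldberg--Sachs theorem this is consistent with, and exploits, the non-shearing geodesic assumption, and it reduces the Cartan and Bianchi equations along the fibres to radial ODEs. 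Integrating these, I would express the coefficients of $\wt{\lambda}$ in terms of functions on $\mc{M}$ and explicit functions of $r$; after the substitution $r=\mr{\varrho}\tan(\phi-\mr{\phi})$ these become finite Fourier polynomials in the fibre coordinate $\phi$, as in \eqref{eq:LNmet}, while the genuinely CR data (the twist $\mr{\varrho}$ and the Kerr-type function playing the role of $Z$ in \eqref{eq:Debmet}) remain fibre-independent.

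Third, I would impose $\wt{\Bach}(\wt{k},\wt{k})=0$. As the Bach tensor is conformally invariant in dimension four, this is a single conformally invariant scalar constraint; inserting the curvature expressions computed in \cite{TaghaviChabert2022} after the radial reduction above, it eliminates the residual fibre-dependent part of $\wt{\lambda}$ that is not of CR origin. After the rescaling of $\theta$ and reparametrisation of the geodesics that absorb $\mr{\varrho}$ and $\mr{\phi}$, as argued in \cite{TaghaviChabert2022,Lewandowski1990}, the representative attains the form \eqref{eq:LNmet}. Comparing with the Fefferman metric \eqref{eq:Fefmet} then lets me read off the decomposition $\wt{\lambda}=\wt{\omega}^{W}_{\theta}+\wt{\xi}$ and verify, via the notion of \emph{CR data} introduced in Section~\ref{sec:Fefferman}, that $\wt{\xi}$ is semi-basic and determined by densities on $\mc{M}$, thereby exhibiting $(\wt{\mc{M}},\wt{\mbf{c}})$ as a perturbation of a Fefferman space.

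The main obstacle is this third step. The Weyl condition fixes only the algebraic type and leaves in $\wt{\lambda}$ an $r$-dependent term --- morally a mass-aspect --- that is a priori not semi-basic; the role of $\wt{\Bach}(\wt{k},\wt{k})=0$ is to annihilate exactly this term, and extracting that conclusion from a single fourth-order scalar equation requires the full curvature bookkeeping of \cite{TaghaviChabert2022} rather than a short computation. Verifying that what remains is genuinely horizontal and CR-determined, so that the perturbation one-form $\wt{\xi}$ is semi-basic and introduces no further integrability obstruction, is the delicate point of the argument.
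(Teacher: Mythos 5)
Your plan follows the paper's proof essentially verbatim: the Weyl hypothesis reduces, via Lemma \ref{lem:PetrovII}, to the fibre ODE $\ddot{\wt{\lambda}}_{\alpha}+2\i\dot{\wt{\lambda}}_{\alpha}=0$ fixing $\wt{\lambda}_{\alpha}$, the Bach hypothesis is then shown (Appendix \ref{app:Bach_prop}) to be equivalent to a fourth-order constant-coefficient ODE on $\wt{\lambda}_{0}$, and Lemma 5.7 of \cite{TaghaviChabert2023b} identifies the resulting finite Fourier data as a perturbed Fefferman space. The one descriptive inaccuracy is your third step: $\wt{\Bach}(\wt{k},\wt{k})=0$ does not \emph{annihilate} the residual fibre-dependent term but rather forces $\wt{\lambda}_{0}$ to be the finite Fourier polynomial $\sum_{|k|\le 2}\lambda_{0}^{(2k)}\e^{2k\i\phi}$ with coefficients descending to $\mc{M}$ (the mass-aspect coefficients $\lambda_{0}^{(\pm 4)}$ survive as part of the CR data), and the paper works directly with the affine parameter $\phi$ of the non-expanding representative rather than detouring through the Debney radial coordinate $r$.
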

	
	In Section \ref{sec:al_Lor_sc}, we investigate the consequences of the existence of distinguished almost Lorentzian scales satisfying differential conditions of increasing stringency --- these include almost (weakly) half-Einstein and pure radiation scales. Notably, Proposition \ref{prop:alwkhalfE}, Theorems \ref{thm:wkhlfEinsc}, \ref{thm:hlfEinsc} and \ref{thm:pertFeffpurad} show how these prescriptions can be translated into CR invariant differential equations. They provide CR invariant articulations of the results of \cite{Lewandowski1990,Mason1998}. Of interest is Proposition \ref{prop:al_half-Einstein}, which gives an alternative characterisation of algebraically special optical geometries as perturbed Fefferman spaces. Drawing from Section \ref{sec:CR_geom}, we also recover the results of \cite{Lewandowski1990a,Hill2008} regarding CR realisability in Proposition \ref{prop:wkhlfEinsc}, Theorems \ref{thm:realisableCRMax} and \ref{thm:realisableCRPetrovII}. On that matter of realisability, Corollary \ref{cor:wkhlfEinsc} adds new results in some specific cases of perturbed Fefferman spaces.

	In Proposition	\ref{prop:Weyl} of Section \ref{sec:Weyl}, we provide a detailed description of the Weyl tensor of an important class of perturbed Fefferman spaces in terms of weighted tensors on the base space. In Lemma \ref{lem:PetrovD} and Proposition \ref{prop:PetrovIII-N-O}, we consider the consequences of the algebraic degeneracy of the Weyl tensor on the CR data according to the Petrov types. These weighted tensors are shown to satisfy algebraic conditions in the presence of an almost half-Einstein scale or an almost pure radiation scale in Propositions \ref{prop:hEinasym} and \ref{prop:pureradasym}. The asymptotic behaviour of the curvature at conformal infinity is examined in Propositions \ref{prop:Wasym} and \ref{prop:confEins}.
	
	Three appendices are included: Appendix \ref{app:coframe2} contains some technical results regarding adapted coframes in dimension three, which are used in Section \ref{sec:CR_geom}. In Appendix \ref{app:Bach_prop}, we provide computations of parts of the Bach tensor used in Section \ref{sec:algsp}. In Appendix \ref{app:others}, we sketch how to recover the Debney--Kerr--Schild metric from our approach.
	
	\vspace{1mm}

	\section{CR geometry}\label{sec:CR_geom}
	We start our journey with a discussion of general CR manifolds in Section \ref{sec:genCR}, where we notably collect known results regarding the analytic aspects of CR structures of \emph{any} Levi signatures --- see \cite{Jacobowitz1990} for a pleasant introduction to the topic. From Section \ref{sec:contactCR} on, exploiting the Webster calculus, we restrict our attention to non-degenerate CR structures. This allows us to provide new geometric characterisations of some analytic properties of CR manifolds in terms of CR invariant differential equations. While the section is by-and-large self-contained, we shall later apply some of the results in Section \ref{sec:al_Lor_sc}.

	\subsection{Preliminaries}\label{sec:genCR}
	An \emph{almost Cauchy--Riemann (CR) structure} on a smooth $(2m+1)$-dimen-sional manifold ${\mc{M}}$ consists of a pair $({H},{J})$ where ${H}$ is a rank-$2m$ distribution and ${J}$ a bundle complex structure on ${H}$, i.e.\ ${J} \circ {J} = - {\Id}$, where ${\Id}$ is the identity map on ${H}$. This means that the complexification ${}^\C {H}$ of ${H}$ splits as ${}^\C {H} = {H}^{(1,0)} \oplus {H}^{(0,1)}$ where ${H}^{(1,0)}$ and ${H}^{(0,1)}$ are the rank-$m$ $\i$-eigenbundle and $-\i$-eigenbundle of ${J}$ respectively. The complex line bundle ${\mc{C}} := \bigwedge^{m+1} \Ann({H}^{(0,1)})$ is known as the \emph{canonical bundle} of $(\mc{M},H,J)$.
	
	Throughout, we shall assume that $({\mc{M}},{H},{J})$ is a \emph{CR manifold}, that is, $H^{(1,0)}$ is involutive, i.e.\ $[H^{(1,0)},H^{(1,0)}] \subset H^{(1,0)}$, which always holds in dimension three, i.e.\ $m=1$. In addition, $\mc{M}$ and $\Ann(H)$ will be assumed to be oriented. In the present context, a choice of nowhere vanishing one-form $\theta$ annihilating $H$ will be referred to as a \emph{pseudo-Hermitian structure}, and by convention, we will assume that $\theta$ agrees with the positive orientation on $\Ann(H)$. Its \emph{Levi form} is the Hermitian form $h$ on $H^{(1,0)}$ defined by $h(v,\ol{w}) = - 2 \i \d \theta (v, \ol{w}) $ for any $v, w \in \Gamma(H^{(1,0)})$. Under a positive rescaling of $\theta$, the Levi form $h$ transforms by a positive conformal factor. In particular, the \emph{signature} $(p,q)$ of $h$, where $p$ and $q$ are non-negative integers such that $p+q \leq m$ and denote the number of positive and negative eigenvalues respectively, provides an invariant of the CR structure. This convention implies that $h$ has $m-p-q$ zero eigenvalues. By extension, we define the \emph{Levi form} of $(H,J)$ to be the Hermitian form $\bm{h}$ on $H^{(1,0)}$ taking values in the complex line bundle ${}^\C T \mc{M}/{}^\C H$, given by $\bm{h}(v,\ol{w}) = 2 \i \pr([v,\ol{w}])$ for any $v, w \in \Gamma(H^{(1,0)})$, where $\pr$ is the natural projection from ${}^\C T \mc{M}$ to ${}^\C T \mc{M}/{}^\C H$.
	
	The CR structure is said to be \emph{contact} if the distribution $H$ is contact, i.e.\ maximally non-integrable. This means that, for any nowhere vanishing section $\theta \in \Gamma(\Ann(H))$, the $(2m+1)$-form $\theta \wedge (\d \theta)^m$ is nowhere vanishing. The Levi form is then non-degenerate, i.e.\ it has signature $(p,q)$ with $p+q=m$. A common term for a contact CR structure with everywhere positive definite Levi form is \emph{strictly pseudo-convex}. The \emph{Reeb vector field} of a contact form $\theta$ is the unique vector field $\ell$, satisfying ${\theta} ({\ell}) = 1$ and $\d {\theta} ( {\ell} , \cdot ) = 0$. Since there is in general no preferred pseudo-Hermitian structure on $(\mc{M},H,J)$, we obtain a subconformal structure ${\mbf{c}}_{{H}}$ on ${H}$ compatible with $J$.

	A \emph{(complex) infinitesimal symmetry of $H^{(1,0)}$} is a complex-valued vector field $v$ on $\mc{M}$ that preserves the distribution $H^{(1,0)}$ in the sense that $\mathsterling_v w \in \Gamma(H^{(1,0)})$ for any $w \in \Gamma(H^{(1,0)})$. An \emph{infinitesimal symmetry of $(\mc{M},H,J)$} is a real-valued vector field $v$ on $\mc{M}$ that preserves $H^{(1,0)}$. In particular, since it is real, it preserves $H^{(0,1)}$ too. An infinitesimal symmetry $v$ of either $H^{(1,0)}$ or $(H,J)$ is said to be \emph{transverse (to $H$)} if it satisfies $v \hook \theta \neq 0$ for any $\theta \in \Gamma(\Ann(H))$.
	
	\begin{rem}
		A complex infinitesimal symmetry of $H^{(1,0)}$ as we have defined it is not an infinitesimal symmetry in the strict sense of the term since it does not generate a one-parameter family of diffeomorphisms preserving the CR structure unless it is real. Its geometric interpretation will be given in Jacobowitz's Proposition \ref{prop:Jacobowitz} below.
	\end{rem}
	
	Real hypersurfaces in $\C^{m+1}$ are the prototypical examples of CR manifolds. Not every CR manifold arises in this way, but those that do are referred to as (locally) \emph{realisable} or \emph{embeddable}, in the sense that locally, there exists an embedding of $\mc{M}$ into $\C^{m+1}$ with the property that the bundle complex structure $J$ on $H$ is induced from the ambient complex structure on $\C^{m+1}$.
	
	A \emph{(local) CR function} on $({\mc{M}}, {H},{J})$ is a smooth complex-valued function $f$ defined in the neighbourhood $\mc{U}$ of a point in $\mc{M}$ that satisfies $v({f}) = 0$ for any section $v$ of ${H}^{(0,1)}$ on $\mc{U}$. We shall say that a set of CR functions $\{ f_1, \ldots f_r\}$ on some neighbourhood $\mc{U}$ of a point is \emph{independent} if they satisfy
	\begin{align*}
		\d f_1 \wedge \ldots \wedge \d f_r \neq 0 \, , 
	\end{align*}
	at every point of $\mc{U}$, and \emph{strongly independent} if they satisfy
	\begin{align*}
		\d f_1 \wedge \d \ol{f_1} \wedge \ldots \wedge \d f_r \wedge \d \ol{f_r} \neq 0 \, , 
	\end{align*}
	at every point of $\mc{U}$. Realisability is then equivalent to the existence of $m+1$ independent CR functions. We shall give further criteria for realisability below. Before that, we give the following criterion for the existence of a CR function:
	\begin{lem}[\cite{Hill2008}]\label{lem:HLN}
		Locally, a CR manifold $({\mc{M}}, {H},{J})$ admits a strongly independent CR function if and only if there exists a smooth complex-valued one-form $\alpha$ annihilated by $H^{(0,1)}$ such that $\alpha \wedge \d \alpha = 0$ and $\alpha \wedge \ol{\alpha} \neq 0$.
	\end{lem}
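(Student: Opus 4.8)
The plan is to treat the two implications separately: the \emph{only if} direction is essentially formal, while the \emph{if} direction rests on an integrability (complex Frobenius) argument.

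For the \emph{only if} direction, suppose $f$ is a strongly independent CR function and simply set $\alpha := \d f$. Since $X(f) = \d f(X) = 0$ for every $X \in \Gamma(H^{(0,1)})$, the form $\alpha$ is annihilated by $H^{(0,1)}$. As $\d \alpha = \d \d f = 0$, we have $\alpha \wedge \d \alpha = 0$, and strong independence is precisely the statement $\alpha \wedge \ol{\alpha} = \d f \wedge \d \ol{f} \neq 0$. No further work is required here.

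For the \emph{if} direction, I would read the two conditions on $\alpha$ as statements about the complex distribution $D := \ker \alpha \subset {}^{\C}T\mc{M}$, and then seek a nowhere-zero complex function $h$ and a function $f$ with $\alpha = h \, \d f$; once this is achieved, $\ker \d f = \ker \alpha = D \supset H^{(0,1)}$ forces $X(f) = 0$ for all $X \in \Gamma(H^{(0,1)})$, so $f$ is a CR function, and $\d f \wedge \d \ol{f} = (h\ol{h})^{-1} \, \alpha \wedge \ol{\alpha} \neq 0$ shows it is strongly independent. First, $\alpha \wedge \ol{\alpha} \neq 0$ forces $\alpha$ to be nowhere zero with $\alpha, \ol{\alpha}$ pointwise linearly independent, so $D$ has complex corank one, $D + \ol{D} = {}^{\C}T\mc{M}$ has corank zero (hence is trivially involutive of constant rank), and $D \cap \ol{D}$ has corank one in $D$, is of constant rank, and is invariant under conjugation. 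Second, since $D$ is the kernel of the single one-form $\alpha$, the Cartan formula gives $\d \alpha(v,w) = -\alpha([v,w])$ for $v, w \in \Gamma(D)$, so the hypothesis $\alpha \wedge \d \alpha = 0$, i.e.\ $\d \alpha|_{\wedge^2 D} = 0$, is exactly the involutivity of $D$. These are precisely the hypotheses of the complex Frobenius theorem of Nirenberg, which yields local coordinates in which $D$ is spanned by $\partial/\partial \ol{z}$ together with real coordinate vector fields, the single complex coordinate $z$ being characterised by $\langle \d z \rangle = \langle \alpha \rangle$. Setting $f := z$ provides the desired $h$ and $f$, and closes the argument.

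The crux is the reverse direction, and specifically the legitimacy of passing from the formal integrability $\alpha \wedge \d \alpha = 0$ to an honest integrating factor $\alpha = h \, \d f$. For complex-valued forms this is \emph{not} the naive real Frobenius theorem and can fail without the constant-rank and double-involutivity conditions; the role of the second hypothesis $\alpha \wedge \ol{\alpha} \neq 0$ is exactly to guarantee them. I expect the only genuinely analytic input to be the elliptic part of Nirenberg's theorem, which here is always two-real-dimensional --- because $D + \ol{D}$ is the whole complexified tangent bundle while $D \cap \ol{D}$ has corank one in it --- and therefore reduces to the classical existence of a holomorphic coordinate on a surface (isothermal coordinates, the two-dimensional Newlander--Nirenberg theorem), valid in the smooth category. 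One should also record that all the relevant ranks are constant, so that the normal form holds on a full neighbourhood, and note that the involutivity of $H^{(0,1)}$ built into the CR structure is consistent with, though not separately needed for, the construction.
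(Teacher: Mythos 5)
Your proof is correct. The paper does not actually prove this lemma---it is imported from \cite{Hill2008}---but your argument (the formal ``only if'' direction via $\alpha := \d f$, and the ``if'' direction via Nirenberg's complex Frobenius theorem, whose elliptic content here reduces to the existence of isothermal coordinates on the two-dimensional local leaf space of the real distribution underlying $\ker\alpha \cap \ker\ol{\alpha}$) is the standard one, and it is consistent with the remark the paper appends to the lemma, namely that one can write $\alpha = w^{-1}\,\d z$ for non-vanishing functions $w$ and $z$ with $z$ a CR function.
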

	More precisely, locally, one can find two smooth complex-valued functions $w$ and $z$ on $\mc{M}$, with $z$ nowhere vanishing, such that $\alpha = z^{-1} \d w$.

	\begin{prop}[\cite{Jacobowitz1987}]\label{prop:Jacobowitz}
		Locally, a CR manifold $({\mc{M}}, {H},{J})$ is realisable if and only if it admits a complex transverse infinitesimal symmetry of $H^{(1,0)}$.
	\end{prop}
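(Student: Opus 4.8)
The plan is to prove both implications by trading the CR structure for an honest complex structure, using the symmetry as the bridge. I would establish the easy direction by a duality argument, and the substantive direction by thickening $\mc{M}$ by one real dimension and applying the Newlander--Nirenberg theorem.

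For the ``only if'' direction, suppose $(\mc{M},H,J)$ is realisable, so there are $m+1$ independent CR functions $z^0,\ldots,z^m$. Their differentials annihilate $H^{(0,1)}$ and are pointwise independent, so the conjugates $\d\ol{z}^0,\ldots,\d\ol{z}^m$ form a local coframe for $\Ann(H^{(1,0)})$. I would then define a complex vector field $v$ by prescribing $\d\ol{z}^j(v)=c^j$ for constants $c^j$; since $\Ann(H^{(1,0)})$ has corank $m$, this determines $v$ modulo $H^{(1,0)}$. Because each $\d\ol{z}^j$ is closed, Cartan's formula gives $\mathsterling_v\d\ol{z}^j=\d\bigl(\d\ol{z}^j(v)\bigr)=\d c^j=0$, so $v$ preserves $\Ann(H^{(1,0)})$ and hence $H^{(1,0)}$. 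Writing the contact form as $\theta=\sum_j\mu_j\,\d\ol{z}^j$, one has $\theta(v)=\sum_j\mu_j c^j$, and choosing the $c^j$ to make this nonzero at the base point renders $v$ transverse on a neighbourhood. This yields the desired complex transverse infinitesimal symmetry.

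For the ``if'' direction, suppose $v$ is a complex transverse infinitesimal symmetry of $H^{(1,0)}$; conjugating the defining relation shows that $\bar{v}$ preserves $H^{(0,1)}$. I would thicken to $\wh{\mc{M}}:=\mc{M}\times\R$ with coordinate $t$, extend $H^{(0,1)}$ and $v$ to be $t$-independent, and set $\wh{H}^{(0,1)}:=H^{(0,1)}\oplus\langle\bar{v}+\partial_t\rangle$, a rank-$(m+1)$ complex distribution on the $(2m+2)$-manifold $\wh{\mc{M}}$. After rescaling $v$ by a constant so that the contact form takes a non-real value on it, transversality forces $\wh{H}^{(0,1)}\cap\overline{\wh{H}^{(0,1)}}=0$, so $\wh{H}^{(0,1)}$ is an almost complex structure. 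Its integrability is the crux: $[H^{(0,1)},H^{(0,1)}]\subset H^{(0,1)}$ by CR integrability, $[\partial_t,\cdot]=0$ by $t$-independence, and $[H^{(0,1)},\bar{v}]\subset H^{(0,1)}$ precisely by the symmetry hypothesis, so $\wh{H}^{(0,1)}$ is involutive.

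By the Newlander--Nirenberg theorem, $\wh{\mc{M}}$ then carries holomorphic coordinates $Z^0,\ldots,Z^m$, which I would restrict to the real hypersurface $\mc{M}\times\{0\}$. Each $z^j:=Z^j|_{\mc{M}}$ is annihilated by $H^{(0,1)}\subset\wh{H}^{(0,1)}$, hence is a CR function, and the CR structure induced on $\mc{M}\times\{0\}$, namely $\wh{H}^{(0,1)}\cap{}^{\C}T(\mc{M}\times\{0\})$, is exactly $H^{(0,1)}$; thus $p\mapsto(z^0(p),\ldots,z^m(p))$ realises $(\mc{M},H,J)$, since $m+1$ independent CR functions is equivalent to realisability. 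I expect the main obstacle to be verifying that $\wh{H}^{(0,1)}$ is a genuine, non-degenerate, integrable almost complex structure: the rank and transversality bookkeeping (hence the need for the constant rescaling of $v$) and, above all, the involutivity computation both hinge on the symmetry condition, which is exactly the ingredient that fails for a general, non-realisable CR manifold.
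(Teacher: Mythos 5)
Your argument is correct. Note that the paper does not prove this proposition at all — it is quoted from Jacobowitz's 1987 paper — so there is no internal proof to compare against; what you have written is essentially the classical argument (and, as far as I can tell, Jacobowitz's own): the ``only if'' direction by taking a constant-coefficient combination dual to the coframe $\d\ol{z}^0,\ldots,\d\ol{z}^m$ of $\Ann(H^{(1,0)})$, and the ``if'' direction by thickening to $\wh{\mc{M}}=\mc{M}\times\R$, adjoining $\bar{v}+\partial_t$ to $H^{(0,1)}$, and invoking Newlander--Nirenberg. Two small points deserve an explicit line. First, the constant rescaling making $\theta(v)$ non-real works only at a point and hence on a neighbourhood by continuity — which is all you need since the statement is local, but say so; the triviality of $\wh{H}^{(0,1)}\cap\overline{\wh{H}^{(0,1)}}$ is exactly equivalent to $\theta(v)\notin\R$, since a nonzero intersection forces $v-\bar{v}\in{}^{\C}H$. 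Second, the independence of the restricted functions $z^j=Z^j|_{\mc{M}\times\{0\}}$ is asserted but not justified: it follows because a combination $\sum_j a_j\,\d Z^j$ vanishing on ${}^{\C}T(\mc{M}\times\{0\})$ also annihilates $\wh{H}^{(0,1)}\ni\bar{v}+\partial_t$, and these two subspaces together span ${}^{\C}T\wh{\mc{M}}$, forcing $a_j=0$. With those two remarks added, the proof is complete.
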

	
	The existence of a (real) transverse infinitesimal symmetry of $(\mc{M},H,J)$ is sufficient but not necessary for realisability.

	On a realisable CR manifold of dimension $2m+1$, the pullback of any holomorphic $(m+1,0)$-form from $\C^{m+1}$ to $\mc{M}$ gives rise to a closed section of the canonical bundle. So realisability provides an abundance of such forms. For a `weaker' converse, having enough CR functions at disposal leads to the following result --- see also Remark \ref{rem:res+conj} below in relation to Trautman's conjecture.
	
	\begin{thm}[\cite{Jacobowitz1987}]\label{thm:Jacobowitz}
		Locally, a $(2m+1)$-dimensional CR manifold is realisable if and only if it admits $m$ strongly independent CR functions and a nowhere vanishing closed section of the canonical bundle. 
	\end{thm}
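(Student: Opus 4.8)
The plan is to prove both implications, treating the forward direction as a bookkeeping consequence of the embedding and concentrating the real work on the converse. \emph{Forward direction.} Suppose $(\mc{M},H,J)$ is realisable, so that locally there is an embedding into $\C^{m+1}$ with coordinates $(w^1,\dots,w^{m+1})$ and $\mc{M} = \{\rho = 0\}$ for a defining function $\rho$ with $\d\rho \neq 0$ on $\mc{M}$. The restrictions $f^a := w^a|_{\mc{M}}$ are then $m+1$ independent CR functions, and pulling back the holomorphic volume form $\d w^1 \wedge \cdots \wedge \d w^{m+1}$ produces the closed canonical section $\d f^1 \wedge \cdots \wedge \d f^{m+1}$. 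For the strong independence I would note that the $\d f^a$ form a frame of $\Ann(H^{(0,1)})$ and the $\d\ol{f}^a$ a frame of $\Ann(H^{(1,0)})$; since these two rank-$(m+1)$ bundles meet exactly in the line spanned by a contact form, the $2m+2$ forms $\{\d f^a,\d\ol{f}^a\}$ span the $(2m+1)$-dimensional $\C T^*\mc{M}$ and hence satisfy a one-dimensional space of linear relations, namely multiples of $\d\rho|_{\mc{M}} = \sum_a \rho_a\,\d f^a + \sum_a \ol{\rho_a}\,\d\ol{f}^a = 0$. Choosing $a_0$ with $\rho_{a_0}\neq 0$ (possible as $\d\rho\neq 0$) and deleting $\d f^{a_0},\d\ol{f}^{a_0}$ leaves $2m$ independent forms, so $\{f^a : a\neq a_0\}$ are $m$ strongly independent CR functions.

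\emph{Converse.} Here I would use the stated equivalence between realisability and the existence of $m+1$ independent CR functions: given the $m$ strongly independent CR functions $z^1,\dots,z^m$, it suffices to manufacture one further CR function $g$ with $\d z^1 \wedge \cdots \wedge \d z^m \wedge \d g \neq 0$. Strong independence lets me choose local coordinates $(z^j,\ol{z}^j,s)$ with $s$ real, in which $H^{(0,1)}$ is spanned by fields $\ol{Z}_j = \partial_{\ol{z}^j} + b_j\,\partial_s$, so that $\eta := \d s - \sum_j b_j\,\d\ol{z}^j$ together with $\d z^1,\dots,\d z^m$ trivialises $\Ann(H^{(0,1)})$. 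The closed canonical section must then take the form $\Theta = c\,\d z^1 \wedge \cdots \wedge \d z^m \wedge \eta$ with $c$ nowhere vanishing.

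The crux is to expand $\d\Theta = 0$: factoring out $\d z^1 \wedge \cdots \wedge \d z^m$ and isolating the terms free of the $\d z^j$, I expect two families of equations, one of the shape $c_{\ol{z}^j} + (cb_j)_s = 0$ and a second which, after using the first, reduces to the involutivity $\ol{Z}_i b_j = \ol{Z}_j b_i$ that holds automatically because $(\mc{M},H,J)$ is a CR manifold. Thus the genuinely new input supplied by the closed canonical section is the first family, and these equations say exactly that the one-form $c\,\eta$ is closed along each slice $\{z=\text{const}\}$. Integrating $c\,\eta$ slice-wise, with the $z^j$ as parameters, produces $g$ with $\partial_s g = c$ and $\partial_{\ol{z}^j} g = -cb_j$; such a $g$ then satisfies $\ol{Z}_j g = -cb_j + b_j c = 0$, hence is CR, and has $\partial_s g = c\neq 0$, whence $\d z^1 \wedge \cdots \wedge \d z^m \wedge \d g \neq 0$. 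This $g$ is the missing $(m+1)$-th independent CR function, and realisability follows. (Alternatively one could absorb the residual freedom in $\Theta$ by adding multiples of the $\d z^j$ and invoke Lemma~\ref{lem:HLN}, but direct integration seems cleaner.)

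I expect the main obstacle to lie entirely in this converse, and specifically in matching $\d\Theta = 0$ to the integrability of the overdetermined system for $g$: one must verify that the \emph{only} obstruction to integrating $c\,\eta$ on slices, beyond the automatic CR involutivity, is precisely cancelled by the equation coming from closedness of the canonical section, and that the slice-wise primitives assemble into a single smooth function of all the variables. Some care is also needed to confirm that the resulting $g$ is genuinely transverse, i.e. $\partial_s g \neq 0$, which here is guaranteed by $c\neq 0$.
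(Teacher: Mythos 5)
The paper itself imports this theorem from \cite{Jacobowitz1987} without proof, so there is no in-paper argument to compare against; I am judging your proposal on its own. Your forward direction is correct: the count of linear relations among $\d f^a,\d\ol{f}{}^a$ (exactly one, generated by $\iota^*\d\rho=0$) is the right way to extract $m$ strongly independent functions from the $m+1$ independent ones. The setup of the converse is also correct: the coordinates $(z^j,\ol{z}^j,s)$ exist by strong independence, $H^{(0,1)}$ is spanned by $\ol{Z}_j=\partial_{\ol{z}^j}+b_j\partial_s$ because $H^{(0,1)}$ contains no real vectors, and expanding $\d\Theta=0$ for $\Theta=c\,\d z^1\wedge\cdots\wedge\d z^m\wedge\eta$ does yield exactly the family $c_{\ol{z}^j}+(cb_j)_s=0$ together with a second family that reduces, modulo the first, to the involutivity identity $\ol{Z}_jb_k=\ol{Z}_kb_j$.

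The gap is the final step. ``Integrating $c\,\eta$ slice-wise with the $z^j$ as parameters'' does not produce the function you need: the slices $\{z=\mathrm{const}\}$ are one-dimensional curves in the $s$-direction, so slice-wise integration determines only $\partial_s g=c$ and gives no control whatsoever over $\partial_{\ol{z}^j}g$; there is no nontrivial sense in which a one-form is ``closed along'' a one-dimensional slice. What you are actually solving is the overdetermined system $\partial_s g=c$, $\partial_{\ol{z}^j}g=-cb_j$ for the flat involutive structure spanned by $\partial_s,\partial_{\ol{z}^1},\dots,\partial_{\ol{z}^m}$ (note $\mathrm{span}(\partial_s,\ol{Z}_j)=\mathrm{span}(\partial_s,\partial_{\ol{z}^j})$), and for general systems of complex vector fields the formal compatibility conditions you verified are necessary but \emph{not} sufficient for local solvability --- this is precisely the Lewy phenomenon the paper recalls, and it is why the theorem has real content. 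The argument that closes the gap is: set $g_0=\int_0^s c\,\d\sigma$; then $\mu_j:=-cb_j-\partial_{\ol{z}^j}g_0$ is $s$-independent by $c_{\ol{z}^j}+(cb_j)_s=0$, and $\sum_j\mu_j\,\d\ol{z}^j$ is $\ol{\partial}$-closed on a domain in $\C^m_z$ by the second family of compatibility conditions; the Dolbeault--Grothendieck lemma then supplies $u(z,\ol{z})$ with $\partial_{\ol{z}^j}u=\mu_j$, and $g=g_0+u$ is the desired CR function with $\partial_s g=c\neq0$. The appeal to the $\ol{\partial}$-Poincar\'e lemma is the analytic heart of Jacobowitz's theorem and must be made explicit; without it the converse does not close.
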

	
	\begin{rem}
		In dimension three, it follows that once we know that $(\mc{M},H,J)$ admits a CR function, realisability becomes equivalent to the existence of a nowhere vanishing closed section of the canonical bundle.
	\end{rem}	
	
	For ease of reference, we summarise the various criteria for the realisability of a CR manifold below. Given the following statements on a CR manifold $(\mc{M},H,J)$:
	\begin{enumerate}
		\item there is a (real) transverse symmetry of $(H,J)$;
		\item there is a (complex) transverse symmetry of $H^{(1,0)}$;
		\item the CR structure is realisable;
		\item there are $m+1$ independent CR functions;
		\item there are $m$ strongly independent CR functions and a nowhere vanishing closed section of $\mc{C}$;
		\item there is a nowhere vanishing closed section of $\mc{C}$;
	\end{enumerate}
	we have that
	\begin{align*}
		(1) & &  \Longrightarrow & & (2) & & \Longleftrightarrow & &  (3) & & \Longleftrightarrow & & (4) & & \Longleftrightarrow & & (5) & & \Longrightarrow & & (6) \, .
	\end{align*}
	
	\begin{rem}\label{rem:CRop}
		Local sections of $H^{(0,1)}$ can be interpreted as \emph{Cauchy--Riemann operators}, sometimes called \emph{Lewy operators}. CR functions thus correspond to smooth complex-valued functions that lie in the kernel of all such operators. For a CR manifold embedded as a real hypersurface in complex space, these operators arise from the restriction of the standard Cauchy--Riemann operators on $\C^{m+1}$. We say that a Cauchy--Riemann operator $\ol{\partial}$ is \emph{solvable} at a point if for each sufficiently smooth complex-valued function $\xi$ in its neighbourhood, there exists a nowhere vanishing complex-valued function $z$ in a possibly smaller neighbourhood such that
		\begin{align}\label{eq:Lewy}
			\xi = \i \ol{\partial} \log z \, .
		\end{align}
		In dimension three, even for realisable CR manifolds, there exist functions $\xi$ for which there is no $z$ solving \eqref{eq:Lewy} \cite{Lewy1957}. In other words, the $\ol{\partial}$-Poincar\'{e} Lemma does not hold in CR geometry in general.
	\end{rem}
	
	\begin{rem}\label{rem:res+conj}
		It is worth listing a number of known facts and conjectures regarding the embeddability of CR manifolds --- see also \cite{Trautman2002}:
		\begin{itemize}
			\item Not every strictly pseudo-convex CR \emph{three}-manifold is embeddable \cite{Nirenberg1973,Jacobowitz1982}. This essentially follows from Lewy's result \cite{Lewy1957} --- see Remark \ref{rem:CRop}.
			\item For $m\geq1$, not every CR manifold of dimension $2m+1$ and Levi signature $(m-1,1)$ is embeddable \cite{Jacobowitz1983}.			
			\item For $m\geq3$, every strictly pseudo-convex CR manifold of dimension $2m+1$ is embeddable, \cite{Kuranishi1982,Akahori1987,Webster1989a,Webster1989}.
			\item There exist CR \emph{seven}-manifolds of Levi signature $(1,1)$ that are not embeddable, yet admit nowhere vanishing closed sections of the canonical bundle \cite{LeBrun1984,Jacobowitz1987}.
			\item It is not known whether every strictly pseudo-convex CR \emph{five}-manifold is embeddable.
			\item Trautman conjectured \cite{Trautman1999a,Jacobowitz2020} that every strictly pseudo-convex three-manifold admitting a nowhere vanishing closed section of the canonical bundle is embeddable.
		\end{itemize}
	\end{rem}
	
	\subsection{Contact CR structures}\label{sec:contactCR}
	Henceforth, $(\mc{M},H,J)$ will be assumed to be a $(2m+1)$-dimensional contact CR manifold with signature $(p,q)$, $p+q=m$. There is a well-developed theory of such geometries available \cite{Tanaka1975,Webster1978,Gover2005,Cap2008,Cap2010}, which we shall exploit throughout this article.
	
	It will be convenient to assume that the canonical bundle $\mc{C}$ admits a $-(m+2)$-nd root, and define the line bundle ${\mc{E}}(1,0) := {\mc{C}}^{-\tfrac{1}{m+2}}$, its complex conjugate ${\mc{E}}(0,1) := \overline{{\mc{E}}(1,0)}$ and dual  ${\mc{E}}(-1,0) :=  {\mc{E}}(1,0)^*$. More generally, we define \emph{density bundles} ${\mc{E}}(w,w') := {\mc{E}}(1,0)^w \otimes \overline{{\mc{E}}(1,0)}{}^{w'}$ for any $w, w' \in \R$ such that $w - w' \in \Z$. By definition, it follows that $\overline{{\mc{E}}(w,w')} = {\mc{E}}(w',w)$.
	
	There are  canonical sections ${\bm{\theta}}$ of $T^* {\mc{M}} \otimes {\mc{E}}(1,1)$ and ${\bm{h}}$ of $({H}^{(1,0)})^* \otimes ({H}^{(0,1)})^* \otimes {\mc{E}}(1,1)$ with the property that, for each \emph{CR scale} $s$, that is, a (real-valued) section $0 < s \in \Gamma({\mc{E}}(1,1))$, we have that ${\theta} = s^{-1} {\bm{\theta}}$ is a contact form with Levi form ${h} = s^{-1}  {\bm{h}}$. This reflects the fact that $T \mc{M}/H$ injects into $\mc{E}(1,1)$. The weighted Levi form $\bm{h}$ allows us to identify ${H}^{(1,0)}$ with $({H}^{(0,1)})^*\otimes \mc{E}(1,1)$ and ${H}^{(0,1)}$ with $({H}^{(1,0)})^*\otimes \mc{E}(1,1)$.
	
	One can also understand the relation between densities and contact forms as follows: to each section $\zeta$ of $\mc{C} = \mc{E}(-(m+2),0)$, there is a unique contact form $\theta$ with Reeb vector field $\ell$ for which $\zeta$ is \emph{volume-normalised}, i.e.\
	\begin{align}\label{eq:volnorm}
		\theta \wedge (\d \theta)^m & = \i^{m^2} m! (-1)^q \theta \wedge (\ell \hook \zeta) \wedge (\ell \hook \ol{\zeta}) \, .
	\end{align}
	This equation tells us that at every point $p$ there is a circle of volume-normalised elements of the fibre $\mc{C}_p$ associated to a non-zero element of $\Ann(H_p)$.

	\subsection{Webster connections}
	Each choice of CR scale $s$ with contact form ${\theta}=s^{-1}\bm{\theta}$ and Reeb vector field $\ell$ induces a splitting
	\begin{align*}
		{}^\C T {\mc{M}} =  {}^\C \langle \ell \rangle \oplus {H}^{(1,0)} \oplus {H}^{(0,1)} \, .
	\end{align*}
	We shall adopt the following abstract index notation for CR geometry:
	\begin{align*}
		{\mc{E}}^{\alpha} &:= {H}^{(1,0)} \, , & {\mc{E}}^{\bar{\alpha}} & := {H}^{(0,1)}\, , & {\mc{E}}_0 & := \Ann(H) \, , \\
		{\mc{E}}_{\alpha} & := ({H}^{(1,0)})^* \, , & {\mc{E}}_{\bar{\alpha}} & := ({H}^{(0,1)})^* \, ,
		& {\mc{E}}^0 & := \left(\Ann(H)\right)^* \, .
	\end{align*}
	Indices will be raised and lowered using ${\bm{h}}_{\alpha \bar{\beta}}$, e.g.\ ${v}_{\alpha} = {\bm{h}}_{\alpha \bar{\beta}} {v}^{\bar{\beta}}$. Complex conjugation on ${}^\C {H}$ changes the index type, and, in general, unless stated otherwise, we shall write ${v}^{\bar{\alpha}}$ for $\overline{{v}^{\alpha}}$, and so on. These indices will also be used in a concrete way, in which case they run from $1$ to $m$.

	Complete ${\ell}$ to a frame $( \ell , {e}_\alpha , \overline{{e}}_{\bar{\beta}} )$, $\alpha, \bar{\beta} = 1, \ldots, m$, adapted to $({H}, {J})$, i.e.\ $( {e}_\alpha )$  and $( \overline{{e}}_{\bar{\beta}} )$, $\alpha, \bar{\beta} = 1, \ldots, m$, span ${H}^{(1,0)}$ and ${H}^{(0,1)}$ respectively. Denote by $( \theta , {\theta}^\alpha , \overline{{\theta}}{}^{\bar{\beta}} )$, the coframe dual to $( \ell , {e}_\alpha , \overline{{e}}_{\bar{\beta}} )$, ${\alpha, \bar{\beta} = 1, \ldots , m}$. Then the contact form ${\theta}$ satisfies
	\begin{align*}
		\rm{d} \theta & = \i {h}_{\alpha \bar{\beta}} {\theta}^{\alpha} \wedge \overline{{\theta}}{}^{\bar{\beta}}  \, ,
	\end{align*}
	where we realise the Levi form ${h}_{\alpha \bar{\beta}}=s^{-1} \bm{h}_{\alpha \bar{\beta}}$ of $\theta$ as a Hermitian matrix of functions. We refer to $(\theta^{\alpha})$ as an \emph{admissible} coframe for $(H^{(1,0)})^*$, and to $(\theta, \theta^{\alpha}, \ol{\theta}{}^{\bar{\alpha}})$ as an \emph{adapted coframe} for $(\mc{M},H,J)$. An admissible or adapted coframe is said to be \emph{unitary} if $h_{\alpha \bar{\beta}}$ is diagonal with respect to $(\theta^{\alpha}, \ol{\theta}{}^{\bar{\alpha}})$ with $p$ $+1$-eigenvalues and $q$ $-1$-eigenvalues. In particular, $h_{\alpha \bar{\beta}} = \delta_{\alpha \bar{\beta}}$ in the positive definite case. In dimension three, we shall sometimes qualify such a coframe as being \emph{of the first kind}, to distinguish it from an adapted coframe \emph{of the second kind} as described in Appendix \ref{app:coframe2}.
	
	For each contact form $\theta$, there is a unique linear connection ${\nabla}$ on $T {\mc{M}}$ and associated bundles with the property that, for a chosen adapted coframe $( \theta , {\theta}^{\alpha} , \overline{{\theta}}{}^{\bar{\beta}} )$, ${\nabla}$ satisfies
	\begin{align*}
		{\nabla} \theta & = 0 \, , & {\nabla} {\theta}^\alpha & = - {\Gamma}_{\beta}{}^{\alpha} \otimes {\theta}^{\beta} \, ,  & {\nabla} \ol{{\theta}}{}^{\bar{\alpha}} & = - \Gamma_{\bar{\beta}}{}^{\bar{\alpha}} \otimes \ol{\theta}{}^{\bar{\beta}} \, ,
	\end{align*}
	where ${\Gamma}_{\beta}{}^{\alpha}$ and $\Gamma_{\bar{\beta}}{}^{\bar{\alpha}}$ denote the connection one-form of ${\nabla}$ in that coframe. In addition, ${\nabla}$ is required to preserve the Levi form ${h}_{\alpha \bar{\beta}}$, from which it follows that
	\begin{align*}
		\d {h}_{\alpha \bar{\beta}} - \left( {\Gamma}_{\alpha}{}^{\gamma} {h}_{\gamma\bar{\beta}} + \Gamma_{\bar{\beta}}{}^{\bar{\gamma}} {h}_{\alpha \bar{\gamma}} \right)  & = 0 \, .
	\end{align*}
	For a unitary adapted coframe, we have ${\Gamma}_{\alpha \bar{\beta}} = - {\Gamma}_{\bar{\beta} \alpha}$.

	The torsion of this connection splits into two pieces: the Levi-form ${h}_{\alpha \bar{\beta}}$ and the \emph{Webster} or \emph{pseudo-Hermitian torsion tensor} ${\WbA}_{\alpha \beta}$ with conjugate ${\WbA}_{\bar{\alpha} \bar{\beta}}$. The latter satisfies the symmetries ${\WbA}_{\alpha \beta} = {\WbA}_{(\alpha \beta)}$. They can be identified within the Cartan structure equations:
	\begin{subequations}\label{eq:structure_CR}
		\begin{align}
			\rm{d} \theta & = \i {h}_{\alpha \bar{\beta}} {\theta}^{\alpha} \wedge \overline{{\theta}}{}^{\bar{\beta}}  \, , \\
			\rm{d} {\theta}^{\alpha} & = {\theta}^{\beta} \wedge {\Gamma}_{\beta}{}^{\alpha} +  {\WbA}^{\alpha}{}_{\bar{\beta}}   \theta \wedge \overline{{\theta}}{}^{\bar{\beta}} \, , \\
			\rm{d} \overline{{\theta}}^{\bar{\alpha}} & = \ol{{\theta}}{}^{\bar{\beta}} \wedge {\Gamma}_{\bar{\beta}}{}^{\bar{\alpha}} + {\WbA}^{\bar{\alpha}}{}_{\beta}  \theta \wedge {\theta}^{\beta}  \, .
		\end{align}
	\end{subequations}
	
	The Webster connection $\nabla$ extends to a connection on the density bundles $\mc{E}(w,w')$. Let us pick $\sigma \in \Gamma(\mc{E}(w,0))$, which we assume to be such that $\sigma^{-\tfrac{m+2}{w}} = \theta \wedge \theta^1 \wedge \ldots \wedge \theta^m$ for some adapted coframe $(\theta,\theta^\alpha, \ol{\theta}{}^{\bar{\alpha}})$. Then
	\begin{align}\label{eq:conn1_density}
		\nabla \sigma & = \tfrac{w}{m+2} \Gamma_{\alpha}{}^{\alpha} \sigma \, ,
	\end{align}
	where $\Gamma_{\alpha}{}^{\beta}$ is the connection one-form of $\nabla$ corresponding to $\theta$, and similarly for densities of weight $(0,w)$.
	A useful property of the weighted contact form $\bm{\theta}$ is that it is preserved by any Webster connection $\nabla$, i.e.\ $\nabla \bm{\theta} = 0$. This means that if $\theta = s^{-1} \bm{\theta}$ for some CR scale $s$, then $\nabla s =0$.
	
	\begin{rem}
		For future use, we record the commutators of the Webster connection when $(\mc{M},H,J)$ is of dimension \emph{three} only. For any $\lambda_{\alpha} \in \Gamma(\mc{E}_{\alpha})$, we have
		\begin{subequations}
			\begin{align}
				({\nabla}_{\alpha} {\nabla}^{\alpha} - {\nabla}^{\alpha} {\nabla}_{\alpha} ) {\lambda}_{\gamma} + \i  {\nabla}_0 {\lambda}_{\gamma}  & = - 4 {\Rho} {\lambda}_{\gamma}  \, , \label{eq:SchoutenSc} \\
				({\nabla}_{\alpha} {\nabla}_{0} - {\nabla}_{0} {\nabla}_{\alpha} ) {\lambda}_{\gamma} - {\WbA}_{\alpha \beta} {\nabla}^{\beta}  {\lambda}_\gamma & = -  {\lambda}_{\beta} {\nabla}^{\beta} {\WbA}_{\alpha \gamma}   \, , \\
				({\nabla}^{\alpha} {\nabla}_{0} - {\nabla}_{0} {\nabla}^{\alpha} ) {\lambda}_{\gamma} - {\WbA}^{\alpha \beta} {\nabla}_{\beta}  {\lambda}_\gamma & = ({\nabla}_{\gamma} {\WbA}^{\alpha \delta})  {\lambda}_{\delta}  \, ,
			\end{align}
		\end{subequations}
		where ${\Rho}$ is the \emph{Webster--Schouten scalar}. For any density $f \in \Gamma(\mc{E}(w,w'))$,
		\begin{subequations}\label{eq:CR_com}
			\begin{align}
				({\nabla}_{\alpha} {\nabla}^{\alpha} - {\nabla}^{\alpha} {\nabla}_{\alpha} ) {f} & = \tfrac{4}{3} (w -w') \Rho f - \i {\nabla}_{0} {f} \, , \label{eq:CR_com1} \\
				({\nabla}_{\alpha} {\nabla}_{0} - {\nabla}_{0} {\nabla}_{\alpha} ) {f} & = \tfrac{1}{3} (w -w') {\nabla}^{\beta}{\WbA}_{\alpha \beta} f + {\WbA}_{\alpha}{}^{\bar{\beta}} 
				{\nabla}_{\bar{\beta}} {f} \, . \label{eq:CR_com2}
			\end{align}
		\end{subequations}
		
		Finally, the \emph{Cartan tensor} is the fourth-order CR invariant of weight $(-1,-1)$ defined by
		\begin{align}\label{eq:Cartan_tensor}
			{\CQ}_{\alpha \beta} & := \i {\nabla}_0 {\WbA}_{\alpha \beta} - 2 \i {\nabla}_{\alpha} \CT_{\beta} + 2 {\Rho} {\WbA}_{\alpha \beta}  \, , & \mbox{where
				$\CT_{\alpha} = \tfrac{1}{3} \left( {\nabla}_{\alpha} {\Rho} - \i {\nabla}^{\gamma} {\WbA}_{\gamma \alpha} \right)$.}
		\end{align}
		Its vanishing is equivalent to \emph{CR flatness}, that is, ${\CQ}_{\alpha \beta} = 0$ if and only if $(\mc{M}, {H},{J})$ is locally CR equivalent to the CR three-sphere.
	\end{rem}
	
	\subsection{Gauged partial Webster connections}\label{sec:gauged_Webster}
	In what follows, $\mc{E}_{\bullet}$ will denote any multiple tensor products of $\mc{E}_{\alpha}$ and $\mc{E}_{\bar{\alpha}}$. Choose a Webster connection $\nabla$, and view $\nabla_{\alpha}$ and $\nabla_{\bar{\alpha}}$ as partial connections along sections of $\mc{E}^{\alpha}$ and $\mc{E}^{\bar{\alpha}}$. Then, for any choice of $\xi_{\alpha} \in \Gamma(\mc{E}_{\alpha})$, we define the partial connection
	\begin{align}\label{eq:gpconn}
		\accentset{\xi}{\nabla}_{\alpha} & : \Gamma(\mc{E}_{\bullet}(w,w')) \longrightarrow \Gamma(\mc{E}_{\alpha} \otimes \mc{E}_{\bullet}(w,w')) \quad : \bm{f} \mapsto  \accentset{\xi}{\nabla}_{\alpha} \bm{f} := \nabla_{\alpha} \bm{f} - \i (w - w') \xi_{\alpha} \bm{f} \, .
	\end{align}
	Clearly, $\accentset{\xi}{\nabla}_{\alpha}$ coincides with $\nabla_{\alpha}$ on any section of $\mc{E}_{\bullet}(w,w)$. We shall often use the short-hand $\accentset{\xi}{\nabla}_{\alpha} = \nabla_{\alpha} + \xi_{\alpha}$ to mean \eqref{eq:gpconn}.
	
	\begin{defn}\label{defn:GPconn}
		We shall call the partial connection defined by \eqref{eq:gpconn} the \emph{partial Webster connection gauged by $\xi_{\alpha}$}. If $\xi_{\alpha} = \i \nabla_{\alpha} \log z$ for some nowhere vanishing complex-valued function $z$, we refer to the partial gauge $\xi_{\alpha}$ as being \emph{exact}.
	\end{defn}
	In full analogy, a one-form $\xi$ can be used to gauge a Webster connection as
	\begin{align}\label{eq:gconn}
		\accentset{\xi}{\nabla} & : \Gamma(\mc{E}_{\bullet}(w,w')) \longrightarrow \Gamma(T^* \mc{M} \otimes \mc{E}_{\bullet}(w,w')) \quad : \accentset{\xi}{\nabla} \bm{f} := \nabla \bm{f} - \i (w - w') \xi \bm{f} \, .
	\end{align}
	This full gauged connection will play no r\^{o}le in this section, but it will find applications in Section \ref{sec:Weyl}. Writing $\xi = \xi_{\alpha} \theta^{\alpha} + \xi_{\bar{\alpha}} \ol{\theta}{}^{\bar{\alpha}} + \xi_{0} \theta$, its curvature is found to be
	\begin{align*}
		F & := \d \xi = F_{\alpha \beta}  \theta^{\alpha} \wedge \theta^{\beta} + F_{\bar{\alpha} \bar{\beta}} \ol{\theta}{}^{\bar{\alpha}} \wedge \ol{\theta}{}^{\bar{\beta}} + 2 F_{\alpha \bar{\beta}} \theta^{\alpha} \wedge \ol{\theta}{}^{\bar{\beta}} 
		+ 2 F_{0 \beta} \theta \wedge \theta^{\beta} 
		+ 2 F_{0 \bar{\beta}} \theta \wedge \ol{\theta}{}^{\bar{\beta}} \, ,
	\end{align*}
	where
	\begin{equation}\label{eq:curv_xi0}
		\begin{aligned}
			F_{\alpha \bar{\beta}} & = \tfrac{1}{2} \left( \nabla_{\alpha} \xi_{\bar{\beta}} - \nabla_{\bar{\beta}} \xi_{\alpha} + \i {h}_{\alpha \bar{\beta}} \xi_{0} \right) \, , & 	F_{\alpha \beta} & = \nabla_{[\alpha} \xi_{\beta]} \, , \\
			F_{0 \beta} & = \tfrac{1}{2} \left( \nabla_{0} \xi_{\beta} - \nabla_{\beta} \xi_{0} +  {\WbA}_{\beta}{} ^{\bar{\alpha}} \xi_{\bar{\alpha}} \right) \, .
		\end{aligned}
	\end{equation}
	
	\begin{rem}
		To carry out explicit computations, it is convenient to choose $\sigma \in \Gamma(\mc{E}(1,0))$ with associated Webster connection $\nabla$, i.e.\ $\nabla (\sigma \ol{\sigma})=0$, so that we can trivialise $\bm{f} \in \Gamma(\mc{E}_{\bullet}(w,w'))$ as the complex-valued function $f = \sigma^{-w} \ol{\sigma}^{-w'} \bm{f}$. Then
		\begin{align}\label{eq:gD_density}
			\sigma^{-w} \ol{\sigma}^{-w'}\accentset{\xi}{\nabla} \bm{f} & = \nabla f + (w -w') (\sigma^{-1} \accentset{\xi}{\nabla} \sigma)  f \, .
		\end{align}
	\end{rem}

	The following lemma follows from a straightforward computation:
	\begin{lem}\label{lem:degauge}
		For any two pairs $\sigma, \xi_{\alpha}$ and $\sigma', \xi'_{\alpha}$, where $\sigma, \sigma' \in \Gamma(\mc{E}(1,0))$ and $\xi_{\alpha}, \xi'_{\alpha} \in \Gamma(\mc{E}_{\alpha})$ are related by
		\begin{align} \label{eq:freedom}
			\sigma' & = z \sigma \, , & \xi'_{\alpha} = \xi_{\alpha} + \i \nabla_{\alpha} \log z \, ,
		\end{align}
		for some smooth nowhere vanishing complex-valued function $z$, we have
		\begin{align*}
			{\sigma'}{}^{-1} \accentset{\xi'}{\nabla}_{\alpha} \sigma' & = \sigma^{-1} \accentset{\xi}{\nabla}_{\alpha} \sigma \, .
		\end{align*}
	\end{lem}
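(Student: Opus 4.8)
The plan is to expand both sides of the claimed identity directly from the definition \eqref{eq:gpconn} of the gauged partial Webster connection, applied to the weight-$(1,0)$ densities $\sigma$ and $\sigma'$, and then to track how each summand transforms under \eqref{eq:freedom}. Since $\sigma \in \Gamma(\mc{E}(1,0))$ has $w - w' = 1$, the definition gives $\accentset{\xi}{\nabla}_{\alpha}\sigma = \nabla_{\alpha}\sigma - \i\,\xi_{\alpha}\sigma$, so that $\sigma^{-1}\accentset{\xi}{\nabla}_{\alpha}\sigma = \sigma^{-1}\nabla_{\alpha}\sigma - \i\,\xi_{\alpha}$, and likewise for the primed pair. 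Thus the whole computation reduces to understanding the behaviour of the two pieces $\sigma^{-1}\nabla_{\alpha}\sigma$ and $\xi_{\alpha}$ separately.

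First I would treat the connection term. Because $z$ is a non-vanishing function of weight $(0,0)$, the Webster connection acts on it as the ordinary directional derivative, and the Leibniz rule applied to $\sigma' = z\sigma$ gives $\nabla_{\alpha}\sigma' = (\nabla_{\alpha}z)\,\sigma + z\,\nabla_{\alpha}\sigma$. Dividing through by $\sigma' = z\sigma$ yields the additivity of logarithmic derivatives,
\begin{align*}
	{\sigma'}{}^{-1}\nabla_{\alpha}\sigma' = \sigma^{-1}\nabla_{\alpha}\sigma + \nabla_{\alpha}\log z \, .
\end{align*}
This is the only point at which the density connection \eqref{eq:conn1_density} enters, and it produces exactly one additional logarithmic-derivative term $\nabla_{\alpha}\log z$.

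Next I would substitute the prescribed gauge shift $\xi'_{\alpha} = \xi_{\alpha} + \i\,\nabla_{\alpha}\log z$ into the gauge term and add the two transformed summands. The key observation is that the factor of $\i$ carried by the gauge shift combines with the factor of $\i$ in the definition of $\accentset{\xi}{\nabla}_{\alpha}$ so that the logarithmic-derivative contribution coming from the gauge cancels precisely the logarithmic-derivative contribution coming from the rescaling of $\sigma$. What remains is then exactly ${\sigma'}{}^{-1}\accentset{\xi'}{\nabla}_{\alpha}\sigma' = \sigma^{-1}\accentset{\xi}{\nabla}_{\alpha}\sigma$, as claimed.

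There is no genuine obstacle in this argument: the statement is a bookkeeping exercise in the Leibniz rule and the factors of $\i$, which is why it is recorded as following from a straightforward computation. The one point that requires care is the consistent use of the sign conventions, both in \eqref{eq:gpconn} and in the notion of an exact gauge, so that the two logarithmic derivatives cancel rather than reinforce one another. Conceptually, the lemma records that the joint transformation $(\sigma,\xi_{\alpha}) \mapsto (z\sigma,\ \xi_{\alpha} + \i\,\nabla_{\alpha}\log z)$ is precisely the stabiliser of the expression $\sigma^{-1}\accentset{\xi}{\nabla}_{\alpha}\sigma$; equivalently, passing to an exact gauge is compensated exactly by rescaling the chosen density. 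This is the invariance that makes the gauged partial Webster connection of Definition \ref{defn:GPconn} well behaved under change of trivialisation, and it is the feature that will later allow the `linearisation' of the non-linear equation alluded to in Theorem \ref{thm:lambda_exact}.
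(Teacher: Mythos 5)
Your plan --- expand both sides via \eqref{eq:gpconn}, use the Leibniz rule on $\sigma'=z\sigma$, and watch the logarithmic derivatives --- is the right (and essentially only) route; the paper itself offers nothing beyond ``straightforward computation''. But the step you single out as the key observation is exactly where the argument fails if you carry it out with the conventions as printed. For $\sigma\in\Gamma(\mc{E}(1,0))$ one has $w-w'=1$, so \eqref{eq:gpconn} gives $\sigma^{-1}\accentset{\xi}{\nabla}_{\alpha}\sigma=\sigma^{-1}\nabla_{\alpha}\sigma-\i\,\xi_{\alpha}$, and therefore
\begin{align*}
{\sigma'}{}^{-1}\accentset{\xi'}{\nabla}_{\alpha}\sigma'
&= \sigma^{-1}\nabla_{\alpha}\sigma+\nabla_{\alpha}\log z-\i\bigl(\xi_{\alpha}+\i\,\nabla_{\alpha}\log z\bigr)
= \sigma^{-1}\accentset{\xi}{\nabla}_{\alpha}\sigma+2\,\nabla_{\alpha}\log z\,,
\end{align*}
since $-\i\cdot\i=+1$. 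The two logarithmic-derivative terms carry the \emph{same} sign and reinforce rather than cancel, so the asserted identity fails by $2\nabla_{\alpha}\log z$ for non-constant $z$. You explicitly flagged the sign bookkeeping as ``the one point that requires care'' and then asserted the cancellation without performing it; that is the gap.

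What this actually reveals is a sign inconsistency in the paper between \eqref{eq:gpconn} and \eqref{eq:freedom}: the lemma holds if and only if one of the two signs is flipped, i.e.\ either $\accentset{\xi}{\nabla}_{\alpha}\bm{f}=\nabla_{\alpha}\bm{f}+\i(w-w')\xi_{\alpha}\bm{f}$, or $\xi'_{\alpha}=\xi_{\alpha}-\i\,\nabla_{\alpha}\log z$. The surrounding material (the remark following the lemma on removing an exact gauge, the unnamed lemma producing a solution $z\sigma$ of \eqref{eq:gexactWW} from a solution of \eqref{eq:exactWW}, and Lemma \ref{lem:ex-nexWW}) all require the matched pairing, so the intended statement is clear and your structural argument would close under the corrected convention. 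A careful execution of your own plan would have surfaced this; as written, the proof asserts the one equality that the printed definitions do not deliver.
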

	
	\begin{rem}
		Clearly, if the gauge is exact, one can always find a rescaling \eqref{eq:freedom} such that ${\sigma'}{}^{-1} {\nabla}_{\alpha} \sigma' = \sigma^{-1} \accentset{\xi}{\nabla}_{\alpha} \sigma$
	\end{rem}

	\subsection{Transformations under a change of contact form}
	Let $s$ and $\wh{s}$ be two CR scales related by $\wh{s} = \e^{-\varphi} s$, for some smooth function $\varphi$ on $\mc{M}$, so that the corresponding Levi forms are related by  $\wh{h}_{\alpha \bar{\beta}} = \e^{\varphi} h_{\alpha \bar{\beta}}$, and the induced change of adapted coframe is given by
	\begin{align}
		\wh{\theta} & = \e^{\varphi} \theta \, , & 
		\wh{\theta}{}^{\alpha} & = \theta^{\alpha} + \i \Upsilon^{\alpha} \theta \, , & 		\ol{\wh{\theta}}{}^{\bar{\alpha}} & = \ol{\theta}{}^{\bar{\alpha}} - \i \Upsilon{}^{\bar{\alpha}} \theta \, , \label{eq:contact_chg}
	\end{align}
	where $\Upsilon = \d \varphi$. The induced change of Webster connection will be denoted by the short-hand notation
	\begin{align}\label{eq:Webstrnsf}
		\wh{\nabla} & = \nabla + \Upsilon \, ,
	\end{align}
	the precise meaning of which is already given in \cite[equations (2.6) and (2.7), Proposition 2.3]{Gover2005}. The Webster torsion and Webster--Schouten scalar transform as
	\begin{align}\label{eq:A_chg} 
		\wh{{\WbA}}_{\alpha \beta} & = {\WbA}_{\alpha \beta} + \i {\nabla}_{\alpha} {\Upsilon}_{\beta}  - \i {\Upsilon}_{\alpha} {\Upsilon}_{\beta} \, , &
		\wh{{\Rho}} & = {\Rho} - \tfrac{1}{2} \left( {\nabla}^{\alpha} {\Upsilon}_{\alpha} + {\nabla}_{\alpha} {\Upsilon}^{\alpha} \right) - \tfrac{m}{2} {\Upsilon}^{\gamma} {\Upsilon}_{\gamma} \, , 
	\end{align}
	respectively.
	
	\begin{rem}
		Gauged (partial) Webster connections undergo the same transformation as their ungauged counterparts: if $\accentset{\xi}{\nabla}_{\alpha}$ and  $\accentset{\xi}{\wh{\nabla}}_{\alpha}$ are the partial Webster connections gauged by some $\xi_{\alpha} \in \Gamma(\mc{E}_{\alpha})$, and $\nabla$ and $\wh{\nabla}$ are related by \eqref{eq:Webstrnsf}, then
		\begin{align*}
			\accentset{\xi}{\wh{\nabla}}_{\alpha} \bm{f} & = 	\accentset{\xi}{\nabla}_{\alpha} \bm{f} + w \Upsilon_{\alpha} \bm{f} \, ,
		\end{align*}
		for any section $\bm{f}$ of $\mc{E}_{\bullet}(w,w')$.
	\end{rem}

	\subsection{CR densities}
	We now naturally extend the notion of CR functions to densities.
	\begin{defn}
		We shall say that a density $\tau$ of weight $(w,0)$ for any $w \in \Z$ on a contact CR manifold $(\mc{M},H,J)$ is a \emph{CR density} if it satisfies the CR invariant equation
		\begin{align}\label{eq:hol_density}
			\nabla_{\bar{\alpha}} \tau = 0 \, .
		\end{align}
	\end{defn}
	
	\begin{rem}
		This definition should not be confused with that of a \emph{CR scale} introduced earlier, that is, a positive real-valued density of weight $(1,1)$.
	\end{rem}
	
	\begin{rem}
		When $w=0$, we then recover the definition of a CR function. For a strongly independent CR function, we need the additional requirement that $\tau \neq \ol{\tau}$ and $\nabla_{\alpha} \tau \neq 0$.
	\end{rem}
	
	The space of CR densities is clearly closed under multiplication, that is, if $\sigma$ and $\tau$ are two CR densities of weight $(w,0)$ and $(w',0)$ respectively, then $\sigma \tau$ is a CR density of weight $(w+w',0)$. Further, since a section $\varepsilon$ of the canonical bundle is a density of weight $-m-2$, choosing any $-\tfrac{w}{m+2}$-th power of $\varepsilon$ for any $w \in \Z$ yields a section of $\mc{E}(w,0)$. In particular, the structure equations \eqref{eq:structure_CR} together with \eqref{eq:conn1_density} gives a straightforward proof of the next result.
	\begin{lem}\label{lem:hol_sgm}
		Locally, a contact CR manifold admits a closed section of the canonical bundle if and only if it admits CR densities of weight $(w,0)$ with $0 \neq w \in \Z$.
	\end{lem}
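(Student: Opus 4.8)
The plan is to reduce both directions of the equivalence to a single observation: for a local section $\varepsilon$ of the canonical bundle $\mc{C} = \mc{E}(-(m+2),0)$, the condition $\d\varepsilon = 0$ is equivalent to the CR density equation $\nabla_{\bar{\alpha}}\varepsilon = 0$. Granting this, one implication of the lemma is immediate, since a closed section of $\mc{C}$ is then precisely a CR density of weight $(-(m+2),0)$, and $-(m+2)$ is a nonzero integer.

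To establish the equivalence I would fix an adapted coframe $(\theta,\theta^{\alpha},\ol{\theta}{}^{\bar{\alpha}})$ and write the section as $\varepsilon = f\,\theta\wedge\theta^{1}\wedge\cdots\wedge\theta^{m}$ for a complex-valued function $f$. Differentiating with the Leibniz rule and substituting the structure equations \eqref{eq:structure_CR}, I expect every contribution to collapse except those proportional to $\bar{\theta}^{\bar{\gamma}}\wedge\theta\wedge\theta^{1}\wedge\cdots\wedge\theta^{m}$: the $\d\theta$-term dies because $\theta^{\alpha}\wedge\theta^{1}\wedge\cdots\wedge\theta^{m}=0$ (a repeated admissible factor), and the pseudo-hermitian torsion terms appearing in $\d\theta^{\alpha}$ die because each carries an extra factor of $\theta$. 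What survives involves only the trace $\Gamma_{\alpha}{}^{\alpha}$ of the connection one-form, and comparing the resulting $\bar{\theta}^{\bar{\gamma}}$-coefficients with \eqref{eq:conn1_density} (taken at $w=-(m+2)$, so that the coframe form is exactly the trivialising density) identifies them with the components $\nabla_{\bar{\gamma}}\varepsilon$. Since the $(m+2)$-forms $\bar{\theta}^{\bar{\gamma}}\wedge\theta\wedge\theta^{1}\wedge\cdots\wedge\theta^{m}$ are linearly independent, $\d\varepsilon=0$ holds if and only if all these coefficients vanish, i.e.\ if and only if $\nabla_{\bar{\gamma}}\varepsilon=0$.

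For the converse, suppose $\tau\in\Gamma(\mc{E}(w,0))$ is a CR density with $0\neq w\in\Z$ and $\tau\not\equiv 0$. On a neighbourhood of a point where $\tau\neq 0$, the $-\tfrac{m+2}{w}$-th power $\varepsilon:=\tau^{-(m+2)/w}$ is a well-defined nonvanishing local section of $\mc{C}=\mc{E}(-(m+2),0)$. Because the Webster connection obeys the Leibniz rule on density bundles, $\nabla_{\bar{\gamma}}\varepsilon = -\tfrac{m+2}{w}\,\tau^{-(m+2)/w-1}\,\nabla_{\bar{\gamma}}\tau = 0$, so by the equivalence above $\varepsilon$ is the desired closed section of $\mc{C}$.

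The substantive point—and essentially the only place requiring care—is the coframe computation of $\d\varepsilon$: one must verify that no stray terms survive and that the single surviving trace term matches the covariant derivative supplied by \eqref{eq:conn1_density}. The algebra is routine, but the sign and normalisation bookkeeping is where a slip is most likely. A minor additional caveat in the converse is that one must localise to the open set where $\tau$ is non-zero in order to extract the non-integer power; this is harmless, as the assertion is purely local.
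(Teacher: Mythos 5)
Your proposal is correct and follows exactly the route the paper intends: the paper's (one-line) proof is precisely "the structure equations \eqref{eq:structure_CR} together with \eqref{eq:conn1_density}", i.e.\ the coframe computation showing $\d\varepsilon=0$ is equivalent to $\nabla_{\bar\alpha}\varepsilon=0$ for a section of $\mc{C}=\mc{E}(-(m+2),0)$, followed by passing between weights via powers. Your added caveats (localising to where the density is non-vanishing before extracting the fractional power, and checking that only the trace $\Gamma_{\alpha}{}^{\alpha}$ survives the wedge computation) are exactly the right points of care and are consistent with the paper.
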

	
	Now, since any realisable $(\mc{M},H,J)$ admits nowhere vanishing closed sections of $\mc{C}$, we have:
	\begin{lem}\label{lem:dbar_densities}
		Any realisable contact CR manifold $(\mc{M},H,J)$ admits nowhere vanishing CR densities of weight $(w,0)$ for any $w \in \Z$.
	\end{lem}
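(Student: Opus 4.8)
The plan is to deduce the statement by threading together two facts already established: that realisability furnishes a closed section of the canonical bundle, and that Lemma \ref{lem:hol_sgm} converts such a section into CR densities of every admissible weight. No fresh computation should be needed beyond checking that the relevant sections are well defined and that the boundary case $w=0$ is handled.

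First I would extract from realisability a nowhere-vanishing closed section of $\mc{C}$. This is the implication $(3) \Rightarrow (6)$ in the summary of criteria, and it is also implied by Theorem \ref{thm:Jacobowitz}. Concretely, if $(\mc{M},H,J)$ is locally embedded as a real hypersurface in $\C^{m+1}$, then since the forms $\d z_1, \ldots, \d z_{m+1}$ pull back to sections of $\Ann(H^{(0,1)})$, the pullback $\varepsilon$ of a holomorphic $(m+1,0)$-form is a section of $\mc{C} = \mc{E}(-(m+2),0)$, and it is closed because pullback commutes with $\d$ and a holomorphic $(m+1,0)$-form is $\d$-closed. Shrinking the neighbourhood if necessary, I may take $\varepsilon$ to be nowhere vanishing.

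Next I would feed $\varepsilon$ into Lemma \ref{lem:hol_sgm}. For $0 \neq w \in \Z$, working locally and fixing a branch, set $\tau_w := \varepsilon^{-w/(m+2)} \in \Gamma(\mc{E}(w,0))$, which is legitimate since $\varepsilon^{-w/(m+2)}$ has weight $(-\tfrac{w}{m+2})\cdot(-(m+2),0) = (w,0)$. The content of Lemma \ref{lem:hol_sgm}, obtained by comparing $\d(\theta \wedge \theta^1 \wedge \cdots \wedge \theta^m)$ with the trace $\Gamma_{\alpha}{}^{\alpha}$ of the Webster connection one-form via the structure equations \eqref{eq:structure_CR} and formula \eqref{eq:conn1_density}, is that closedness of $\varepsilon$ is equivalent to $\nabla_{\bar{\alpha}} \varepsilon = 0$. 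Because $\nabla$ obeys the Leibniz rule on density bundles and $\tau_w$ is a fixed power of $\varepsilon$, this gives $\nabla_{\bar{\alpha}} \tau_w = 0$, so each $\tau_w$ satisfies \eqref{eq:hol_density} and is a CR density of weight $(w,0)$. The remaining case $w=0$ is immediate: the constant function $1 \in \Gamma(\mc{E}(0,0))$ satisfies $\nabla_{\bar{\alpha}} 1 = 0$ and is thus a CR density (indeed a CR function). Combining the two cases yields CR densities of weight $(w,0)$ for every $w \in \Z$.

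I do not expect a serious obstacle, since the statement is essentially an assembly of Theorem \ref{thm:Jacobowitz} (via the implication chain) and Lemma \ref{lem:hol_sgm}. The only points demanding care are local: ensuring the closed canonical section is nowhere vanishing on the (possibly shrunk) neighbourhood so that the fractional powers $\varepsilon^{-w/(m+2)}$ are single-valued sections of $\mc{E}(w,0)$, and recording that the weight $(w,0)$ is admissible for all integers $w$ since $w - 0 = w \in \Z$. Were Lemma \ref{lem:hol_sgm} not at our disposal, the genuine work would lie in its proof, namely the structure-equation computation identifying closedness of the coframe volume form with the vanishing of the anti-holomorphic part of the trace connection one-form $\Gamma_{\alpha}{}^{\alpha}$.
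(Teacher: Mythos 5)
Your proposal is correct and follows essentially the same route as the paper: realisability yields a closed (nowhere-vanishing) section of the canonical bundle, and Lemma \ref{lem:hol_sgm} converts it into CR densities of weight $(w,0)$ for all nonzero $w$, with $w=0$ handled trivially by constants. The paper states this in a single line immediately after recalling that realisable CR manifolds admit closed sections of $\mc{C}$, so your write-up simply makes the same argument explicit.
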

	
	\begin{rem}
		Since CR densities and CR functions are on the same footing, Jacobowitz's Theorem \ref{thm:Jacobowitz} feels very natural in that respect: in this light, a (nowhere vanishing) closed section of the canonical bundle is just a (nowhere vanishing) CR density.
	\end{rem}
	
	\begin{rem}
		Lee showed \cite{Lee1988} that locally, the existence of a nowhere vanishing closed section of $\mc{C}$ is essentially equivalent to the existence of a so-called \emph{pseudo--Einstein structure}, that is, a pseudo-Hermitian structure whose \emph{Webster--Schouten tensor} is proportional to the Levi form (but not necessarily by a constant factor).
	\end{rem}

	\subsection{Complex infinitesimal transverse symmetries}
	It is straightforward to check that for any density $\sigma$ of weight $(1,w)$, $w \in \Z$, the equation
	\begin{align}
		\nabla_{\alpha} \nabla_{\beta} \sigma + \i {\WbA}_{\alpha \beta} \sigma & = 0 \, , \label{eq:exactWW}
	\end{align}
	is CR invariant. Note that the first term of this expression is automatically symmetric in its indices since by \cite[Proposition 2.2]{Gover2005}, $\nabla_{[\alpha} \nabla_{\beta]} \sigma = 0$ for any density $\sigma$.
	
	\begin{rem}\label{rem:BGG}
		In the language of parabolic Cartan geometries, a solution $\sigma$ to \eqref{eq:exactWW} lies in the kernel of a so-called first \emph{Bernstein--Gel'fand-Gel'fand (BGG) operator} \cite{Cap2001}, which occurs as part of a sequence of vector bundles associated to irreducible representations of a parabolic subgroup of $\SU(p+1,q+1)$. In the homogeneous (flat) case and $w \geq 0$, this sequence is in fact a resolution of a vector bundle associated to an irreducible $\SU(p+1,q+1)$ representation, which is referred to, more generally, as a \emph{tractor bundle}. In the holomorphic setting and $m=1$, it appears in \cite[Section 8.2]{Baston1989}. This machinery was first conceived in \cite{Bernstein1975} in the context of simple complex Lie algebras.
	\end{rem}
	
	In the case where $w=1$, the geometric interpretation of its solutions is given in the next proposition --- see also Remark \ref{rem:Curry-Ebenfelt}.
	\begin{prop}\label{prop:complexTSym}
		Any nowhere vanishing solution $\sigma \in \Gamma(\mc{E}(1,1))$ to \eqref{eq:exactWW} is equivalent to the existence of a complex infinitesimal transverse symmetry of $H^{(1,0)}$.
	\end{prop}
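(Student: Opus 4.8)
The plan is to set up an explicit, manifestly CR-invariant correspondence between solutions $\sigma$ and complex transverse symmetries $v$, and then to verify the equivalence by a single frame computation. Fix a CR scale $s$ with contact form $\theta = s^{-1}\bm{\theta}$, Reeb field $\ell$, and an adapted coframe $(\theta, \theta^\alpha, \ol{\theta}{}^{\bar\alpha})$ with dual frame $(\ell, e_\alpha, \ol{e}_{\bar\alpha})$. Given a complex vector field $v = v^0 \ell + v^\alpha e_\alpha + v^{\bar\alpha} \ol{e}_{\bar\alpha}$, I would associate to it the density $\sigma := \bm{\theta}(v)$ of weight $(1,1)$. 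Since $\bm{\theta}$ is the canonical weighted contact form, this assignment is independent of the scale $s$, so the correspondence is automatically CR-invariant, and in the chosen scale it reads $v^0 = s^{-1}\sigma$. Transversality $v \hook \theta = v^0 \neq 0$ is then equivalent to $\sigma$ being non-vanishing, so off its zero set a solution yields a transverse symmetry.

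The heart of the argument is to decompose the symmetry condition $\mathsterling_v w \in \Gamma(H^{(1,0)})$ --- which by $C^\infty$-linearity it suffices to check on frame sections $w = e_\delta$ --- into its $\theta$- and $\ol{\theta}{}^{\bar\gamma}$-components, the $\theta^\gamma$-component landing in $H^{(1,0)}$ automatically. Using $\theta(w) = 0$ and the structure equation $\d\theta = \i h_{\alpha\bar\beta}\theta^\alpha \wedge \ol{\theta}{}^{\bar\beta}$, the vanishing of $\theta([v,w])$ for all $w$ gives
\begin{align*}
\nabla_\alpha v^0 = \i v_\alpha \, , \qquad v_\alpha := h_{\alpha\bar\beta} v^{\bar\beta} \, ,
\end{align*}
so that the $H^{(0,1)}$-part of $v$ is determined by $v^0$. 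Expanding $\ol{\theta}{}^{\bar\gamma}([v,w])$ via the structure equation $\d\ol{\theta}{}^{\bar\gamma} = \ol{\theta}{}^{\bar\beta}\wedge\Gamma_{\bar\beta}{}^{\bar\gamma} + \WbA^{\bar\gamma}{}_\beta\theta\wedge\theta^\beta$ and rewriting frame derivatives as Webster-covariant derivatives, the connection terms $\Gamma_{\delta\bar\beta}{}^{\bar\gamma}$ cancel and I expect its vanishing to reduce to
\begin{align*}
\nabla_\beta v_\alpha + \WbA_{\alpha\beta} v^0 = 0 \, .
\end{align*}
The antisymmetric part is vacuous, since the Webster torsion has no $\theta^\alpha\wedge\theta^\beta$ component, whence $\nabla_{[\alpha}\nabla_{\beta]}v^0 = 0$. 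The main technical obstacle is precisely this cancellation of connection terms while keeping the torsion contributions to the Lie bracket straight; it is the one step that genuinely uses the form of the Cartan structure equations \eqref{eq:structure_CR}.

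It then remains to translate these two conditions into \eqref{eq:exactWW}. Since the Webster connection preserves the scale, $\nabla s = 0$, differentiating $\sigma = s v^0$ gives $\nabla_\alpha\sigma = \i s v_\alpha$ on account of the first condition, and a second derivative together with the second condition yields
\begin{align*}
\nabla_\beta\nabla_\alpha\sigma + \i\WbA_{\alpha\beta}\sigma = \i s\left(\nabla_\beta v_\alpha + \WbA_{\alpha\beta} v^0\right) = 0 \, ,
\end{align*}
which is \eqref{eq:exactWW}, the index order being immaterial as $\WbA_{\alpha\beta}$ is symmetric. Every step is an equivalence, so this proves both directions at once: from a symmetry $v$ one reads off the solution $\sigma = \bm{\theta}(v)$, and conversely, given a solution $\sigma$ one sets $v^0 = s^{-1}\sigma$, defines the $H^{(0,1)}$-part by $v_\alpha = -\i\nabla_\alpha v^0$, and takes the $H^{(1,0)}$-part to be arbitrary, say zero. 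This last ambiguity reflects the involutivity of $H^{(1,0)}$: adding any section of $H^{(1,0)}$ to $v$ preserves the symmetry property, so $v$ is canonically determined only modulo $\Gamma(H^{(1,0)})$.
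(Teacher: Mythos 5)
Your proof is correct and follows essentially the same route as the paper's: express $v$ in an adapted frame, show that the symmetry condition is equivalent to the $(0,1)$-part of $v$ being determined by $\nabla_\alpha v^0$ together with the torsion equation, identify $\sigma$ with $v^0 s$ (your $\bm{\theta}(v)$ is the same object, phrased invariantly), and observe that the $(1,0)$-part is arbitrary. The only differences are cosmetic — you spell out the frame computation via the structure equations and flag the CR-invariance of the assignment, which the paper leaves implicit.
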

	
	\begin{proof}
		In a local adapted frame $(\ell, e_{\alpha}, \ol{e}_{\bar{\alpha}})$, a vector field $v$ on $(\mc{M},H,J)$ can be expressed as
		\begin{align*}
			v & = v^{0} \ell + v^{\alpha} e_{\alpha} + v^{\bar{\alpha}} \bar{e}_{\bar{\alpha}} \, ,
		\end{align*}
		for complex-valued $v^{0} \in \Gamma(\mc{E})$, $v^{\alpha} \in \Gamma(\mc{E}^{\alpha})$ and $v^{\bar{\alpha}} \in \Gamma(\mc{E}^{\bar{\alpha}})$. Note that we are \emph{not} assuming that $v^{\bar{\alpha}}$ is the complex conjugate of $v^{\alpha}$ in general. A straightforward computation shows that $v$ preserves $H^{(1,0)}$, i.e.\ $\mathsterling_{v} e_{\alpha} \in \Gamma(H^{(1,0)})$, if and only if
		\begin{subequations}
			\begin{align}
				v_{\alpha} - \i \nabla_{\alpha} v^{0} & = 0 \, , \label{eq:infsym1} \\
				\nabla_{\alpha} v_{\beta} + \i \WbA_{\alpha \beta} & = 0 \, . \label{eq:infsym2}
			\end{align}
		\end{subequations}
		The condition that $v$ be transverse to $H$ means that $v^{0}$ is nowhere vanishing. Setting $\sigma := v^{0} s$ where $s$ is the CR scale preserved by $\nabla$, we see that $\sigma$ is nowhere vanishing and satisfies \eqref{eq:exactWW}.
		
		The converse is clearly true: let $\sigma \in \Gamma(\mc{E}(1,1))$ be a nowhere vanishing solution of \eqref{eq:exactWW}, choose a CR scale $s$ with Webster connection $\nabla$, and set $v^0 := \sigma s^{-1}$. Then $v^0$ is nowhere vanishing, and $v_{\alpha} := \i \nabla_{\alpha} v^{0} $ satisfies \eqref{eq:infsym2} by virtue of \eqref{eq:infsym1}. Defining $v := v^0 \ell + v^{\alpha} e_{\alpha} + v^{\bar{\alpha}} \ol{e}_{\bar{\alpha}}$ for \emph{any} choice of section $v^{\alpha}$ of $\mc{E}^{\alpha}$, we can now conclude that $v$ is a complex infinitesimal transverse symmetry of $H^{(1,0)}$.
	\end{proof}	
	
	\begin{rem}
		In the special case where a nowhere vanishing density $\sigma$ of weight $(1,1)$ is \emph{real-valued}, i.e.\ $\sigma$ is a CR scale, and solves \eqref{eq:exactWW}, then the Reeb vector field defined by $\sigma$ is an infinitesimal transverse symmetry of $(\mc{M},H,J)$. In this case the assumption that $(H,J)$ is contact, together with the fact that $\sigma$ is nowhere vanishing, forces the symmetry of $(H,J)$ to be transverse. We shall nevertheless retain the terminology. Such solutions have already been investigated in \cite{Cap2008a,Cap2008} in the context of BGG sequences.
		
		On the other hand, the correspondence between nowhere vanishing complex-valued densities of weight $(1,1)$ satisfying \eqref{eq:exactWW} and complex transverse symmetries is not one-to-one in general, since the $(1,0)$-part of the latter is completely arbitrary. This is to be expected since $H^{(1,0)}$ is involutive. Thus, if one solution exists, so do infinitely many.
	\end{rem}
	
	\begin{rem}
		If we drop the assumption that our real-valued density $\sigma$ is nowhere vanishing, then the associated symmetry will be transverse only off the zero set of $\sigma$ as explicit examples in \cite{Curry2021} show. This is essentially a consequence of the theory of curved orbit decompositions enunciated in \cite{Cap2014}. This also applies to complex-valued densities.
	\end{rem}
	
	In the context of contact CR manifolds and in the light of Proposition \ref{prop:complexTSym}, we can re-express Jacobowitz's Proposition \ref{prop:Jacobowitz} in the following terms: 
	\begin{prop}\label{prop:New_Jacobowitz}
		Locally, a contact CR manifold $(\mc{M},H,J)$ is realisable if and only if it admits a nowhere vanishing solution $\sigma \in \Gamma(\mc{E}(1,1))$ to  \eqref{eq:exactWW}. 
	\end{prop}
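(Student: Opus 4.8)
The plan is to prove this biconditional by simply composing two equivalences that are already available in the excerpt, so the argument is essentially a one-line chaining together of results. First I would invoke Proposition \ref{prop:complexTSym}, which provides an exact dictionary: a density $\sigma \in \Gamma(\mc{E}(1,1))$ solving the CR invariant equation \eqref{eq:exactWW}, namely $\nabla_{\alpha} \nabla_{\beta} \sigma + \i \WbA_{\alpha \beta} \sigma = 0$, is equivalent to the existence of a complex infinitesimal transverse symmetry of $H^{(1,0)}$. The weight $(1,1)$ is precisely the case $w=1$ for which that proposition is stated, and since $(\mc{M},H,J)$ is assumed contact, its hypotheses are met.

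Second, I would appeal to Jacobowitz's Proposition \ref{prop:Jacobowitz}, which asserts that a CR manifold is locally realisable if and only if it admits a complex transverse infinitesimal symmetry of $H^{(1,0)}$. Composing the two equivalences then reads: $(\mc{M},H,J)$ is realisable if and only if it admits a complex transverse infinitesimal symmetry of $H^{(1,0)}$, if and only if it admits a solution $\sigma \in \Gamma(\mc{E}(1,1))$ to \eqref{eq:exactWW}. This is exactly the assertion of the proposition.

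There is no genuine obstacle here; the only points requiring a moment's care are bookkeeping ones. One must check that the word \emph{transverse} means the same thing on both sides of the chain. In the proof of Proposition \ref{prop:complexTSym}, transversality of the associated vector field $v = v^0 \ell + v^{\alpha} e_{\alpha} + v^{\bar{\alpha}} \ol{e}_{\bar{\alpha}}$ amounts to $v^0 = \sigma s^{-1}$ being nowhere vanishing, which in turn forces $\sigma$ to be non-vanishing; this matches the transversality condition $v \hook \theta \neq 0$ appearing in Jacobowitz's formulation. I would also remark, as is already noted after Proposition \ref{prop:complexTSym}, that the correspondence between solutions $\sigma$ and transverse symmetries is \emph{not} one-to-one, since the $(1,0)$-part of $v$ is arbitrary. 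However, since the statement is a biconditional about mere \emph{existence}, this non-uniqueness is irrelevant: a solution $\sigma$ exists precisely when a transverse symmetry exists. With these identifications made explicit, the proof is complete.
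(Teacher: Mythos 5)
Your proof is correct and follows exactly the paper's route: Proposition \ref{prop:New_Jacobowitz} is introduced there precisely as a re-expression of Jacobowitz's Proposition \ref{prop:Jacobowitz} via the dictionary of Proposition \ref{prop:complexTSym}, which is the composition you carry out. The bookkeeping remarks on transversality and non-uniqueness are sensible but not needed beyond what the paper already records.
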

	
	\begin{rem}\label{rem:Curry-Ebenfelt}
		In dimension three, complex-valued solutions to \eqref{eq:exactWW} of weight $(1,1)$ were investigated in \cite{Curry2019a} in the context of \emph{obstruction-flat} compact strictly pseudo-convex CR three-manifolds. There, the authors proved results analogous to Proposition \ref{prop:complexTSym} and \ref{prop:New_Jacobowitz}. In addition, they showed that such solutions correspond to ambient holomorphic vector fields on the pseudo-convex side, smooth up to the CR manifold.
	\end{rem}

	\subsection{Webster--Weyl structures}
	\subsubsection{General odd dimensions}
	In this section, we introduce a CR invariant structure that may be thought of as a non-linear analogue to \eqref{eq:exactWW}, and will play a key r\^{o}le in the subsequent sections. We start with a simple lemma, whose proof follows from the transformation rule \eqref{eq:A_chg} of the Webster torsion tensor, and the fact \cite[equation (2.7)]{Gover2005} that $\wh{\nabla}_{\alpha} \lambda_{\beta} = \nabla_{\alpha} \lambda_{\beta} - 2 \Upsilon_{(\alpha} \lambda_{\beta)}$ for any $\lambda_{\alpha} \in \Gamma(\mc{E}_{\alpha})$.
	\begin{lem}
		Let ${\nabla}$ and $\wh{\nabla}$ be two Webster connections on a contact CR manifold $(\mc{M},H,J)$ related by \eqref{eq:Webstrnsf}. Then a section $\lambda_{\alpha}$ of $\mc{E}_{\alpha}$ is a solution to
		\begin{align}\label{eq:Webster--Weyl}
			{\nabla}_{(\alpha} {\lambda}_{\beta)} -  \i  {\lambda}_{\alpha} {\lambda}_{\beta}  - {\WbA}_{\alpha \beta} & = 0 \, ,
		\end{align}
		if and only if 
		\begin{align}\label{eq:lbd_trnsf}
			\wh{{\lambda}}_{\alpha} = {\lambda}_{\alpha} + \i {\Upsilon}_{\alpha} \, ,
		\end{align}
		is a solution to
		\begin{align*}
			\wh{\nabla}_{(\alpha} \wh{\lambda}_{\beta)} -  \i  \wh{\lambda}_{\alpha} \wh{\lambda}_{\beta}  - \wh{\WbA}_{\alpha \beta} & = 0 \, .
		\end{align*}
	\end{lem}

	Considering the close resemblance of \eqref{eq:Webster--Weyl} to the Einstein--Weyl equation, we make the following definition.
	\begin{defn}
		A \emph{Webster--Weyl} structure on $(\mc{M},H,J)$ is an equivalent class $[\nabla_{\alpha},\lambda_{\alpha}]$ of pairs $(\nabla_{\alpha},\lambda_{\alpha})$, where $\nabla_{\alpha}$ is a (partial) Webster connection and $\lambda_{\alpha} \in \Gamma(\mc{E}_{\alpha})$, with the property that these satisfy the \emph{Webster--Weyl equation} \eqref{eq:Webster--Weyl}, and $(\wh{\nabla}_{\alpha}, \wh{\lambda}_{\alpha}) \sim (\nabla_{\alpha},\lambda_{\alpha})$ whenever $\nabla_{\alpha}$ and $\wh{\nabla}_{\alpha}$ related by \eqref{eq:Webstrnsf}, and $\lambda_{\alpha}$ and $\wh{\lambda}_{\alpha}$ by \eqref{eq:lbd_trnsf}.
	\end{defn}
	
	\begin{rem}
		It is not difficult to check that equation \eqref{eq:exactWW} for a nowhere vanishing \emph{real} density $\sigma \in \Gamma(\mc{E}(1,1))$ is a particular case of \eqref{eq:Webster--Weyl} where, for every choice of representative $[\nabla_{\alpha},\lambda_{\alpha}]$, we have that $\lambda_{\alpha} = \i \nabla_{\alpha} \psi$ for some real-valued smooth function $\psi$ on $\mc{M}$. In particular, there is a representative for which $\lambda_{\alpha} = 0$, which tells us that the Reeb vector field corresponding to $\sigma$ is an infinitesimal symmetry of $(\mc{M},H,J)$.
	\end{rem}
	
	There is an analogue of equation \eqref{eq:exactWW}, closely related to \eqref{eq:Webster--Weyl} for gauged partial connections: Let $\accentset{\xi}{\nabla}_{\alpha}$ be a partial Webster connection gauged by some $\xi_{\alpha} \in \Gamma(\mc{E}_{\alpha})$. Then
	\begin{align}\label{eq:gexactWW}
		\accentset{\xi}{\nabla}_{(\alpha} \accentset{\xi}{\nabla}_{\beta)} \sigma + \i {\WbA}_{\alpha \beta} \sigma & = 0 \, ,
	\end{align}
	is a well-defined CR invariant on densities of weight $(1,w)$. In fact, CR invariance simply follows from the following easy observation:
	\begin{lem}\label{lem:ex-nexWW}
		Suppose that for any $w \in \Z$,  $\sigma \in \Gamma(\mc{E}(1,w))$ is a nowhere vanishing solution to \eqref{eq:gexactWW}. Then
		\begin{align}\label{eq:density_gauge}
			\lambda_{\alpha} = \i \sigma^{-1} \accentset{\xi}{\nabla}_{\alpha} \sigma = \i \sigma^{-1} \nabla_{\alpha} \sigma + \xi_{\alpha} \, ,
		\end{align}
		satisfies the Webster--Weyl equation \eqref{eq:Webster--Weyl}.
		
		Conversely, suppose that $\lambda_{\alpha}$ solves \eqref{eq:Webster--Weyl}. Fix some nowhere vanishing $\sigma \in \mc{E}(1,w)$, $w \in \Z$, and define a partial gauge $\xi_{\alpha} \in \Gamma(\mc{E}_{\alpha})$ by \eqref{eq:density_gauge}. Then $\sigma$ satisfies \eqref{eq:gexactWW}.
	\end{lem}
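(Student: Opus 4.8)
The plan is to reduce both implications to a single second-order computation, exploiting that $\accentset{\xi}{\nabla}_{\alpha}$ is an honest linear connection and hence obeys the Leibniz rule. The starting observation is that the defining relation \eqref{eq:density_gauge} is equivalent to
\begin{align*}
	\accentset{\xi}{\nabla}_{\alpha} \sigma & = - \i \lambda_{\alpha} \sigma \, ,
\end{align*}
so that $\sigma$ is parallel for the gauged partial connection up to the factor $-\i \lambda_{\alpha}$. The crucial bookkeeping point is that $\lambda_{\alpha} = \i \sigma^{-1} \accentset{\xi}{\nabla}_{\alpha} \sigma$ is a section of $\mc{E}_{\alpha}$ of weight $(0,0)$, since the density weights of $\sigma^{-1}$ and of $\accentset{\xi}{\nabla}_{\alpha} \sigma$ cancel. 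Consequently the gauge coupling $-\i(w-w')$ in \eqref{eq:gpconn} vanishes on $\lambda_{\alpha}$, so that $\accentset{\xi}{\nabla}_{\beta} \lambda_{\alpha} = \nabla_{\beta} \lambda_{\alpha}$.

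For the forward direction I would apply $\accentset{\xi}{\nabla}_{\beta}$ to the displayed identity and use Leibniz together with this last remark:
\begin{align*}
	\accentset{\xi}{\nabla}_{\beta} \accentset{\xi}{\nabla}_{\alpha} \sigma & = - \i ( \nabla_{\beta} \lambda_{\alpha} ) \sigma - \i \lambda_{\alpha} \accentset{\xi}{\nabla}_{\beta} \sigma = - \i ( \nabla_{\beta} \lambda_{\alpha} ) \sigma - \lambda_{\alpha} \lambda_{\beta} \sigma \, .
\end{align*}
Symmetrising over $\alpha$ and $\beta$, where $\lambda_{(\alpha} \lambda_{\beta)} = \lambda_{\alpha} \lambda_{\beta}$, and substituting into \eqref{eq:gexactWW} yields
\begin{align*}
	\left( - \i \nabla_{(\alpha} \lambda_{\beta)} - \lambda_{\alpha} \lambda_{\beta} + \i \WbA_{\alpha \beta} \right) \sigma & = 0 \, .
\end{align*}
Since $\sigma$ is nowhere vanishing, dividing through by $-\i \sigma$ returns precisely the Webster--Weyl equation \eqref{eq:Webster-Weyl}.

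The converse runs the same computation backwards. Given a solution $\lambda_{\alpha}$ of \eqref{eq:Webster-Weyl} and a fixed nowhere-vanishing $\sigma \in \Gamma(\mc{E}(1,w))$, I would set $\xi_{\alpha} := \lambda_{\alpha} - \i \sigma^{-1} \nabla_{\alpha} \sigma \in \Gamma(\mc{E}_{\alpha})$, which is exactly \eqref{eq:density_gauge} solved for $\xi_{\alpha}$ and again forces $\accentset{\xi}{\nabla}_{\alpha} \sigma = - \i \lambda_{\alpha} \sigma$. The same Leibniz expansion then gives $\accentset{\xi}{\nabla}_{(\alpha} \accentset{\xi}{\nabla}_{\beta)} \sigma + \i \WbA_{\alpha \beta} \sigma = - \i \sigma \left( \nabla_{(\alpha} \lambda_{\beta)} - \i \lambda_{\alpha} \lambda_{\beta} - \WbA_{\alpha \beta} \right)$, which vanishes by hypothesis, establishing \eqref{eq:gexactWW}.

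I do not anticipate a serious obstacle: the argument is essentially a two-line application of the Leibniz rule. The only point demanding care --- and the only place the structure of the gauged connection genuinely enters --- is the weight bookkeeping, namely checking that $\lambda_{\alpha}$ is of weight $(0,0)$ so that $\accentset{\xi}{\nabla}$ collapses to $\nabla$ on it. This is exactly what makes the nonlinear term $\i \lambda_{\alpha} \lambda_{\beta}$ in \eqref{eq:Webster-Weyl} arise from the quadratic contribution $-\i \lambda_{\alpha} \accentset{\xi}{\nabla}_{\beta} \sigma$ rather than from torsion. The asserted CR invariance of \eqref{eq:gexactWW} is then inherited from that of the Webster--Weyl equation, already established in the preceding lemma.
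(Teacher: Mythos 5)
Your proof is correct and is precisely the computation the paper leaves implicit by calling the lemma an ``easy observation'': the identity $\accentset{\xi}{\nabla}_{\alpha}\sigma = -\i\lambda_{\alpha}\sigma$, the Leibniz rule for the gauged connection, and the fact that $\lambda_{\alpha}=\i\sigma^{-1}\accentset{\xi}{\nabla}_{\alpha}\sigma$ has weight $(0,0)$ so that $\accentset{\xi}{\nabla}$ reduces to $\nabla$ on it, which is exactly where the quadratic term $-\i\lambda_{\alpha}\lambda_{\beta}$ of \eqref{eq:Webster-Weyl} comes from. Both directions check out, so there is nothing to add.
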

	Thus, we can interpret equation \eqref{eq:gexactWW} in three different ways:
	\begin{enumerate}
		\item as an equation on the density $\sigma$ for $\xi_{\alpha}$ fixed;
		\item as an equation on the partial gauge $\xi_{\alpha}$ for $\sigma$ fixed;
		\item as an equation on the pair $(\sigma, \xi_{\alpha})$ subject to the freedom \eqref{eq:freedom} since by Lemma \ref{lem:degauge}, it leaves $\lambda_{\alpha}$ as defined by \eqref{eq:density_gauge} unchanged.
	\end{enumerate}

	A consequence of Lemma \ref{lem:degauge} is the following:
	\begin{lem}
		For any nowhere vanishing complex-valued function $z$, any solution $\sigma \in \Gamma(\mc{E}(1,0))$ to \eqref{eq:exactWW} yields a solution $z \sigma$ to \eqref{eq:gexactWW}, where the partial gauge is given by $\xi_{\alpha} =\i \nabla_{\alpha} \log z$.
		
		Suppose that $\xi_{\alpha} = \i \nabla_{\alpha} \log z$ for some nowhere vanishing complex-valued function $z$. Then any solution to \eqref{eq:gexactWW} gives rise to a solution to \eqref{eq:exactWW}.
	\end{lem}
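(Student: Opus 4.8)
The plan is to recognise both equations \eqref{eq:exactWW} and \eqref{eq:gexactWW} as two linearisations of the single Webster--Weyl equation \eqref{eq:Webster-Weyl}, and to transport solutions between them by means of the gauge-invariance recorded in Lemma \ref{lem:degauge}. Once the correspondence $\sigma \mapsto \lambda_{\alpha} = \i \sigma^{-1} \accentset{\xi}{\nabla}_{\alpha} \sigma$ of Lemma \ref{lem:ex-nexWW} is in hand, the statement reduces to pure bookkeeping with no genuinely hard analytic step; the whole content is essentially the remark following Lemma \ref{lem:degauge}.

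For the first assertion I would start from a solution $\sigma \in \Gamma(\mc{E}(1,0))$ of \eqref{eq:exactWW}. Setting $\xi_{\alpha} = 0$ reduces \eqref{eq:gexactWW} to \eqref{eq:exactWW}, so by the ungauged case of Lemma \ref{lem:ex-nexWW} this is equivalent to $\lambda_{\alpha} := \i \sigma^{-1} \nabla_{\alpha} \sigma$ being a solution of the Webster--Weyl equation \eqref{eq:Webster-Weyl}. Now put $\sigma' := z \sigma$ and $\xi_{\alpha} := \i \nabla_{\alpha} \log z$. The pairs $(\sigma,0)$ and $(\sigma', \xi_{\alpha})$ are then related exactly as in \eqref{eq:freedom}, so Lemma \ref{lem:degauge} yields ${\sigma'}^{-1} \accentset{\xi}{\nabla}_{\alpha} \sigma' = \sigma^{-1} \nabla_{\alpha} \sigma$, whence $\i {\sigma'}^{-1} \accentset{\xi}{\nabla}_{\alpha} \sigma' = \lambda_{\alpha}$ still solves \eqref{eq:Webster-Weyl}. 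The converse direction of Lemma \ref{lem:ex-nexWW} then shows that $\sigma'$ satisfies the gauged equation \eqref{eq:gexactWW} with partial gauge $\xi_{\alpha}$, as required.

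For the second assertion I would simply run this argument backwards. Given an exact gauge $\xi_{\alpha} = \i \nabla_{\alpha} \log z$ and a solution $\sigma'$ of \eqref{eq:gexactWW}, Lemma \ref{lem:ex-nexWW} makes $\lambda_{\alpha} := \i {\sigma'}^{-1} \accentset{\xi}{\nabla}_{\alpha} \sigma'$ a solution of \eqref{eq:Webster-Weyl}. I would then set $\sigma := z^{-1} \sigma'$, so that the pairs $(\sigma,0)$ and $(\sigma',\xi_{\alpha})$ are once more an instance of the freedom \eqref{eq:freedom}; Lemma \ref{lem:degauge} gives $\sigma^{-1}\nabla_{\alpha}\sigma = {\sigma'}^{-1}\accentset{\xi}{\nabla}_{\alpha}\sigma'$, so that $\i \sigma^{-1}\nabla_{\alpha}\sigma = \lambda_{\alpha}$ solves \eqref{eq:Webster-Weyl}, and the ungauged case of Lemma \ref{lem:ex-nexWW} returns a solution $\sigma$ of \eqref{eq:exactWW}. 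Throughout, one records that multiplication by the weight-$(0,0)$ function $z^{\pm 1}$ preserves the density weight, so $\sigma$ and $\sigma'$ lie in the same bundle $\mc{E}(1,w)$.

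The only point that demands care — and the closest thing to an obstacle — is keeping the factor-of-$\i$ and sign conventions straight: one must check that the exact gauge $\xi_{\alpha} = \i \nabla_{\alpha}\log z$ together with the rescaling $\sigma' = z\sigma$ is precisely the transformation \eqref{eq:freedom} issuing from the trivial gauge, and dually that $\sigma = z^{-1}\sigma'$ undoes it. Once this identification is secured, both implications drop out of Lemmas \ref{lem:degauge} and \ref{lem:ex-nexWW} with no further calculation.
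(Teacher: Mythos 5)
Your argument is correct and is precisely the route the paper intends: the lemma is stated there as ``a consequence of Lemma \ref{lem:degauge}'' with no further proof, and your write-up simply makes explicit the transport of solutions through the Webster--Weyl equation via Lemma \ref{lem:ex-nexWW} combined with the gauge freedom \eqref{eq:freedom}. Your closing caution about the factor of $\i$ is well placed --- taking \eqref{eq:density_gauge} literally, the exact gauge attached to the rescaling $\sigma \mapsto z\sigma$ comes out as $-\i\nabla_{\alpha}\log z$ rather than $+\i\nabla_{\alpha}\log z$, a sign discrepancy internal to the paper's conventions that is harmless here since replacing $z$ by $z^{-1}$ absorbs it.
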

	
	Once a solution to \eqref{eq:exactWW} is known for some gauge, it is easy to produce more by using CR densities of weight $(1,0)$ as `raising' or `lowering' operators:
	\begin{lem}\label{lem:lowering}
		Suppose that $(\mc{M},H,J)$ admits a nowhere vanishing CR density $\tau$ of weight $(w,0)$ for some $w \in \Z$. Then any solution $s$ of weight $(1,w')$ to \eqref{eq:exactWW} for some $w' \in \Z$ gives rise to a solution $\sigma=s \ol{\tau}^{-1}$ of weight $(1,w'-w)$ to \eqref{eq:exactWW}.
	\end{lem}
	
	It thus follows from Lemmata \ref{lem:lowering} and \ref{lem:dbar_densities} that for realisable $(\mc{M},H,J)$, densities of weight $(1,0)$ that satisfy \eqref{eq:exactWW} exist in plentiful supply. We note that the geometric interpretation of a solution to \eqref{eq:exactWW} crucially depends on the weight of the density since in general only weight $(1,1)$ implies realisability. However, Jacobowitz's Theorem \ref{thm:Jacobowitz} already hints at a way to remedy this issue.
	\begin{thm}\label{thm:real2dens}
		Locally, a contact CR manifold $(\mc{M},H,J)$ is realisable if and only if it admits two nowhere vanishing densities $\sigma$ and $\tau$ of weight $(1,0)$ that satisfy
		\begin{subequations}
			\begin{align}
				\nabla_{\alpha} \nabla_{\beta} \sigma + \i \WbA_{\alpha \beta} \sigma & = 0 \, , \label{eq:linWW1} \\
				\nabla_{\bar{\alpha}} \tau & = 0 \, , \label{eq:linWW2}
			\end{align}
		\end{subequations}
		respectively.
	\end{thm}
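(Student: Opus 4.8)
The plan is to reduce both implications to Proposition \ref{prop:New_Jacobowitz}, which characterises local realisability by the existence of a nowhere-vanishing solution $\rho \in \Gamma(\mc{E}(1,1))$ of the weight-$(1,1)$ instance of \eqref{eq:exactWW}. The bridge between such a $\rho$ and the pair $(\sigma,\tau)$ in the statement is the product $\rho = \sigma \ol{\tau}$, which has weight $(1,1)$ because $\ol{\tau} \in \Gamma(\mc{E}(0,1))$. The key algebraic observation is that equation \eqref{eq:linWW2} for $\tau$, namely $\nabla_{\bar{\alpha}}\tau = 0$, conjugates to $\nabla_{\alpha}\ol{\tau} = 0$, so that $\ol{\tau}$ is annihilated by the $(1,0)$-part of every Webster connection. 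Multiplication by such a density therefore commutes past both covariant derivatives appearing in \eqref{eq:linWW1} without generating any correction terms.

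For the implication $(\Leftarrow)$, I would take the two densities $\sigma,\tau \in \Gamma(\mc{E}(1,0))$ as given, nowhere vanishing on the neighbourhood under consideration, and set $\rho := \sigma\ol{\tau} \in \Gamma(\mc{E}(1,1))$. Using $\nabla_{\alpha}\ol{\tau} = 0$ one computes directly
\begin{align*}
	\nabla_{\alpha}\nabla_{\beta}\rho + \i \WbA_{\alpha\beta}\rho = \left(\nabla_{\alpha}\nabla_{\beta}\sigma + \i \WbA_{\alpha\beta}\sigma\right)\ol{\tau} = 0 \, ,
\end{align*}
the last equality being precisely \eqref{eq:linWW1}. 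Note that no symmetrisation is required: since both $\nabla_{\alpha}\ol{\tau}$ and $\nabla_{\beta}\ol{\tau}$ vanish, the full tensorial equation \eqref{eq:linWW1}, and not merely its symmetric part, is inherited by $\rho$. As $\rho$ is a nowhere-vanishing weight-$(1,1)$ solution of \eqref{eq:exactWW}, Proposition \ref{prop:New_Jacobowitz} yields local realisability.

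For the converse $(\Rightarrow)$, I would start from realisability and assemble the two densities from the results already available. Proposition \ref{prop:New_Jacobowitz} produces a nowhere-vanishing solution $\rho \in \Gamma(\mc{E}(1,1))$ of \eqref{eq:exactWW}, while Lemma \ref{lem:dbar_densities} supplies a CR density $\tau \in \Gamma(\mc{E}(1,0))$ solving \eqref{eq:linWW2}; after shrinking the neighbourhood we may take $\tau$ nowhere zero. Applying Lemma \ref{lem:lowering} with the CR density $\tau$ of weight $(1,0)$ and the solution $\rho$ of weight $(1,1)$ then gives that $\sigma := \rho\,\ol{\tau}^{-1} \in \Gamma(\mc{E}(1,0))$ solves \eqref{eq:linWW1} (equivalently, this is the one-line computation above run in reverse). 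The pair $(\sigma,\tau)$ is exactly the one required by the statement.

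The only genuinely delicate point, and the step I would treat most carefully, is the nowhere-vanishing hypothesis: both Proposition \ref{prop:New_Jacobowitz} and the passage $\rho \mapsto \rho\,\ol{\tau}^{-1}$ require the relevant densities to be nonzero throughout the neighbourhood. Since realisability is a purely local property this is no real obstruction—one restricts to the open set where $\sigma$ and $\tau$ (equivalently $\rho$) are nonzero—but it must be flagged, because a solution of \eqref{eq:linWW1} encodes a transverse symmetry, and a solution of \eqref{eq:linWW2} an invertible CR density, only away from their zero loci. Conceptually, the result is the density-theoretic rendering of Jacobowitz's Theorem \ref{thm:Jacobowitz}: the CR density $\tau$ plays the role of the closed section of the canonical bundle via Lemma \ref{lem:hol_sgm}, while the weight-$(1,1)$ combination $\sigma\ol{\tau}$ encodes the complex transverse infinitesimal symmetry of $H^{(1,0)}$ responsible, through Proposition \ref{prop:Jacobowitz}, for realisability.
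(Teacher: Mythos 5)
Your argument is correct and follows essentially the same route as the paper's proof: it combines Proposition \ref{prop:New_Jacobowitz}, Lemma \ref{lem:dbar_densities} and Lemma \ref{lem:lowering}, passing between the pair $(\sigma,\tau)$ and a weight-$(1,1)$ solution of \eqref{eq:exactWW} by multiplying or dividing by $\ol{\tau}$. Your explicit product $\rho=\sigma\ol{\tau}$ is in fact the correct weight-$(1,1)$ combination (the paper's printed $\sigma\ol{\tau}^{-1}$ would have weight $(1,-1)$ and is evidently a slip), and your flagging of the nowhere-vanishing hypothesis, which the statement leaves implicit, is a worthwhile addition.
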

	
	\begin{proof}
		Suppose that $(\mc{M},H,J)$ is realisable. Then Lemma \ref{lem:dbar_densities} provides a nowhere vanishing solution $\tau$ to \eqref{eq:linWW2}, and Proposition \ref{prop:New_Jacobowitz} provides a nowhere vanishing solution to \eqref{eq:exactWW}. Hence, by Lemma \ref{lem:lowering} we obtain a nowhere vanishing solution to \eqref{eq:linWW1}.
		
		Conversely, we apply Lemma \ref{lem:lowering} to our nowhere vanishing solutions $\sigma$ and $\tau$ to \eqref{eq:linWW1} and \eqref{eq:linWW2} respectively, to produce a nowhere vanishing solution $\sigma \ol{\tau}^{-1} \in \Gamma(\mc{E}(1,1))$ to \eqref{eq:exactWW}. Realisability then follows by Proposition \ref{prop:New_Jacobowitz}.
	\end{proof}

	\begin{rem}\label{rem:CR-Eins}
		In the special case, where $\tau=\sigma$ in Theorem \ref{thm:real2dens}, the pair of equations \eqref{eq:linWW1} and \eqref{eq:linWW2} defines what is referred to as a \emph{CR--Einstein structure} in \cite{Cap2008}, and arises from a BGG operator --- see \cite{Matsumoto2022} for (more general) almost CR manifolds. In this case, the solution $\sigma$ defines a pseudo-Hermitian structure $\theta$ for which
		\begin{align}\label{eq:CREin}
			\WbA_{\alpha \beta} & = 0 \, , & \left( \Rho_{\alpha \bar{\beta}} \right)_{\circ} & = 0 \, , & \nabla_{\alpha} \Rho & = 0 \, .
		\end{align}
		In dimension three, any solution thereof immediately tells us that $(\mc{M},H,J)$ is CR--flat. Allowing $\sigma$ to have a non-empty zero set $\mc{Z}$ also leads to a pseudo-Hermitian structure satisfying \eqref{eq:CREin}, but only off $\mc{Z}$.
	\end{rem}
	
	The assumptions of Theorem \ref{thm:real2dens} can in fact be weakened:
	\begin{thm}\label{thm:real2dens_d3}
		Locally, a contact CR manifold $(\mc{M},H,J)$ is realisable if and only if it admits two nowhere vanishing densities $\sigma$ and $\tau$ of weight $(1,0)$ that satisfy
		\begin{subequations}
			\begin{align}
				\accentset{\xi}{\nabla}_{\alpha} \accentset{\xi}{\nabla}_{\beta} \sigma + \i \WbA_{\alpha \beta} \sigma & = 0 \, , \label{eq:linWW1d3} \\
				\accentset{\xi}{\nabla}_{\bar{\alpha}} \tau & = 0 \, , \label{eq:linWW2d3}
			\end{align}
		\end{subequations}
		where $\accentset{\xi}{\nabla}_{\alpha} = \nabla_{\alpha} + \xi_{\alpha}$ for some gauge $\xi_{\alpha} \in \Gamma(\mc{E}_{\alpha})$.
	\end{thm}

	\begin{proof}
		We work locally. Suppose that $\sigma$ and $\tau$ are nowhere vanishing densities of weight $(1,0)$ that solve \eqref{eq:linWW1d3} and \eqref{eq:linWW2d3} respectively. Then
		\begin{align*}
			\nabla_{\alpha} \nabla_{\beta} (\sigma \ol{\tau}) + \i \WbA_{\alpha \beta} (\sigma \ol{\tau}) & = \accentset{\xi}{\nabla}_{\alpha} \accentset{\xi}{\nabla}_{\beta} (\sigma \ol{\tau}) + \i \WbA_{\alpha \beta} (\sigma \ol{\tau}) & \mbox{since $\sigma \ol{\tau} \in \Gamma(\mc{E}(1,1))$,} \\
			& =  \ol{\tau} \left( \accentset{\xi}{\nabla}_{\alpha} \accentset{\xi}{\nabla}_{\beta} \sigma + \i \WbA_{\alpha \beta} \sigma \right) & \mbox{by \eqref{eq:linWW2d3},} \\
			& = 0 & \mbox{by \eqref{eq:linWW1d3}.}
		\end{align*}
		For the last equality, we used the fact that $\tau$ is nowhere vanishing. Hence, by Proposition \ref{prop:New_Jacobowitz}, we conclude that  $(\mc{M},H,J)$ is locally realisable.
		
		The converse is clear by Theorem \ref{thm:real2dens}.
	\end{proof}
	
	\begin{rem}
		Note that for equation \eqref{eq:linWW2d3} to hold, the CR invariant condition $\nabla_{[\alpha} \xi_{\beta]} = 0$ must hold too, and this in turn implies that equation \eqref{eq:linWW1d3} is symmetric in its indices, since $\accentset{\xi}{\nabla}_{[\alpha} \accentset{\xi}{\nabla}_{\beta]} f = - \i (w -w') (\nabla_{[\alpha} \xi_{\beta]}) f$ for any density $f \in \Gamma(\mc{E}(w,w'))$. Evidently, this condition is trivially satisfied when $m=1$.
	\end{rem}
	
	\subsubsection{Dimension three}\label{sec:CR3}
	From now on, we assume that $(\mc{M},H,J)$ has dimension three. Then the tensor product of any number of copies of ${H}^{(1,0)}$ and ${H}^{(0,1)}$ is of complex rank $1$, so symmetrisation is a vacuous operation, and we have that ${H}^{(1,0)}\otimes{H}^{(0,1)} \cong {\mc{E}}(1,1)$, since for any ${A}^{\alpha} \in \Gamma(\mc{E}^{\alpha})$, ${B}^{\bar{\alpha}} \in \Gamma(\mc{E}^{\bar{\alpha}})$, we can write ${A}^{\alpha} {B}^{\bar{\beta}} = {A}^{\gamma} {B}_{\gamma} {\bm{h}}^{\alpha \bar{\beta}}$. While the use of indices, which concretely only take the numerical value $1$, may appear to be of rather limited utility, they turn out to be extremely useful for book keeping and ultimately make the geometric interpretation of our computations much more transparent, and the relation to the general case more direct.
	
	We now return to the problem of characterising the existence of a CR function. 	Let us choose an arbitrary local adapted coframe $(\theta, \theta^{\alpha}, \ol{\theta}{}^{\bar{\alpha}})$ on $(\mc{M},H,J)$ with dual $(\ell,e_{\alpha}, \ol{e}_{\bar{\alpha}})$. Starting in greater generality, let ${\lambda}_{\alpha}$ be an arbitrary section of ${\mc{E}}_{\alpha}$, and set
	\begin{align}\label{eq:om_lam_thet}
		{\omega} & = \theta \, , &
		{\omega}^{\alpha} & = {\theta}^{\alpha} + {\lambda}^{\alpha} \theta \, , &
		\ol{{\omega}}{}^{\bar{\alpha}} & = \ol{{\theta}}{}^{\bar{\alpha}} + {\lambda}^{\bar{\alpha}} \theta \, .
	\end{align}
	Note that by \eqref{eq:lbd_trnsf} and \eqref{eq:contact_chg}, ${\omega}^{\alpha}$ does not depend on the choice of contact form.
	Using the structure equations \eqref{eq:structure_CR}, we then find
	\begin{subequations}
		\begin{align}
			\d \omega & = \i {h}_{\alpha \bar{\beta}} {\omega}^{\alpha} \wedge \overline{{\omega}}{}^{\bar{\beta}} +  \omega \wedge \left( \i {\lambda}_{\alpha} {\omega}^{\alpha}  - \i {\lambda}_{\bar{\alpha}} {\omega}^{\bar{\alpha}} \right) \, , \label{eq:dom0} \\
			\d {\omega}^{\alpha}
			& = \left( {\Gamma}_{\bar{\beta} \gamma}{}^{\gamma} + \i {\lambda}_{\bar{\beta}} \right) {\omega}^{\alpha} \wedge \overline{{\omega}}{}^{\bar{\beta}} + \left( {\WbA}^{\alpha}{}_{\bar{\beta}} - {\nabla}_{\bar{\beta}} {\lambda}^{\alpha} -   \i {\lambda}_{\bar{\beta}} {\lambda}^{\alpha} \right)  \omega \wedge \overline{{\omega}}{}^{\bar{\beta}} \label{eq:domalf} \\
			& \qquad \qquad \qquad  + \left(  - {\Gamma}_{0 \delta}{}^{\delta} \delta_{\beta}^{\alpha} - {e}_{\beta}( {\lambda}^{\alpha} ) + \lambda^{\bar{\gamma}} \Gamma_{\bar{\gamma} \delta}{}^{\delta}  \delta_{\beta}^{\alpha} +  \i {\lambda}^{\alpha}  {\lambda}_{\beta} \right) \omega \wedge {\omega}^{\beta} \, , \nonumber \\
			\d \ol{{\omega}}^{\bar{\alpha}}
			& = \left( {\Gamma}_{\beta  \bar{\gamma}}{}^{\bar{\gamma}} - \i {\lambda}_{\beta} \right) \ol{{\omega}}^{\bar{\alpha}} \wedge {\omega}{}^{\beta} + \left( {\WbA}^{\bar{\alpha}}{}_{\beta} - {\nabla}_{\beta} {\lambda}^{\bar{\alpha}} + \i {\lambda}_{\beta} {\lambda}^{\bar{\alpha}} \right)  \omega \wedge {\omega}{}^{\beta}
			\label{eq:domalfb} \\
			& \qquad \qquad \qquad  + \left(  - \Gamma_{0 \bar{\delta}}{}^{\bar{\delta}} \delta_{\bar{\beta}}^{\bar{\alpha}} - \ol{e}_{\bar{\beta}}( {\lambda}^{\bar{\alpha}} ) + \lambda^{\gamma} \Gamma_{\gamma \bar{\delta}}{}^{\bar{\delta}} \delta_{\bar{\beta}}^{\bar{\alpha}} -  \i {\lambda}^{\bar{\alpha}}  {\lambda}_{\bar{\beta}} \right) \omega \wedge \ol{{\omega}}^{\bar{\beta}} \, . \nonumber
		\end{align}
	\end{subequations}
	In particular,
	\begin{align*}
		{\omega}^{\gamma} \wedge \d {\omega}^{\alpha} & = \left( {\nabla}_{\bar{\beta}} {\lambda}^{\alpha} +  \i {\lambda}_{\bar{\beta}} {\lambda}^{\alpha} - {\WbA}^{\alpha}{}_{\bar{\beta}}\right)  \omega \wedge {\omega}^{\gamma} \wedge \overline{{\omega}}{}^{\bar{\beta}} \, ,
	\end{align*}
	and similarly for its complex conjugate. We thus conclude:
	\begin{align*}
		{\omega}^{\alpha} \wedge \d {\omega}^{\beta} & = 0 & & \Longleftrightarrow & &    {\nabla}_{\alpha} {\lambda}_{\beta} - \i  {\lambda}_{\alpha} {\lambda}_{\beta} - {\WbA}_{\alpha \beta} = 0 \, .
	\end{align*}
	Further, we have
	\begin{align*}
		\omega^{\alpha} \wedge \ol{\omega}{}^{\bar{\beta}} & = {\theta}^{\alpha} \wedge \ol{{\theta}}{}^{\bar{\beta}} + \theta \wedge \left( {\lambda}^{\alpha} \ol{{\theta}}{}^{\bar{\beta}} - {\lambda}^{\bar{\alpha}} \ol{{\theta}}{}^{\bar{\beta}} \right) \neq 0 \, .
	\end{align*}
	Lemma \ref{lem:HLN} now concludes the proof of:
	\begin{thm}\label{thm:Webster--Weyl}
		Locally, a contact CR three-manifold admits a strongly independent CR function if and only if it admits a Webster--Weyl structure.
	\end{thm}
	
	This, together with Lemma \ref{lem:hol_sgm}, allows us to recast Jacobowitz's Theorem \ref{thm:Jacobowitz} in the following terms:
	\begin{thm}\label{thm:New_Jacobowitz}
		Locally, a smooth contact CR three-manifold is realisable if and only if it admits a Webster--Weyl structure and a nowhere vanishing CR density of weight $(1,0)$.
	\end{thm}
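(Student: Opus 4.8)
The plan is to read this statement as the dimension-three ($m=1$) specialisation of Jacobowitz's Theorem \ref{thm:Jacobowitz}, in which each of the two analytic hypotheses of that theorem is replaced by its reformulation in terms of CR data. For $m=1$, Theorem \ref{thm:Jacobowitz} asserts that $(\mc{M},H,J)$ is locally realisable if and only if it admits one strongly independent CR function together with a closed section of the canonical bundle $\mc{C}=\mc{E}(-3,0)$, so it suffices to translate these two conditions independently and then reassemble them.

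First I would invoke Theorem \ref{thm:Webster-Weyl}, which already establishes, on a contact CR three-manifold, that the local existence of a strongly independent CR function is equivalent to the existence of a Webster--Weyl structure $[\nabla_\alpha,\lambda_\alpha]$. Next I would handle the closed section of $\mc{C}$: by Lemma \ref{lem:hol_sgm}, the existence of such a section is equivalent to the existence of a CR density of weight $(w,0)$ for some $0\neq w\in\Z$. Both directions then follow by assembly. For the forward implication, assume realisability: Theorem \ref{thm:Jacobowitz} supplies a strongly independent CR function, hence a Webster--Weyl structure by Theorem \ref{thm:Webster-Weyl}, while Lemma \ref{lem:dbar_densities} produces CR densities of weight $(1,0)$ directly from realisability. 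For the converse, a Webster--Weyl structure yields a strongly independent CR function via Theorem \ref{thm:Webster-Weyl}, and a CR density of weight $(1,0)$ is in particular one of weight $(w,0)$ with $w=1\neq 0$, so Lemma \ref{lem:hol_sgm} provides a closed section of $\mc{C}$; Theorem \ref{thm:Jacobowitz} then concludes realisability.

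The part that requires the most care, and that I would flag as the genuine conceptual point rather than a computational one, is the choice of route. A tempting alternative would be to argue through Theorem \ref{thm:real2dens}, converting the Webster--Weyl structure into an honest solution $\sigma$ of the ungauged linear equation \eqref{eq:exactWW}. This is precisely where an obstruction hides: passing from $\lambda_\alpha$ solving \eqref{eq:Webster-Weyl} to an ungauged $\sigma$ demands that $\lambda_\alpha$ be a logarithmic derivative (an exact gauge in the sense of Lemma \ref{lem:ex-nexWW}), which need not hold for a general Webster--Weyl structure. Indeed, by Proposition \ref{prop:complexTSym} and Proposition \ref{prop:New_Jacobowitz}, a weight-$(1,1)$ solution of \eqref{eq:exactWW} already encodes full realisability on its own, so it is strictly stronger than the CR-function content carried by a Webster--Weyl structure. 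Routing instead through Jacobowitz's Theorem \ref{thm:Jacobowitz} cleanly separates the CR-function datum from the canonical-section datum and avoids this exactness issue entirely.

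The only remaining subtlety is the weight bookkeeping relating the canonical weight $(-3,0)$ to the weight $(1,0)$ in the statement, which relies on the standing assumption that $\mc{C}$ admits a $-(m+2)$-nd root so that $\mc{E}(1,0)=\mc{C}^{-1/3}$ is defined, on the nonvanishing of the relevant densities, and on the local nature of the claim. In the converse direction this is absorbed inside Lemma \ref{lem:hol_sgm}, and in the forward direction inside Lemma \ref{lem:dbar_densities}, so no further argument beyond citing these results is needed, and I expect no additional obstacle.
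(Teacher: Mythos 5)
Your proof is correct and follows exactly the route the paper intends: the theorem is presented there as a recasting of Jacobowitz's Theorem \ref{thm:Jacobowitz} obtained by combining Theorem \ref{thm:Webster-Weyl} (Webster--Weyl structure $\Leftrightarrow$ strongly independent CR function) with Lemma \ref{lem:hol_sgm} (CR density of nonzero weight $\Leftrightarrow$ closed section of $\mc{C}$), with Lemma \ref{lem:dbar_densities} supplying the weight-$(1,0)$ density in the forward direction. Your remarks on avoiding the route through Theorem \ref{thm:real2dens} are sound but not needed for the argument.
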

	
	\begin{rem}
		It is interesting to note that in dimension three, if one considers equation \eqref{eq:exactWW} alone, realisability can only be guaranteed if the weight of $\sigma$ is $(1,1)$. For any other weight $(1,w)$ with $w \neq 1$, we must content ourselves with only one strongly independent CR function by Theorem \ref{thm:Webster--Weyl} via Lemma \ref{lem:ex-nexWW} with $\xi_{\alpha}=0$.
	\end{rem}

	In the following proposition, we translate the condition of functional independence of two CR functions into a statement on their Webster--Weyl structures.
	\begin{prop}
		Let $(\mc{M},H,J)$ be a contact CR three-manifold. Then any two CR functions are independent if and only if their corresponding Webster--Weyl structures $[\nabla_{\alpha},\lambda_{\alpha}]$ and $[\nabla_{\alpha},\lambda_{\alpha}']$ differ by a nowhere vanishing $(1,0)$-form, i.e.\ $\lambda'_{\alpha} = \lambda_{\alpha} + \mu_{\alpha}$ for some nowhere vanishing $\mu_{\alpha}$.
		
		In particular, given a Webster--Weyl structure $(\nabla_{\alpha},\lambda_{\alpha})$, any other $(\nabla_{\alpha},\lambda_{\alpha}')$ arises from a solution $\mu_{\alpha}$ to the CR invariant equation
		\begin{align*}
			\nabla_{\alpha} \mu_{\beta} - 2 \i \lambda_{\alpha} \mu_{\beta} - \i \mu_{\alpha} \mu_{\beta} & = 0 \, ,
		\end{align*}
		and setting $\lambda'_{\alpha} = \lambda_{\alpha} + \mu_{\alpha}$.
	\end{prop}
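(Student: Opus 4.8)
The plan is to lean on the correspondence between strongly independent CR functions and Webster--Weyl structures established in Theorem \ref{thm:Webster-Weyl}, and thereby reduce both assertions to elementary computations. Recall from the proof of that theorem that, for a fixed adapted coframe $(\theta,\theta^{\alpha},\ol{\theta}{}^{\bar{\alpha}})$ and Webster connection $\nabla$, the Webster--Weyl datum $\lambda_{\alpha}$ is encoded in the one-form $\omega^{\alpha} = \theta^{\alpha} + \lambda^{\alpha} \theta$ of \eqref{eq:om_lam_thet}, which by Lemma \ref{lem:HLN} is a non-vanishing multiple of $\d z$ for a strongly independent CR function $z$. First I would make this dictionary explicit: given a strongly independent CR function $f$ (so $\nabla_{\bar{\alpha}} f = 0$ and $\nabla_{\alpha} f \neq 0$), the identity $\d f = (\nabla_0 f)\theta + (\nabla_{\alpha} f)\theta^{\alpha}$ shows that $\omega^{\alpha} = (\nabla_{\alpha} f)^{-1}\d f$ is of the normalised form \eqref{eq:om_lam_thet}, forcing $\lambda^{\alpha} = (\nabla_{\alpha} f)^{-1}\nabla_0 f$. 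Thus each strongly independent CR function $f,f'$ (each giving a Webster--Weyl structure) determines its Webster--Weyl datum uniquely in the given scale, and I may work with the associated $\lambda_{\alpha},\lambda'_{\alpha}$.

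For the first claim, I would compute the wedge product directly. Since $f$ and $f'$ are CR functions, their differentials carry no $\ol{\theta}{}^{\bar{\alpha}}$ component, so in dimension three
\begin{align*}
	\d f \wedge \d f' & = \left( (\nabla_0 f)(\nabla_{\alpha} f') - (\nabla_{\alpha} f)(\nabla_0 f') \right) \theta \wedge \theta^{\alpha} \, .
\end{align*}
As $\nabla_{\alpha} f, \nabla_{\alpha} f' \neq 0$, the bracket vanishes at a point precisely when $(\nabla_{\alpha} f)^{-1}\nabla_0 f = (\nabla_{\alpha} f')^{-1}\nabla_0 f'$, that is, when $\lambda^{\alpha} = \lambda'^{\alpha}$ there. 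Hence $\d f \wedge \d f' \neq 0$ pointwise --- the independence of $f$ and $f'$ --- is equivalent to $\mu_{\alpha} := \lambda'_{\alpha} - \lambda_{\alpha}$ being nowhere-vanishing, the raising and lowering by the rank-one positive Levi form $\bm{h}_{\alpha\bar{\beta}}$ being immaterial for the vanishing locus. This settles the equivalence in both directions.

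For the second claim, I would subtract the two Webster--Weyl equations \eqref{eq:Webster-Weyl} satisfied by $\lambda_{\alpha}$ and $\lambda'_{\alpha}=\lambda_{\alpha}+\mu_{\alpha}$ for the common connection $\nabla$. The terms $\WbA_{\alpha\beta}$ and $\nabla_{\alpha}\lambda_{\beta}$ cancel, leaving
\begin{align*}
	\nabla_{\alpha}\mu_{\beta} - \i\left( \lambda_{\alpha}\mu_{\beta} + \mu_{\alpha}\lambda_{\beta} + \mu_{\alpha}\mu_{\beta} \right) & = 0 \, .
\end{align*}
Here the essential simplification is that $(\mc{M},H,J)$ is three-dimensional, so all indices take the single value $1$ and $\lambda_{\alpha}\mu_{\beta} = \mu_{\alpha}\lambda_{\beta}$; the middle bracket collapses to $2\lambda_{\alpha}\mu_{\beta} + \mu_{\alpha}\mu_{\beta}$, yielding exactly the displayed equation. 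The converse is immediate: if $\mu_{\alpha}$ solves this equation, reversing the subtraction shows $\lambda_{\alpha} + \mu_{\alpha}$ solves \eqref{eq:Webster-Weyl}. Finally, CR invariance follows without a separate computation: under a change of contact form \eqref{eq:contact_chg}, both $\lambda_{\alpha}$ and $\lambda'_{\alpha}$ transform by \eqref{eq:lbd_trnsf} with the same shift $\i\Upsilon_{\alpha}$, so $\mu_{\alpha}$ is genuinely invariant, and its equation, being the difference of two copies of the CR invariant equation \eqref{eq:Webster-Weyl}, transforms by the same conformal factor.

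The only genuinely delicate point, and the one I would be most careful about, is the bookkeeping of the dictionary $f \leftrightarrow \lambda_{\alpha}$: one must check that the normalisation implicit in \eqref{eq:om_lam_thet} (coefficient $1$ in front of $\theta^{\alpha}$, coefficient $0$ in front of $\ol{\theta}{}^{\bar{\alpha}}$) is compatible with $\omega^{\alpha}$ being a CR multiple of $\d f$, and that the reality convention relating $\lambda^{\alpha}$ and $\lambda_{\alpha}$ via $\bm{h}_{\alpha\bar{\beta}}$ does not affect the vanishing statement, together with the transformation rule \eqref{eq:A_chg} underlying the equivalence of Webster--Weyl equations. Everything else is formal, the three-dimensionality doing the decisive algebraic work in the second part.
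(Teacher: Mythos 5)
Your proof is correct and follows essentially the same route as the paper: both reduce via Lemma \ref{lem:HLN} and the normalisation $\omega^{1}=\theta^{1}+\lambda^{1}\theta$ to the observation that independence of the two CR functions is the non-vanishing of $(\lambda^{1}-\lambda'^{1})\,\theta\wedge\theta^{1}$, your version merely phrasing this through $\d f\wedge\d f'$ and the explicit identification $\lambda^{1}=(\nabla_{1}f)^{-1}\nabla_{0}f$. The subtraction of the two Webster--Weyl equations for the second claim is exactly the computation the paper leaves as ``immediate''.
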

	
	\begin{proof}
		Let $\omega^{1}$ and ${\omega'}{}^{1}$ be two forms annihilated by $H^{(0,1)}$ and satisfying $\omega^{1} \wedge \d \omega^{1} = 0$ and ${\omega'}{}^{1} \wedge \d {\omega'}{}^{1} = 0$, so that by Lemma \ref{lem:HLN}, $\omega^1 = z^{-1} \d w$ and ${\omega'}{}^1 = {z'}{}^{-1} \d w'$ for some complex-valued functions $z$, $w$, $z'$ and $w'$ on $\mc{M}$ with $z$ and $z'$ being nowhere vanishing. With no loss, we may choose an adapted coframe $(\theta, \theta^{1}, \ol{\theta}{}^{\bar{1}})$ and, possibly after some rescaling of $\omega^1$, write $\omega^1 = \theta^{1} + \lambda^{1} \theta$ and ${\omega'}{}^1 = \theta^{1} + {\lambda'}^{1} \theta$ for some solutions $\lambda_{\alpha}$ and $\lambda'_{\alpha}$ to the Webster--Weyl equation \eqref{eq:Webster--Weyl}. The condition that the CR functions $w$ and $w'$ be independent is then equivalent to $\omega^1 \wedge {\omega'}{}^1 = ( \lambda^{1} - {\lambda'}{}^{1}) \theta \wedge \theta^1 \neq 0$, from which the result follows immediately.
	\end{proof}
	
	As it will next emerge, in dimension three, one can always `linearise' a solution to the Webster--Weyl equation, in the sense of turning a one-form solving \eqref{eq:Webster--Weyl} into a density of weight $(1,-2)$ solving \eqref{eq:exactWW}:
	\begin{thm}\label{thm:lambda_exact}
		Let $(\mc{M},H,J)$ be a contact CR three-manifold endowed with a Webster--Weyl structure $[\nabla_{\alpha},\lambda_{\alpha}]$. Then locally, there exists a nowhere vanishing smooth density $\sigma$ of weight $(1,0)$ such that for any choice of representative $(\nabla_{\alpha}, \lambda_{\alpha})$, we have
		\begin{align}\label{eq:lambdsig}
			\lambda_{\alpha} & = \i \sigma^{-1} \ol{\sigma}^{2} \nabla_{\alpha} \left( \sigma \ol{\sigma}^{-2} \right) \, ,
		\end{align}
		or equivalently,
		\begin{align*}
			\lambda_{\alpha} & = \i \sigma^{-1} \accentset{\xi}{\nabla}_{\alpha} \sigma := \i \sigma^{-1} \nabla_{\alpha} \sigma + \xi_{\alpha} \, , & & \mbox{where} & \xi_{\alpha} & := - 2 \i \ol{\sigma}^{-1} \nabla_{\alpha} \ol{\sigma} \, ,
		\end{align*}
		with the property that if $(\theta, \theta^{1}, \ol{\theta}{}^{\bar{1}})$ is the unitary adapted coframe determined by $\sigma$, i.e.\ $\sigma^{-3} = \theta \wedge \theta^{1}$, and $(\accentset{(\theta)}{\nabla}_{\alpha},\accentset{(\theta)}{\lambda}_{\alpha})$ is the representative of $[\nabla_{\alpha},\lambda_{\alpha}]$ determined by $\theta = (\sigma \ol{\sigma})^{-1} \bm{\theta}$, then the one-form $\omega^{1} = \theta^{1} + \lambda^{1} \theta$ is given by $\omega^{1} = \d w$ for some CR function $w$, and
		\begin{align}\label{eq:divlambsig}
			\accentset{(\theta)}{\nabla}_{\alpha} \accentset{(\theta)}{\lambda}^{\alpha} - \i \accentset{(\theta)}{\lambda}_{\alpha} \accentset{(\theta)}{\lambda}^{\alpha} + 3 \sigma^{-1} \accentset{(\theta)}{\nabla}_{0} \sigma & = 0 \, .
		\end{align}
		In particular, $\sigma \ol{\sigma}^{-2} \in \Gamma ( \mc{E}(1,-2) )$ satisfies
		\begin{align}\label{eq:WWlin}
			\nabla_{\alpha} \nabla_{\beta} \left( \sigma \ol{\sigma}^{-2} \right) + \i \WbA_{\alpha \beta} \left( \sigma \ol{\sigma}^{-2} \right) & = 0 \, .
		\end{align}
		
		Any other density $\wh{\sigma}$ of weight $(1,0)$ related to $\sigma$ by
		\begin{align}\label{eq:z_change}
			\wh{\sigma} = z^{-\tfrac{3}{2}} \ol{z}^{-\tfrac{1}{2}} \sigma \, ,
		\end{align}
		for some smooth CR function $z$ satisfying $\nabla_{0} z - \lambda^\alpha \nabla_{\alpha} z = 0$, will share the same property as $\sigma$.
	\end{thm}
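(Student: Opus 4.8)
The plan is to manufacture the density $\sigma$ by passing to a distinguished unitary coframe in which the one-form $\omega^1 = \theta^1 + \lambda^1\theta$ of \eqref{eq:om_lam_thet} becomes exact, and then to extract \eqref{eq:lambdsig}--\eqref{eq:WWlin} directly from the Cartan structure equations \eqref{eq:structure_CR}. Since $[\nabla_\alpha,\lambda_\alpha]$ is a Webster--Weyl structure, the computation preceding Theorem \ref{thm:Webster-Weyl} gives $\omega^1\wedge\d\omega^1 = 0$ and $\omega^1\wedge\ol\omega^{\bar1}\neq0$, so Lemma \ref{lem:HLN} furnishes a CR function together with a non-vanishing integrating factor $f$ such that $\omega^1 = f^{-1}\d w$.

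The key point I would stress is that, although $\omega^1$ is invariant under a change of contact form at \emph{fixed} Levi normalisation (as observed after \eqref{eq:om_lam_thet}), restoring unitarity after the rescaling $\wh\theta = \e^\varphi\theta$ in \eqref{eq:contact_chg} requires $\theta^1\mapsto\e^{\varphi/2}\theta^1$, whence $\omega^1\mapsto\e^{\varphi/2}\omega^1$ picks up an arbitrary positive real factor; together with the residual $U(1)$ rotation $\theta^1\mapsto\e^{\i\psi}\theta^1$, sending $\omega^1\mapsto\e^{\i\psi}\omega^1$, the unitary-coframe freedom rescales $\omega^1$ by an arbitrary non-vanishing complex function. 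I would therefore fix $\varphi,\psi$ by $\e^{\varphi/2+\i\psi} = f$, so that $\omega^1 = \d w$ in the new coframe. This choice of CR scale and coframe phase is precisely a choice of density $\sigma\in\Gamma(\mc{E}(1,0))$, normalised by $\sigma^3 = \theta\wedge\theta^1 = \theta\wedge\d w$.

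With $\sigma$ in hand the identities are read off as follows. By \eqref{eq:conn1_density} one has $\nabla_1\sigma = \tfrac13\Gamma_{11}{}^1\sigma$ and $\nabla_1\ol\sigma = \tfrac13\Gamma_{1\bar1}{}^{\bar1}\ol\sigma$, while unitarity gives $\Gamma_{1\bar1}{}^{\bar1} = -\Gamma_{11}{}^1$. Since $\ol\omega^{\bar1} = \d\ol w$ is closed, the $\ol\omega^{\bar1}\wedge\omega^1$ coefficient in \eqref{eq:domalfb} must vanish, forcing $\Gamma_{11}{}^1 = -\i\lambda_1$; hence $\i\sigma^{-1}\nabla_1\sigma - 2\i\ol\sigma^{-1}\nabla_1\ol\sigma = \tfrac{\i}{3}\bigl(\Gamma_{11}{}^1 - 2\Gamma_{1\bar1}{}^{\bar1}\bigr) = \i\Gamma_{11}{}^1 = \lambda_1$, which is \eqref{eq:lambdsig} for the distinguished representative. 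As $\i\tau^{-1}\nabla_\alpha\tau$ with $\tau = \sigma\ol\sigma^{-2}$ transforms under \eqref{eq:Webstrnsf} exactly by \eqref{eq:lbd_trnsf} (the holomorphic weight of $\tau$ being $1$), \eqref{eq:lambdsig} then holds for every representative. Equation \eqref{eq:divlambsig} is the vanishing of the $\omega\wedge\omega^1$ coefficient in $\d\omega^1 = 0$ in \eqref{eq:domalf}, after substituting $\Gamma_{01}{}^1 = 3\sigma^{-1}\nabla_0\sigma$, and \eqref{eq:WWlin} is then immediate: \eqref{eq:lambdsig} states $\lambda_\alpha = \i\tau^{-1}\nabla_\alpha\tau$, so the converse direction of Lemma \ref{lem:ex-nexWW} with vanishing gauge shows $\tau = \sigma\ol\sigma^{-2}$ solves the linear equation.

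For the final claim, the only choice made above is the CR function used to exactify $\omega^1$; any other admissible choice $z$ is a CR function with $\d z\propto\omega^1$, and writing $\d z = \nabla_1 z\,\theta^1 + \nabla_0 z\,\theta$ and comparing with $\omega^1 = \theta^1+\lambda^1\theta$ shows this is exactly the condition $\nabla_0 z - \lambda^\alpha\nabla_\alpha z = 0$. Re-running the normalisation with $z$ in place of $w$ rescales the coframe, hence $\sigma$, and tracking the weights through $\sigma^3 = \theta\wedge\d w$ produces \eqref{eq:z_change}, after which one checks that $\wh\sigma$ satisfies the analogues of \eqref{eq:lambdsig}--\eqref{eq:WWlin}. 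I expect the genuinely delicate step to be the freedom observation of the second paragraph --- that imposing unitarity lets the coframe freedom scale $\omega^1$ over all of $\C^\ast$ and so absorb the integrating factor; the remaining identities are a careful but routine inspection of \eqref{eq:structure_CR}, and matching the exact exponents in \eqref{eq:z_change} is the most bookkeeping-intensive part.
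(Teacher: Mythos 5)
Your proposal is correct and follows essentially the same route as the paper: obtain a unitary adapted coframe in which $\omega^{1}=\theta^{1}+\lambda^{1}\theta$ is exact (the paper outsources this normalisation to Theorem \ref{thm:Webster-Weyl} and Appendix \ref{app:coframe2}, whereas you make the absorption of the integrating factor into the conformal-plus-$U(1)$ coframe freedom explicit), then read \eqref{eq:lambdsig}, \eqref{eq:divlambsig} and \eqref{eq:WWlin} off the structure equations \eqref{eq:domalf}--\eqref{eq:domalfb} via \eqref{eq:conn1_density} and Lemma \ref{lem:ex-nexWW}, and handle the residual freedom \eqref{eq:z_change} identically. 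The only caveat is the normalisation $\sigma^{3}=\theta\wedge\theta^{1}$ versus $\sigma^{-3}=\theta\wedge\theta^{1}$ (the paper's statement and proof disagree here, and \eqref{eq:conn1_density} presupposes the latter), which affects an overall sign but not the substance of your argument.
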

	
	\begin{proof}
		Since $(\mc{M},H,J)$ admits a Webster--Weyl structure, by Theorem \ref{thm:Webster--Weyl}, it locally admits a CR function $w$, and we can find a unitary adapted coframe $(\theta, \theta^{1}, \ol{\theta}{}^{\bar{1}})$ such that $(\omega, \omega^{1}, \ol{\omega}^{\bar{1}})$ as defined by \eqref{eq:om_lam_thet} is an adapted coframe of the second type, as described in Appendix \ref{app:coframe2}, with the property that $\omega^{1} = \d w$. This choice picks out a representative $(\accentset{(\theta)}{\nabla}_{\alpha},\accentset{(\theta)}{\lambda}_{\alpha})$ of $[\nabla_{\alpha},\lambda_{\alpha}]$. We now return to equation \eqref{eq:domalf}, and to emphasise the dependence on the choice of frame, we adorn the geometric quantities with $\accentset{(\theta)}{\cdot}$. Then $\d \omega^{1} = 0$ implies
		\begin{subequations}
			\begin{align}
				\accentset{(\theta)}{\nabla}_{\bar{\beta}} \accentset{(\theta)}{\lambda}^{\alpha} + \i \accentset{(\theta)}{\lambda}_{\bar{\beta}} \accentset{(\theta)}{\lambda}^{\alpha} - \accentset{(\theta)}{\WbA}^{\alpha}{}_{\bar{\beta}} & = 0 \, , \label{eq:domalf0} \\
				\accentset{(\theta)}{\Gamma}_{\bar{\beta} \gamma}{}^{\gamma} + \i \accentset{(\theta)}{\lambda}_{\bar{\beta}} & = 0 \, , \label{eq:domalf1} \\
				- \accentset{(\theta)}{\Gamma}_{0 \gamma}{}^{\gamma} \delta_{\beta}^{\alpha} - {e}_{\beta}( \accentset{(\theta)}{\lambda}^{\alpha} ) + \accentset{(\theta)}\lambda^{\bar{\gamma}} \accentset{(\theta)}\Gamma_{\bar{\gamma} \delta}{}^{\delta} \delta_{\beta}^{\alpha} +  \i \accentset{(\theta)}{\lambda}^{\alpha}  \accentset{(\theta)}{\lambda}_{\beta}  & = 0 \, . \label{eq:domalf2}
			\end{align}
		\end{subequations}
		Clearly, equation \eqref{eq:domalf0} is vacuous. By \eqref{eq:conn1_density}, equation \eqref{eq:domalf1} tells us that $\accentset{(\theta)}{\lambda}_{\bar{\alpha}} = 3 \i \sigma^{-1} \accentset{(\theta)}{\nabla}_{\bar{\alpha}} \sigma$ for some density $\sigma$ of weight $(1,0)$ such that $\sigma^{-3} = \theta \wedge \theta^1$ and $\theta = (\sigma \ol{\sigma})^{-1} \bm{\theta}$. However, this expression depends on the choice of coframe, and thus on the choice of connection. To resolve this issue, we note that since $\accentset{(\theta)}{\nabla}(\sigma \ol{\sigma}) = 0$, we can write
		\begin{align*}
			\accentset{(\theta)}{\lambda}_{\alpha} & =  \i \sigma^{-1} \accentset{(\theta)}{\nabla}_{\alpha} \sigma - 2 \i \ol{\sigma}^{-1} \accentset{(\theta)}{\nabla}_{\alpha} \ol{\sigma} = \i \sigma^{-1} \ol{\sigma}^2 \accentset{(\theta)}{\nabla}_{\alpha} (\sigma \ol{\sigma}^{-2} )\, .
		\end{align*}
		This new expression gives the appropriate transformation for $\accentset{(\theta)}{\lambda}_{\alpha}$ under a change of contact form. In particular, it does not depend on the particular choice of representative of $[\nabla_{\alpha}, \lambda_{\alpha}]$, and we can safely drop the adornment $\accentset{(\theta)}{\cdot}$, thereby establishing \eqref{eq:lambdsig}. We can now recast \eqref{eq:Webster--Weyl} as \eqref{eq:WWlin}. Plugging \eqref{eq:domalf1} into \eqref{eq:domalf2} then yields \eqref{eq:divlambsig}.

		Finally, we note that if $z$ is some nowhere vanishing complex-valued function that satisfies $\nabla_{\bar{\alpha}} z = 0$ (so $z$ is a CR function) and $\nabla_{0} z - \lambda^\alpha \nabla_{\alpha} z = 0$, the one-form $\wh{\omega}^{1} = z \omega^{1}$ is also exact with a CR potential. Rescaling $\sigma$ as in \eqref{eq:z_change} yields a unitary adapted coframe of the first kind $(\wh{\theta}, \wh{\theta}^{1}, \ol{\wh{\theta}}{}^{\bar{1}})$ where $\wh{\theta} = z \ol{z} \theta$ and $\wh{\theta}^{1} = z \theta^{1} + \Upsilon^{1} \theta$. Carrying out the analysis as before, we find that
		\begin{align*}
			\wh{\lambda}_{\alpha} & = \i \wh{\sigma}^{-1} \wh{\nabla}_{\alpha} \wh{\sigma} - 2 \i \ol{\wh{\sigma}}^{-1} \wh{\nabla}_{\alpha} \ol{\wh{\sigma}} \, ,
		\end{align*}
		and we may verify for consistency that $\wh{\lambda}_{\alpha} = \lambda_{\alpha} + \i \Upsilon_{\alpha}$.
	\end{proof}
	
	\begin{rem}
		Theorem \ref{thm:lambda_exact} tells us that we obtain an equivalence class of densities related by \eqref{eq:z_change}, for which the solution $\lambda_{\alpha}$ to \eqref{eq:Webster--Weyl} takes the form \eqref{eq:lambdsig}. Each of these densities singles out a connection $\nabla$ in which the connection one-form component $\Gamma_{\alpha \bar{\beta}}{}^{\bar{\beta}}$ in the associated adapted coframe can be identified with $\i \lambda_{\alpha}$. This falls short of a distinguished connection, but nevertheless partially mimics the behaviour of Einstein--Weyl structure in the conformal setting.
	\end{rem}

	\begin{rem}
		While Theorem \ref{thm:lambda_exact} singles out a family of unitary adapted coframes, and in particular, of contact forms, in the context of CR functions, these might not be the most convenient ones when it comes to geometric structures. Indeed, a local transverse symmetry determines a unique contact form $\theta$ for which the Reeb vector field is its generator. But in general, there will be no admissible unitary coframe $(\theta^{1})$ for this $\theta$ such that $\theta^1 = \d w$ where $w$ is a CR function.
	\end{rem}
	
	\section{Conformal geometry}\label{sec:conf_geom}
	In this section, we present some background material on conformal Lorentzian four-manifolds, which will include a discussion of the main object of this article, namely, non-shearing congruences of null geodesics. Besides providing a link between conformal and CR geometries, these are fundamental in our understanding of exact solutions to the Einstein equations, or a subsystem thereof, as will be addressed in Section \ref{sec:confsc} in the language of almost Lorentzian scales. We will also review the Petrov classification of the Weyl tensor and the Goldberg--Sachs theorem, two essential ingredients of mathematical relativity in Section \ref{sec:Petrov}.
	
	\subsection{Preliminaries}
	Let $(\wt{\mc{M}}, \wt{\mbf{c}})$ be an oriented and time-oriented conformal smooth manifold of dimension four and of Lorentzian signature $(+,+,+,-)$. Two metrics $\wt{g}$ and $\wh{\wt{g}}$ belong to the conformal class $\wt{\mbf{c}}$ if and only if
	\begin{align}\label{eq-conf-res}
		\wh{\wt{g}} & = \e^{2 \, \wt{\varphi}} \wt{g} \, , & \mbox{for some smooth function $\wt{\varphi}$ on $\wt{\mc{M}}$,}
	\end{align}
	Following \cite{Bailey1994}, for each $w \in \R$, there are naturally associated density bundles denoted $\wt{\mc{E}}[w]$, and the Levi-Civita connection of any metric $\wt{g}$ in $\wt{\mbf{c}}$ extends to a linear connection on $\wt{\mc{E}}[w]$. In particular, there is a one-to-one correspondence between metrics in $\wt{\mbf{c}}$ and sections of the bundle of \emph{conformal scales}, denoted $\wt{\mc{E}}_+[1]$, which is a choice of positive square root of $\wt{\mc{E}}[2]$. The correspondence is achieved by means of the \emph{conformal metric} $\wt{\bm{g}}$, which is a distinguished non-degenerate section of $\bigodot^2 T \wt{\mc{M}} \otimes \wt{\mc{E}} [2]$: if $\wt{\sigma} \in \Gamma(\wt{\mc{E}}_+[1])$, then $\wt{g} = \wt{\sigma}^{-2} \wt{\bm{g}}$ is the corresponding metric in $\wt{\mbf{c}}$.  We thus have a canonical identification of $T \wt{\mc{M}}$ with $T^* \wt{\mc{M}}\otimes \wt{\mc{E}} [2]$ via $\wt{\bm{g}}$. The Levi-Civita connection $\wt{\nabla}$ of $\wt{g}$ also preserves $\wt{\sigma}$, and it follows that $\wt{\bm{g}}$ is preserved by the Levi-Civita connection of any metric $\wt{g}$ in $\wt{\mbf{c}}$.

	We shall often use the abstract index notation, whereby sections of $T \wt{\mc{M}}$, respectively, $T^* \wt{\mc{M}}$ will be adorned with upper, respectively, lower minuscule Roman indices starting from the beginning of the alphabet, i.e.\ $\wt{\mc{E}}^a := T \wt{\mc{M}}$ and $\wt{\mc{E}}_{a} := T^* \wt{\mc{M}}$. Symmetrisation will be denoted by round brackets, and skew-symmetrisation by square brackets, e.g.\ $\wt{\lambda}_{(a b)} = \tfrac{1}{2} \left( \wt{\lambda}_{a b} + \wt{\lambda}_{b a} \right)$ and $\wt{\lambda}_{[a b]} = \tfrac{1}{2} \left( \wt{\lambda}_{a b} - \wt{\lambda}_{b a} \right)$. In particular, we will write $\mc{E}_{[a_1 \ldots a_k]}$ for the $k$-th exterior power of the cotangent bundle $\bigwedge^k T^* \wt{\mc{M}}$, and $\wt{\mc{E}}_{(a_1 \ldots a_k)}$ for its $k$-th symmetric power $\bigodot^k T^* \wt{\mc{M}}$.  Its trace-free part $\bigodot^k_\circ T^* \wt{\mc{M}}$ will be denoted  $\wt{\mc{E}}_{(a_1 \ldots a_k)_\circ}$.
	
	By convention, we take the Riemann tensor of a given metric $\wt{g}_{a b}$ in $\wt{\mbf{c}}$ to be defined by
	\begin{align}
		2 \wt{\nabla}_{[a} \wt{\nabla}_{b]} \wt{\alpha}_c & = - \wt{\Riem}_{a b}{}^{d}{}_{c} \wt{\alpha}_d \, , &  \wt{\alpha}_a \in \Gamma(\wt{\mc{E}}_a[w]) \, . \label{eq:Riem}
	\end{align}
	It decomposes as
	\begin{align}\label{eq-Riem_decomp}
		\wt{\Riem}_{a b c d} & = \wt{\Weyl}_{a b c d} + 4 \wt{\bm{g}}_{[a|[c} \wt{\Rho}_{d]|b]} \, ,
	\end{align}
	where $\wt{\Weyl}_{a b c d}$ is the conformally invariant \emph{Weyl tensor} and $\wt{\Rho}_{a b}$ the \emph{Schouten tensor}, which is related to the \emph{Ricci tensor} $\wt{\Ric}_{a b} = \wt{\Riem}_{c a}{}^{c}{}_{b}$ and the \emph{Ricci scalar} $\wt{\Sc} = \wt{\Ric}_{a}{}^{a}$ by
	\begin{align*}
		\wt{\Rho}_{a b} & = \tfrac{1}{2} \left( \wt{\Ric}_{a b} - \tfrac{1}{6} \wt{\Sc} \wt{\bm{g}}_{a b} \right) \, .
	\end{align*}
	The \emph{Schouten scalar} is defined to be $\wt{\Rho} := \wt{\Rho}_{a b} \wt{\bm{g}}^{a b} = \tfrac{1}{6} \wt{\Sc} $. The \emph{Cotton tensor} and the \emph{Bach tensor} are given by
	\begin{align}
		\wt{\Cot}_{a b c} & = 2 \wt{\nabla}_{[b} \wt{\Rho}_{c] a} \, , \label{eq:Cotton_Rho} \\
		\wt{\Bach}_{a b} & = - \wt{\nabla}^{c} \wt{\Cot}_{a b c} + \wt{\Rho}^{c d} \wt{\Weyl}_{a c b d} \, , \label{eq:Bach}
	\end{align}
	respectively. The former vanishes whenever $\wt{g}$ is an Einstein metric, while the latter is a conformally invariant obstruction to the existence of an Einstein metric. By the Bianchi identities, we have
	\begin{align}\label{eq:Y=divW}
		\wt{\Cot}_{a b c} = \wt{\nabla}^d \wt{\Weyl}_{d a b c} \, .
	\end{align}
	
	For any two metrics $\wt{g}$ and $\wh{\wt{g}}$ in $\wt{\mbf{c}}$ related by \eqref{eq-conf-res}, their respective Levi-Civita connections $\wt{\nabla}$ and $\wh{\wt{\nabla}}$ are related by
	\begin{align}
		\widehat{\wt{\nabla}}_a \wt{\alpha}_b & = \wt{\nabla}_a \wt{\alpha}_b + (w - 1) \wt{\Upsilon}_a \wt{\alpha}_b - \wt{\Upsilon}_b \wt{\alpha}_a + \wt{\Upsilon}_c \wt{\alpha}^c \wt{\bm{g}}_{a b} \, , & \wt{\alpha}_a \in \Gamma(\wt{\mc{E}}_a[w])  \label{eq-conf-tr-form}
	\end{align}
	where $\wt{\Upsilon}_a := \wt{\nabla}_a \wt{\varphi}$. The Schouten tensor, Schouten scalar and Cotton tensor transform as
	\begin{align}
		\widehat{\wt{\Rho}}_{a b} &  = \wt{\Rho}_{a b} - \wt{\nabla}_{a} \wt{\Upsilon}_{b} + \wt{\Upsilon}_{a} \wt{\Upsilon}_{b} - \tfrac{1}{2} \wt{\Upsilon}^{c} \wt{\Upsilon}_{c} \wt{\bm{g}}_{a b} \, , &
		\widehat{\wt{\Rho}} &  = \wt{\Rho} - \wt{\nabla}^{a} \wt{\Upsilon}_{a} - \wt{\Upsilon}^{c} \wt{\Upsilon}_{c} \, , \label{eq:Rho_transf} \\
		\wh{\wt{\Cot}}_{a b c} & = \wt{\Cot}_{a b c} + \wt{\Upsilon}^d \wt{\Weyl}_{d a b c} \, ,
	\end{align}
	respectively. 	
	
	\subsection{Twisting non-shearing congruences of null geodesics}\label{sec-optical}
	We briefly review some notions that have been introduced and studied variously in \cite{Trautman1984,Trautman1985,Robinson1985,Penrose1986,Robinson1986,Robinson1989,Trautman1999,Fino2023a,Fino2023}. We shall refer to a null line distribution $\wt{K}$ on $(\wt{\mc{M}},\wt{\mbf{c}})$ as an \emph{optical structure}, and the triple $(\wt{\mc{M}},\wt{\mbf{c}},\wt{K})$ as an \emph{optical geometry}. The orientation and time-orientation on $(\wt{\mc{M}},\wt{\mbf{c}})$ induces an orientation on $\wt{K}$. It is a subbundle of its orthogonal complement  $\wt{K}^\perp$ with respect to $\wt{\mbf{c}}$, i.e.\ $\wt{K} \subset \wt{K}^\perp \subset T \wt{\mc{M}}$, and the \emph{screen bundle} $\wt{H} := \wt{K}^\perp / \wt{K}$ of $\wt{K}$ inherits a conformal structure of Riemannian signature from $\wt{\mbf{c}}$.
	
	Since $\wt{H}$ has rank two it admits a bundle complex structure $\wt{J}$, and thus a splitting ${}^\C \wt{H} = \wt{H}^{(1,0)} \oplus \wt{H}^{(0,1)}$ into the eigenbundles of $\wt{J}$. One can show that there is a unique complex rank-two distribution $\wt{N}$, totally null with respect to $\wt{\mbf{c}}$, such that ${}^\C \wt{K} = \wt{N} \cap \ol{\wt{N}}$ and $\wt{N} / {}^\C \wt{K} = \wt{H}^{(0,1)}$. Conversely, any totally null complex rank-two distribution arises in this way. We refer to the pair $(\wt{N},\wt{K})$ as the \emph{almost Robinson structure} associated to $\wt{K}$.
	
	Any nowhere vanishing section $\wt{k}$ of $\wt{K}$ generates a \emph{congruence of null curves} $\wt{\mc{K}}$ tangent to $\wt{K}$. These curves are oriented since $\wt{K}$ is oriented. The local leaf space $\mc{M}$ of $\wt{\mc{K}}$ inherits geometric structures from $(\wt{\mc{M}},\wt{\mbf{c}},\wt{K})$ under certain conditions. The weakest of all is when the curves of $\wt{\mc{K}}$ are geodesics, that is,
	\begin{align}\label{eq:Lie_kap}
		\mathsterling_{\wt{k}} \wt{\kappa}  (\wt{v})  & = 0 \, , & \wt{v} & \in \Gamma(\wt{K}^\perp) \, .
	\end{align}
	where $\wt{\kappa} = \wt{g} (\wt{k},\cdot)$ for some metric $\wt{g}$ in $\wt{\mbf{c}}$. This means that $\wt{H}$ descends to a rank-two distribution $H$ on $\mc{M}$.
	
	We shall suppose that in addition,  $\wt{\mc{K}}$ is \emph{non-shearing},
	that is, the conformal structure on $\wt{H}$ is preserved along the flow of $\wt{k}$, i.e.\
	\begin{align}
		\mathsterling_{\wt{k}} \wt{g} (\wt{v} , \wt{w} ) & = \wt{\epsilon} \wt{g} (\wt{v} , \wt{w} )  \, , & \wt{v}, \wt{w} \in \Gamma(\wt{K}^\perp) \, , \label{eq-nonshear} 
	\end{align}
	for some smooth function $\wt{\epsilon}$. Then $H$ inherits a bundle conformal structure from $\wt{H}$, and thus a bundle complex structure $J$. In fact, the property that $\wt{K}$ is non-shearing is equivalent to the associated almost Robinson structure $(\wt{N},\wt{K})$ being involutive, in which case we refer to $(\wt{N},\wt{K})$ as a \emph{Robinson structure}. The subbundle $\wt{H}^{(1,0)}$ descends to the $\i$-eigenbundle $H^{(1,0)}$ of $J$. This makes $(\mc{M},H,J)$ a CR three-manifold.
	
	Our last requirement will be that $\wt{\mc{K}}$ is \emph{twisting}, i.e.\ $\wt{K}^\perp$ is non-integrable: for any generator $\wt{k}$ and any metric $\wt{g}$ in $\wt{\mbf{c}}$, the one-form $\wt{\kappa} = \wt{g} (\wt{k},\cdot)$ satisfies
	\begin{align}
		\d \wt{\kappa} ( \wt{v} , \wt{w}) & \neq 0 \, , & \wt{v}, \wt{w} & \in \Gamma(\wt{K}^\perp) \, .
	\end{align}
	By virtue of \eqref{eq:Lie_kap}, the leaf space $(\mc{M},H,J)$ of $\wt{\mc{K}}$ is then a contact CR three-manifold.
	
	\begin{rem}
		There are two further notions that are useful in relation to the higher-dimensional story \cite{Fino2023}:
		\begin{itemize}
			\item  We refer to $(\wt{N},\wt{K})$ as \emph{nearly Robinson} if $[\wt{K},\wt{N}] \subset \wt{N}$ --- this implies that $\wt{K}$ is tangent to a congruence of null geodesics. But in dimension four, a nearly Robinson structure is necessarily involutive, and thus equivalent to a non-shearing congruence of null geodesics.
			\item We say that the twist \emph{induces} an almost Robinson structure if for any $\wt{\kappa} \in \Gamma(\Ann(\wt{K}^\perp))$,  the two-form $\d \wt{\kappa}$ restricted to $\wt{H}$ defines a bundle complex structure $\wt{J}$. In dimension four, any twisting congruence of null geodesics defines a twist-induced almost Robinson structure.
		\end{itemize}
	\end{rem}
	
	We summarise the previous remark as a lemma, which will allow us to refer freely to the results of \cite{TaghaviChabert2023b}.
	\begin{lem}
		A four-dimensional optical geometry with twisting non-shearing congruence of null geodesics is the same as a twist-induced (nearly) Robinson geometry with non-shearing congruence.
	\end{lem}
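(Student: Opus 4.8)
The plan is to deduce the equivalence directly from the two observations collected in the preceding Remark, the point being that both sides of the asserted identification share a non-shearing congruence, so that only the involutivity type of the associated null distribution and the nature of the twist need to be matched. First I would recall from the discussion surrounding \eqref{eq-nonshear} that for a geodesic congruence the non-shearing condition is equivalent to the associated almost Robinson structure $(\wt{N},\wt{K})$ being involutive, $[\wt{N},\wt{N}]\subset\wt{N}$, i.e.\ a genuine Robinson structure.

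The key dimensional input is that in dimension four $\wt{N}$ has complex rank two. Locally one may frame it as $\{\wt{k},\wt{m}\}$, with $\wt{k}$ spanning ${}^\C\wt{K}$ and $\wt{m}$ projecting onto a generator of $\wt{H}^{(0,1)}=\wt{N}/{}^\C\wt{K}$; the only bracket which can fail to lie in $\wt{N}$ is then $[\wt{k},\wt{m}]$. Hence involutivity $[\wt{N},\wt{N}]\subset\wt{N}$ is equivalent to the nearly Robinson condition $[\wt{K},\wt{N}]\subset\wt{N}$. I would also note that either condition forces the congruence to be geodesic: since $\wt{k}$ is real, the nearly Robinson condition and its complex conjugate give $[\wt{k},\wt{m}]\in\wt{N}$ and $[\wt{k},\ol{\wt{m}}]\in\ol{\wt{N}}$, so the flow of $\wt{k}$ preserves ${}^\C\wt{K}^\perp=\wt{N}+\ol{\wt{N}}$ and hence $\wt{K}^\perp$, a condition equivalent to \eqref{eq:Lie_kap}.

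It remains to match the twist. The annihilator $\Ann(\wt{K}^\perp)$ is a real line bundle spanned by $\wt{\kappa}=\wt{g}(\wt{k},\cdot)$, for any generator $\wt{k}$ and any metric $\wt{g}$ in $\wt{\mbf{c}}$. As $\wt{k}$ is null, Cartan's formula reduces the Lie derivative to $\mathsterling_{\wt{k}}\wt{\kappa}=\wt{k}\hook\d\wt{\kappa}$, so the geodesic condition \eqref{eq:Lie_kap} says precisely that $\d\wt{\kappa}(\wt{k},\wt{v})=0$ for all $\wt{v}\in\wt{K}^\perp$; thus $\d\wt{\kappa}$ descends to a two-form on the rank-two screen bundle $\wt{H}=\wt{K}^\perp/\wt{K}$. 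Twisting says this descended two-form is non-zero, hence non-degenerate since $\wt{H}$ has rank two, and together with the Riemannian conformal structure on $\wt{H}$ it determines, upon raising an index and normalising, an almost complex structure --- the twist-induced one. Conversely, the twist-induced hypothesis already presupposes that this two-form is non-degenerate, i.e.\ that the congruence twists, so twisting and twist-induced are equivalent.

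Assembling these, both implications follow. The step I expect to need the most care is checking that the almost complex structure produced by the twist actually coincides with the complex structure $\wt{J}$ carried by the screen bundle of the Robinson structure, rather than its conjugate: a rank-two Riemannian bundle admits exactly two orientation-compatible complex structures, and one must verify that the orientation and time-orientation fixing $\wt{K}$ pin down the sign of $\d\wt{\kappa}|_{\wt{H}}$ so that its $(0,1)$-eigenbundle is $\wt{H}^{(0,1)}=\wt{N}/{}^\C\wt{K}$. Once this sign bookkeeping is in place, the equivalence is immediate from the rank-two bracket count above.
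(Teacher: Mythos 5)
Your argument is correct and follows exactly the route the paper intends: the lemma is stated as a summary of the two observations in the preceding Remark (nearly Robinson $\Leftrightarrow$ involutive in dimension four via the rank-two bracket count, and twisting $\Leftrightarrow$ twist-induced via non-degeneracy of $\d\wt{\kappa}$ on the rank-two screen bundle), and the paper supplies no further proof. Your closing caveat about matching the twist-induced complex structure with $\wt{J}$ rather than its conjugate is a fair point of care that the paper itself leaves implicit in its orientation conventions.
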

	
	The tensorial quantities associated to a null geodesic congruence thus far considered, namely, the shear and twist, are conformally invariant, and, up to rescalings, do not depend on the choice of generator of $\wt{\mc{K}}$. We now summarise the discussion contained in \cite{TaghaviChabert2022,Fino2023a,Fino2023} concerning the relation between pseudo-Hermitian structures in $(\mc{M},H,J)$ and metrics in $\wt{\mbf{c}}$. To each pseudo-Hermitian structure $\theta$ with Levi form $h$, there is a unique associated metric $\wt{g}_{\theta}$ in $\wt{\mbf{c}}$ with the property that $\wt{\mc{K}}$ is \emph{non-expanding} with respect to it, i.e.\ $\mathsterling_{\wt{k}} \wt{g}_{\theta} (\wt{v},\wt{v}) = 0$ for any $\wt{k} \in \Gamma(\wt{K})$, $\wt{v} \in \Gamma(\wt{K}^\perp)$, and any such metric arises in this way. This defines a conformal subclass of metrics, which we shall denote $\accentset{n.e.}{\wt{\mbf{c}}}$. In addition, there is a unique nowhere vanishing vector field $\wt{k}$ tangent to $\wt{\mc{K}}$ such that, for each metric $\wt{g}_{\theta} \in \accentset{n.e.}{\wt{\mbf{c}}}$, we have $\wt{\kappa} := \wt{g}_{\theta} (\wt{k},\cdot) = 2 \theta$, and for each such choice, there is a unique nowhere vanishing vector field $\wt{\ell}$ satisfying $\wt{\ell} \hook \wt{\kappa} = 1$ and $\wt{\ell} \hook \d \wt{\kappa} = 0$ --- see \cite[Proposition~4.30]{Fino2023a}. Each $\wt{g}_{\theta}$ can then be uniquely expressed as
	\begin{subequations}\label{eq:can_met}
		\begin{align}\label{eq:can_met1}
			\wt{g}_{\theta} & = 4 \theta \odot \wt{\lambda} + h \, ,
		\end{align}
		where, with a slight abuse of notation, $h$ is the metric induced from the Levi form of $\theta$, and $\wt{\lambda} = \wt{g}(\wt{\ell},\cdot)$. Choosing an admissible coframe $(\theta^{\alpha})$ for $\theta$, and an affine parameter $\phi$ along the geodesics of $\wt{k}$ so that $\wt{k}=\parderv{}{\phi}$, we can write
		\begin{align}\label{eq:can_met2}
			h & = 2 h_{\alpha \bar{\beta}} \theta^{\alpha} \odot \ol{\theta}{}^{\bar{\beta}} \, , & 
			\wt{\lambda} & = \d \phi + \wt{\lambda}_{\alpha} \theta^{\alpha} + \wt{\lambda}_{\bar{\alpha}} \ol{\theta}{}^{\bar{\alpha}} + \wt{\lambda}_0 \theta \, ,
		\end{align}
	\end{subequations}
	for some complex-valued functions $\wt{\lambda}_{\alpha}$ and $\wt{\lambda}_{\bar{\alpha}}=\ol{\wt{\lambda}_{\alpha}}$, and real-valued function $\wt{\lambda}_0$ on $\wt{\mc{M}}$. Differentiation with respect to $\phi$ will be denoted by a dot, i.e.\ $\dot{\wt{f}} := \mathsterling_{\wt{k}} \wt{f}$ for any smooth tensor-valued function $\wt{f}$ on $\wt{\mc{M}}$, and this notation will be extended to tensor components. Choosing $(\theta^{\alpha},\ol{\theta}{}^{\bar{\alpha}})$ to be unitary so that $h_{\alpha \bar{\beta}} = \delta_{\alpha \bar{\beta}}$, and bearing in mind that indices only take the value $1$, we then see that to each choice of metric $\wt{g}_{\theta}$ in $\accentset{n.e.}{\wt{\mbf{c}}}$ corresponds a coframe $(\wt{\kappa},\theta^{\alpha},\ol{\theta}{}^{\bar{\alpha}},\wt{\lambda})$ adapted to $(\wt{N},\wt{k})$, unique up to a phase transformation of $\theta^{\alpha}$.
	
	Let $\wt{g}{}_{\theta}$ and $\wt{g}{}_{\wh{\theta}}$ be two metrics in $\accentset{n.e.}{\wt{\mbf{c}}}$ corresponding to two pseudo-Hermitian structures $\theta$ and $\wh{\theta}$, so that $\wt{g}{}_{\wh{\theta}} = \e^\varphi \wt{g}{}_{\theta}$ and $\wh{\theta} = \e^{\varphi} \theta$  for some smooth function $\varphi$ on $\mc{M}$. Then the corresponding coframes $(\wt{\kappa},\theta^{\alpha},\ol{\theta}{}^{\bar{\alpha}},\wt{\lambda})$ and $(\wh{\wt{\kappa}},\wh{\theta}^{\alpha},\ol{\wh{\theta}}{}^{\bar{\alpha}},\wh{\wt{\lambda}})$ are related by
	\begin{gather}\label{eq:adFrch}
		\begin{aligned}
			\wh{\wt{\kappa}} = \e^{\varphi} \wt{\kappa} \, , \qquad 
			\wh{\theta}{}^{\alpha} = \theta^{\alpha} + \i \Upsilon^{\alpha} \theta \, , \qquad 		\ol{\wh{\theta}}{}^{\bar{\alpha}} = \ol{\theta}{}^{\bar{\alpha}} - \i \Upsilon{}^{\bar{\alpha}} \theta \, , \\
			\wh{\wt{\lambda}} = \wt{\lambda} + \tfrac{1}{2} \i \Upsilon_{\alpha} \theta^{\alpha} - \tfrac{1}{2} \i \Upsilon_{\bar{\alpha}} \ol{\theta}{}^{\bar{\alpha}} - \tfrac{1}{2} \Upsilon_{\alpha} \Upsilon^{\alpha} \theta \, ,
		\end{aligned}
	\end{gather}
	where $\Upsilon_{\alpha} = \nabla_{\alpha} \varphi$, in agreement with equation \eqref{eq:contact_chg}. More details can be found in \cite{TaghaviChabert2023b}.
	
	We shall henceforth denote an optical geometry with twisting non-shearing congruence of null geodesics by the triple $(\wt{\mc{M}},\wt{\mbf{c}},\wt{k})$. The perturbed Fefferman spaces to be introduced in Section \ref{sec:Fefferman} are a special case of these.

	\subsection{Almost Einstein scales and generalisations}\label{sec:confsc}
	Recall that a metric $\wt{g}$ in $\wt{\mbf{c}}$ is said to be \emph{Einstein} if its Ricci tensor satisfies
	\begin{align*}
		\wt{\Ric} & = \wt{\Lambda} \wt{g} \, , & \mbox{for some constant $\wt{\Lambda}$.}
	\end{align*}
	In the context of optical geometries, one can make further definitions whereby the Einstein condition is weakened. These were already introduced in \cite{TaghaviChabert2023b}, but on account of the features specific to dimension four, we restate them here in a slightly different way.
	\begin{defn}\label{defn:redEins}
		Let $(\wt{\mc{M}},\wt{\mbf{c}},\wt{K})$ be an optical structure with associated almost Robinson structure $(\wt{N},\wt{K})$. We say that a metric $\wt{g}$ in $\wt{\mbf{c}}$ is:
		\begin{itemize}
			\item a \emph{weakly half-Einstein} metric if its Ricci tensor satisfies
			\begin{align}\label{eq:wk_hlf_Einstein}
				\wt{\Ric}_{a b} \wt{v}^a \wt{v}^b & = 0 \, , & \mbox{for any $\wt{v} \in \Gamma(\wt{N})$;}
			\end{align}
			\item a \emph{half-Einstein} metric if it is weakly half-Einstein and has constant Ricci scalar curvature;
			\item a \emph{pure radiation} metric if it is half-Einstein and the trace-free part of the Ricci tensor satisfies
			\begin{subequations}\label{eq:puradvac}
				\begin{align}\label{eq:puradvac1}
					\wt{\kappa}_{[a} \left( \wt{\Ric}_{b] c} \right)_\circ & = 0 \, ,  & \mbox{where $\wt{\kappa}_{a} = \wt{g}_{a b} \wt{k}^b$ for some $\wt{k} \in \Gamma(\wt{K})$,}
				\end{align}
				i.e.\
				\begin{align}\label{eq:puradvac2}
					\wt{\Ric}_{a b} & = \wt{\Lambda} \wt{g}_{a b} + \wt{\Phi} \wt{\kappa}_{a} \wt{\kappa}_{b} \, ,  & \mbox{where $\wt{\kappa}_{a} = \wt{g}_{a b} \wt{k}^b$ for some $\wt{k} \in \Gamma(\wt{K})$,}
				\end{align}
				for some constant $\wt{\Lambda}$ and smooth function $\wt{\Phi}$ on $\wt{\mc{M}}$.
			\end{subequations}
		\end{itemize}
	\end{defn}
	
	For our considerations, these conditions are somewhat too strong. For this reason, we shall use the notion of \emph{almost Lorentzian scale}, referred to as \emph{almost pseudo-Riemannian structure} in \cite{Curry2018}: This is a section $\wt{\sigma}$ of $\wt{\mc{E}}[1]$ with zero set $\wt{\mc{Z}}$ such that $\wt{\nabla}_{a} \wt{\sigma} \neq 0$ on $\wt{\mc{Z}}$ where $\wt{\nabla}$ is the Levi-Civita connection compatible with some metric in $\wt{\mbf{c}}$. That this definition does not depend on the choice of metric follows from the transformation rule $\wh{\wt{\nabla}}_{a} \wt{\sigma} = \wt{\nabla}_{a} \wt{\sigma} + \wt{\Upsilon}_{a} \wt{\sigma}$, and the fact that the second term vanishes on $\wt{\mc{Z}}$. The density $\wt{\sigma}$ defines a metric $\wt{g} = \wt{\sigma}^{-2} \wt{\bm{g}}$ in $\wt{\mbf{c}}$, but regular only off $\wt{\mc{Z}}$.
	
	First introduced by Gover in \cite{Gover2005a} --- see also \cite{LeBrun1985} --- an \emph{almost Einstein scale} is an almost Lorentzian scale $\wt{\sigma}$ that satisfies the conformally invariant equation
	\begin{align}
		\left( \wt{\nabla}_{a} \wt{\nabla}_{b} \wt{\sigma} + \wt{\Rho}_{a b} \wt{\sigma} \right)_\circ = 0 \, . \label{eq:alEinstein}		
	\end{align}
	If $\wt{\sigma}$ has empty zero set $\wt{\mc{Z}}$, then it is referred to as an \emph{Einstein scale}, and it defines a (global) Einstein metric. Otherwise, if $\wt{\mc{Z}}$ is non-empty, then off $\wt{\mc{Z}}$, the metric $\wt{g} = \wt{\sigma}^{-2} \wt{\bm{g}}$ is Einstein. A useful interpretation of $\wt{\mc{Z}}$ is as the \emph{conformal infinity} of some Lorentzian Einstein manifold --- see \cite{Curry2018} for more details.

	We can readily generalise this idea to the metrics introduced in Definition \ref{defn:redEins}.
	\begin{defn}\label{defn:aESc}
		Let $(\wt{\mc{M}},\wt{\mbf{c}},\wt{K})$ be an optical geometry with associated almost Robinson structure $(\wt{N},\wt{K})$. We say that a density $\wt{\sigma} \in \Gamma(\wt{\mc{E}}[1])$ is 
		\begin{itemize}
			\item an \emph{almost weakly half-Einstein scale} if it satisfies
			\begin{align}\label{eq:alwkEins}
				&	\left( \wt{\nabla}_{a} \wt{\nabla}_{b} \wt{\sigma} + \wt{\Rho}_{a b} \wt{\sigma} \right)_\circ = \tfrac{1}{2} \wt{\Phi}_{a b} \wt{\sigma} \, , 
			\end{align}
			for some trace-free symmetric tensor $\wt{\Phi}_{a b}$ satisfying
			\begin{align}\label{eq:RiccwkEins}
				\wt{\Phi}_{a b} \wt{v}^a \wt{v}^b & = 0 \, , & \mbox{for any $\wt{v}^a \in \Gamma (\wt{N})$;}
			\end{align}
			\item an \emph{almost half-Einstein scale} if it is an almost weakly half-Einstein scale, and
			\begin{align}\label{eq:alcstRicSc}
				& \wt{\Phi}_{a}{}^{b} \wt{\nabla}_{b} \wt{\sigma} - \tfrac{1}{2} \wt{\sigma} \wt{\nabla}_{b} \wt{\Phi}_{a}{}^{b} = 0 \, ;
			\end{align}
			\item an \emph{almost pure radiation scale} if it is an almost half-Einstein scale, and
			\begin{align}\label{eq:purad}
				& \wt{\Phi}_{a [b} \wt{\bm{\kappa}}_{c]}  = 0 \, , & \mbox{where $\wt{\bm{\kappa}}_{a} = \wt{\bm{g}}_{a b} \wt{k}^{b}$ for some $\wt{k} \in \Gamma(\wt{K})$.}
			\end{align}
		\end{itemize} 
	\end{defn}
	All the conditions given in the above definition are conformally invariant.
	\begin{rem}
		Note that condition \eqref{eq:purad} is equivalent to
		\begin{align}\label{eq:purad_alt}
			\wt{\Phi}_{a b} & = \wt{\bm{\Phi}} \wt{\bm{\kappa}}_{a} \wt{\bm{\kappa}}_{b} \, , & \mbox{for some $\wt{\bm{\Phi}} \in \Gamma(\wt{\mc{E}}(-4))$.}
		\end{align}
	\end{rem}
	The relation between Definition \ref{defn:redEins} and Definition \ref{defn:aESc} is given below. The proof is already given in \cite{TaghaviChabert2023b}.
	\begin{prop}\label{prop:scale2metric}
		Let $(\wt{\mc{M}},\wt{\mbf{c}},\wt{K})$ be a four-dimensional optical geometry, and let $\wt{\sigma} \in \Gamma(\mc{E}[1])$ so that $\wt{g} = \wt{\sigma}^{-2} \wt{\bm{g}} \in \wt{\mbf{c}}$ is a smooth metric off the zero set of $\wt{\sigma}$. Then
		\begin{enumerate}
			\item $\wt{\sigma}$ is an almost weakly half-Einstein scale if and only if $\wt{g}$ is a weakly half-Einstein metric;
			\item $\wt{\sigma}$ is an almost half-Einstein scale if and only if $\wt{g}$ is a half-Einstein metric;
			\item $\wt{\sigma}$ is an almost pure radiation scale if and only if $\wt{g}$ is a pure radiation metric.
		\end{enumerate}
	\end{prop}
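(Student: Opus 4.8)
The plan is to establish all three equivalences at once by reducing to a single distinguished gauge. Since every condition appearing in Definitions \ref{defn:redEins} and \ref{defn:aESc} is conformally invariant, it suffices to verify each equivalence in the Lorentzian scale $\wt{g} = \wt{\sigma}^{-2} \wt{\bm{g}}$ cut out by $\wt{\sigma}$ itself, regarded off the zero set $\wt{\mc{Z}}$. The decisive simplification is that the Levi-Civita connection $\wt{\nabla}$ of $\wt{g}$ preserves $\wt{\sigma}$: in this scale $\wt{\nabla}_a \wt{\sigma} = 0$ and $\wt{\sigma}$ is represented by a non-vanishing constant that may be divided out away from $\wt{\mc{Z}}$.

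For part (1), I would substitute $\wt{\nabla}_a \wt{\sigma} = 0$ into \eqref{eq:alwkEins}, collapsing it to $(\wt{\Rho}_{a b})_\circ = \tfrac{1}{2} \wt{\Phi}_{a b}$ off $\wt{\mc{Z}}$. Using $(\wt{\Rho}_{a b})_\circ = \tfrac{1}{2} (\wt{\Ric}_{a b})_\circ$, which follows from $\wt{\Rho}_{a b} = \tfrac{1}{2}(\wt{\Ric}_{a b} - \tfrac{\wt{\Sc}}{6} \wt{g}_{a b})$, this identifies the weightless tensor $\wt{\Phi}_{a b}$ with $(\wt{\Ric}_{a b})_\circ$. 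Because every $\wt{v} \in \Gamma(\wt{N})$ is $\wt{\bm{g}}$-null, the trace part of the Ricci tensor drops out of $\wt{\Phi}_{a b} \wt{v}^a \wt{v}^b = (\wt{\Ric}_{a b})_\circ \wt{v}^a \wt{v}^b = \wt{\Ric}_{a b} \wt{v}^a \wt{v}^b$, so \eqref{eq:RiccwkEins} holds if and only if \eqref{eq:wk_hlf_Einstein} does. The converse direction defines $\wt{\Phi}_{a b} := (\wt{\Ric}_{a b})_\circ$ from a weakly half-Einstein metric and reads the same computation backwards.

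For part (2), in the same scale the term $\wt{\Phi}_a{}^b \wt{\nabla}_b \wt{\sigma}$ of \eqref{eq:alcstRicSc} vanishes, leaving $\wt{\nabla}^b \wt{\Phi}_{a b} = 0$. Writing $\wt{\Phi}_{a b} = (\wt{\Ric}_{a b})_\circ = \wt{\Ric}_{a b} - \tfrac{\wt{\Sc}}{4} \wt{g}_{a b}$ and invoking the contracted Bianchi identity $\wt{\nabla}^b \wt{\Ric}_{a b} = \tfrac{1}{2} \wt{\nabla}_a \wt{\Sc}$, I would get $\wt{\nabla}^b \wt{\Phi}_{a b} = \tfrac{1}{4} \wt{\nabla}_a \wt{\Sc}$, so the condition is equivalent to constancy of $\wt{\Sc}$, that is, to the half-Einstein property. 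Part (3) then follows from \eqref{eq:purad_alt}: the almost pure radiation condition reads $\wt{\Phi}_{a b} = \wt{\bm{\Phi}} \wt{\bm{\kappa}}_a \wt{\bm{\kappa}}_b$, which in the scale $\wt{g}$ becomes $(\wt{\Ric}_{a b})_\circ = \wt{\Phi} \wt{\kappa}_a \wt{\kappa}_b$; since $\wt{\kappa}$ is null the right-hand side is automatically tracefree, and combining with the constancy of $\wt{\Sc}$ already in force rearranges this to \eqref{eq:puradvac2} with $\wt{\Lambda} = \tfrac{1}{4}\wt{\Sc}$.

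The only genuinely delicate point is the conformal bookkeeping: one must track weights so that the identification of the weight-zero density $\wt{\Phi}_{a b}$ with the curvature tensor $(\wt{\Ric}_{a b})_\circ$ in the scale $\wt{g}$ is consistent, and confirm that dividing out the constant representing $\wt{\sigma}$ is legitimate precisely off $\wt{\mc{Z}}$. Everything else reduces to the substitutions above, and the conformal invariance of the defining equations guarantees that an equivalence verified in the distinguished scale holds throughout $\wt{\mbf{c}}$.
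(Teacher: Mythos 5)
Your argument is correct and is essentially the expected one: the paper defers the proof to \cite{TaghaviChabert2023b}, but the natural route is precisely your evaluation in the distinguished scale $\wt{g}=\wt{\sigma}^{-2}\wt{\bm{g}}$, where $\wt{\nabla}_a\wt{\sigma}=0$ identifies $\wt{\Phi}_{ab}$ with $(\wt{\Ric}_{ab})_\circ$, nullity of sections of $\wt{N}$ disposes of the trace term, and the contracted Bianchi identity turns \eqref{eq:alcstRicSc} into $\tfrac14\wt{\nabla}_a\wt{\Sc}=0$; all of these steps check out. The one point you gesture at but should state explicitly is that in the converse directions with $\wt{\mc{Z}}\neq\emptyset$ the tensor $\wt{\Phi}_{ab}=2\wt{\sigma}^{-1}\left(\wt{\nabla}_a\wt{\nabla}_b\wt{\sigma}+\wt{\Rho}_{ab}\wt{\sigma}\right)_\circ$ is a priori defined only off the zero set, so either the equivalence is to be read there or one must verify the smooth extension of $\wt{\Phi}_{ab}$ across $\wt{\mc{Z}}$ (as the paper does in the pure radiation case in Lemma \ref{lem:purad_Ric}).
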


	\subsection{Principal null directions and the Petrov types of the Weyl tensor}\label{sec:Petrov}
	Let $(\wt{\mc{M}}, \wt{\mbf{c}},\wt{K})$  be a four-dimensional optical geometry.  We shall adapt a number of definitions from mathematical relativity to the present setting --- see \cite{Penrose1986} for details. We first work at a point $\wt{p}$ of $\wt{\mc{M}}$ and assume that the Weyl tensor $\wt{\Weyl}$ is non-zero at $\wt{p}$. We say that $\wt{K}_{\wt{p}}$ is a
	\emph{principal null direction (PND)} of the Weyl tensor $\wt{\Weyl}$ at $\wt{p}$ if
	\begin{align}
		\wt{\Weyl}(\wt{k},\wt{v},\wt{k},\wt{v}) & = 0 \, , & \mbox{for any $\wt{k} \in \Gamma(\wt{K}_{\wt{p}})$, $\wt{v} \in \Gamma(\wt{K}_{\wt{p}}^\perp)$,} \label{eq:1PND}
	\end{align}
	and it is said to be \emph{repeated} if it satisfies
	\begin{align}
		\wt{\Weyl}(\wt{k},\wt{v},\wt{k},\cdot) & = 0 \, , & \mbox{for any $\wt{k} \in \Gamma(\wt{K}_{\wt{p}})$, $\wt{v} \in \Gamma(\wt{K}_{\wt{p}}^\perp)$.} \label{eq:2PND}
	\end{align}
	Further degeneracy conditions are possible, namely,
	\begin{align}
		\wt{\Weyl}(\wt{k},\wt{v},\wt{w},\cdot) & = 0 \, , & \mbox{for any $\wt{k} \in \Gamma(\wt{K}_{\wt{p}})$, $\wt{v}, \wt{w} \in \Gamma(\wt{K}_{\wt{p}}^\perp)$,} \label{eq:3PND} \\
		\wt{\Weyl}(\wt{k},\cdot,\cdot,\cdot) & = 0 \, , & \mbox{for any $\wt{k} \in \Gamma(\wt{K}_{\wt{p}})$.} \label{eq:4PND}
	\end{align}
	In this case, we refer to a PND $\wt{K}_{\wt{p}}$ at $\wt{p}$ as
	\begin{itemize}
		\item  \emph{simple} if it satisfies \eqref{eq:1PND} but not \eqref{eq:2PND},
		\item  \emph{double} if it satisfies \eqref{eq:2PND} but not \eqref{eq:3PND},
		\item \emph{triple} if it satisfies \eqref{eq:3PND} but not \eqref{eq:4PND},
		\item \emph{quadruple} if it satisfies \eqref{eq:4PND}.
	\end{itemize}
	The reader may check that this is in agreement with the established terminology by comparing the above defining equations with, for instance, \cite[Table~8.1.4]{Penrose1986}. We can naturally extend these definitions to the neighbourhood $\wt{\mc{U}}$ of any given point in $\wt{\mc{M}}$ (or $\wt{\mc{M}}$ itself): thus, $\wt{K}$ will be said to be a \emph{simple/double/triple/quadruple} on $\wt{\mc{U}}$ if it is so at every point of $\wt{\mc{U}}$.
	
	On the other hand, at any point of a four-dimensional Lorentzian conformal manifold $(\wt{\mc{M}},\wt{\mbf{c}})$, where it is is non-zero, the Weyl tensor $\wt{\Weyl}$ always determines at most four distinct PNDs, and thus locally, at most four distinct optical structures. The multiplicity of each of these PNDs form the basis of the \emph{Petrov classification} of the Weyl tensor \cite{Petrov1954}. We say that $\wt{\Weyl}$ is \emph{algebraically special} at $\wt{p} \in \wt{\mc{M}}$ if it admits a repeated PND there, and \emph{algebraically general} otherwise. In more details, now allowing for the Weyl tensor to vanish, $\wt{\Weyl}$ is said to be
	\begin{itemize}
		\item of \emph{Petrov type I} at $\wt{p}$ if it does not admit any repeated PNDs at $\wt{p}$,
		\item of \emph{Petrov type II} at $\wt{p}$ if it admits a single double PND at $\wt{p}$,
		\item of \emph{Petrov type D} at $\wt{p}$ if it admits a pair of distinct double PNDs at $\wt{p}$,
		\item of \emph{Petrov type III} at $\wt{p}$ if it admits a triple PND at $\wt{p}$,
		\item of \emph{Petrov type N} at $\wt{p}$ if it admits a quadruple PND at $\wt{p}$,
		\item of \emph{Petrov type O} at $\wt{p}$ if it vanishes at $\wt{p}$.
	\end{itemize}
	Again, these definitions can be applied to subsets of $\wt{\mc{M}}$ in the obvious way. In particular, $(\wt{\mc{M}},\wt{\mbf{c}})$ is conformally flat if it is of Petrov type O at every point.
	
	It is a standard result that if $\wt{K}$ is a non-shearing congruence of null geodesics, then $\wt{K}$ is a PND of the Weyl tensor. Another important result is the so-called Goldberg--Sachs theorem \cite{Goldberg1962}, which in the present context, can be formulated in these terms:
	\begin{thm}\label{thm:GS}
		Let $(\wt{\mc{M}},\wt{\mbf{c}})$ be a four-dimensional Lorentzian manifold that admits an almost Einstein scale. Then the Weyl tensor is algebraically special if and only if it admits a non-shearing congruence of null geodesics.
	\end{thm}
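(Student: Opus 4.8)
The plan is to reduce the statement to the classical Goldberg--Sachs theorem for a metric whose Weyl tensor is divergence-free, and then to transfer the conclusion across the singular set of the scale by continuity. Let $\wt{\sigma} \in \Gamma(\wt{\mc{E}}[1])$ be the almost Einstein scale with zero set $\wt{\mc{Z}}$. By the defining property of an almost Lorentzian scale, $\wt{\nabla}_a \wt{\sigma} \neq 0$ on $\wt{\mc{Z}}$, so $\wt{\mc{Z}}$ is a smooth hypersurface and hence nowhere dense, and on $\wt{\mc{M}} \setminus \wt{\mc{Z}}$ the metric $\wt{g} = \wt{\sigma}^{-2} \wt{\bm{g}}$ is genuinely Einstein. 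For such $\wt{g}$ one has $\wt{\Rho}_{ab} = \wt{\Rho}\, \wt{g}_{ab}$ with $\wt{\Rho}$ constant (the scalar curvature of an Einstein metric in dimension four being constant by the contracted Bianchi identity), so \eqref{eq:Cotton_Rho} gives $\wt{\Cot}_{abc} = 2\wt{\nabla}_{[b}(\wt{\Rho}\, \wt{g}_{c]a}) = 0$, and then \eqref{eq:Y=divW} yields
\begin{align*}
	\wt{\nabla}^{d} \wt{\Weyl}_{d a b c} = 0 \qquad \text{on } \wt{\mc{M}} \setminus \wt{\mc{Z}} \, .
\end{align*}
Thus off $\wt{\mc{Z}}$ the Weyl tensor is divergence-free, which is the only curvature input required below. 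We may restrict attention to the open set where $\wt{\Weyl} \neq 0$, since the structure of principal null directions is defined only there and Petrov type $O$ is handled trivially.

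Next I would pass to the two-component spinor formalism, encoding $\wt{\Weyl}_{abcd}$ by the totally symmetric Weyl spinor $\Psi_{ABCD}$ and a null line $\wt{K} = \langle o^{A} \bar{o}^{\dot{A}} \rangle$ by its principal spinor $o^{A}$. In these terms $\wt{K}$ is a principal null direction precisely when $\Psi_{ABCD}\, o^{A} o^{B} o^{C} o^{D} = 0$, and a \emph{repeated} (hence algebraically special) one precisely when $\Psi_{ABCD}\, o^{B} o^{C} o^{D} = 0$; the congruence tangent to $\wt{K}$ is geodesic and non-shearing precisely when $o^{A} o^{B} \wt{\nabla}_{A \dot{B}} o_{B} = 0$, i.e.\ when the Newman--Penrose coefficients $\kappa$ and $\sigma$ both vanish. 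Since the Einstein condition kills the tracefree Ricci spinor, the contracted Bianchi identity reduces to the spinor transcription of the divergence-free condition just obtained,
\begin{align*}
	\wt{\nabla}^{A \dot{A}} \Psi_{ABCD} = 0 \, .
\end{align*}
The heart of the argument is then the standard Goldberg--Sachs computation: contracting this identity with the principal spinor and feeding in the Newman--Penrose optical (Ricci) equations yields the equivalence
\begin{align*}
	\kappa = \sigma = 0 \qquad \Longleftrightarrow \qquad \Psi_{ABCD}\, o^{B} o^{C} o^{D} = 0 \, .
\end{align*}
Concretely, in the direction starting from a non-shearing congruence of null geodesics, the standard result cited in the excerpt already supplies $\Psi_{ABCD}\, o^{A} o^{B} o^{C} o^{D} = 0$, and the Bianchi identity together with $\kappa = \sigma = 0$ upgrades this to $\Psi_{ABCD}\, o^{B} o^{C} o^{D} = 0$, i.e.\ algebraic speciality; in the converse direction a repeated principal null direction is fed into the same identities to conclude $\kappa = \sigma = 0$, exhibiting the non-shearing geodesic congruence.

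Finally I would remove the restriction to $\wt{\mc{M}} \setminus \wt{\mc{Z}}$. The objects involved---the Weyl tensor, its principal null directions and their multiplicities, and the geodesy and shear of a congruence---are all conformally invariant and depend continuously on the point, while algebraic speciality is a closed condition, being the vanishing of a conformal invariant of $\wt{\Weyl}$. Since $\wt{\mc{Z}}$ is nowhere dense and $\wt{\Weyl}$ extends smoothly across it, the equivalence established on the dense open set $\wt{\mc{M}} \setminus \wt{\mc{Z}}$ propagates to all of $\wt{\mc{M}}$: a repeated principal null direction varies continuously, so its integral congruence remains geodesic and non-shearing in the limit, and conversely algebraic speciality holds on the closure. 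I expect the main obstacle to be the Newman--Penrose core itself, namely organising the Bianchi identities $\wt{\nabla}^{A \dot{A}} \Psi_{ABCD} = 0$ together with the optical equations so that the shear--geodesy data and the algebraic data decouple cleanly, and then controlling the limiting behaviour across the hypersurface $\wt{\mc{Z}}$ at points where $\wt{\Weyl}$ may degenerate.
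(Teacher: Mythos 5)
Your proposal is correct and follows essentially the same route as the paper, which omits the proof and refers to the argument for Theorem \ref{thm:GSalpha}: pass to the genuine Einstein metric $\wt{g}=\wt{\sigma}^{-2}\wt{\bm{g}}$ off the zero set $\wt{\mc{Z}}$, invoke the classical Goldberg--Sachs theorem there, and extend the (closed, conformally invariant) conclusion across the nowhere dense hypersurface $\wt{\mc{Z}}$ by continuity. The only difference is that you additionally sketch the Newman--Penrose/spinor internals of Goldberg--Sachs, which the paper delegates to the literature.
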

	We omit the proof, which is similar to that of Theorem \ref{thm:GSalpha} below. There are variations of this theorem, and for our purpose, we shall need the following reformulation of a result due to \cite{GoverHillNurowski11}.
	\begin{thm}\label{thm:GSalpha}
		Let $(\wt{\mc{M}},\wt{\mbf{c}},\wt{K})$ be an optical structure with non-shearing congruence of null geodesics that admits an almost weakly half-Einstein scale. Then $\wt{K}$ is a repeated PND of the Weyl tensor.
	\end{thm}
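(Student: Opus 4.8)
The plan is to work off the zero set $\wt{\mc{Z}}$ of the almost weakly half-Einstein scale $\wt{\sigma}$, where by Proposition \ref{prop:scale2metric} the metric $\wt{g} = \wt{\sigma}^{-2}\wt{\bm{g}}$ is genuinely weakly half-Einstein, so that $\wt{\Ric}_{ab}\wt{v}^a\wt{v}^b = 0$ for every $\wt{v} \in \Gamma(\wt{N})$; equivalently the Schouten tensor satisfies $\wt{\Rho}_{ab}\wt{v}^a\wt{v}^b = 0$ on $\wt{N}$, since the trace term drops out on the totally null $\wt{N}$. I would fix a null frame $(\wt{k}, \wt{\ell}, \wt{m}, \ol{\wt{m}})$ adapted to $\wt{K}$, with $\wt{m}$ spanning the screen direction so that $\wt{N} = \langle \wt{k}, \ol{\wt{m}}\rangle$, and record the three hypotheses: non-shearing and geodesic means the optical scalars $\kappa$ and $\sigma$ of $\wt{k}$ vanish; twisting means the expansion-twist $\rho$ has nonzero imaginary part; and weakly half-Einstein means that the three Schouten components aligned with $\wt{N}$ vanish identically. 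Since $\wt{K}$ is already a simple principal null direction by the standard result that a non-shearing congruence of null geodesics is a principal null direction, i.e.\ $\wt{\Weyl}(\wt{k},\wt{v},\wt{k},\wt{v}) = 0$ as in \eqref{eq:1PND}, the theorem reduces to showing the single scalar $\Psi_1 := \wt{\Weyl}(\wt{k},\wt{\ell},\wt{k},\wt{m})$ vanishes, which is exactly the repeated condition \eqref{eq:2PND}.

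The heart of the argument is the once-contracted second Bianchi identity \eqref{eq:Y=divW}, $\wt{\Cot}_{abc} = \wt{\nabla}^d\wt{\Weyl}_{dabc}$, three of whose free indices I would contract with $\wt{k}$ and the remaining one with the screen vector $\wt{m}$. On the right-hand side the Cotton tensor \eqref{eq:Cotton_Rho} is built from one derivative of $\wt{\Rho}_{ab}$; the relevant $\wt{N}$-aligned components of $\wt{\Cot}$ are controlled by the weakly half-Einstein condition, because $\wt{\Rho}_{ab}$ contracted twice into $\wt{N}$ vanishes \emph{everywhere}, while the shear-free geodesic condition lets one commute the factors of $\wt{k}$ through the covariant derivative at the cost of terms that are again proportional to the vanishing $\wt{N}$-aligned Schouten components. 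Hence these Cotton components vanish, and the identity collapses to a homogeneous first-order transport system for $\Psi_1$ along the congruence and along the screen distribution --- concretely the two Newman--Penrose Bianchi relations $D\Psi_1 = 2(2\rho + \epsilon)\Psi_1$ and $\delta\Psi_1 = 2(2\tau + \beta)\Psi_1$ once $\Psi_0 = \kappa = \sigma = 0$ are imposed.

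The main obstacle is that a transport system by itself does not force $\Psi_1$ to vanish: integrating the first equation along the geodesics, together with $D\rho = \rho^2$, only yields the peeling behaviour $\Psi_1 \propto \rho^{4}$, whose integration ``constant'' is an a priori arbitrary function on the leaf space. Killing this function is where the twist genuinely enters, and I see two complementary ways to close the argument: (i) impose the integrability condition obtained by commuting the $D$- and $\delta$-transport equations, whose structure functions involve both $\rho$ and $\ol{\rho}$, and feed in the Newman--Penrose Ricci identities governing $\rho, \tau, \epsilon, \beta$, so as to reduce everything to an algebraic relation $(\,\cdots)\,\Psi_1 = 0$ whose coefficient is nonzero precisely when $\rho \neq \ol{\rho}$; or (ii) use that for a twisting congruence the geometry is that of a region of Fefferman's circle bundle with $\wt{k} = \partial_\phi$, so that $\Psi_1$ is single-valued and admits a finite Fourier expansion along the fibres, which is incompatible with a nonzero peeling solution unless the integration constant vanishes. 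Either route gives $\Psi_1 = 0$ off $\wt{\mc{Z}}$, and continuity across $\wt{\mc{Z}}$ then extends the conclusion to all of $\wt{\mc{M}}$, so that $\wt{K}$ is a repeated principal null direction. I expect this final step --- converting nonvanishing twist into the vanishing of the integration constant --- to be the genuinely delicate point, the weakly half-Einstein and shear-free geodesic hypotheses having already done their work in annihilating the Cotton source.
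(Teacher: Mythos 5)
Your overall skeleton --- pass to the metric $\wt{g}=\wt{\sigma}^{-2}\wt{\bm{g}}$ off the zero set $\wt{\mc{Z}}$, show there that the weakly half-Einstein condition kills the relevant Cotton/Ricci sources in the contracted Bianchi identity so that $\wt{\Psi}_1$ obeys the homogeneous transport equations $D\wt{\Psi}_1=2(2\rho+\epsilon)\wt{\Psi}_1$ and $\delta\wt{\Psi}_1=2(2\tau+\beta)\wt{\Psi}_1$, then extend $\wt{\Psi}_1=0$ to $\wt{\mc{Z}}$ by continuity --- is sound, and it is a genuinely different route from the paper, which simply quotes Theorem 5.9 of \cite{GoverHillNurowski11} on $\wt{\mc{M}}\setminus\wt{\mc{Z}}$ and adds the continuity remark. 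In effect you are reproving that reference's Goldberg--Sachs-type statement from scratch, which is legitimate in principle.

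The gap is in how you propose to close the argument. First, the theorem as stated carries \emph{no} twist hypothesis, so any proof whose final step "genuinely" requires $\rho\neq\ol{\rho}$ cannot establish it. Second, your route (i) is the correct mechanism but your prediction about it is wrong: carrying out the commutator $[\delta,D]$ on $\wt{\Psi}_1^{-1/4}$ and substituting the Ricci identities for $\delta\rho$, $D\tau$ and $D\beta-\delta\epsilon$ (all of which reduce to their vacuum form because the $\wt{N}$-aligned Ricci components $\Phi_{00},\Phi_{01},\Phi_{02}$ vanish identically), one finds that every term in $\rho,\ol{\rho},\tau,\epsilon,\beta,\ol{\alpha},\ol{\pi}$ cancels and the relation collapses to $c\,\wt{\Psi}_1=0$ for a universal nonzero constant $c$; the twist plays no role, which is precisely why the classical Goldberg--Sachs theorem and this refinement also hold for non-twisting congruences (pp-waves, for instance). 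There is no surviving "integration constant" to kill. Route (ii) is worse: it is confined to the twisting case, and the finite Fourier expansion of curvature components along the fibres is something the paper \emph{derives} (Lemma \ref{lem:PetrovII}, Theorem \ref{thm:wkhlfEinsc}) using Theorem \ref{thm:GSalpha} as an input, so invoking it here would be circular. Drop the peeling/integration-constant discussion, carry route (i) to completion, and your proof closes --- without the twist.
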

	
	\begin{proof}
		By hypothesis, the almost weakly half-Einstein scale $\wt{\sigma}$ defines a weakly half-Einstein metric $\wt{g}$ off the zero set $\wt{\mc{Z}}$ of $\wt{\sigma}$. By \cite[Theorem 5.9]{GoverHillNurowski11}, we already know that off $\wt{\mc{Z}}$, $\wt{K}$ is a repeated PND of the Weyl tensor. This property extends smoothly to $\wt{\mc{Z}}$.
	\end{proof}
	Of course, the Weyl tensor may degenerate further on the zero set, as will be investigated in Section \ref{sec:Weyl}.
	
	We end this section by introducing some notation. Let $(\wt{\mc{M}},\wt{\mbf{c}},\wt{K})$ be an optical geometry with congruence of null curves $\wt{\mc{K}}$ and associated almost Robinson structure $(\wt{N},\wt{K})$. For a given choice of metric $\wt{g}$  in $\wt{\mbf{c}}$, it is convenient to choose a frame $(\wt{\ell}, \wt{e}_{1},\ol{\wt{e}}_{\bar{1}},\wt{k}) = (\wt{e}_{0}, \wt{e}_{1},\ol{\wt{e}}_{\bar{1}},\wt{e}^{0})$ adapted to $(\wt{N},\wt{K})$, with dual $(\wt{\kappa},\wt{\theta}{}^{1}, \ol{\wt{\theta}}{}^{\bar{1}},\wt{\lambda}) = (\wt{\theta}{}^0,\wt{\theta}{}^{1}, \ol{\wt{\theta}}{}^{\bar{1}},\wt{\theta}{}_0)$, so that $\wt{K} = \langle \wt{k} \rangle$, $\wt{N} = {}^\C \langle \ol{\wt{e}}_{\bar{1}} , \wt{k}  \rangle$, and
	\begin{align*}
		\wt{g}(\wt{e}_{0} , \wt{e}^0) & = 1 \, , & 	\wt{g}(\wt{e}_{1} , \ol{\wt{e}}_{\bar{1}}) & = 1 \, ,
	\end{align*}
	and all other pairings zero. For compatibility with our earlier notation, we shall also write $\wt{e}_{\alpha}$ and $\ol{\wt{e}}_{\bar{\alpha}}$ for $\wt{e}_{1}$ and $\ol{\wt{e}}_{\bar{1}}$ respectively, and similarly for their duals. This will also allow us to make contact with the computations of \cite{TaghaviChabert2022}.
	
	Recall that in dimension four, the Weyl tensor splits into a self-dual part $\wt{\Psi}$ and an anti-self-dual part $\ol{\wt{\Psi}}$, complex-valued tensors that are conjugate of each other. Assuming that our orientation is given by positive multiples of $\i \wt{\lambda} \wedge \wt{\kappa}\wedge \wt{\theta}{}^{1} \wedge  \ol{\wt{\theta}}{}^{\bar{1}}$, we set
	\begin{align}\label{eq:NP-Psi}
		\wt{\Psi}_0 & := \wt{\Weyl}_{\alpha}{}^{0} {}_{\beta}{}^{0} \, , & \wt{\Psi}_1 & := \wt{\Weyl}^{0}{}_{\alpha} {}^{0}{}_{0} \, , &	\wt{\Psi}_2 & := \wt{\Weyl}_{\alpha 0}{}^{\alpha 0} \, , &	\wt{\Psi}_3 & := \wt{\Weyl}_{0 \alpha} {}_{0}{}^{0} \, , &	\wt{\Psi}_4 := \wt{\Weyl}_{\alpha 0} {}_{\beta 0} \, ,
	\end{align}
	with respective complex conjugates $\ol{\wt{\Psi}}_{0}$, $\ol{\wt{\Psi}}_{1}$, $\ol{\wt{\Psi}}_{2}$, $\ol{\wt{\Psi}}_{3}$ and $\ol{\wt{\Psi}}_{4}$.
	We shall now assume that $\wt{\mc{K}}$ is geodesic and non-shearing (so that $\wt{\Psi}_0 = 0$), and in addition that $\wt{K}$ is a repeated PND, so that
	\begin{align*}
		\wt{\Psi}_{0} = \wt{\Psi}_{1} = 0 \, .
	\end{align*}
	We suppose further that $\wt{\mc{K}}$ is twisting. Referring to the setup of Section \ref{sec-optical}, we choose two metrics $\wt{g}_{\theta}$ and $\wt{g}_{\wh{\theta}}$ in $\accentset{n.e.}{\wt{\mbf{c}}}$, so that the corresponding change of adapted frame given by \eqref{eq:adFrch} induces the change
	\begin{subequations}\label{eq:Psichange}
		\begin{align}
			\wh{\wt{\Psi}}_2 & = \wt{\Psi}_2 \, , \\ 	(\wh{\wt{\Psi}}_3)_{\alpha} & = (\wt{\Psi}_3)_{\alpha} - \tfrac{3}{2}\i \Upsilon_{\alpha} \wt{\Psi}_2 \, , \\ 	(\wh{\wt{\Psi}}_4)_{\alpha \beta} & = (\wt{\Psi}_4)_{\alpha \beta} - 2 \i \Upsilon_{\alpha} (\wt{\Psi}_3)_{\beta} - \tfrac{3}{2} \Upsilon_{\alpha} \Upsilon_{\beta} \wt{\Psi}_2 \, .
		\end{align}
	\end{subequations}
	Here, we have adjoined abstract indices to $\wt{\Psi}_3$ and $\wt{\Psi}_4$ to emphasize the transformation laws.
	
	Table \ref{tab:Petrov} below characterises the remaining possible Petrov types in terms of the components $\wt{\Psi}_{i}$ defined in \eqref{eq:NP-Psi}:
	\begin{table}[ht]
		\begin{tabular}{||c | c||} 
			\hline
			Petrov type  & Conditions on $\wt{\Psi}_i$ \\ [0.5ex] 
			\hline\hline
			D & $4 (\wt{\Psi}_3)^2 - 6 \wt{\Psi}_2 \wt{\Psi}_4 = 0$ \\
			\hline
			III & $\wt{\Psi}_2 = 0$ \\
			\hline
			N & $\wt{\Psi}_2 = \wt{\Psi}_3 = 0$ \\
			\hline
			O & $\wt{\Psi}_2 = \wt{\Psi}_3 = \wt{\Psi}_4 = 0$ \\
			\hline
		\end{tabular}
		\caption{\label{tab:Petrov}}
	\end{table}
	
	\section{The Fefferman construction and its variants}\label{sec:Fefferman}
	Among the twisting non-shearing congruences of null geodesics are those generated by a conformal Killing symmetry. As shown by Lewandowski and Nurowski in \cite{Lewandowski1990b}, under appropriate curvature conditions, namely that the Weyl tensor is of Petrov type N, one recovers Fefferman's canonical conformal structure fibred over a CR three-manifold, which we shall first review. In preparation for the subsequent sections, notably \ref{sec:algsp} and \ref{sec:al_Lor_sc}, we will then perturb this conformal structure in Section \ref{sec:perb_Feff}.
	
	\subsection{Fefferman spaces}
	We  recall the Fefferman construction as introduced in \cite{Cap2008}. A contact CR three-manifold $(\mc{M},H,J)$ has a canonical circle bundle $\wt{\mc{M}}$ defined as the quotient of the $\C^*$-principal bundle $\mc{E}(-1,0) \setminus \{ 0 \}$ by the natural $\R_{>0}$-action. We can project a nowhere vanishing section $\tau$ of $\mc{E}(-1,0)$ to a section of $\wt{\mc{M}} \rightarrow \mc{M}$ as an equivalence class $[\tau]$, where $\tau, \tau' \in [\tau]$ if and only if $\tau' = \varrho \tau$ for some positive real-valued function $\varrho$ on $\mc{M}$. It then follows that a given choice of contact form $\theta$ allows us to identify sections of $\wt{\mc{M}}$ with nowhere vanishing densities $\tau$ of weight $(-1,0)$ such that $\theta = \tau \ol{\tau} \bm{\theta}$. At the same time, $\tau$ defines a fibre coordinate $\phi \in [ - \pi, \pi)$ on $\wt{\mc{M}}$ such that $\e^{\i \phi} \tau$ is a section of $\wt{\mc{M}}$. The \emph{Fefferman metric} associated to $\theta$ is the Lorentzian metric on $\wt{\mc{M}}$ given by
	\begin{align}\label{eq:Feff_metric}
		\wt{g}_{\theta} & = 4 \theta \odot \left( \d \phi + \tfrac{\i}{2} \left( \sigma^{-1} \nabla \sigma - \ol{\sigma}^{-1} \nabla \ol{\sigma} \right) - \tfrac{1}{3}  \Rho \theta \right) + h \, ,
	\end{align}
	where $\sigma = \tau^{-1} \in \Gamma(\mc{E}(1,0))$, and $h$ is the degenerate metric induced by the Levi form of $\theta$. One can check that $\wt{g}_{\theta}$ is independent of the choice of $\sigma$ provided $\theta = (\sigma \ol{\sigma})^{-1} \bm{\theta}$, and transforms conformally under a change of pseudo-Hermitian structure. Thus, we obtain a conformal class $\wt{\mbf{c}}$ associated to $(\mc{M},H,J)$. Further, the vector field $\wt{k} = \frac{\partial}{\partial \phi}$ generating the fibres of $\wt{\mc{M}} \rightarrow \mc{M}$ is conformal Killing for $\wt{\mbf{c}}$, and in fact, Killing for each of the Fefferman metrics in $\wt{\mbf{c}}$. We shall refer to $(\wt{\mc{M}},\wt{\mbf{c}},\wt{k}) \longrightarrow (\mc{M},H,J)$ as the \emph{Fefferman space} of $(\mc{M},H,J)$.
	
	\subsection{Perturbed Fefferman spaces}\label{sec:perb_Feff}
	Following \cite{TaghaviChabert2023b}, we consider a Fefferman space\linebreak $(\wt{\mc{M}},\wt{\mbf{c}},\wt{k}) \longrightarrow (\mc{M},H,J)$ and choose a semi-basic one-form $\wt{\xi}$ on $\wt{\mc{M}}$, that is, $\wt{k} \hook \wt{\xi} = 0$. To each Fefferman metric $\wt{g}_{\theta}$ in $\wt{\mbf{c}}$, we define the \emph{Fefferman metric perturbed by $\wt{\xi}$} as
	\begin{align}\label{eq:pertFeff}
		\wt{g}_{\theta,\wt{\xi}} & = \wt{g}_{\theta} + 4 \theta \odot \wt{\xi} \, ,
	\end{align}
	from which we clearly obtain a corresponding \emph{perturbed Fefferman conformal structure} $\wt{\mbf{c}}_{\wt{\xi}}$. We call $\wt{\xi}$ the \emph{perturbation one-form} of $(\wt{\mc{M}},\wt{\mbf{c}},\wt{k})$, and the triple $(\wt{\mc{M}},\wt{\mbf{c}}_{\wt{\xi}},\wt{k})$ as a \emph{perturbed Fefferman space}.
	
	We now pick a nowhere vanishing density $\sigma \in \Gamma(\mc{E}(1,0))$ such that $\theta = (\sigma \ol{\sigma})^{-1} \bm{\theta}$ is the contact form corresponding to the metric \eqref{eq:pertFeff}, and let $\phi$ be the associated fibre coordinate on $\wt{\mc{M}} \rightarrow \mc{M}$. With respect to an admissible coframe $(\theta^{\alpha})$, we express the semi-basic one-form as
	\begin{align}\label{eq:xi_cmpnt}
		\wt{\xi} & = \wt{\xi}_{\alpha} \theta^{\alpha} + \wt{\xi}_{\bar{\alpha}} \ol{\theta}{}^{\bar{\alpha}} + \wt{\xi}_{0} \theta \, .
	\end{align}
	Since $\wt{\mc{M}}$ is a circle bundle, we can Fourier expand the components $\wt{\xi}_{\alpha}$, $\wt{\xi}_{\bar{\alpha}}$ and  $\wt{\xi}_{0}$, i.e.\
	\begin{align}\label{eq:xi_Fourier}
		\wt{\xi}_{\alpha} & = \sum_{k \in \mc{I}} \xi_{\alpha}^{(k)} \e^{k \i \phi} \, , &
		\wt{\xi}_{\bar{\alpha}} & = \sum_{k \in \mc{I}} \xi_{\bar{\alpha}}^{(-k)} \e^{-k \i \phi} \, , &
		\wt{\xi}_{0} & = \sum_{k \in - \mc{J} \cup \mc{J}} \xi_{0}^{(k)} \e^{k \i \phi} \, ,
	\end{align}
	for some subsets $\mc{I} \subset \Z$, $\mc{J} \subset \Z_{\geq 0}$. For each $k \in \mc{I}$ and for each $k \in \mc{J}$, we can then define densities $\bm{\xi}_{\alpha}^{(k)} \in \Gamma(\mc{E}_{\alpha}\left(\tfrac{k}{2},-\tfrac{k}{2}\right))$ and $\bm{\xi}_{0}^{(k)} \in \Gamma(\mc{E} \left(\tfrac{k}{2}-1,-\tfrac{k}{2}-1\right))$ by
	\begin{align}\label{eq:Fourier_coef}
		\bm{\xi}_{\alpha}^{(k)} & = \xi_{\alpha}^{(k)} \sigma^{\tfrac{k}{2}} \ol{\sigma}^{-\tfrac{k}{2}} \, , & \bm{\xi}_{\bar{\alpha}}^{(k)} & = \xi_{\bar{\alpha}}^{(k)} \ol{\sigma}^{\tfrac{k}{2}} \sigma^{-\tfrac{k}{2}} \, , & \bm{\xi}_{0}^{(k)} = \xi_{0}^{(k)}  \sigma^{-1+\tfrac{k}{2}} \ol{\sigma}^{-1-\tfrac{k}{2}} \, .
	\end{align}
	Note that, for $i \in \mc{I}, j \in \mc{J}$, $\bm{\xi}_{\bar{\alpha}}^{(i)} = \ol{\bm{\xi}_{\alpha}^{(-i)}}$, $\bm{\xi}_{0}^{(j)} = \ol{\bm{\xi}_{0}^{(-j)}}$. In particular $\bm{\xi}_{0}^{(0)}$ is real.
	
	The $\bm{\xi}_{\alpha}^{(k)}$ do not depend on the choice of contact form, while the $\bm{\xi}_{0}^{(k)}$ transform as
	\begin{align}\label{eq:wtd_Fourier}
		\wh{\bm{\xi}}{}^{(k)}_0 & = \bm{\xi}^{(k)}_0 - \i \bm{\xi}^{(k)}_{\alpha} \Upsilon^\alpha + \i \bm{\xi}^{(k)}_{\bar{\alpha}} \Upsilon^{\bar{\alpha}} \, .
	\end{align}
	We shall denote the equivalence class of such densities related by this transformation by $[\nabla, \bm{\xi}^{(k)}_0]$.
	
	Summarising, the perturbation $\wt{\xi}$ determines a tuple $\left( \bm{\xi}^{(i)}_{\alpha} , [\nabla, \bm{\xi}^{(j)}_{0}] \right)_{i \in \mc{I}, j \in \mc{J}}$, and any such tuple determines a perturbation one-form. We shall refer to this tuple as the \emph{CR data associated to $\wt{\xi}$} --- see \cite{TaghaviChabert2023b} for details.
	
	We give an example to illustrate the definition.
	\begin{exa}\label{exa:CR_data}
		A perturbation one-form $\wt{\xi}$ with CR data
		\begin{equation*}
			\left( \bm{\xi}_{\alpha}^{(-2)}, \bm{\xi}_{\alpha}^{(0)}, [\nabla, \bm{\xi}_{0}^{(0)}, \bm{\xi}_{0}^{(2)}], \bm{\xi}_{0}^{(4)} \right)
		\end{equation*}
		means that with a choice of trivialisation $\sigma$ of $\wt{\mc{M}}$ with fibre coordinate $\phi$ and adapted coframe $(\theta,\theta^{\alpha}, \ol{\theta}{}^{\bar{\alpha}})$ with $\theta = (\sigma \ol{\sigma})^{-1} \bm{\theta}$, the one-form $\wt{\xi}$ is given by
		\begin{multline*}
			\wt{\xi} = \left( \xi_{\alpha}^{(0)} + \xi_{\alpha}^{(-2)} \e^{-2\i \phi} \right) \theta^{\alpha} + \left( \xi_{\bar{\alpha}}^{(0)} + \xi_{\bar{\alpha}}^{(2)} \e^{2\i \phi} \right) \ol{\theta}{}^{\bar{\alpha}} \\
			+ \left( \xi_{0}^{(4)} \e^{4\i \phi} + \xi_{0}^{(2)} \e^{2\i \phi} + \xi_{0}^{(0)} + \xi_{0}^{(-2)} \e^{-2\i \phi} + \xi_{0}^{(-4)} \e^{-4\i \phi} \right) \theta \, ,
		\end{multline*}
		where $\xi_{\alpha}^{(0)} = \bm{\xi}_{\alpha}^{(0)}$, $\xi_{\alpha}^{(-2)} = \sigma \ol{\sigma}^{-1} \bm{\xi}_{\alpha}^{(-2)}$, $\xi_{0}^{(0)} = \sigma \ol{\sigma} \bm{\xi}_{0}^{(0)}$, $\xi_{0}^{(-2)} = \sigma^2 \bm{\xi}_{0}^{(-2)}$, $\xi_{0}^{(-4)} = \sigma^3 \ol{\sigma}^{-1} \bm{\xi}_{0}^{(-4)}$, and similar for their complex conjugates. Note that ${\bm{\xi}}_{0}^{(4)} \mapsto \wh{\bm{\xi}}_{0}^{(4)} = {\bm{\xi}}_{0}^{(4)}$ under a change of contact form, which justifies our notation for the tuple.
	\end{exa}
	
	We end the section with the following straightforward, yet fundamental, lemma.
	\begin{lem}
		A four-dimensional perturbed Fefferman space is an optical geometry with twisting non-shearing congruence of null geodesics.
	\end{lem}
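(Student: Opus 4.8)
The plan is to recognise that a perturbed Fefferman metric already sits in the canonical form \eqref{eq:can_met1}--\eqref{eq:can_met2} of Section~\ref{sec-optical}, and then to read off the four required properties (null generator, geodesic, non-shearing, twisting) directly from that form. Fix a contact form $\theta$ with Fefferman metric $\wt{g}_{\theta} = 4\theta\odot\wt{\omega}^{W}_{\theta} + h$ as in \eqref{eq:Feff_metric}; the perturbed metric is then $\wt{g}_{\theta,\wt{\xi}} = 4\theta\odot\wt{\lambda} + h$ with $\wt{\lambda} := \wt{\omega}^{W}_{\theta} + \wt{\xi}$. Because $\wt{\xi}$ is semi-basic, $\wt{k}\hook\wt{\xi} = 0$, so $\wt{k}\hook\wt{\lambda} = \wt{k}\hook\wt{\omega}^{W}_{\theta} = 1$: thus $\wt{\lambda}$ is still a null one-form with $\wt{k}\hook\wt{\lambda} = 1$, and with $\wt{k} = \partial/\partial\phi$ and the basic data $\theta,\theta^{\alpha},h$ pulled back from $\mc{M}$ this is precisely \eqref{eq:can_met1}--\eqref{eq:can_met2}, the only novelty being that the components of $\wt{\lambda}$ now carry a $\phi$-dependence inherited from the Fourier modes of $\wt{\xi}$. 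I would also note in passing that $\wt{g}_{\theta,\wt{\xi}}$ stays Lorentzian, since adding the semi-basic $\wt{\xi}$ keeps $(\theta,\theta^{\alpha},\ol{\theta}{}^{\bar{\alpha}},\wt{\lambda})$ a coframe and hence preserves the $(+,+,+,-)$ block structure.

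Next I would check the optical data. Contracting with $\wt{k}$ and using that both $\theta$ and $\wt{\xi}$ are semi-basic gives $\wt{\kappa} := \wt{g}_{\theta,\wt{\xi}}(\wt{k},\cdot) = 2\theta$, exactly as in the unperturbed case; in particular $\wt{g}_{\theta,\wt{\xi}}(\wt{k},\wt{k}) = 2\theta(\wt{k}) = 0$, so $\wt{K} = \langle\wt{k}\rangle$ is a null line distribution and $\wt{K}^{\perp} = \ker\theta$. For the geodesic and non-shearing conditions \eqref{eq:Lie_kap} and \eqref{eq-nonshear}, the decisive point is that $\theta$, $h$ and the admissible coframe $\theta^{\alpha}$ are all basic, hence annihilated by $\wt{k}$ and independent of $\phi$. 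From $\d\theta = \i h_{\alpha\bar{\beta}}\theta^{\alpha}\wedge\ol{\theta}{}^{\bar{\beta}}$ one reads off $\wt{k}\hook\d\theta = 0$, so $\mathsterling_{\wt{k}}\theta = 0$ and therefore $\mathsterling_{\wt{k}}\wt{\kappa} = 0$, giving \eqref{eq:Lie_kap}. Only the cross-term survives in the Lie derivative of the metric, $\mathsterling_{\wt{k}}\wt{g}_{\theta,\wt{\xi}} = 4\,\theta\odot(\wt{k}\hook\d\wt{\xi})$, the basic parts of $\wt{\lambda}$ and the tensor $h$ contributing nothing; evaluated on $\wt{v},\wt{w}\in\wt{K}^{\perp}=\ker\theta$ this vanishes, so \eqref{eq-nonshear} holds with $\wt{\epsilon}=0$. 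This is exactly the step where semi-basicity of $\wt{\xi}$ earns its keep, and is the part I would flag as the only real subtlety: although $\wt{k}$ generally fails to be Killing once $\wt{\xi}$ depends on $\phi$, the entire Lie derivative $\mathsterling_{\wt{k}}\wt{g}_{\theta,\wt{\xi}}$ is proportional to $\theta$ and hence invisible to the screen bundle $\wt{H}=\wt{K}^{\perp}/\wt{K}$.

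Finally, twisting follows from contactness of $(\mc{M},H,J)$: since $\wt{\kappa}=2\theta$ we have $\d\wt{\kappa}=2\i h_{\alpha\bar{\beta}}\theta^{\alpha}\wedge\ol{\theta}{}^{\bar{\beta}}$, whose restriction to $\wt{K}^{\perp}$ is (twice) the Levi form and is non-degenerate precisely because $\theta$ is a contact form. Assembling the four observations --- $\wt{k}$ null, geodesic, non-shearing, twisting --- and invoking the correspondence between such configurations and metrics of the form \eqref{eq:can_met1}--\eqref{eq:can_met2} set up in Section~\ref{sec-optical} identifies $(\wt{\mc{M}},\wt{\mbf{c}}_{\wt{\xi}},\wt{k})$ as a four-dimensional optical geometry with twisting non-shearing congruence of null geodesics, completing the proof.
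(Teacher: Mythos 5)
Your proposal is correct, and it is essentially the verification the paper has in mind: the paper states this lemma without proof (calling it ``straightforward, yet fundamental''), and your check --- casting $\wt{g}_{\theta,\wt{\xi}}$ in the canonical form \eqref{eq:can_met}, computing $\wt{\kappa}=2\theta$, and reading off the geodesic, non-shearing and twisting conditions from basicity of $\theta$, $h$ and semi-basicity of $\wt{\xi}$ --- fills that gap exactly as intended. Your observation that $\mathsterling_{\wt{k}}\wt{g}_{\theta,\wt{\xi}}=4\,\theta\odot(\wt{k}\hook\d\wt{\xi})$ is proportional to $\theta$ and hence invisible on $\wt{K}^\perp$ is indeed the one point worth recording.
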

	We shall soon characterise those optical structures that arise from perturbed Fefferman spaces.

	\section{Algebraically special perturbed Fefferman spaces}\label{sec:algsp}
	Not every twisting non-shearing congruence of null geodesics arises as a perturbed Fefferman space as introduced in the previous section, and the aim of the present one is to characterise those that do, at least for a certain class of CR data --- see Theorem \ref{thm:PetrovII+Bach}.
	
	Throughout, we consider a four-dimensional optical geometry $(\wt{\mc{M}},\wt{\mbf{c}},\wt{k}) \longrightarrow (\mc{M},H,J)$ with twisting non-shearing congruence of null geodesics $\wt{\mc{K}}$. We refer the reader to Section \ref{sec-optical} for the general setup and notation concerning these geometries. In addition, in the case of a perturbed Fefferman space $(\wt{\mc{M}},\wt{\mbf{c}}_{\wt{\xi}},\wt{k})$, since a perturbed Fefferman metric is a special case of an optical geometry with twisting non-shearing congruence of null geodesics, it will be convenient to write
	\begin{align*}
		\wt{g}_{\theta,\wt{\xi}} & = 4 \theta \odot \wt{\lambda} + h \, , &
		\wt{\lambda} & := \wt{\omega}_{\theta} - \tfrac{1}{3}\Rho \theta + \wt{\xi} \, ,
	\end{align*}
	where $\wt{\omega}_{\theta}$ is the induced Webster connection one-form on $\wt{\mc{M}}$ and $\Rho$ is the Schouten scalar defined by \eqref{eq:SchoutenSc}. Just as we did for the components of the perturbation one-form $\wt{\xi}$, we can Fourier expand the components of $\wt{\lambda}$ with respect to an adapted coframe. The relation between the respective Fourier coefficients is given by
	\begin{subequations}\label{eq:lmb2xi}
		\begin{align}
			\lambda_{\alpha}^{(0)} & = \i \sigma^{-1} \nabla_{\alpha} \sigma + \xi_{\alpha}^{(0)}   \, , &
			\lambda_{0}^{(0)} & = \i \sigma^{-1} \nabla_{0} \sigma + \xi_{0}^{(0)} - \tfrac{1}{3} \Rho  \, , &
			\\
			\lambda_{\alpha}^{(k)} & =  \xi_{\alpha}^{(k)} \, , &
			\lambda_{0}^{(k)} & =  \xi_{0}^{(k)} \, , & k \neq 0 \, ,
		\end{align}
	\end{subequations}
	with $\lambda_{\bar{\alpha}}^{(k)} = \ol{\lambda_{\alpha}^{(-k)}}$ and $\lambda_{0}^{(k)} = \ol{\lambda_{0}^{(-k)}}$ for any $k$. This means that $\lambda_{0}^{(0)}$ is real-valued --- here, with a slight abuse of notation, we view $\nabla_{0}$ and $\Rho$ as unweighted. We shall also treat the complex-valued coefficients $\lambda^{(k)}_{\alpha}$ abstractly as sections of $\mc{E}_{\alpha}$.
	
	We start with the following technical lemma.
	\begin{lem}[\cite{Lewandowski1990}]\label{lem:PetrovII}
		Let $(\wt{\mc{M}},\wt{\mbf{c}},\wt{k}) \longrightarrow (\mc{M},H,J)$ be a four-dimensional optical geometry with twisting non-shearing congruence of null geodesics. Let $\wt{g}_{\theta}$ be any metric in $\accentset{n.e}{\wt{\mbf{c}}}$ so that it takes the form \eqref{eq:can_met}. The following two conditions are equivalent:
		\begin{enumerate}
			\item $\langle\wt{k}\rangle$ is a repeated principal direction of the Weyl tensor $\wt{\Weyl}$, i.e.\ it satisfies
			\begin{align}
				\wt{\Weyl}(\wt{k}, \wt{v}, \wt{k}, \cdot) & = 0 \, , & \mbox{for any $\wt{v} \in \Gamma(\langle\wt{k}\rangle^\perp)$,} \label{eq:Petrov_typeII}
			\end{align}
			so that $\wt{\Weyl}$ is of Petrov type II or more degenerate.
			\item The component $\wt{\lambda}_{\alpha}$ takes the form
			\begin{align}
				\wt{\lambda}_{\alpha} & = \lambda_{\alpha}^{(0)} + \lambda_{\alpha}^{(-2)} \e^{-2 \i \phi} \, ,  \label{eq:solambdalf} 
			\end{align}
			for some complex-valued functions $\lambda_\alpha^{(0)}$ and $\lambda_\alpha^{(-2)}$ on $(\mc{M},H,J)$.
		\end{enumerate}
	\end{lem}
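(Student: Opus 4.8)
The plan is to reduce the repeated-PND condition \eqref{eq:Petrov_typeII} to the vanishing of the single Weyl scalar $\wt{\Psi}_1$ from \eqref{eq:NP-Psi}, and then to read off the $\phi$-dependence of $\wt{\lambda}_\alpha$ from an explicit formula for $\wt{\Psi}_1$. First I would fix a frame $(\wt{e}_0,\wt{e}_\alpha,\ol{\wt{e}}_{\bar\alpha},\wt{e}^0)$ adapted to $(\wt{N},\wt{K})$ as in Section~\ref{sec:Petrov}, with $\wt{e}^0$ spanning $\wt{K}$ (so $\wt{e}^0$ and $\wt{k}$ are proportional) and $\wt{e}_0$ the transverse null vector. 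Since $\wt{\mc{K}}$ is geodesic and non-shearing we already have $\wt{\Psi}_0 = 0$, i.e.\ $\wt{K}$ is a principal null direction. Evaluating the one-form $\wt{\Weyl}(\wt{k},\wt{e}_\alpha,\wt{k},\cdot)$ on the frame, its $\wt{k}$-component vanishes by the antisymmetry of $\wt{\Weyl}$ in its last pair, its $\wt{e}_\beta$-component equals $\wt{\Psi}_0 = 0$, and its $\ol{\wt{e}}_{\bar\beta}$-component (the screen trace $\wt{\Weyl}(\wt{k},\wt{e}_1,\wt{k},\ol{\wt{e}}_{\bar 1})$) vanishes by trace-freeness of the Weyl tensor. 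Hence \eqref{eq:Petrov_typeII} holds if and only if the one remaining component $\wt{\Psi}_1 = \wt{\Weyl}(\wt{k},\wt{e}_\alpha,\wt{k},\wt{e}_0)$ vanishes.

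The substantive step is then to compute $\wt{\Psi}_1$ for a metric of the form \eqref{eq:can_met}. Writing out the Cartan structure equations for $\wt{g}_\theta = 4\theta\odot\wt{\lambda} + h$ — using the CR structure equations \eqref{eq:structure_CR} for $\d\theta$ and $\d\theta^\alpha$ together with $\wt{\lambda} = \d\phi + \wt{\lambda}_\alpha\theta^\alpha + \wt{\lambda}_{\bar\alpha}\ol{\theta}{}^{\bar\alpha} + \wt{\lambda}_0\theta^0$ — one extracts the connection one-forms and the curvature two-form, and reads off $\wt{\Psi}_1$. These computations have been carried out in arbitrary even dimension in \cite{TaghaviChabert2022}, so I would specialise them to dimension four rather than redo them. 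The crux of the lemma is that the only $\phi$-dependence of $\wt{\Psi}_1$ enters through a constant-coefficient second-order operator applied to $\wt{\lambda}_\alpha$: up to a nonvanishing coefficient (built from $h_{\alpha\bar\beta}$ and the remaining metric functions, which do not affect the characterisation), $\wt{\Psi}_1$ is a multiple of $\ddot{\wt{\lambda}}_\alpha + 2\i\,\dot{\wt{\lambda}}_\alpha$, where a dot denotes $\mathsterling_{\wt{k}} = \partial_\phi$. This is the step I expect to be the main obstacle, since one must verify that no undifferentiated terms and no $\wt{\lambda}_0$- or $\wt{\lambda}_{\bar\alpha}$-derivative terms survive in $\wt{\Psi}_1$; this is exactly where the non-expanding and non-shearing reductions must be used carefully.

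Granting this, the conclusion follows by solving a linear ODE along the circle fibre. The operator $\ddot{(\,\cdot\,)} + 2\i\,\dot{(\,\cdot\,)}$ has characteristic roots $0$ and $-2\i$, and on the $k$-th Fourier mode $\e^{k\i\phi}$ it acts as multiplication by $-k(k+2)$, which vanishes precisely for $k \in \{0,-2\}$. Hence its kernel among $\phi$-periodic functions consists exactly of the functions $\lambda_\alpha^{(0)} + \lambda_\alpha^{(-2)}\e^{-2\i\phi}$ with $\lambda_\alpha^{(0)}$ and $\lambda_\alpha^{(-2)}$ independent of $\phi$, i.e.\ functions on $(\mc{M},H,J)$. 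Therefore $\wt{\Psi}_1 = 0$ if and only if $\wt{\lambda}_\alpha$ has the form \eqref{eq:solambdalf}, and combining this with the reduction of the first paragraph yields the equivalence of (1) and (2), recovering the result of \cite{Lewandowski1990}.
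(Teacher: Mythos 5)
Your proposal is correct and follows essentially the same route as the paper: the paper's proof likewise reduces \eqref{eq:Petrov_typeII} to $\wt{\Psi}_1=0$, invokes the curvature computations of \cite{TaghaviChabert2022} to identify this with the ODE $\ddot{\wt{\lambda}}_{\alpha} + 2\i\dot{\wt{\lambda}}_{\alpha}=0$, and solves it to obtain \eqref{eq:solambdalf}. Your added frame-component check and the Fourier-mode reading of the characteristic roots are harmless elaborations of the same argument (the ODE solution does not even require periodicity in $\phi$).
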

	
	\begin{proof}
		This is already computed in \cite{Lewandowski1990,TaghaviChabert2022}. Condition \eqref{eq:Petrov_typeII} is equivalent to $\wt{\Psi}_{1} = 0$ in the notation of Section \ref{sec:Petrov}. This can be seen to be equivalent to the second-order linear ordinary differential equation $\ddot{\wt{\lambda}}_{\alpha} + 2 \i \dot{\wt{\lambda}}_{\alpha} = 0$, which has general solution \eqref{eq:solambdalf}.
	\end{proof}
	
	Applying Lemma \ref{lem:PetrovII} to a perturbed Fefferman space gives:
	\begin{cor}\label{cor:PetrovII}
		Let $(\wt{\mc{M}},\wt{\mbf{c}}_{\wt{\xi}},\wt{k}) \longrightarrow (\mc{M},H,J)$ be a four-dimensional perturbed Fefferman space. Then $\langle \wt{k} \rangle$ is a repeated PND of the Weyl tensor of $\wt{\mbf{c}}_{\xi}$ if and only if the perturbation one-form $\wt{\xi}$ is determined by some CR data $\left( \bm{\xi}_{\alpha}^{(-2)},  \bm{\xi}_{\alpha}^{(0)}, [\nabla,\bm{\xi}_{0}^{(k)}] \right)_{k \in \mc{I}}$ for some subset $\mc{I} \subset \Z_{\geq0}$.
	\end{cor}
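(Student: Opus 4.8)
The plan is to deduce the corollary directly from Lemma \ref{lem:PetrovII} by reading off the Fourier content of the perturbation one-form. Since a perturbed Fefferman metric $\wt{g}_{\theta,\wt{\xi}} = 4 \theta \odot \wt{\lambda} + h$ is a metric of the canonical form \eqref{eq:can_met} lying in the non-expanding subclass $\accentset{n.e.}{\wt{\mbf{c}}}$, Lemma \ref{lem:PetrovII} applies verbatim: the direction $\langle \wt{k} \rangle$ is a repeated PND of the Weyl tensor precisely when the semi-basic component $\wt{\lambda}_{\alpha}$ has only the two Fourier modes $k=0$ and $k=-2$, that is $\wt{\lambda}_{\alpha} = \lambda_{\alpha}^{(0)} + \lambda_{\alpha}^{(-2)} \e^{-2 \i \phi}$, equivalently $\lambda_{\alpha}^{(k)} = 0$ for all $k \notin \{0,-2\}$.

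Next I would translate this condition from $\wt{\lambda}$ to the perturbation $\wt{\xi}$ by means of the relations \eqref{eq:lmb2xi}. For every $k \neq 0$ one has $\lambda_{\alpha}^{(k)} = \xi_{\alpha}^{(k)}$, while the zeroth mode $\lambda_{\alpha}^{(0)} = \i \sigma^{-1} \nabla_{\alpha} \sigma + \xi_{\alpha}^{(0)}$ carries the always-present Webster connection contribution and is an admissible $k=0$ mode in any case. Hence the vanishing of $\lambda_{\alpha}^{(k)}$ for $k \notin \{0,-2\}$ is equivalent to $\xi_{\alpha}^{(k)} = 0$ for all such $k$, so that $\wt{\xi}_{\alpha} = \xi_{\alpha}^{(0)} + \xi_{\alpha}^{(-2)} \e^{-2 \i \phi}$. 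Passing from these function-valued coefficients to densities through the dictionary \eqref{eq:Fourier_coef} isolates exactly the CR data $\bm{\xi}_{\alpha}^{(-2)}$ and $\bm{\xi}_{\alpha}^{(0)}$.

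The final point to record is that the repeated-PND condition of Lemma \ref{lem:PetrovII} constrains only the $\theta^{\alpha}$-component $\wt{\lambda}_{\alpha}$ and is entirely silent about the $\theta^0$-component $\wt{\lambda}_{0}$, hence about $\wt{\xi}_0$. Therefore $\wt{\xi}_0$ remains a free real-valued semi-basic component, which by \eqref{eq:xi_Fourier} is parametrized by its non-negative Fourier modes $\bm{\xi}_0^{(k)}$, $k \in \mc{I} \subset \Z_{\geq 0}$, the negative modes being determined by reality via $\bm{\xi}_0^{(k)} = \ol{\bm{\xi}_0^{(-k)}}$. Combining the two pieces shows that $\langle \wt{k} \rangle$ is a repeated PND exactly when $\wt{\xi}$ is determined by CR data of the asserted form $\left( \bm{\xi}_{\alpha}^{(-2)}, \bm{\xi}_{\alpha}^{(0)}, [\nabla,\bm{\xi}_{0}^{(k)}] \right)_{k \in \mc{I}}$, and reversing each step yields the converse. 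The argument is essentially bookkeeping; the only care required is in matching the function-valued Fourier coefficients of $\wt{\lambda}$ with the density-valued CR data of \eqref{eq:Fourier_coef}, and in noting that no constraint is placed on $\wt{\xi}_0$, so I do not expect a genuine obstacle.
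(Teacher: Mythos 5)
Your argument is correct and is essentially the paper's own: the paper derives this corollary by applying Lemma \ref{lem:PetrovII} to the perturbed Fefferman metric and translating the Fourier condition on $\wt{\lambda}_{\alpha}$ into one on $\wt{\xi}_{\alpha}$ via \eqref{eq:lmb2xi}, with $\wt{\xi}_{0}$ left unconstrained exactly as you observe. The bookkeeping through \eqref{eq:Fourier_coef} and the reality condition on the $\bm{\xi}_{0}^{(k)}$ is handled correctly.
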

	
	Let us now derive further sufficient conditions for an optical geometry to be conformally isometric to a perturbed Fefferman space.
	\begin{prop}\label{prop:PetrovIII}
		Let $(\wt{\mc{M}},\wt{\mbf{c}},\wt{k}) \longrightarrow (\mc{M},H,J)$ be a four-dimensional optical geometry with twisting non-shearing congruence of null geodesics. Suppose that $\langle \wt{k} \rangle$ is a triple or quadruple PND of the Weyl tensor $\wt{\Weyl}$, i.e.\
		\begin{align}
			\wt{\Weyl}(\wt{k}, \wt{v}, \wt{w}, \cdot) & = 0 \, , & \mbox{for any $\wt{v}, \wt{w} \in \Gamma(\langle\wt{k}\rangle^\perp)$,} \label{eq:Petrov_typeIII}
		\end{align}
		so that $\wt{\Weyl}$ is of Petrov type III or more degenerate. Then $(\wt{\mc{M}},\wt{\mbf{c}},\wt{k})$ is locally conformally isometric to a perturbed Fefferman space $(\wt{\mc{M}}',\wt{\mbf{c}}'_{\wt{\xi}},\wt{k}') \longrightarrow (\mc{M},H,J)$, and the perturbation one-form $\wt{\xi}$ is determined by the CR data $\left(\bm{\xi}_{\alpha}^{(-2)} , \bm{\xi}_{\alpha}^{(0)}, [\nabla, \bm{\xi}_{0}^{(0)}, \bm{\xi}_{0}^{(2)}] \right)$ where
		\begin{align}
			\bm{\xi}_{0}^{(0)} & = \i  \nabla_{\alpha} \bm{\xi}^{\alpha}_{(0)} -   \i \nabla^{\alpha} \bm{\xi}_{\alpha}^{(0)} 
			+ 3 \bm{\xi}^{\alpha}_{(2)} \bm{\xi}_{\alpha}^{(-2)}  \, , \label{eq:coeff00} \\
			\bm{\xi}_{0}^{(2)} & = \i \accentset{\xi}{\nabla}_{\alpha}\bm{\xi}^{\alpha}_{(2)} \, , \label{eq:coeff02} 
		\end{align}
		where $\accentset{\xi}{\nabla}_{\alpha}$ is the partial Webster connection gauged by $\bm{\xi}_{\alpha}^{(0)}$.
	\end{prop}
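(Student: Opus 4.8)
The plan is to split the argument into two stages: first to exhibit the optical geometry as locally conformally isometric to a perturbed Fefferman space, and then to identify the precise CR data forced by the triple-PND hypothesis. Since condition \eqref{eq:Petrov_typeIII} is stronger than the repeated-PND condition of Lemma \ref{lem:PetrovII}, that lemma already puts the $\wt{\lambda}_{\alpha}$-component of any metric \eqref{eq:can_met} in $\accentset{n.e.}{\wt{\mbf{c}}}$ into the two-mode form \eqref{eq:solambdalf}, namely $\wt{\lambda}_{\alpha} = \lambda_{\alpha}^{(0)} + \lambda_{\alpha}^{(-2)} \e^{-2\i\phi}$. To upgrade this to a perturbed Fefferman structure I would invoke Theorem \ref{thm:PetrovII+Bach}: its repeated-PND hypothesis holds, so it remains only to verify the Bach condition $\wt{\Bach}(\wt{k},\wt{k}) = 0$. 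For this I would use the Bach computations of Appendix \ref{app:Bach_prop}, which express $\wt{\Bach}(\wt{k},\wt{k})$ for a twisting non-shearing geodesic congruence with $\wt{\Psi}_{0} = \wt{\Psi}_{1} = 0$ in terms of $\wt{\Psi}_{2}$ and its derivatives alone; as triple-PND is exactly the additional pointwise condition $\wt{\Psi}_{2} \equiv 0$, the Bach component vanishes. Theorem \ref{thm:PetrovII+Bach} then gives the local conformal isometry onto a perturbed Fefferman space, whose $\wt{\xi}_{\alpha}$-data Corollary \ref{cor:PetrovII} identifies as $\bm{\xi}_{\alpha}^{(0)}$ and $\bm{\xi}_{\alpha}^{(-2)}$ through \eqref{eq:lmb2xi} and \eqref{eq:Fourier_coef}.

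It remains to determine $\wt{\xi}_{0}$, and here the full force of $\wt{\Psi}_{2} \equiv 0$ is used. I would compute $\wt{\Psi}_{2}$ explicitly as a function of the fibre coordinate $\phi$ from the curvature formulas of \cite{TaghaviChabert2022}, substituting the two-mode form of $\wt{\lambda}_{\alpha}$ and a general Fourier expansion of $\wt{\lambda}_{0}$. Imposing $\wt{\Psi}_{2} \equiv 0$ becomes a system of equations, one for each Fourier mode in $\phi$; requiring each to vanish eliminates the Fourier modes of $\wt{\xi}_{0}$ beyond $k = 0, \pm 2$, reducing the index set $\mc{I}$ of Corollary \ref{cor:PetrovII} to $\{0, 2\}$ (with $-2$ recovered by reality), while the $\e^{2\i\phi}$-mode and the zero-mode fix the surviving coefficients.

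Translating these relations between Fourier coefficients of $\wt{\lambda}$ into relations between the weighted densities \eqref{eq:Fourier_coef} is where the shape of the formulas emerges. Since $\lambda_{\alpha}^{(0)} = \i \sigma^{-1} \nabla_{\alpha} \sigma + \xi_{\alpha}^{(0)} = \i \sigma^{-1} \accentset{\xi}{\nabla}_{\alpha} \sigma$ by \eqref{eq:lmb2xi}, the divergence appearing in the $\e^{2\i\phi}$-equation assembles into the gauged divergence $\i \accentset{\xi}{\nabla}_{\alpha} \bm{\xi}^{\alpha}_{(2)}$ with gauge $\bm{\xi}_{\alpha}^{(0)}$, which is \eqref{eq:coeff02}. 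Inserting this into the zero-mode equation and using the reality relations $\bm{\xi}_{\bar{\alpha}}^{(2)} = \ol{\bm{\xi}_{\alpha}^{(-2)}}$ produces the imaginary divergence $\i \nabla_{\alpha} \bm{\xi}^{\alpha}_{(0)} - \i \nabla^{\alpha} \bm{\xi}_{\alpha}^{(0)}$ together with the quadratic term $3 \bm{\xi}^{\alpha}_{(2)} \bm{\xi}_{\alpha}^{(-2)}$ arising from the product of the two $\wt{\lambda}_{\alpha}$-modes, which is exactly \eqref{eq:coeff00}. I would then check that both expressions are compatible with the transformation \eqref{eq:wtd_Fourier}, so the answer is well defined as the class $[\nabla, \bm{\xi}_{0}^{(0)}, \bm{\xi}_{0}^{(2)}]$.

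The main obstacle is the curvature bookkeeping underpinning both stages: establishing that $\wt{\Bach}(\wt{k},\wt{k})$ reduces to a multiple of $\wt{\Psi}_{2}$ and hence vanishes, and carrying out the $\phi$-expansion of $\wt{\Psi}_{2}$ and the mode-matching without error. The conceptual input is modest once Lemma \ref{lem:PetrovII}, Theorem \ref{thm:PetrovII+Bach} and the curvature formulas of \cite{TaghaviChabert2022} are available; the genuine labour lies in converting the two surviving modes into the weighted-density identities \eqref{eq:coeff00} and \eqref{eq:coeff02} while correctly tracking the gauge $\bm{\xi}_{\alpha}^{(0)}$.
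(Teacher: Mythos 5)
Your second stage is, in substance, exactly the paper's proof: the paper computes the real and imaginary parts of $\wt{\Psi}_2$ as explicit functions of $\phi$ (equations \eqref{eq:RePsi2}--\eqref{eq:ImPsi2}), sets both to zero, and extracts \eqref{eq:solambd0III}, then converts the surviving coefficients into \eqref{eq:coeff00} and \eqref{eq:coeff02} via \eqref{eq:lmb2xi}, the commutator \eqref{eq:CR_com1} and the weighting \eqref{eq:Fourier_coef}, finishing with Lemma 5.7 of \cite{TaghaviChabert2023b}. One methodological caveat on your version of this stage: you propose to substitute ``a general Fourier expansion of $\wt{\lambda}_0$'' and match modes, but at this point of the argument the geometry is only an optical geometry, not yet a circle bundle, so periodicity in $\phi$ --- and hence a Fourier expansion --- is not available. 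The paper avoids this by treating $\wt{\Psi}_2=0$ as a coupled system: $\eqref{eq:ImPsi2}=0$ gives $\dot{\wt{\lambda}}_0$ explicitly, and substituting its $\phi$-derivative into $\eqref{eq:RePsi2}=0$ determines $\wt{\lambda}_0$ \emph{algebraically}, with the finite trigonometric form \eqref{eq:solambd0III} falling out rather than being assumed.

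The more serious issue is your first stage. Routing the structural conclusion through Theorem \ref{thm:PetrovII+Bach} is circular as the paper is organised: the proof of that theorem handles the triple/quadruple-PND case precisely by citing Proposition \ref{prop:PetrovIII}, the statement you are trying to prove. The circularity is repairable --- the fourth-order ODE derived in the double-PND branch of that proof does not actually use $\wt{\Psi}_2\neq 0$, so it applies here too --- but you would have to say so explicitly. Even then the detour buys you nothing: Theorem \ref{thm:PetrovII+Bach} only delivers the five-mode form \eqref{eq:solambd0} with $\bm{\xi}_0^{(2)}$ and $\bm{\xi}_0^{(4)}$ left arbitrary, so you must still run the full $\wt{\Psi}_2\equiv 0$ analysis of your second stage to kill the $\pm 4$ modes and pin down $\bm{\xi}_0^{(2)}$ --- and that analysis already yields the finite Fourier data needed to invoke Lemma 5.7 of \cite{TaghaviChabert2023b} directly. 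The clean argument is therefore your second stage alone, preceded only by Lemma \ref{lem:PetrovII}; the Bach computation belongs to Proposition \ref{prop:PetrovIII2Bach} and Theorem \ref{thm:PetrovII+Bach}, which come \emph{after} this proposition in the logical order.
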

	
	\begin{proof}
		As before, we work with a metric $\wt{g}_{\theta}$ in $\accentset{n.e.}{\wt{\mbf{c}}}$ with the form \eqref{eq:can_met}. Condition \eqref{eq:Petrov_typeIII} clearly implies \eqref{eq:Petrov_typeII}, so we know by Lemma \ref{lem:PetrovII} that the component $\wt{\lambda}_{\alpha}$ is given by \eqref{eq:solambdalf}. 
		
		We proceed to show that under the assumption \eqref{eq:Petrov_typeIII}, the component $\wt{\lambda}_0$ takes the form
		\begin{align}
			\wt{\lambda}_{0} & = \lambda_{0}^{(-2)} \e^{-2 \i \phi} + \lambda_{0}^{(0)} + \lambda_{0}^{(2)} \e^{2 \i \phi} \, , \label{eq:solambd0III}
		\end{align}
		where $\lambda_{0}^{(0)}$ and $\lambda_{0}^{(\pm 2)}$ are smooth functions on $\mc{M}$, with $\ol{\lambda_{0}^{(0)}} = \lambda_{0}^{(0)}$ and $\ol{\lambda_{0}^{(2)}} = \lambda_{0}^{(-2)}$. Using \cite[Appendix~A]{TaghaviChabert2022}, we compute the component $\wt{\Psi}_2$ of the Weyl tensor, as defined in Section \ref{sec:Petrov}, under the assumption \eqref{eq:solambdalf}. For convenience, we separate the real and imaginary parts of $\wt{\Psi}_2$. We find
		\begin{multline}\label{eq:RePsi2}
			\wt{\Psi}_{2} + \ol{\wt{\Psi}}_2 = \tfrac{1}{6} \ddot{\wt{\lambda}}\,_{0} - \tfrac{4}{3} \wt{\lambda}_0 + \tfrac{4}{3}  \i \left( \nabla_{\alpha} \lambda^{\alpha}_{(0)} -  \i \nabla^{\alpha} \lambda_{\alpha}^{(0)} + 3 \lambda^{\alpha}_{(2)} \lambda_{\alpha}^{(-2)}   
			+ \Rho \right)  \\
			+  \i  \left(   \nabla_{\alpha}  \lambda^{\alpha}_{(2)} - 2 \i  \lambda_{\alpha}^{(0)} \lambda^{\alpha}_{(2)} \right) \e^{2 \i \phi}  - \i  \left( \nabla^{\alpha} \lambda_{\alpha}^{(-2)} + 2 \i   \lambda^{\alpha}_{(0)} \lambda_{\alpha}^{(-2)} \right) \e^{-2 \i \phi} \, ,
		\end{multline}
		and
		\begin{equation}\label{eq:ImPsi2}
			\i ( \ol{\wt{\Psi}}_2 - \wt{\Psi}_2 ) = \dot{\wt{\lambda}}_{0} + \left( \nabla_{\alpha}{\lambda^{\alpha}_{(2)} } - 2 \i \lambda_{\alpha}^{(0)} \lambda^{\alpha}_{(2)} \right) \e^{2 \i \phi} 
			+ \left( \nabla^{\alpha}{ \lambda_{\alpha}^{(-2)} }  + 2 \i \lambda^{\alpha}_{(0)} \lambda_{\alpha}^{(-2)} \right) \e^{-2 \i \phi}  \, .
		\end{equation}
		Each of these parts must vanish separately. Differentiating $\eqref{eq:ImPsi2} = 0$ with respect to $\phi$, and substituting the resulting expression for $\ddot{\wt{\lambda}}_{0}$ into $\eqref{eq:RePsi2} = 0$, we can immediately solve for $\lambda_0$ algebraically, and find that it takes the form \eqref{eq:solambd0III}, where
		\begin{align*}
			\lambda_0^{(0)} & = \i  \nabla_{\alpha} \lambda^{\alpha}_{(0)} -   \i \nabla^{\alpha} \lambda_{\alpha}^{(0)} 
			+ 3 \lambda^{\alpha}_{(2)} \lambda_{\alpha}^{(-2)}   
			+ \Rho  \, , \\
			\lambda_0^{(2)} & = \i  \nabla_{\alpha}  \lambda^{\alpha}_{(2)} + 2 \lambda_{\alpha}^{(0)} \lambda^{\alpha}_{(2)} \, .
		\end{align*}
		Next, we use the relations \eqref{eq:lmb2xi} and the commutation relation \eqref{eq:CR_com1}, multiply the resulting expressions by $(\sigma \ol{\sigma})^{-1}$ and $\ol{\sigma}^{-2}$ respectively, and using \eqref{eq:Fourier_coef}, we arrive at \eqref{eq:coeff00} and \eqref{eq:coeff02} respectively. To complete the proof, we simply apply \cite[Lemma~5.7]{TaghaviChabert2023b}.
	\end{proof}
	
	\begin{rem}
		We note that both $\bm{\xi}_{0}^{(0)}$ and $\bm{\xi}_{0}^{(2)}$ transform as \eqref{eq:wtd_Fourier} as required.
	\end{rem}

	In the present context, the condition that the Weyl tensor is of Petrov type III or more degenerate has the following conformally invariant consequence:
	
	\begin{prop}\label{prop:PetrovIII2Bach}
		Let $(\wt{\mc{M}},\wt{\mbf{c}},\wt{k})$ be a four-dimensional optical geometry with twisting non-shearing congruence of null geodesics. Suppose that $\langle\wt{k}\rangle$ is a triple or quadruple PND of the Weyl tensor, i.e.\ \eqref{eq:Petrov_typeIII} holds.	Then the Bach tensor satisfies $\wt{\Bach}(\wt{k},\wt{k}) = 0$.
	\end{prop}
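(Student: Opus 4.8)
The Bach tensor is conformally invariant, and $\wt{k}$ carries a conformal weight, so $\wt{\Bach}(\wt{k},\wt{k})$ is a well-defined density on $\wt{\mc{M}}$; the plan is therefore to verify the claim in whatever gauge is most convenient, after splitting \eqref{eq:Bach} contracted with $\wt{k}^a \wt{k}^b$ into its algebraic curvature part $\wt{\Rho}^{cd}\wt{\Weyl}_{acbd}\wt{k}^a\wt{k}^b$ and its Cotton-divergence part $-\wt{\nabla}^c \wt{\Cot}_{abc}\wt{k}^a \wt{k}^b$, and treating them separately.

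First I would record the purely algebraic content of the hypothesis. Condition \eqref{eq:Petrov_typeIII} says exactly that $\wt{K}$ is a principal null direction of multiplicity at least three; in spinor terms, writing $\wt{k}^a = o^A \bar{o}^{A'}$ and $\wt{\Weyl} \leftrightarrow \Psi_{ABCD}$, this is $\Psi_{ABCD} o^A o^B = 0$. Decomposing $\wt{\Weyl}_{acbd}$ into its spinor parts and contracting the first index of each skew pair with $\wt{k}$ then gives $\wt{\Weyl}_{acbd}\wt{k}^a \wt{k}^b = (\Psi_{ABCD}o^A o^B)\,\bar{o}_{C'}\bar{o}_{D'} + \text{c.c.}$, so that \eqref{eq:Petrov_typeIII} is equivalent to the tensorial identity $\wt{\Weyl}_{acbd}\wt{k}^a \wt{k}^b = 0$. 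The curvature term of the Bach tensor then vanishes outright, $\wt{\Rho}^{cd}\wt{\Weyl}_{acbd}\wt{k}^a\wt{k}^b = 0$, and the same identity together with the pair- and skew-symmetries of $\wt{\Weyl}$ also yields $\wt{\Weyl}_{eabc}\wt{k}^a \wt{k}^b = 0$.

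It remains to kill the Cotton-divergence term. Using \eqref{eq:Y=divW} to write $\wt{\Cot}_{abc}=\wt{\nabla}^e \wt{\Weyl}_{eabc}$, I would differentiate the identity $\wt{\Weyl}_{eabc}\wt{k}^a \wt{k}^b = 0$ twice and move the derivatives off the factors of $\wt{k}$, which expresses the term as
\begin{align*}
-\wt{\nabla}^c \wt{\Cot}_{abc}\wt{k}^a \wt{k}^b & = (\wt{\nabla}^e \wt{\Weyl}_{eabc})\,\wt{\nabla}^c(\wt{k}^a \wt{k}^b) + (\wt{\nabla}^c \wt{\Weyl}_{eabc})\,\wt{\nabla}^e(\wt{k}^a \wt{k}^b) + \wt{\Weyl}_{eabc}\,\wt{\nabla}^c \wt{\nabla}^e(\wt{k}^a \wt{k}^b) \, .
\end{align*}
Now I would pass to a metric $\wt{g}_\theta \in \accentset{n.e.}{\wt{\mbf{c}}}$ of the form \eqref{eq:can_met}, for which the congruence is geodesic, non-shearing and, crucially, non-expanding. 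In this gauge $\wt{\nabla}_a \wt{\kappa}_b$ collapses to its twist part plus terms proportional to $\wt{\kappa}$, so every factor $\wt{\nabla}\wt{k}$ that is not itself a multiple of $\wt{k}$ lands in the screen distribution; contracting such factors, together with the remaining explicit $\wt{k}$, against $\wt{\Weyl}$ and its divergence places $\wt{k}$ in precisely the slots where the multiplicity-three degeneracy forces vanishing, the divergence-of-Weyl factors being controlled by \eqref{eq:Y=divW} and the repeated-PND conditions \eqref{eq:2PND}--\eqref{eq:Petrov_typeIII}.

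The main obstacle is exactly this last bookkeeping: one must check that each of the three groups of terms, after inserting the optical decomposition of $\wt{\nabla}\wt{k}$ and $\wt{\nabla}\wt{\nabla}\wt{k}$, is annihilated by the Weyl degeneracy, including the terms carrying a single surviving derivative of $\wt{\Weyl}$. An equivalent and, in practice, more economical route, and the one matching the computational machinery of this paper, is to bypass the invariant manipulation: fix $\wt{g}_\theta$ as in \eqref{eq:can_met}, invoke Lemma \ref{lem:PetrovII} and the computation in the proof of Proposition \ref{prop:PetrovIII} to put $\wt{\lambda}_\alpha$ and $\wt{\lambda}_0$ into the explicit Fourier forms \eqref{eq:solambdalf} and \eqref{eq:solambd0III}, and substitute these into the expression for $\wt{\Bach}(\wt{k},\wt{k})$ derived in Appendix \ref{app:Bach_prop} following the curvature formulae of \cite{TaghaviChabert2022}; a direct calculation then shows this component is identically zero.
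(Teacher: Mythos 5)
Your fallback route in the closing paragraph is exactly the paper's proof: Appendix \ref{app:Bach_prop} derives, under the double-PND hypothesis \eqref{eq:2PND} alone, the identity \eqref{eq:BachPsi2} expressing $\wt{\Bach}(\wt{k},\wt{k})$ as a linear combination of $\wt{\Psi}_2$, $\ol{\wt{\Psi}}_2$ and their derivatives along $\wt{k}$, and the triple-PND condition \eqref{eq:Petrov_typeIII} is precisely $\wt{\Psi}_2=0$, so the component vanishes. You do not even need to substitute the explicit Fourier solutions \eqref{eq:solambdalf} and \eqref{eq:solambd0III} into anything --- those forms are themselves derived from $\wt{\Psi}_2=0$ in the proof of Proposition \ref{prop:PetrovIII}, so doing so is circular-but-harmless; the linearity of \eqref{eq:BachPsi2} in $\wt{\Psi}_2$ already does all the work.

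Concerning the invariant route that occupies most of your write-up: the algebraic part is correct and cleanly isolated --- in spinors \eqref{eq:Petrov_typeIII} is $\Psi_{ABCD}o^Ao^B=0$, which is indeed equivalent to $\wt{\Weyl}_{acbd}\wt{k}^a\wt{k}^b=0$, so the term $\wt{\Rho}^{cd}\wt{\Weyl}_{acbd}\wt{k}^a\wt{k}^b$ of \eqref{eq:Bach} dies at once; and the double differentiation of $\wt{\Weyl}_{eabc}\wt{k}^a\wt{k}^b=0$ is a legitimate rewriting of the Cotton-divergence term. The gap is the assertion that each of the three resulting groups of terms is "annihilated by the Weyl degeneracy": this is the entire content of the computation and it is not clear it holds group by group. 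For genuine type III one still has $\wt{\Weyl}_{eabc}\wt{k}^b\neq 0$ in general (it is built from $\wt{\Psi}_3$ and $\wt{\Psi}_4$), so a term such as $\wt{\Weyl}_{eabc}\wt{k}^b\,\wt{\nabla}^c\wt{\nabla}^e\wt{k}^a$ cannot be dispatched by slot-counting alone; it requires the Ricci identity together with the optical structure equations \eqref{eq:spNkap}--\eqref{eq:Ntau}, and the appendix computation shows that the various pieces combine into multiples of $\wt{\Psi}_2$ and its $\wt{k}$-derivatives rather than vanishing individually. So the invariant route, as written, stalls at exactly the point you flag; the Appendix \ref{app:Bach_prop} calculation is what fills it, and your final paragraph correctly defers to it.
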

	
	\begin{proof}
		This is a long and tedious computation. Details are given in Appendix \ref{app:Bach_prop}. Formula \eqref{eq:BachPsi2} gives $\wt{\Bach}(\wt{k},\wt{k})$ as a linear combination of the real and imaginary parts of the component $\wt{\Psi}_2$ of the Weyl tensor and its derivatives with respect to $\wt{k}$. The result follows immediately.
	\end{proof}
	
	The converse of Proposition \ref{prop:PetrovIII2Bach} is not true in general, and this leads us to weaken the hypothesis of Proposition \ref{prop:PetrovIII}:
	\begin{thm}\label{thm:PetrovII+Bach}
		Let $(\wt{\mc{M}},\wt{\mbf{c}},\wt{k})$ be a four-dimensional optical geometry with twisting non-shearing congruence of null geodesics. Suppose that the Weyl tensor and the Bach tensor satisfy
		\begin{align}
			\wt{\Weyl}(\wt{k}, \wt{v}, \wt{k}, \cdot) & = 0 \, , & \mbox{for any $v \in \Gamma(\langle\wt{k}\rangle^\perp)$,} \label{eq:Weyl_deg} \\
			\wt{\Bach}(\wt{k},\wt{k}) & = 0 \, , \label{eq:Bach_deg} 
		\end{align}
		respectively. Then $(\wt{\mc{M}},\wt{\mbf{c}},\wt{k})$ is locally conformally isometric to a perturbed Fefferman space $(\wt{\mc{M}}',\wt{\mbf{c}}'_{\wt{\xi}},\wt{k}') \longrightarrow (\mc{M},H,J)$. More precisely, the perturbation one-form $\wt{\xi}$ is determined by the CR data $\left(\bm{\xi}_{\alpha}^{(0)}, \bm{\xi}_{\alpha}^{(-2)} , [\nabla,\bm{\xi}_{0}^{(0)}, \bm{\xi}_{0}^{(2)}], \bm{\xi}_{0}^{(4)} \right)$ where
		\begin{align}\label{eq:xi00PetrovII+Bach}
			\bm{\xi}_{0}^{(0)} & = \i  \nabla_{\alpha} \bm{\xi}^{\alpha}_{(0)} -   \i \nabla^{\alpha} \bm{\xi}_{\alpha}^{(0)} 
			+ 3 \bm{\xi}^{\alpha}_{(2)} \bm{\xi}_{\alpha}^{(-2)}  \, .
		\end{align}
	\end{thm}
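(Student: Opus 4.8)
The plan is to run the same fibrewise Fourier analysis as in Propositions \ref{prop:PetrovIII} and \ref{prop:PetrovIII2Bach}, but now reading \eqref{eq:Bach_deg} as a \emph{constraint} on $\wt{\lambda}_0$ rather than as a consequence of Petrov type III. First I would fix a metric $\wt{g}_{\theta}$ in $\accentset{n.e.}{\wt{\mbf{c}}}$ of the form \eqref{eq:can_met} and observe that \eqref{eq:Weyl_deg} is literally condition (1) of Lemma \ref{lem:PetrovII}; hence $\wt{\lambda}_{\alpha} = \lambda^{(0)}_{\alpha} + \lambda^{(-2)}_{\alpha}\e^{-2\i\phi}$. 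Substituting this into the expressions \eqref{eq:RePsi2} and \eqref{eq:ImPsi2} for the real and imaginary parts of $\wt{\Psi}_2$ and assembling them, a direct computation shows that the Fourier coefficient of $\wt{\Psi}_2$ at frequency $k$ is proportional to $(k+2)(k+4)\,\lambda_0^{(k)}$, together with source terms supported at frequencies $0$ and $2$. The decisive point is that the factor $(k+2)(k+4)$ makes the frequencies $k=-2$ and $k=-4$ of $\wt{\lambda}_0$ drop out of $\wt{\Psi}_2$ identically, so that these two modes will remain free throughout.

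Next I would invoke formula \eqref{eq:BachPsi2} of Appendix \ref{app:Bach_prop}, which expresses $\wt{\Bach}(\wt{k},\wt{k})$ as a second-order constant-coefficient operator in $\mathsterling_{\wt{k}}=\partial_\phi$ applied to $\wt{\Psi}_2$ and its conjugate. The content I expect to extract is that, up to a nonzero real factor, this operator is $(\partial_\phi-2\i)(\partial_\phi-4\i)$, whose symbol vanishes precisely at the \emph{positive} frequencies $k=2,4$. Imposing \eqref{eq:Bach_deg} and decomposing into Fourier modes then yields, for each $k$, the condition $(k-2)(k-4)\,(\wt{\Psi}_2)_k + (k+2)(k+4)\,\overline{(\wt{\Psi}_2)_{-k}}=0$. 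Together with the already observed vanishing of $(\wt{\Psi}_2)_{-2}$ and $(\wt{\Psi}_2)_{-4}$, these conditions are vacuous at $k=\pm2,\pm4$ and invertible for every other $k$.

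Reading these back through the proportionality $(\wt{\Psi}_2)_k \propto (k+2)(k+4)\,\lambda_0^{(k)}$ and the reality $\lambda_0^{(-k)}=\overline{\lambda_0^{(k)}}$, I would conclude that $\lambda_0^{(k)}=0$ for $k\notin\{-4,-2,0,2,4\}$, that the frequencies $\pm2$ and $\pm4$ survive as free data, and that the $k=0$ equation reduces to the single real constraint $(\wt{\Psi}_2+\ol{\wt{\Psi}}_2)_0=0$ already solved for $\lambda_0^{(0)}$ in Proposition \ref{prop:PetrovIII}. Passing to weighted CR densities via \eqref{eq:lmb2xi}, \eqref{eq:Fourier_coef} and the commutator \eqref{eq:CR_com1} turns this $k=0$ equation into \eqref{eq:xi00PetrovII+Bach} and identifies the remaining data as $\bm{\xi}^{(2)}_0$ and $\bm{\xi}^{(4)}_0$. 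The transformation law \eqref{eq:wtd_Fourier} shows that $\bm{\xi}^{(2)}_0$ is defined only modulo change of contact form, hence enters the class $[\nabla,\bm{\xi}^{(0)}_0,\bm{\xi}^{(2)}_0]$, whereas $\bm{\xi}^{(4)}_0$ is invariant and stands on its own; this reproduces exactly the asserted CR data $\left(\bm{\xi}_{\alpha}^{(0)}, \bm{\xi}_{\alpha}^{(-2)}, [\nabla,\bm{\xi}_{0}^{(0)}, \bm{\xi}_{0}^{(2)}], \bm{\xi}_{0}^{(4)}\right)$. Applying Lemma 5.7 of \cite{TaghaviChabert2023b} to this tuple then realises $(\wt{\mc{M}},\wt{\mbf{c}},\wt{k})$ as a perturbed Fefferman space, as claimed.

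The main obstacle is the middle step: extracting from \eqref{eq:BachPsi2} the precise statement that $\wt{\Bach}(\wt{k},\wt{k})$ is a second-order operator whose symbol vanishes at the frequencies $2$ and $4$. This is exactly what permits the weakening of the Petrov type III hypothesis $\wt{\Psi}_2=0$ of Proposition \ref{prop:PetrovIII} to the single scalar equation \eqref{eq:Bach_deg}: the Bach condition detects $\wt{\Psi}_2$ only at frequencies outside $\{2,4\}$, and since $\wt{\Psi}_2$ already omits the frequencies $\{-2,-4\}$, precisely the two extra complex parameters $\bm{\xi}^{(2)}_0$ and $\bm{\xi}^{(4)}_0$ are left undetermined, which is the genuine difference from the Petrov type III case.
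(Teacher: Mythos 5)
Your proposal follows essentially the same route as the paper: reduce everything to the fibrewise behaviour of $\wt{\lambda}_0$, feed \eqref{eq:RePsi2}--\eqref{eq:ImPsi2} into \eqref{eq:BachPsi2}, observe that the resulting operator on $\wt{\lambda}_0$ annihilates exactly the frequencies $\pm 2, \pm 4$, and then pass to CR data and invoke Lemma 5.7 of the companion paper. The paper simply composes your two second-order factors into the single fourth-order equation $\ddddot{\wt{\lambda}}_0 + 20 \ddot{\wt{\lambda}}_0 + 64 \wt{\lambda}_0 = \mathrm{const}$, whose characteristic roots $\pm 2\i, \pm 4\i$ encode precisely your ``invisible frequencies''.

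There is, however, one genuine gap in your version as written: you ``decompose into Fourier modes'' before you are entitled to. For a general optical geometry $(\wt{\mc{M}},\wt{\mbf{c}},\wt{k})$, the parameter $\phi$ is only an affine parameter along the geodesics of $\wt{\mc{K}}$ — nothing says $\wt{\lambda}_0$ is periodic in $\phi$, and indeed the existence of a circle-bundle structure is part of the conclusion of the theorem, not a hypothesis. So statements like ``the Fourier coefficient of $\wt{\Psi}_2$ at frequency $k$'' and the mode-by-mode condition $(k-2)(k-4)(\wt{\Psi}_2)_k + (k+2)(k+4)\overline{(\wt{\Psi}_2)_{-k}}=0$ are not available a priori. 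The repair is exactly the paper's move: solve the constant-coefficient ODE in $\phi$ directly; its general solution is automatically the finite trigonometric sum \eqref{eq:solambd0}, and only then does Lemma 5.7 of \cite{TaghaviChabert2023b} let you identify the space with a circle bundle. Two smaller slips: (i) it is not true that $(\wt{\Psi}_2)_{-2}$ and $(\wt{\Psi}_2)_{-4}$ vanish — in the paper's conventions these are precisely the surviving components $\Psi_2^{(-2)}, \Psi_2^{(-4)}$ of Proposition \ref{prop:Weyl}; what is true is that the modes $\lambda_0^{(\pm 2)}, \lambda_0^{(\pm 4)}$ of $\wt{\lambda}_0$ drop out of the Bach scalar, which is the statement you actually need; (ii) the assignment of the factors $(k\pm2)(k\pm4)$ to $\wt{\Psi}_2$ versus $\ol{\wt{\Psi}}_2$ is swapped relative to \eqref{eq:BachPsi2}, though this does not affect the set of surviving frequencies. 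With the ODE substituted for the premature Fourier decomposition, your argument is the paper's proof.
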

	
	\begin{proof}
		Again we refer to the setup given at the beginning of this section. The line of arguments follows that of the proof of Proposition \ref{prop:PetrovIII}. In particular, condition \eqref{eq:Weyl_deg} already tells us that 
		$\wt{\lambda}_{\alpha}$ is given by \eqref{eq:solambdalf}.
		
		Next, we show that condition \eqref{eq:Bach_deg} forces the coefficient $\wt{\lambda}_0$ to take the form
		\begin{align}
			\wt{\lambda}_{0} & = \lambda_{0}^{(-4)} \e^{-4 \i \phi} + \lambda_{0}^{(-2)} \e^{-2 \i \phi} + \lambda_{0}^{(0)} + \lambda_{0}^{(2)} \e^{2 \i \phi} + \lambda_{0}^{(4)} \e^{4 \i \phi} \, , \label{eq:solambd0}
		\end{align}
		where $\lambda_{0}^{(2k)}$, $|k|=0,1,2$, are smooth functions on $\mc{M}$, real-valued for $k=0$, complex-valued for $k \neq 0$. By Proposition \ref{prop:PetrovIII}, we already know that this is true with $\lambda_{0}^{(4)} = 0$ when $\langle \wt{k} \rangle$ is a triple PND of $\wt{\Weyl}$, and Proposition \ref{prop:PetrovIII2Bach} tells us that this is consistent with \eqref{eq:Bach_deg}. Let us now assume that $\langle \wt{k} \rangle$ is a double PND of $\wt{\Weyl}$. This means that the component $\wt{\Psi}_2$ of $\wt{\Weyl}$ defined in \eqref{eq:NP-Psi} does not vanish. Using \eqref{eq:RePsi2} and \eqref{eq:ImPsi2} and taking further derivatives with respect to $\phi$, and plugging the resulting expressions into \eqref{eq:BachPsi2}, we find that condition \eqref{eq:Bach_deg} is equivalent to the fourth-order linear ordinary differential equation
		\begin{align*}
			\ddddot{\wt{\lambda}}_0 + 20 \ddot{\wt{\lambda}}_0 + 64 \wt{\lambda}_0 & = 64 \left( \i  \nabla_{\alpha} \lambda^{\alpha}_{(0)} -   \i \nabla^{\alpha} \lambda_{\alpha}^{(0)} 
			+ 3 \lambda^{\alpha}_{(2)} \lambda_{\alpha}^{(-2)}   
			+ \Rho  \right) \, ,
		\end{align*}
		on $\wt{\lambda}_0$. The general solution is given by \eqref{eq:solambd0} where
		\begin{align*}
			\lambda_0^{(0)} & = \i  \nabla_{\alpha} \lambda^{\alpha}_{(0)} -   \i \nabla^{\alpha} \lambda_{\alpha}^{(0)} 
			+ 3 \lambda^{\alpha}_{(2)} \lambda_{\alpha}^{(-2)}   
			+ \Rho  \, ,
		\end{align*}
		and $\lambda_0^{(-2)}$, $\lambda_0^{(-4)}$ are arbitrary complex-valued functions on $(\mc{M},H,J)$. The end of the proof is just as that of Proposition \ref{prop:PetrovIII}.
	\end{proof}

	\section{Distinguished almost Lorentzian scales}\label{sec:al_Lor_sc}
	In contrast with Section \ref{sec:algsp}, we now examine the consequences of the existence of a distinguished almost Lorentzian scale $\wt{\sigma}$ on a four-dimensional optical geometry with twisting non-shearing congruence of null geodesics. The idea is to impose an increasing number of differential conditions on $\wt{\sigma}$, which correspond to subsystems of the Einstein equations as defined in Section \ref{sec:confsc}. These will be shown to reduce to invariant differential equations on the CR leaf space, and, in the context of perturbed Fefferman spaces, on the CR data. In passing, we will derive sufficient conditions on $\wt{\sigma}$ for  an optical geometry to be conformally isometric to a perturbed Fefferman space --- see Corollary \ref{cor:al_half-Einstein}.
	
	We will also draw conclusions regarding the analytic properties of the underlying CR structure as a result of these almost Lorentzian scales. Proposition \ref{prop:wkhlfEinsc}, in particular, is a conceptual approach to \emph{Kerr's complex coordinate}, while Theorems \ref{thm:realisableCRMax} and \ref{thm:realisableCRPetrovII} formalise the results of \cite{Lewandowski1990,Hill2008} regarding CR embeddability.
	
	\subsection{Preliminary results}\label{sec:prel_zero_set}
	We start from a four-dimensional optical geometry $(\wt{\mc{M}},\wt{\mbf{c}},\wt{k}) \rightarrow (\mc{M},H,J)$ with twisting non-shearing congruence of null geodesics $\wt{\mc{K}}$  --- as before, the associated Robinson structure will be denoted $(\wt{N},\wt{K})$, so that $\langle \wt{k} \rangle$ is a PND of the Weyl tensor, and for each pseudo-Hermitian structure $\theta$ with Levi form $h$, the corresponding metric $\wt{g}_{\theta} \in \accentset{n.e.}{\wt{\mbf{c}}}$ takes the form \eqref{eq:can_met} for some adapted coframe.
	
	Let $\wt{\sigma}$ be an almost Lorentzian scale that satisfies
	\begin{align}\label{eq:RickkscFef}
		\wt{k}{}^{a} \wt{k}{}^{b} \left( \wt{\nabla}_{a} \wt{\nabla}_{b} \wt{\sigma} + \wt{\Rho}_{a b} \wt{\sigma} \right) & = 0 \, .
	\end{align}
	By \cite[Proposition~6.1]{TaghaviChabert2023b}, and working on a suitable subset $\wt{\mc{U}}$ of $\wt{\mc{M}}$, we find
	\begin{align*}
		\wt{\sigma} & = \cos \phi \cdot \wt{\sigma}_{\theta} \, ,
	\end{align*}
	where $\phi$ is an affine parameter along the geodesics of $\wt{\mc{K}}$ and $\wt{\sigma}_{\theta}$ is the Lorentzian scale of a metric $\wt{g}_{\theta}$ in $\accentset{n.e.}{\wt{\mbf{c}}}$ for some contact form $\theta$ for $(H,J)$. The zero set $\wt{\mc{Z}}$ of $\wt{\sigma}$ is thus the union of sections of $\varpi : \wt{\mc{U}} \rightarrow \mc{U} := \varpi (\wt{\mc{U}}) \subset \mc{M}$ parametrised by the integers $\Z$, and off $\wt{\mc{Z}}$, the metric $\wt{g} = \wt{\sigma}{}^{-2} \wt{\bm{g}}$ takes the form
	\begin{align}\label{eq:gsec2gth}
		\wt{g} & = \sec^2 \phi \cdot \wt{g}_{\theta} \, , & \mbox{for $\phi \neq \tfrac{2 k + 1}{2}\pi$, $k\in \Z$,}
	\end{align}
	and its Ricci tensor satisfies $\wt{\Ric}(\wt{k},\wt{k}) = 0$. For the purpose of our subsequent computations, we shall express the metric $\wt{g}_{\theta}$ in the form \eqref{eq:can_met}.
	
	\begin{rem}
		When $(\wt{\mc{M}},\wt{\mbf{c}},\wt{k}) \rightarrow (\mc{M},H,J)$ is a perturbed Fefferman space, we can refine the above statements. By \cite[Proposition 6.4]{TaghaviChabert2023b}, any choice of nowhere vanishing density $\sigma \in \Gamma(\mc{E}(1,0))$ on $\mc{M}$ determines an almost Lorentzian scale $\wt{\sigma}$ that satisfies \eqref{eq:RickkscFef}, and conversely, any such $\wt{\sigma}$ arises from a unique nowhere vanishing density $\sigma \in \Gamma(\mc{E}(1,0))$. In this case, the zero set of $\wt{\sigma}$ consists of the union $\wt{\mc{Z}} = \wt{\mc{Z}}_+ \cup \wt{\mc{Z}}_-$, where $\wt{\mc{Z}}_\pm := [\pm \i \sigma^{-1}]: \mc{M} \rightarrow \wt{\mc{M}}$, i.e.\ two cross-sections of Fefferman's circle bundle.
	\end{rem}

	For simplicity, we temporarily assume that $\wt{k}$ is a repeated PND of the Weyl tensor of $\wt{\mbf{c}}$, so that by Lemma \ref{lem:PetrovII}, $\wt{\lambda}$ appearing in \eqref{eq:can_met2} is given by
	\begin{align}\label{eq:lambda_PetrovII}
		\wt{\lambda} & = \d \phi + \left( \lambda_{\alpha}^{(0)} + \lambda_{\alpha}^{(-2)} \e^{-2\i \phi} \right) \theta^{\alpha} + \left( \lambda_{\bar{\alpha}}^{(0)} + \lambda_{\bar{\alpha}}^{(2)} \e^{2\i \phi} \right) \ol{\theta}{}^{\bar{\alpha}} + \wt{\lambda}_{0} \theta \, ,
	\end{align}
	for some complex-valued functions $\lambda_{\alpha}^{(0)}$, $\lambda_{\alpha}^{(-2)}$ on $\mc{M}$, and real-valued function $\wt{\lambda}_{0}$ on $\wt{\mc{M}}$. Suppose that $\wt{\sigma}$ satisfies
	\begin{align}\label{eq:deg_sp_L_sc}
		\wt{k}{}^{a} \wt{v}{}^{b} \left( \wt{\nabla}_{a} \wt{\nabla}_{b} \wt{\sigma} + \wt{\Rho}_{a b} \wt{\sigma} \right) & = 0 \, , & \mbox{for any $\wt{v} \in \Gamma (\langle \wt{k} \rangle^\perp)$.}
	\end{align}
	Then off $\wt{\mc{Z}}$, the Ricci tensor satisfies $\wt{\Ric}(\wt{k},\wt{v}) = 0$ for any $\wt{v} \in \Gamma(\langle \wt{k} \rangle^\perp)$. This, under the assumption that $\langle \wt{k} \rangle$ is a repeated PND of the Weyl tensor, means that
	\begin{align}\label{eq:step1}
		2 \lambda_\alpha ^{(-2)} =  \lambda_{\alpha}^{(0)} =: \lambda_\alpha \, ,
	\end{align}
	as was computed in \cite{TaghaviChabert2022} (see \cite[Remark~8.2]{TaghaviChabert2022} and also \cite{Lewandowski1990}), and likewise, this condition is sufficient for $\wt{\sigma}$ to satisfy \eqref{eq:deg_sp_L_sc}. Summarizing:
	\begin{lem}\label{lem:deg_sp_L_sc}
		Let $(\wt{\mc{M}},\wt{\mbf{c}},\wt{k}) \rightarrow (\mc{M},H,J)$ be a four-dimensional optical geometry with twisting non-shearing congruence of null geodesics. Suppose that $\langle \wt{k} \rangle$ is a repeated PND of the Weyl tensor. Let $\wt{\sigma}$ be an almost Lorentzian scale that satisfies \eqref{eq:RickkscFef}, so that off the zero set $\wt{\mc{Z}}$ of $\wt{\sigma}$, $\wt{g} = \wt{\sigma}^{-2} \wt{\bm{g}}$ is a smooth metric. Then $\wt{\sigma}$ satisfies \eqref{eq:deg_sp_L_sc} if and only if off $\wt{\mc{Z}}$, the metric $\wt{g}$ takes the form
		\begin{align}\label{eq:strongNS_2}
			\wt{g} & = \sec^2 \phi \cdot \left( 4 \theta \odot \left( \d \phi + (1 + \tfrac{1}{2} \e^{-2 \i \phi}  )\lambda_{\alpha} \theta^{\alpha} + (1 + \tfrac{1}{2} \e^{2 \i \phi}  )\lambda_{\bar{\alpha}} \ol{\theta}{}^{\bar{\alpha}}  + \wt{\lambda}_{0} \theta \right) + h \right) \, ,
		\end{align}
		for some $\lambda_{\alpha} \in \Gamma(\mc{E}_{\alpha})$ and real-valued function $\wt{\lambda}_{0}$ on $\wt{\mc{M}}$, and where $\theta$ is a pseudo-Hermitian structure with Levi form $h$, and $\phi$ is an affine parameter along $\wt{\mc{K}}$.
	\end{lem}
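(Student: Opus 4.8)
The plan is to read off the metric form \eqref{eq:strongNS_2} directly from the structural data already assembled above the lemma, the only substantive new ingredient being the algebraic identity \eqref{eq:step1}.

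First I would record the starting point. Since $\wt{\sigma}$ satisfies \eqref{eq:RickkscFef}, Proposition 6.1 of \cite{TaghaviChabert2023b} puts $\wt{g} = \wt{\sigma}^{-2}\wt{\bm{g}}$ in the form $\wt{g} = \sec^2\phi\cdot\wt{g}_\theta$ off the zero set $\wt{\mc{Z}}$, with $\wt{g}_\theta = 4\theta\odot\wt{\lambda} + h$ of the canonical shape \eqref{eq:can_met} and $\phi$ an affine parameter along $\wt{\mc{K}}$. Because $\langle\wt{k}\rangle$ is a repeated PND, Lemma \ref{lem:PetrovII} forces the $\theta^\alpha$-component of $\wt{\lambda}$ in \eqref{eq:can_met2} to the finite Fourier polynomial $\lambda_\alpha^{(0)} + \lambda_\alpha^{(-2)}\e^{-2\i\phi}$, with $\lambda_\alpha^{(0)}, \lambda_\alpha^{(-2)}$ pulled back from $(\mc{M},H,J)$, and likewise for its complex conjugate.

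Next I would exploit \eqref{eq:deg_sp_L_sc}. In the scale $\wt{\sigma}$ the Levi-Civita connection of $\wt{g}$ annihilates $\wt{\sigma}$, so off $\wt{\mc{Z}}$ the operator $\wt{\nabla}_a\wt{\nabla}_b\wt{\sigma} + \wt{\Rho}_{ab}\wt{\sigma}$ reduces to $\wt{\Rho}_{ab}\wt{\sigma}$; contracting with $\wt{k}^a\wt{v}^b$ and using $\wt{g}(\wt{k},\wt{v}) = 0$ for $\wt{v}\in\Gamma(\langle\wt{k}\rangle^\perp)$ turns \eqref{eq:deg_sp_L_sc} into $\wt{\Ric}(\wt{k},\wt{v}) = 0$, exactly as recorded above the statement. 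The genuinely computational heart --- expanding this mixed Ricci component through the form of $\wt{\lambda}$ into Fourier modes in $\phi$ and comparing coefficients under the repeated-PND hypothesis --- is the step that yields the identity \eqref{eq:step1}, namely $2\lambda_\alpha^{(-2)} = \lambda_\alpha^{(0)} =: \lambda_\alpha$, so that $\lambda_\alpha^{(-2)} = \tfrac{1}{2}\lambda_\alpha$.

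Finally I would substitute. The $\theta^\alpha$-coefficient of $\wt{\lambda}$ collapses to $(1 + \tfrac{1}{2}\e^{-2\i\phi})\lambda_\alpha$ and its conjugate to $(1 + \tfrac{1}{2}\e^{2\i\phi})\lambda_{\bar\alpha}$; inserting this into $\wt{g} = \sec^2\phi\cdot(4\theta\odot\wt{\lambda} + h)$ produces \eqref{eq:strongNS_2}, with $\wt{\lambda}_0$ the still-unconstrained real component carried along from $\wt{\lambda}$. I expect no serious obstacle in the lemma itself: its content is essentially the packaging performed in this last step, while the only delicate work --- the Fourier and Ricci bookkeeping delivering \eqref{eq:step1} --- has already been carried out in the discussion immediately preceding the statement.
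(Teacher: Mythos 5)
Your proposal is correct and follows essentially the same route as the paper, which itself presents this lemma as a summary of the discussion immediately preceding it: Proposition 6.1 of the companion paper for the $\sec^2\phi$ form, Lemma \ref{lem:PetrovII} for the Fourier form of $\wt{\lambda}_{\alpha}$, the reduction of \eqref{eq:deg_sp_L_sc} to $\wt{\Ric}(\wt{k},\wt{v})=0$, and the coefficient comparison yielding \eqref{eq:step1}. No gaps.
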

	
	\subsection{Almost weakly half-Einstein scales}
	Next, we assume that our almost Lorentzian scale $\wt{\sigma}$ is almost weakly half-Einstein, i.e.\ $\wt{\sigma}$ satisfies \eqref{eq:alwkEins}. By Theorem \ref{thm:GSalpha}, the condition that $\langle \wt{k} \rangle$ is a repeated PND follows immediately, so need not be added. In particular, the premises of Lemma \ref{lem:deg_sp_L_sc} hold. In addition, off $\wt{\mc{Z}}$, we have $\wt{\Ric}(\wt{w},\wt{w}) = 0$ for any $\wt{w} \in \Gamma(\wt{N})$, which means that
	\begin{align}\label{eq:step2}
		\nabla_{\alpha} \lambda_\beta - \i \lambda_\alpha \lambda_\beta - \WbA_{\alpha \beta} = 0 \, ,
	\end{align}
	as follows from \cite[equation~(A.19)]{TaghaviChabert2022}. Again, equation \eqref{eq:step2} will guarantee that $\wt{\sigma}$ satisfies \eqref{eq:alwkEins}.
	We now conclude:
	
	\begin{prop}\label{prop:alwkhalfE}
		Let $(\wt{\mc{M}},\wt{\mbf{c}},\wt{k}) \longrightarrow (\mc{M},H,J)$ be a four-dimensional optical geometry with twisting non-shearing congruence of null geodesics. Let $\wt{\sigma}$ be an almost Lorentzian scale that satisfies \eqref{eq:RickkscFef}, so that off the zero set $\wt{\mc{Z}}$ of $\wt{\sigma}$, $\wt{g} = \wt{\sigma}^{-2} \wt{\bm{g}}$ is a smooth metric. Then $\wt{\sigma}$ is an almost weakly half-Einstein scale if and only if off $\wt{\mc{Z}}$, the metric $\wt{g}$ takes the form \eqref{eq:strongNS_2} where $\lambda_{\alpha}$ satisfies \eqref{eq:step2}.
	\end{prop}
	
	As a corollary, we obtain the next proposition, which is a reformulation of a classical result that has its roots in the work of Robinson, Trautman and Kerr, to name but a few, but was subsequently interpreted in the language of CR geometry in \cite{Lewandowski1990a,Hill2008}.
	\begin{prop}\label{prop:wkhlfEinsc}
		Locally, a four-dimensional optical geometry $(\wt{\mc{M}},\wt{\mbf{c}},\wt{k}) \longrightarrow (\mc{M},H,J)$ with twisting non-shearing congruence of null geodesics admits an almost weakly half-Einstein scale if and only if $(\mc{M},H,J)$ admits a strongly independent CR function.
	\end{prop}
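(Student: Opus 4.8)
The plan is to factor the statement through the equivalence between Webster--Weyl structures and CR functions, Theorem \ref{thm:Webster-Weyl}, together with the curvature analysis of this section. By Theorem \ref{thm:Webster-Weyl}, $(\mc{M},H,J)$ locally admits a \emph{strongly independent} CR function precisely when it admits a Webster--Weyl structure $[\nabla_\alpha,\lambda_\alpha]$, that is, a solution $\lambda_\alpha$ of \eqref{eq:Webster-Weyl}. First I would reconcile this with the weaker hypothesis of the proposition: a real CR function is constant (by \eqref{eq:CR_com1}, since reality forces $\nabla_\alpha f = \nabla_{\bar\alpha} f = 0$), and if $\nabla_\alpha f$ vanishes identically the same commutator makes $f$ constant; hence a non-constant CR function satisfies $\nabla_\alpha f \neq 0$ on a dense open set, where $\d f \wedge \d\bar f \neq 0$ and so $f$ is strongly independent. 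Thus, locally, the existence of a CR function is equivalent to that of a Webster--Weyl structure, and the whole proposition reduces to the claim that $\wt{\mbf{c}}$ admits an almost weakly half-Einstein scale if and only if $(\mc{M},H,J)$ admits a Webster--Weyl structure.

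For the forward direction, suppose $\wt{\mbf{c}}$ carries an almost weakly half-Einstein scale $\wt\sigma$. By Theorem \ref{thm:GSalpha} the line $\langle\wt{k}\rangle$ is automatically a repeated PND of the Weyl tensor, and the almost weakly half-Einstein conditions \eqref{eq:alwkEins}--\eqref{eq:RiccwkEins} imply in particular \eqref{eq:deg_sp_L_sc} (contract \eqref{eq:alwkEins} with $\wt{k}$ and a generator of $\wt{N}$, using that $\wt\Phi$ vanishes on $\wt{N}$ and that $\wt{k}$ is null and orthogonal to $\wt{H}$). Lemma \ref{lem:deg_sp_L_sc} then puts $\wt g = \wt\sigma^{-2}\wt{\bm g}$ into the form \eqref{eq:strongNS_2}, and the remaining condition $\wt\Ric(\wt w,\wt w)=0$ for $\wt w\in\Gamma(\wt N)$, which holds by Proposition \ref{prop:scale2metric}, is exactly equation \eqref{eq:step2}. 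Hence $\lambda_\alpha$ solves the Webster--Weyl equation and $[\nabla_\alpha,\lambda_\alpha]$ is a Webster--Weyl structure.

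For the converse I would run the computation backwards. Given a Webster--Weyl structure, choose a contact form $\theta$ with adapted coframe, form the semi-basic one-form $\wt\lambda = \d\phi + (1+\tfrac12\e^{-2\i\phi})\lambda_\alpha\theta^\alpha +\ \mathrm{c.c.}\ + \wt\lambda_0\theta$ with $\wt\lambda_0$ real to be fixed, producing a metric $\wt g_\theta\in\accentset{n.e.}{\wt{\mbf{c}}}$, and set $\wt\sigma = \cos\phi\cdot\sigma_\theta$ so that $\wt g = \wt\sigma^{-2}\wt{\bm g} = \sec^2\phi\cdot\wt g_\theta$ is of the shape \eqref{eq:strongNS_2}; this $\wt\sigma$ is an almost Lorentzian scale whose zero set is the union of cross-sections $\phi=\tfrac{2k+1}{2}\pi$. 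By Proposition \ref{prop:scale2metric} it suffices to verify that $\wt g$ is weakly half-Einstein off the zero set, i.e.\ $\wt\Ric(\wt w,\wt w)=0$ for all $\wt w\in\Gamma(\wt N)$. Polarising, the three components vanish for separate reasons: $\wt\Ric(\wt k,\wt k)=0$ by the $\sec^2\phi$ normalisation (Proposition 6.1 of \cite{TaghaviChabert2023b}); $\wt\Ric(\wt k,\wt w)=0$ because the relation $2\lambda_\alpha^{(-2)}=\lambda_\alpha^{(0)}$ of \eqref{eq:step1} is built into \eqref{eq:strongNS_2}; and $\wt\Ric(\wt w,\wt w)=0$ because $\lambda_\alpha$ solves \eqref{eq:step2}, with $\wt\lambda_0$ chosen to keep the first equality consistent.

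The main obstacle is precisely this converse verification. The forward computation only shows that \eqref{eq:step1}, \eqref{eq:step2}, and the $\sec^2\phi$ structure are \emph{necessary}; to close the proof one must check they are jointly \emph{sufficient} for every component of $\wt\Ric$ along $\wt N$ to vanish, and that the residual freedom in $\wt\lambda_0$ (together with the higher Fourier data of the implied perturbation one-form) can be fixed without producing any new integrability obstruction on $\lambda_\alpha$. Concretely this means reversing the explicit curvature computations underlying Lemma \ref{lem:deg_sp_L_sc} and Proposition \ref{prop:alwkhalfE}, for which the component formulae of \cite{TaghaviChabert2022} are the essential input.
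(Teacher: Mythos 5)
Your proposal follows the same route as the paper's proof: Theorem \ref{thm:GSalpha}, Lemma \ref{lem:deg_sp_L_sc} and Proposition \ref{prop:alwkhalfE} reduce the almost weakly half-Einstein condition to the Webster--Weyl equation \eqref{eq:step2}, and Theorem \ref{thm:Webster-Weyl} converts that into a CR function, with the converse obtained by reversing the curvature computation (the paper dispatches it with ``works just analogously'', so your explicit flagging of what must be checked there is no less rigorous than the original). Your preliminary reconciliation of ``CR function'' with the ``strongly independent CR function'' demanded by Theorem \ref{thm:Webster-Weyl} addresses a point the paper silently skips and is a welcome refinement rather than a departure.
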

	
	\begin{proof}
		If $(\wt{\mc{M}},\wt{\mbf{c}},\wt{k})$ admits an almost weakly half-Einstein scale, then off its zero set $\wt{\mc{Z}}$, the Ricci tensor of the metric $\wt{g} = \wt{\sigma}^{-2}\wt{\bm{g}}$ satisfies \eqref{eq:wk_hlf_Einstein}. By Proposition \ref{prop:alwkhalfE}, there exists $\lambda_{\alpha} \in \Gamma(\mc{E}_{\alpha})$ that satisfies \eqref{eq:step2}, which is none other than the Webster--Weyl equation \eqref{eq:Webster--Weyl}. By Theorem \ref{thm:Webster--Weyl}, we can conclude that $(\mc{M},H,J)$ admits a strongly independent CR function. The converse works just analogously.
	\end{proof}

	Let us now specialise to the case where we start from a perturbed Fefferman space. This will allow us to determine the nature of the Webster--Weyl equation in this context.
	\begin{thm}\label{thm:wkhlfEinsc}
		A four-dimensional perturbed Fefferman space $(\wt{\mc{M}},\wt{\mbf{c}}_{\wt{\xi}},\wt{k}) \longrightarrow (\mc{M},H,J)$ admits an almost weakly half-Einstein scale $\wt{\sigma}$ if and only if $\wt{\xi}$ is determined by some CR data $\left( \bm{\xi}_{\alpha}^{(-2)},  \bm{\xi}_{\alpha}^{(0)}, [\nabla,\bm{\xi}_{0}^{(k)}] \right)_{k \in \mc{I}}$ for some subset $\mc{I} \subset \Z_{\geq0}$, and there exists a nowhere vanishing density $\sigma \in \Gamma(\mc{E}(1,0))$ that satisfies
		\begin{align}
			& \accentset{\xi}{\nabla}_{\bar{\alpha}} \sigma + 2\i \xi_{\bar{\alpha}}^{(2)} \sigma = 0 \, , \label{eq:xi2_a} \\
			& \accentset{\xi}{\nabla}_{\alpha} \accentset{\xi}{\nabla}_{\beta} \sigma + \i  \WbA_{\alpha \beta} \sigma = 0 \, , \label{eq:exactWW_Ein_a}	
		\end{align}
		where $\xi^{(-2)}_{\alpha} = \sigma \ol{\sigma}^{-1} \bm{\xi}_{\alpha}^{(-2)}$ and $\accentset{\xi}{\nabla}_{\alpha}$ denotes any partial Webster connection gauged by $\bm{\xi}_{\alpha}^{(0)}$.
		
		Moreover, the zero set $\wt{\mc{Z}}$ of $\wt{\sigma}$ consists of the union $\wt{\mc{Z}}_+ \cup \wt{\mc{Z}}_-$ of the sections $\wt{\mc{Z}}_{\pm} = [\pm \i \sigma^{-1}] : \mc{M} \rightarrow \wt{\mc{M}}$. Off $\wt{\mc{Z}}$, $\wt{\sigma}$ determines a weakly half-Einstein metric $\wt{g} = \sec^2 \phi \cdot \wt{g}_{\theta, \wt{\xi}}$, where $\phi$ is the fibre coordinate determined by $\sigma$ and $\wt{g}_{\theta, \wt{\xi}}$ is the perturbed Fefferman metric associated to the contact form $\theta = (\sigma \ol{\sigma})^{-1} \bm{\theta}$.
	\end{thm}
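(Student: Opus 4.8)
The plan is to reduce the statement to a translation, via the weighting relations \eqref{eq:lmb2xi} and the Fourier normalisation \eqref{eq:Fourier_coef}, of the two Ricci conditions \eqref{eq:step1} and \eqref{eq:step2} that characterise almost weakly half-Einstein scales in dimension four into the gauged Webster equations \eqref{eq:xi2_a}--\eqref{eq:exactWW_Ein_a} on a single CR density $\sigma \in \Gamma(\mc{E}(1,0))$. The curvature analysis is entirely imported from the earlier results; the new content is the change of variables from the Fourier coefficients of the null one-form to CR densities.

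For the forward direction, I would argue as follows. If $\wt{\sigma}$ is an almost weakly half-Einstein scale, then Theorem \ref{thm:GSalpha} makes $\langle \wt{k}\rangle$ a repeated PND, so by Corollary \ref{cor:PetrovII} the perturbation is determined by CR data of the claimed shape with $\mc{I}\subset\Z_{\geq0}$, and Lemma \ref{lem:PetrovII} fixes $\wt{\lambda}_{\alpha} = \lambda_{\alpha}^{(0)} + \lambda_{\alpha}^{(-2)}\e^{-2\i\phi}$. The almost weakly half-Einstein condition off $\wt{\mc{Z}}$, via Lemma \ref{lem:deg_sp_L_sc} and the discussion preceding Proposition \ref{prop:alwkhalfE}, yields exactly \eqref{eq:step1}, namely $2\lambda_{\alpha}^{(-2)} = \lambda_{\alpha}^{(0)} =: \lambda_{\alpha}$, and \eqref{eq:step2}, the Webster--Weyl equation on $\lambda_{\alpha}$. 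I then pick $\sigma$ with $\theta = (\sigma\ol{\sigma})^{-1}\bm{\theta}$ the contact form of \eqref{eq:strongNS_2} and $\nabla$ its Webster connection, so $\nabla(\sigma\ol{\sigma})=0$; relation \eqref{eq:lmb2xi} reads $\lambda_{\alpha} = \i\sigma^{-1}\nabla_{\alpha}\sigma + \bm{\xi}_{\alpha}^{(0)} = \i\sigma^{-1}\accentset{\xi}{\nabla}_{\alpha}\sigma$ for the gauge $\bm{\xi}_{\alpha}^{(0)}$. Since $\lambda_{\alpha}$ solves \eqref{eq:step2}, the converse part of Lemma \ref{lem:ex-nexWW} (with $\xi_{\alpha}$ given by \eqref{eq:density_gauge}) gives \eqref{eq:exactWW_Ein_a}. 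Equation \eqref{eq:xi2_a} I obtain by conjugating \eqref{eq:step1}, substituting $\xi_{\alpha}^{(-2)} = \sigma\ol{\sigma}^{-1}\bm{\xi}_{\alpha}^{(-2)} = \lambda_{\alpha}^{(-2)}$, and using $\nabla(\sigma\ol{\sigma})=0$ to replace $\sigma^{-1}\nabla_{\bar\alpha}\sigma$ by $-\ol{\sigma}^{-1}\nabla_{\bar\alpha}\ol{\sigma}$. Finally, since $\wt{g} = \sec^2\phi\cdot\wt{g}_{\theta}$ off $\wt{\mc{Z}}$, the Lorentzian scale is proportional to $\cos\phi$ in the trivialisation fixed by $\sigma$, whose zeros lie at $\phi = \pm\tfrac{\pi}{2}$; identifying the fibre of $\wt{\mc{M}}\to\mc{M}$ with $[\e^{\i\phi}\sigma^{-1}]$ shows this is exactly $\wt{\mc{Z}}_{\pm} = \{[\pm\i\sigma^{-1}]\}$.

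For the reverse direction, given CR data of the stated form and $\sigma$ solving \eqref{eq:xi2_a}--\eqref{eq:exactWW_Ein_a}, Corollary \ref{cor:PetrovII} again makes $\langle\wt{k}\rangle$ a repeated PND. Setting $\lambda_{\alpha} := \i\sigma^{-1}\accentset{\xi}{\nabla}_{\alpha}\sigma$, Lemma \ref{lem:ex-nexWW} turns \eqref{eq:exactWW_Ein_a} into the Webster--Weyl equation \eqref{eq:step2}, while the conjugate of \eqref{eq:xi2_a}, combined with $\nabla(\sigma\ol{\sigma})=0$ for the Webster connection of $\theta = (\sigma\ol{\sigma})^{-1}\bm{\theta}$, delivers $\xi_{\alpha}^{(-2)} = \tfrac12\lambda_{\alpha}$, i.e.\ \eqref{eq:step1}. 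These are precisely the data entering \eqref{eq:strongNS_2}; since the computation producing \eqref{eq:step1}--\eqref{eq:step2} is reversible, the metric $\wt{g} = \wt{\sigma}^{-2}\wt{\bm{g}}$ then has $\wt{\Ric}_{a b}\wt{v}^a\wt{v}^b = 0$ for all $\wt{v}\in\Gamma(\wt{N})$, hence is weakly half-Einstein off its zero set. Proposition \ref{prop:scale2metric} upgrades this to $\wt{\sigma}$ being an almost weakly half-Einstein scale, with zero set $\wt{\mc{Z}}_{\pm}$ identified as above.

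I expect the main obstacle to be purely a matter of consistent bookkeeping rather than analysis: one must carry the weight factors of \eqref{eq:Fourier_coef} correctly, track which objects are contact-form independent (the $\bm{\xi}_{\alpha}^{(k)}$) versus those that transform (the $\bm{\xi}_{0}^{(k)}$ and $\nabla$), and confirm that \eqref{eq:xi2_a} and \eqref{eq:exactWW_Ein_a} transform so that $\sigma$ is well defined only up to the CR-function rescaling of Theorem \ref{thm:lambda_exact}. The delicate point is the interplay between the $\bar\alpha$-gauge in \eqref{eq:xi2_a} and the normalisation $\nabla(\sigma\ol{\sigma})=0$: one has to verify that \eqref{eq:xi2_a}, read off the conjugate of \eqref{eq:step1}, is genuinely equivalent to the half-Einstein relation $2\lambda_{\alpha}^{(-2)} = \lambda_{\alpha}^{(0)}$ and is compatible with evaluating $\accentset{\xi}{\nabla}_{\alpha}$ on any partial Webster connection gauged by $\bm{\xi}_{\alpha}^{(0)}$, rather than encoding an independent constraint.
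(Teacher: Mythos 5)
Your proposal is correct and follows essentially the same route as the paper: Theorem \ref{thm:GSalpha} plus Corollary \ref{cor:PetrovII} to pin down the CR data, the structural results culminating in \eqref{eq:step1}--\eqref{eq:step2}, and then the translation via \eqref{eq:lmb2xi}, the weighting \eqref{eq:Fourier_coef} and the normalisation $\nabla(\sigma\ol{\sigma})=0$ into \eqref{eq:xi2_a}--\eqref{eq:exactWW_Ein_a}, reversed for the converse. The only step you leave implicit --- that $\sigma$ exists with $\wt{g}=\sec^2\phi\cdot\wt{g}_{\theta,\wt{\xi}}$ and that the zero set of $\wt{\sigma}$ is exactly the pair of sections $[\pm\i\sigma^{-1}]$ --- is precisely what the paper imports from Proposition 6.4 of the companion article, so this is a matter of citation rather than a gap.
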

	
	\begin{proof}
		We have two immediate consequences of the existence of an almost weakly half-Einstein scale $\wt{\sigma}$. On the one hand, by Theorem \ref{thm:GSalpha}, $\langle \wt{k} \rangle$ is a repeated PND of the Weyl tensor of $\wt{\mbf{c}}_{\wt{\xi}}$, and so, by Corollary \ref{cor:PetrovII}, the perturbation one-form $\wt{\xi}$ is determined by some CR data $\left( \bm{\xi}_{\alpha}^{(-2)},  \bm{\xi}_{\alpha}^{(0)}, [\nabla,\bm{\xi}_{0}^{(k)}] \right)_{k \in \mc{I}}$ for some subset $\mc{I} \subset \Z_{\geq0}$. On the other hand, by \cite[Proposition~6.4]{TaghaviChabert2023b}, there exists a nowhere vanishing density $\sigma$ of weight $(1,0)$, and thus a pseudo-Hermitian structure $\theta = (\sigma \ol{\sigma})^{-1} \bm{\theta}$ together with a fibre coordinate $\phi$ such that the metric $\wt{g} = \wt{\sigma}^{-2} \wt{\bm{g}}$ takes the form
		\begin{align}\label{eq:secpFeff}
			\wt{g} & = \sec^2 \phi \cdot \wt{g}_{\theta,\wt{\xi}} \, , & \mbox{for $\phi \neq \pm \tfrac{1}{2}\pi$,}
		\end{align}
		where $\wt{g}_{\theta,\wt{\xi}}$ is the perturbed Fefferman metric associated to $\theta$ and $\wt{\xi}$. The zero set $\wt{\mc{Z}}$ can be identified as the cross-sections $[\pm \i \sigma^{-1}]$. The metric $\wt{g}$ was already obtained in Proposition \ref{prop:alwkhalfE} for more general optical geometries, and it suffices to apply relations \eqref{eq:lmb2xi} to \eqref{eq:step1} and \eqref{eq:step2} to derive
		\begin{align*}
			\accentset{\xi}{\nabla}_{\alpha} \sigma +  2 \i \xi_\alpha ^{(-2)} \sigma & = 0 \, ,  &
			\accentset{\xi}{\nabla}_{\alpha} \accentset{\xi}{\nabla}_{\beta} \sigma + \i \WbA_{\alpha \beta} \sigma = 0 \, ,
		\end{align*}
		respectively. The second of these equations is just \eqref{eq:exactWW_Ein_a}, and since we are using the Webster connection compatible with $\sigma \ol{\sigma}$, the complex conjugate of the first is just \eqref{eq:xi2_a}. This determines the densities $\bm{\xi}^{(-2)}_{\alpha}$ and $\bm{\xi}^{(0)}_{\alpha}$. We are then free to choose the remaining densities $\bm{\xi}^{(k)}_{0}$, where $k \in \mc{I}$ for any choice of subset $\mc{I} \subset \Z_{\geq0}$.
		
		For the converse, we assume that we have the CR data as given and a density $\sigma$ of weight $(1,0)$ that satisfies \eqref{eq:xi2_a} and \eqref{eq:exactWW_Ein_a}. By \cite[Proposition~6.4]{TaghaviChabert2023b}, $\sigma$ gives rise to an almost Lorentzian scale $\wt{\sigma}$ satisfying \eqref{eq:RickkscFef} with zero set $\wt{\mc{Z}}$ determined by $[\pm \i \sigma^{-1}]$. Off $\wt{\mc{Z}}$, the corresponding metric $\wt{g} = \wt{\sigma}^{-2} \wt{\bm{g}}$ is of the form \eqref{eq:secpFeff}, where $\phi$ and $\wt{g}_{\theta,\wt{\xi}}$ are the fibre coordinate and perturbed Fefferman metric determined by $\sigma$ respectively.  Choose $\nabla$ to be compatible with $\theta = (\sigma \ol{\sigma})^{-1} \bm{\theta}$, and trivialise \eqref{eq:xi2_a} and \eqref{eq:exactWW_Ein_a} to obtain \eqref{eq:step1} and \eqref{eq:step2}. Using the identification \eqref{eq:lmb2xi} and applying Proposition \ref{prop:alwkhalfE} shows that $\wt{\sigma}$ is an almost weakly half-Einstein scale $\wt{\sigma}$ as required.
	\end{proof}
	
	In two special cases, we obtain the following consequences:
	\begin{cor}\label{cor:wkhlfEinsc}
		Let $(\wt{\mc{M}},\wt{\mbf{c}}_{\wt{\xi}},\wt{k}) \longrightarrow (\mc{M},H,J)$ be a four-dimensional perturbed Fefferman space.
		\begin{enumerate}
			\item Suppose that $\wt{\xi}$ is determined by some CR data $\left(\bm{\xi}_{\alpha}^{(0)}, [\nabla,\bm{\xi}_{0}^{(k)}] \right)_{k \in \mc{I}}$ for some subset $\mc{I} \subset \Z_{\geq0}$. Then locally, $(\wt{\mc{M}},\wt{\mbf{c}}_{\wt{\xi}},\wt{k})$ admits an almost weakly half-Einstein scale $\wt{\sigma}$ if and only if $(\mc{M},H,J)$ is realisable.
			\item Suppose that $\wt{\xi}$ is determined by some CR data $\left(\bm{\xi}_{0}^{(k)} \right)_{k \in \mc{I}}$ for some subset $\mc{I} \subset \Z_{\geq0}$. Then locally, $(\wt{\mc{M}},\wt{\mbf{c}}_{\wt{\xi}},\wt{k})$ admits an almost weakly half-Einstein scale $\wt{\sigma}$ if and only if $(\mc{M},H,J)$ is CR--Einstein.
		\end{enumerate} 
	\end{cor}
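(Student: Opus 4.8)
The plan is to read both statements off Theorem~\ref{thm:wkhlfEinsc}, whose structural hypothesis on the CR data is met by assumption in each case. In both parts the prescribed tuple contains no $\bm{\xi}_{\alpha}^{(-2)}$, so $\xi^{(-2)}_{\alpha} = 0$ and, by the reality relations among the Fourier coefficients, $\xi^{(2)}_{\bar{\alpha}} = 0$ as well; equation \eqref{eq:xi2_a} then reduces to $\accentset{\xi}{\nabla}_{\bar{\alpha}}\sigma = 0$. Hence Theorem~\ref{thm:wkhlfEinsc} says that the space carries an almost weakly half-Einstein scale exactly when there is a non-vanishing $\sigma \in \Gamma(\mc{E}(1,0))$ solving
\begin{align*}
\accentset{\xi}{\nabla}_{\bar{\alpha}}\sigma & = 0 \, , & \accentset{\xi}{\nabla}_{\alpha}\accentset{\xi}{\nabla}_{\beta}\sigma + \i\WbA_{\alpha\beta}\sigma & = 0 \, ,
\end{align*}
with $\accentset{\xi}{\nabla}_{\alpha}$ the partial Webster connection gauged by $\bm{\xi}_{\alpha}^{(0)}$. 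This is precisely the system \eqref{eq:linWW1d3}--\eqref{eq:linWW2d3} of Theorem~\ref{thm:real2dens_d3} in the diagonal case $\tau = \sigma$, so each part amounts to recognising which realisability criterion is triggered.

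For part (2) the data is $(\bm{\xi}_{0}^{(k)})_{k\in\mc{I}}$, so $\bm{\xi}_{\alpha}^{(0)} = 0$ as well and the gauge is trivial, $\accentset{\xi}{\nabla}_{\alpha} = \nabla_{\alpha}$. The coupled system becomes the ungauged pair $\nabla_{\bar{\alpha}}\sigma = 0$ and $\nabla_{\alpha}\nabla_{\beta}\sigma + \i\WbA_{\alpha\beta}\sigma = 0$, which is \eqref{eq:linWW1}--\eqref{eq:linWW2} with $\tau = \sigma$, i.e.\ the defining equations of an almost CR--Einstein structure (Remark~\ref{rem:CR-Eins}). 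Thus the existence of the scale is equivalent to $(\mc{M},H,J)$ being CR--Einstein, and both implications hold verbatim because the two sets of data literally coincide; no gauge-dependent analysis is needed here.

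For part (1) the gauge $\bm{\xi}_{\alpha}^{(0)}$ is genuinely present. The forward implication is immediate: an almost weakly half-Einstein scale yields a $\sigma$ solving the coupled system for the gauge $\bm{\xi}_{\alpha}^{(0)}$, and setting $\tau = \sigma$ in Theorem~\ref{thm:real2dens_d3} gives realisability. For the converse I would start from realisability, extract via Proposition~\ref{prop:New_Jacobowitz} a solution of the exact Webster--Weyl equation and, via Theorem~\ref{thm:lambda_exact}, a linearising density of weight $(1,0)$. The gauge-invariant content of \eqref{eq:exactWW_Ein_a} is the Webster--Weyl one-form $\lambda_{\alpha} = \i\sigma^{-1}\accentset{\xi}{\nabla}_{\alpha}\sigma$, so Lemma~\ref{lem:degauge} together with the freedom \eqref{eq:freedom} lets me transport this solution to the prescribed gauge $\bm{\xi}_{\alpha}^{(0)}$, while the CR densities supplied on a realisable manifold by Lemmata~\ref{lem:dbar_densities} and~\ref{lem:lowering} provide the adjustment needed to meet the $\dbar$-type equation $\accentset{\xi}{\nabla}_{\bar{\alpha}}\sigma = 0$ for that same gauge.

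The delicate point --- and the only place requiring genuine work --- is exactly this gauge-matching in the converse of part (1). Since $\accentset{\xi}{\nabla}_{\bar{\alpha}}\sigma = 0$ is a Lewy-type equation whose solvability for an arbitrarily prescribed $\bm{\xi}_{\alpha}^{(0)}$ is obstructed in general (cf.\ the failure of the $\dbar$-Poincar\'e lemma recorded earlier), one cannot simply quote the converse of Theorem~\ref{thm:real2dens_d3}, which produces only \emph{some} gauge. The crux is therefore to verify that the gauge carried by a perturbed Fefferman space of the stated form is precisely the one manufactured from the realisability data, so that the degauging of Lemma~\ref{lem:degauge} closes the loop; everything beyond this is routine bookkeeping with the transformation rules \eqref{eq:freedom} and \eqref{eq:density_gauge}.
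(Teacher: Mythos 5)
Your reduction through Theorem~\ref{thm:wkhlfEinsc}, your treatment of part (2), and the forward implication of part (1) are exactly the paper's argument: with $\bm{\xi}_{\alpha}^{(-2)}=0$ the system \eqref{eq:xi2_a}, \eqref{eq:exactWW_Ein_a} becomes \eqref{eq:linWW2d3}, \eqref{eq:linWW1d3} with $\tau=\sigma$, whence Theorem~\ref{thm:real2dens_d3} gives realisability, and with $\bm{\xi}_{\alpha}^{(0)}=0$ as well the pair is the (almost) CR--Einstein system of Remark~\ref{rem:CR-Eins}. Part (2) is complete as you present it, since no gauge is left free.

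The converse of part (1), however, is not actually proved in your write-up: you isolate the gauge-matching step as ``the crux'' and then leave it ``to verify''. That is a genuine gap, and your sketched repair cannot close it in general, because $\bm{\xi}_{\alpha}^{(0)}$ is \emph{prescribed} data of the given perturbed Fefferman space, not something manufactured from the realisability data that you are free to match. Concretely, one must produce a non-vanishing $\sigma\in\Gamma(\mc{E}(1,0))$ with $\accentset{\xi}{\nabla}_{\bar{\alpha}}\sigma=0$ for the \emph{given} gauge; trivialising $\sigma$ against a reference density turns this into the Lewy-type equation \eqref{eq:Lewy} with datum determined by $\bm{\xi}_{\bar{\alpha}}^{(0)}$, whose solvability is obstructed even on realisable CR three-manifolds, as the paper itself records. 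You should be aware that the paper's own proof disposes of this direction by simply citing Theorem~\ref{thm:real2dens_d3}, whose converse only supplies a solution for \emph{some} gauge (indeed the trivial one), not the prescribed one --- so your hesitation is well placed rather than a misreading. The honest outcome is that the ``only if'' direction holds as stated, while the ``if'' direction requires either an exactness hypothesis on $\bm{\xi}_{\alpha}^{(0)}$ (so that Lemma~\ref{lem:degauge} genuinely degauges the system to \eqref{eq:linWW1}--\eqref{eq:linWW2}) or a reformulation in which the perturbation one-form is allowed to be chosen after realisability is assumed.
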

	
	\begin{proof}
		For the first part, \eqref{eq:exactWW_Ein_a} and \eqref{eq:xi2_a}, with $\bm{\xi}_{\alpha}^{(-2)} = 0$, are simply \eqref{eq:linWW1d3} and \eqref{eq:linWW2d3} respectively with $\tau = \sigma$. By Theorem \ref{thm:real2dens_d3} the result follows. For the second part, if in addition $\bm{\xi}_{\alpha}^{(0)} = 0$, \eqref{eq:exactWW_Ein_a} and \eqref{eq:xi2_a} are the defining equations for a CR--Einstein structure --- see Remark \ref{rem:CR-Eins}
	\end{proof}
	
	\begin{rem}
		The second part of Corollary \ref{cor:wkhlfEinsc} also tells us that the underlying CR structure is flat. Weaker conditions are certainly possible if one assumes that $\xi_{\bar{\alpha}}^{(0)}$ or $\xi_{\bar{\alpha}}^{(2)}$ are `exact'. We leave the details to the reader.
	\end{rem}
	
	\subsection{Almost weakly half-Einstein scales and null vacuum Maxwell two-forms}
	Drawing from the terminology used in mathematical physics, a \emph{vacuum Maxwell two-form} on a four-dimensional Lorentzian conformal manifold $(\wt{\mc{M}},\wt{\mbf{c}})$ is a two-form $\wt{F}$ that is both closed and co-closed, i.e.\
	\begin{align}\label{eq:vacMax}
		\d \wt{F} & = 0 \, , & \d \wt{\star} \wt{F} & = 0 \, .
	\end{align}
	Here, $\wt{\star}$ is the (conformally invariant) Hodge dual operator on two-forms on $\mc{M}$. Such a two-form will be said to be \emph{null} or \emph{algebraically special} if there exists a null vector field $\wt{k}$ such that
	\begin{align}\label{eq:null2f}
		\wt{k} \hook \wt{F} & = 0 \, , & \wt{\bm{\kappa}} \wedge \wt{F} & = 0 \, .
	\end{align}
	where $\wt{\bm{\kappa}} = \wt{\bm{g}} (\wt{k} , \cdot )$. In this case, we shall refer to the span of $\wt{k}$ as a \emph{repeated principal null direction (PND)} of $\wt{F}$. Both \eqref{eq:vacMax} and \eqref{eq:null2f} are conformally invariant.
	
	Condition \eqref{eq:null2f} tells us that we can write $\wt{F} = \wt{F}^+ + \wt{F}^-$ where $\wt{F}^\pm$ are simple complex two-forms satisfying $\wt{F}^\pm = \ol{\wt{F}^\mp}$ and $\wt{\star} \wt{F}^\pm = \pm \i \wt{F}^\pm$. More precisely, a PND $\wt{k}$ of $\wt{F}$ defines an optical structure $\wt{K} := \langle \wt{k} \rangle$ on $(\wt{\mc{M}},\wt{\mbf{c}})$ with almost Robinson structure $(\wt{N},\wt{K})$, and $\wt{F}^+ \in \Gamma(\bigwedge^2 \Ann(\wt{N}))$. By \eqref{eq:vacMax}, $\wt{F}^\pm$ are closed, so $\wt{N}$ is involutive, which is equivalent to $\wt{k}$ being tangent to a non-shearing congruence of null geodesics $\wt{\mc{K}}$. This is essentially the content of \emph{Mariot's Theorem} \cite{Mariot1954}: a null vacuum Maxwell two-form locally always gives rise to a non-shearing congruence of null geodesics. In particular, the leaf space of $\wt{\mc{K}}$ is a CR three-manifold $(\mc{M},H,J)$, contact if the congruence is twisting, and by \eqref{eq:vacMax} and \eqref{eq:null2f}, using a suitable orientation on $\wt{\mc{M}}$, $\wt{F}^+$ is the pullback of a closed section of the canonical bundle of $(\mc{M},H,J)$.
	
	This already suggests that a converse of Mariot's Theorem will not in general be possible: starting from a twisting non-shearing congruence of null geodesics, a null vacuum Maxwell two-form would have to arise from a closed section of the canonical bundle of a contact CR three-manifold. But the existence is in general not guaranteed unless $(\mc{M},H,J)$ is realisable. In particular, if one works in the \emph{analytic} category, the CR three-manifold is certainly realisable and admits nowhere vanishing closed sections, and one can construct analytic null vacuum Maxwell two-forms. This is known as \emph{Robinson's Theorem} \cite{Robinson1961}. Tafel \cite{Tafel1985,Tafel1986} showed that Robinson's result may fail otherwise, and indeed, highlighted its failure in terms of a Lewy operator on the CR three-manifold.
	
	The following result is a reformulation of \cite{Hill2008} and \cite{Schmalz2019}.
	\begin{thm}\label{thm:realisableCRMax}
		Let $(\wt{\mc{M}},\wt{\mbf{c}},\wt{k}) \rightarrow (\mc{M},H,J)$ be a four-dimensional optical geometry with twisting non-shearing congruence of null geodesics. Then locally, $\wt{\mbf{c}}$ admits an almost weakly half-Einstein scale and $\langle \wt{k} \rangle$ is a repeated PND of a nowhere vanishing null vacuum Maxwell two-form if and only if $(\mc{M},H,J)$ is realisable.
	\end{thm}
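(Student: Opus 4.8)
The plan is to match each of the two left-hand hypotheses to one of the two ingredients of Jacobowitz's realisability criterion as packaged in Theorem \ref{thm:New_Jacobowitz}, namely a Webster--Weyl structure and a CR density of weight $(1,0)$; so I would prove two local equivalences and then quote that theorem. The first equivalence --- that $\wt{\mbf{c}}$ carries an almost weakly half-Einstein scale if and only if $(\mc{M},H,J)$ carries a Webster--Weyl structure --- is essentially Proposition \ref{prop:wkhlfEinsc}, whose proof produces through \eqref{eq:step2} a section $\lambda_\alpha$ of $\mc{E}_\alpha$ solving the Webster--Weyl equation \eqref{eq:Webster-Weyl}. The second --- that $\langle\wt{k}\rangle$ is a repeated PND of a null vacuum Maxwell two-form if and only if $(\mc{M},H,J)$ admits a closed section of the canonical bundle $\mc{C}$, equivalently (Lemma \ref{lem:hol_sgm}) a CR density of weight $(1,0)$ --- is the Mariot--Robinson correspondence recalled just before the statement.

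First I would carry out the forward direction. Assuming both hypotheses, Proposition \ref{prop:wkhlfEinsc} supplies a Webster--Weyl structure. For the Maxwell two-form I would use the null condition \eqref{eq:null2f} to split $\wt{F} = \wt{F}^+ + \wt{F}^-$ with $\wt{F}^- = \ol{\wt{F}^+}$ and $\wt{\star}\wt{F}^\pm = \pm\i\wt{F}^\pm$; the closedness in \eqref{eq:vacMax} makes $\wt{N}$ involutive (consistent with the given non-shearing congruence) and, for a suitable orientation, exhibits $\wt{F}^+$ as the pullback of a closed section of $\mc{C}$ (Mariot's Theorem). By Lemma \ref{lem:hol_sgm} this yields a CR density of weight $(1,0)$, and Theorem \ref{thm:New_Jacobowitz} then gives realisability.

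For the converse I would start from realisability and invoke Theorem \ref{thm:New_Jacobowitz} to obtain both a Webster--Weyl structure and a closed section of $\mc{C}$. The Webster--Weyl structure returns an almost weakly half-Einstein scale by the converse half of Proposition \ref{prop:wkhlfEinsc}. From the closed section I would build the Maxwell field in the manner of Robinson's Theorem: pull it back to a simple self-dual complex two-form $\wt{F}^+$ on $\wt{\mc{M}}$, set $\wt{F} = \wt{F}^+ + \ol{\wt{F}^+}$, and check that closedness of the section forces $\d\wt{F} = 0$ and, since $\wt{\star}\wt{F} = \i(\wt{F}^+ - \wt{F}^-)$, also $\d\wt{\star}\wt{F} = 0$; by construction $\langle\wt{k}\rangle$ is a repeated PND, so \eqref{eq:vacMax} and \eqref{eq:null2f} both hold.

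The main obstacle --- and the reason both hypotheses are genuinely required --- is the Robinson direction of the second equivalence. A twisting non-shearing congruence need not admit any null vacuum Maxwell two-form, because the requisite closed section of $\mc{C}$ can be obstructed, an obstruction tied to the non-solvability of the Lewy operator in the non-realisable case. Hence in the converse I must draw the closed section from realisability itself, not merely from the CR function produced by the half-Einstein scale, and I must verify that the pulled-back $\wt{F}^+$ is genuinely closed on $\wt{\mc{M}}$ once the density weight has been trivialised against the fibre coordinate. This verification, carried out in \cite{Hill2008} and \cite{Schmalz2019}, is the only non-formal step; everything else is an assembly of the cited equivalences.
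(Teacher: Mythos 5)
Your proposal is correct and follows essentially the same route as the paper: the paper likewise obtains a CR function from the almost weakly half-Einstein scale via Proposition \ref{prop:wkhlfEinsc}, a closed section of the canonical bundle from the null vacuum Maxwell two-form via the Mariot correspondence, and concludes by Jacobowitz's criterion (the paper cites Theorem \ref{thm:Jacobowitz} directly where you cite its reformulation, Theorem \ref{thm:New_Jacobowitz} together with Lemma \ref{lem:hol_sgm}, which is the same argument). If anything, your treatment of the converse is more complete than the paper's one-line remark that it ``follows from Proposition \ref{prop:wkhlfEinsc} alone,'' since that proposition only supplies the scale and one still needs the Robinson construction from the closed section of $\mc{C}$ to produce the Maxwell two-form, as you correctly point out.
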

	
	\begin{proof}
		Suppose that $(\wt{\mc{M}},\wt{\mbf{c}}, \wt{k})$ admits an almost weakly half-Einstein scale. Then Proposition \ref{prop:wkhlfEinsc} provides a CR function. On the other hand, the nowhere vanishing null Maxwell two-form  yields a nowhere vanishing closed section of the canonical bundle. By Theorem \ref{thm:Jacobowitz}, realisability follows.
		
		Conversely, suppose $(\mc{M},H,J)$ is realisable. Then by Proposition \ref{prop:wkhlfEinsc}, $(\wt{\mc{M}},\wt{\mbf{c}}, \wt{k})$ admits an almost weakly half-Einstein scale. In addition, there exist nowhere vanishing closed sections of $\mc{C}$ on $(\mc{M},H,J)$, and any of them pull back to nowhere vanishing null vacuum Maxwell two-forms on $(\wt{\mc{M}},\wt{\mbf{c}}, \wt{k})$.
	\end{proof}

	\subsection{Almost half-Einstein scales}
	Let us restrict our almost Lorentzian scale $\wt{\sigma}$ on $(\wt{\mc{M}},\wt{\mbf{c}},\wt{k}) \rightarrow (\mc{M},H,J)$ further by assuming that $\wt{\sigma}$ is now an almost half-Einstein scale, i.e.\ it satisfies \eqref{eq:alwkEins} and \eqref{eq:alcstRicSc}. Then, in particular, Proposition \ref{prop:alwkhalfE} must hold, i.e.\ off the zero set $\wt{\mc{Z}}$ of $\wt{\sigma}$, the metric $\wt{g} = \wt{\sigma}^{-2} \wt{\bm{g}}$ is given by \eqref{eq:strongNS_2} with $\lambda_{\alpha}$ satisfying \eqref{eq:step2}. By Proposition \ref{prop:scale2metric}, condition \eqref{eq:alcstRicSc} tells us that off $\wt{\mc{Z}}$, the Ricci scalar is constant, with $\wt{\Sc} = 4 \wt{\Lambda}$, which is equivalent to a second-order ordinary differential equation on the component $\wt{\lambda}_{0}$:
	\begin{equation}\label{eq:ODE1}
		\cos^2 \phi \ddot{\wt{\lambda}}_0 + 6 \cos \phi \sin \phi \dot{\wt{\lambda}}_0 + \left( 12 - 8 \cos^2 \phi \right) \wt{\lambda}_{0} 
		+ A_2 \cos^2 \phi 
		+ A_1 \cos \phi \sin \phi  + A_0 = 0 \, ,
	\end{equation}
	where
	\begin{subequations}
		\begin{align}\label{eq:coeffODE1}
			A_0 & = -  4  \wt{\Lambda} - 6   \lambda_{\alpha} \lambda^{\alpha} \, , \\
			A_1 & = 3 \left( \nabla_{\alpha} \lambda^{\alpha} + \nabla^{\alpha} \lambda_{\alpha} \right) \, , \\
			A_2 & =  8  \Rho  + 5  \i \left( \nabla_{\alpha} \lambda^{\alpha} - \nabla^{\alpha} \lambda_{\alpha} \right) - 6 \lambda_{\alpha} \lambda^{\alpha}  \, .
		\end{align}
	\end{subequations}
	Again, these equations can be derived from the computation of the curvature given in \cite{TaghaviChabert2022}.
	The general solution $\wt{\lambda}_{0}$ is given by \eqref{eq:ODE1sol} with coefficients \eqref{eq:coeff_lbd0} in the next proposition.
	
	\begin{prop}\label{prop:al_half-Einstein}
		Let $(\wt{\mc{M}},\wt{\mbf{c}},\wt{k}) \longrightarrow (\mc{M},H,J)$ be a four-dimensional optical geometry with twisting non-shearing congruence of null geodesics.  Let $\wt{\sigma}$ be an almost Lorentzian scale that satisfies \eqref{eq:RickkscFef}, so that off the zero set $\wt{\mc{Z}}$ of $\wt{\sigma}$, $\wt{g} = \wt{\sigma}^{-2} \wt{\bm{g}}$ is a smooth metric. Then $\wt{\sigma}$ is an an almost half-Einstein scale if and only if off $\wt{\mc{Z}}$, the metric $\wt{g}$ takes the form \eqref{eq:strongNS_2}, where $\lambda_{\alpha}$ satisfies \eqref{eq:step2}, and $\wt{\lambda}_{0}$ is given by
		\begin{align}\label{eq:ODE1sol}
			\wt{\lambda}_0 & =  \lambda_0^{(-4)} \e^{-4 \i \phi} + \lambda_0^{(-2)} \e^{-2 \i \phi}  + \lambda_0^{(0)} + \lambda_0^{(2)} \e^{2 \i \phi} + \lambda_0^{(4)} \e^{4 \i \phi}  \, ,
		\end{align}
		where
		\begin{subequations}\label{eq:coeff_lbd0}
			\begin{align}
				\lambda_{0}^{(0)} 
				& = \i \left(  \nabla_{\alpha} \lambda^{\alpha} - \nabla^{\alpha} \lambda_{\alpha} \right) + \tfrac{3}{4} \lambda_{\alpha} \lambda^{\alpha}  +  \Rho  + 6 \Re \left( \mu \right)  \, , \label{eq:step3a}\\
				\lambda_{0}^{(2)}
				& = 2 \lambda_{0}^{(4)} +  \tfrac{1}{4}  \i  \nabla_{\alpha} \lambda^{\alpha} + \tfrac{1}{2} \lambda_{\alpha} \lambda^{\alpha} + 2 \Re \left( \mu \right) \, , \label{eq:step3b}\\
				\lambda_{0}^{(4)}
				& = \tfrac{1}{8} \left(   4  \Rho  + 3  \i \left( \nabla_{\alpha} \lambda^{\alpha} -  \nabla^{\alpha} \lambda_{\alpha} \right) - 3 \lambda_{\alpha} \lambda^{\alpha}  - \tfrac{4}{3}  \wt{\Lambda}   \right) + \mu \, , \label{eq:step3c}
			\end{align}
		\end{subequations}
		for some arbitrary complex-valued function $\mu$, and where $\wt{\Lambda} = \tfrac{1}{4} \wt{\Sc}$ is constant.
	\end{prop}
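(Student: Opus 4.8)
The plan is to reduce everything to what has already been established for almost weakly half-Einstein scales and then to convert the one remaining hypothesis — constancy of the Ricci scalar — into the ordinary differential equation \eqref{eq:ODE1}, which I then integrate. Since an almost half-Einstein scale is in particular an almost weakly half-Einstein scale, Proposition \ref{prop:alwkhalfE} applies verbatim: the leaf space $(\mc{M},H,J)$ is the contact CR three-manifold induced by the twist, and the metric $\wt{g} = \wt{\sigma}^{-2}\wt{\bm{g}}$ already has the form \eqref{eq:strongNS_2} in which $\wt{\lambda}_{\alpha}$ is assembled from a single $\lambda_{\alpha}$ solving the Webster--Weyl equation \eqref{eq:step2}. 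Hence the entire content still requiring proof is the precise shape of the fibre component $\wt{\lambda}_{0}$.

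Next I would exploit the extra hypothesis. By part (2) of Proposition \ref{prop:scale2metric}, the almost half-Einstein condition \eqref{eq:alcstRicSc} is equivalent to $\wt{g}$ being a half-Einstein metric, i.e. weakly half-Einstein with constant Ricci scalar $\wt{\Sc} = 4\wt{\Lambda}$. So I would compute $\wt{\Sc}$ for the metric \eqref{eq:strongNS_2}, using the curvature formulae of \cite{TaghaviChabert2022} together with \eqref{eq:step2} to trade derivatives of $\lambda_{\alpha}$ for the Webster torsion and the Schouten scalar. Imposing $\wt{\Sc} = \text{const}$ and collecting the $\phi$-dependence produces precisely the second-order linear inhomogeneous ODE \eqref{eq:ODE1} with coefficients \eqref{eq:coeffODE1}, the principal part carrying the trigonometric factors $\cos^2\phi$ and $\cos\phi\sin\phi$ that originate from the conformal factor $\sec^2\phi$ and the twist.

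Finally I would integrate \eqref{eq:ODE1}. As $\wt{\lambda}_{0}$ is a smooth function on the fibre circle, I expand it as a Fourier series $\wt{\lambda}_{0} = \sum_{k} \lambda_{0}^{(2k)} \e^{2\i k \phi}$; rewriting $\cos^2\phi$ and $\cos\phi\sin\phi$ through $\e^{\pm 2\i\phi}$ turns \eqref{eq:ODE1} into a three-term recurrence whose diagonal coefficient is proportional to $8 - \tfrac12(2k)^2$, with the inhomogeneous data supported only in the modes $0$ and $\pm 2$. At large $|k|$ the recurrence has the double characteristic root $-1$, so no nonzero tail can have decaying coefficients; smoothness therefore forces $\lambda_{0}^{(2k)} = 0$ for $|k| \geq 3$, while at $k = \pm 2$ the diagonal coefficient vanishes and leaves the amplitude of the modes $\pm 4$ free. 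Solving the surviving equations for modes $0, \pm 2, \pm 4$, substituting \eqref{eq:coeffODE1}, and using reality $\wt{\lambda}_{0} = \overline{\wt{\lambda}_{0}}$ (so that $\lambda_{0}^{(-2k)} = \overline{\lambda_{0}^{(2k)}}$) then yields the finite expansion \eqref{eq:ODE1sol} with coefficients \eqref{eq:coeff_lbd0}, the free amplitude of the mode $4$ being recorded as the arbitrary complex function $\mu$.

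I expect the main obstacle to be the curvature computation in the second step: assembling $\wt{\Sc}$ for the twisting, conformally rescaled metric \eqref{eq:strongNS_2} out of the formulae of \cite{TaghaviChabert2022} is lengthy bookkeeping, and it is there that the exact numerical coefficients in \eqref{eq:coeffODE1} are pinned down. By contrast, once the ODE is in hand the integration is routine, the only conceptual point being the truncation of the Fourier series, which is settled by the double root at $-1$ of the recurrence.
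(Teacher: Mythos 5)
Your proposal matches the paper's argument: the paper likewise invokes Proposition \ref{prop:alwkhalfE} for the weakly half-Einstein part, converts constancy of the Ricci scalar (via Proposition \ref{prop:scale2metric}) into the ODE \eqref{eq:ODE1} with coefficients \eqref{eq:coeffODE1}, and then simply records \eqref{eq:ODE1sol} with \eqref{eq:coeff_lbd0} as its general solution. The one soft spot is your justification of the Fourier truncation: the ``double characteristic root $-1$'' asymptotics does not by itself exclude a rapidly decaying minimal solution of a three-term recurrence, but the truncation is in fact immediate --- at the frequency $\e^{4\i\phi}$ both the diagonal coefficient $8-2m^{2}$ and the coefficient of the lower neighbour vanish while the inhomogeneity (supported only on frequencies $0,\pm2$) is absent, so that equation forces the $\e^{6\i\phi}$ mode to vanish and all higher modes then vanish inductively; equivalently, the two-dimensional homogeneous solution space of the second-order ODE is already exhausted by $\cos^{4}\phi$ and $\cos^{3}\phi\sin\phi$, which is exactly the $\mu$-freedom in \eqref{eq:coeff_lbd0}.
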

	
	\begin{rem}
		We note that the function $\mu$ can also be written as
		\begin{align}\label{eq:const}
			\mu
			& := \tfrac{1}{16} \left( 2 \i \nabla^{\alpha} \lambda_{\alpha} - 4 \lambda_{\alpha} \lambda^{\alpha}  + c - \i b  \right) \, ,
		\end{align}
		for some arbitrary real-valued functions $b$ and $c$ on $\mc{M}$.
	\end{rem}
	
	Since $\wt{\lambda}_{\alpha} =  \lambda_{\alpha}^{(0)} + \lambda_{\alpha}^{(-2)} \e^{-2\i \phi}$ (see display \eqref{eq:lambda_PetrovII}) and $\wt{\lambda}_{0}$ is of the form \eqref{eq:ODE1sol}, applying \cite[Lemma~5.7]{TaghaviChabert2023b} leads to the following:
	\begin{cor}\label{cor:al_half-Einstein}
		Let $(\wt{\mc{M}},\wt{\mbf{c}},\wt{k}) \longrightarrow (\mc{M},H,J)$ be a four-dimensional optical geometry with twisting non-shearing congruence of null geodesics $\wt{\mc{K}}$. Suppose that $\wt{\mbf{c}}$ admits an almost half-Einstein scale. Then $(\wt{\mc{M}},\wt{\mbf{c}},\wt{k})$ is locally conformally isometric to a perturbed Fefferman space.
	\end{cor}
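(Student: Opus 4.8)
The plan is to follow the template of the proofs of Proposition~\ref{prop:PetrovIII} and Theorem~\ref{thm:PetrovII+Bach}: an almost half-Einstein scale forces every Fourier component of the distinguished one-form $\wt{\lambda}$ to be a \emph{finite} polynomial in $\e^{\i \phi}$, and finiteness of the Fourier expansion is exactly the feature that characterises a perturbed Fefferman metric. Once this finiteness is in hand, the identification is delivered by Lemma~5.7 of \cite{TaghaviChabert2023b}.

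First I would observe that an almost half-Einstein scale is \emph{a fortiori} an almost weakly half-Einstein scale, so Theorem~\ref{thm:GSalpha} applies and $\langle \wt{k} \rangle$ is automatically a repeated PND of the Weyl tensor. Together with the vanishing $\wt{\Ric}(\wt{k},\wt{v})=0$ for $\wt{v} \in \Gamma(\langle \wt{k} \rangle^\perp)$ off the zero set $\wt{\mc{Z}}$, this places us in the hypotheses of Lemma~\ref{lem:deg_sp_L_sc}. Fixing a gauge $\wt{g}_\theta \in \accentset{n.e.}{\wt{\mbf{c}}}$ with associated density $\sigma \in \Gamma(\mc{E}(1,0))$ and fibre coordinate $\phi$, the metric $\wt{g}=\wt{\sigma}^{-2}\wt{\bm{g}}$ then takes the form \eqref{eq:strongNS_2}; in particular $\wt{\lambda}_\alpha$ carries only the Fourier modes $0$ and $-2$.

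Next I would feed the remaining half-Einstein requirement \eqref{eq:alcstRicSc} into the dimension-four computation that precedes Proposition~\ref{prop:al_half-Einstein}: constancy of the Ricci scalar off $\wt{\mc{Z}}$ is equivalent to the ordinary differential equation \eqref{eq:ODE1} for $\wt{\lambda}_0$, whose general solution \eqref{eq:ODE1sol} is again a finite Fourier polynomial, now with modes $-4,-2,0,2,4$ and coefficients \eqref{eq:coeff_lbd0}. Hence \emph{all} components of $\wt{\lambda}$ are finite Fourier expansions on the circle bundle, so that $\cos^2 \phi \cdot \wt{g}$ extends smoothly across $\wt{\mc{Z}}$ to a genuine metric of the canonical form \eqref{eq:can_met} lying in $\wt{\mbf{c}}$.

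To finish, I would extract the CR data: reading the Fourier coefficients of $\wt{\lambda}$ through the relations \eqref{eq:lmb2xi} and normalising by powers of $\sigma$ as in \eqref{eq:Fourier_coef} produces a tuple of weighted densities on $(\mc{M},H,J)$, while the residual part $\wt{\xi}=\wt{\lambda}-\wt{\omega}_\theta+\tfrac{1}{3}\Rho\,\theta$ is semi-basic, since the $\d \phi$-term is supplied entirely by the Fefferman one-form $\wt{\omega}_\theta$. Applying Lemma~5.7 of \cite{TaghaviChabert2023b} then exhibits $(\wt{\mc{M}},\wt{\mbf{c}},\wt{K})$ as locally conformally isometric to the perturbed Fefferman space carrying this CR data. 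I expect the only real subtlety to be the bookkeeping that confirms the extracted coefficients transform correctly as weighted densities under a change of contact form, in accordance with \eqref{eq:wtd_Fourier}; but this is precisely the content absorbed into Lemma~5.7, so once the finite-Fourier form is secured the conclusion is automatic.
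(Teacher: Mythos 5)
Your proposal is correct and follows essentially the same route as the paper: the almost half-Einstein scale is in particular almost weakly half-Einstein, so Proposition \ref{prop:alwkhalfE} (via Theorem \ref{thm:GSalpha} and Lemma \ref{lem:deg_sp_L_sc}) pins down $\wt{\lambda}_{\alpha}$ to the modes $0,-2$, the constancy of the Ricci scalar yields the ODE \eqref{eq:ODE1} whose solution \eqref{eq:ODE1sol} makes $\wt{\lambda}_{0}$ a finite Fourier polynomial, and Lemma 5.7 of \cite{TaghaviChabert2023b} then gives the local conformal isometry with a perturbed Fefferman space. This is precisely the argument the paper compresses into the sentence preceding the corollary, resting on Proposition \ref{prop:al_half-Einstein}.
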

	
	The next theorem describes the CR data of the perturbed Fefferman space alluded to in Proposition \ref{prop:al_half-Einstein} and Corollary \ref{cor:al_half-Einstein}.
	\begin{thm}\label{thm:hlfEinsc}
		A four-dimensional perturbed Fefferman space $(\wt{\mc{M}},\wt{\mbf{c}}_{\wt{\xi}},\wt{k}) \longrightarrow (\mc{M},H,J)$ admits an almost half-Einstein scale $\wt{\sigma}$ if and only if $\wt{\xi}$ is determined by some CR data\linebreak $\left(\bm{\xi}_{\alpha}^{(-2)}, \bm{\xi}_{\alpha}^{(0)}, [\nabla , \bm{\xi}_{0}^{(0)}, \bm{\xi}_{0}^{(2)}], \bm{\xi}_{0}^{(4)} \right)$, and there exists a nowhere vanishing density $\sigma \in \Gamma(\mc{E}(1,0))$ that satisfies
		\begin{align}
			& \accentset{\xi}{\nabla}_{\bar{\alpha}} \sigma + 2\i \xi_{\bar{\alpha}}^{(2)} \sigma = 0 \, , \label{eq:xi2_b} \\
			& \accentset{\xi}{\nabla}_{\alpha} \accentset{\xi}{\nabla}_{\beta} \sigma + \i  \WbA_{\alpha \beta} \sigma = 0 \, , \label{eq:exactWW_Ein_b}	
		\end{align}
		where $\xi^{(-2)}_{\alpha} = \sigma \ol{\sigma}^{-1} \bm{\xi}_{\alpha}^{(-2)}$, $\accentset{\xi}{\nabla}_{\alpha}$ denotes any partial Webster connection gauged by $\bm{\xi}_{\alpha}^{(0)}$, and
		\begin{subequations}\label{eq:xi0}
			\begin{align}
				& \bm{\xi}_{0}^{(0)} =   \i \left(  \nabla_{\alpha} \bm{\xi}^{\alpha}_{(0)} -  \nabla^{\alpha} \bm{\xi}_{\alpha}^{(0)} \right) + 3 \bm{\xi}_{\alpha}^{(-2)} \bm{\xi}^{\alpha}_{(2)} + 6 \Re \left( \mu \right) \sigma^{-1} \ol{\sigma}^{-1}  \, , \label{eq:xi00} \\
				& \bm{\xi}_{0}^{(2)} =  \tfrac{1}{2}  \i  \accentset{\xi}{\nabla}_{\alpha} \bm{\xi}^{\alpha}_{(2)} + 2 \bm{\xi}_{0}^{(4)} \sigma^{-1} \ol{\sigma} + 2 \Re \left( \mu \right) \ol{\sigma}^{-2}  \, ,  \label{eq:xi02} \\
				& \bm{\xi}_{0}^{(4)} =  \tfrac{1}{2}  \left(  \tfrac{3}{4}  \left( \nabla_{\alpha} \left( \ol{\sigma}^{-1} \accentset{\xi}{\nabla}^{\alpha} \ol{\sigma} \right) + \nabla^{\alpha} \left( \sigma^{-1} \accentset{\xi}{\nabla}_{\alpha} \sigma \right) - \left(\sigma^{-1} \accentset{\xi}{\nabla}_{\alpha} \sigma \right) \left( \ol{\sigma}^{-1} \accentset{\xi}{\nabla}^{\alpha} \ol{\sigma} \right) \right) \right. \label{eq:xi04} \\
				& \qquad \qquad \left. - \tfrac{1}{2} \left( \nabla_{\alpha} \nu^{\alpha} + \nabla^{\alpha} \nu_{\alpha}  - \nu_{\alpha} \nu^{\alpha} \right) + \Rho \right)  \sigma^2 \ol{\sigma}^{-2} + \left( - \tfrac{1}{6}  \wt{\Lambda} + \mu \right) \sigma \ol{\sigma}^{-3} \, , \nonumber
			\end{align}
		\end{subequations}
		for some real constant $\wt{\Lambda}$ and complex-valued function $\mu$ on $(\mc{M},H,J)$, and where $\nabla_{\alpha} (\sigma \ol{\sigma}) = \nu_{\alpha} \sigma \ol{\sigma}$.
		
		Moreover, the zero set $\wt{\mc{Z}}$ of $\wt{\sigma}$ consists of the union $\wt{\mc{Z}}_+ \cup \wt{\mc{Z}}_-$ of the sections $\wt{\mc{Z}}_{\pm} = [\pm \i \sigma^{-1}] : \mc{M} \rightarrow \wt{\mc{M}}$. Off $\wt{\mc{Z}}$, $\wt{\sigma}$ determines a half-Einstein metric $\wt{g} = \sec^2 \phi \cdot \wt{g}_{\theta, \wt{\xi}}$, with Ricci scalar $4 \wt{\Lambda}$, where $\phi$ is the fibre coordinate determined by $\sigma$ and $\wt{g}_{\theta, \wt{\xi}}$ is the perturbed Fefferman metric associated to the contact form $\theta = (\sigma \ol{\sigma})^{-1} \bm{\theta}$.
	\end{thm}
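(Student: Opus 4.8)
The plan is to treat the almost half-Einstein condition as the almost weakly half-Einstein condition \eqref{eq:alwkEins}--\eqref{eq:RiccwkEins} reinforced by the constant Ricci scalar condition \eqref{eq:alcstRicSc}, and to import wholesale the output of Theorem \ref{thm:wkhlfEinsc} for the former. First I would observe that every almost half-Einstein scale is in particular an almost weakly half-Einstein scale, so for the forward direction Theorem \ref{thm:wkhlfEinsc} already supplies a non-vanishing $\sigma \in \Gamma(\mc{E}(1,0))$ solving \eqref{eq:xi2_b} and \eqref{eq:exactWW_Ein_b}, the pseudo-hermitian structure $\theta = (\sigma \ol{\sigma})^{-1} \bm{\theta}$ with fibre coordinate $\phi$, the zero set $\wt{\mc{Z}} = \wt{\mc{Z}}_+ \cup \wt{\mc{Z}}_-$ with $\wt{\mc{Z}}_\pm = \{ [\pm \i \sigma^{-1}] \}$, and the metric $\wt{g} = \sec^2 \phi \cdot \wt{g}_{\theta,\wt{\xi}}$ of \eqref{eq:secpFeff}, equivalently \eqref{eq:strongNS_2}. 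This fixes $\bm{\xi}_\alpha^{(0)}$ and $\bm{\xi}_\alpha^{(-2)}$ through \eqref{eq:step1}--\eqref{eq:step2} and leaves only the components $\bm{\xi}_0^{(k)}$ to be pinned down; everything specific to the half-Einstein case must come from \eqref{eq:alcstRicSc}.

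Next I would feed this extra hypothesis through Proposition \ref{prop:scale2metric}(2): off $\wt{\mc{Z}}$ the metric $\wt{g}$ is half-Einstein, i.e.\ has constant Ricci scalar. Imposing this on the explicit form \eqref{eq:strongNS_2} produces, exactly as in the computation leading up to Proposition \ref{prop:al_half-Einstein}, the second-order ODE \eqref{eq:ODE1} in the fibre variable $\phi$ on the single remaining free component $\wt{\lambda}_0$, with coefficients \eqref{eq:coeffODE1} built from $\lambda_\alpha$, $\Rho$ and the constant $\wt{\Lambda} = \tfrac14 \wt{\Sc}$. Its general solution is the finite Fourier sum \eqref{eq:ODE1sol} with coefficients \eqref{eq:coeff_lbd0}, carrying a single arbitrary complex function $\mu$ on $\mc{M}$. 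In particular $\wt{\lambda}_0$ has only the modes $0, \pm 2, \pm 4$, which tells us that the index set $\mc{I}$ of Theorem \ref{thm:wkhlfEinsc} is $\{0,2,4\}$, and that the perturbation data is precisely $\left( \bm{\xi}_\alpha^{(0)}, \bm{\xi}_\alpha^{(-2)}, [\nabla, \bm{\xi}_0^{(0)}, \bm{\xi}_0^{(2)}], \bm{\xi}_0^{(4)} \right)$, the mode $4$ appearing without a gauge class since $\bm{\xi}_0^{(4)}$ is invariant under \eqref{eq:wtd_Fourier}.

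The substantive step, and the one I expect to be the main obstacle, is the passage from the pseudo-hermitian identities \eqref{eq:coeff_lbd0} to the weighted, gauge-covariant density identities \eqref{eq:xi00}--\eqref{eq:xi04}. Here I would proceed exactly as at the end of the proof of Proposition \ref{prop:PetrovIII}: use the dictionary \eqref{eq:lmb2xi} to trade each $\lambda_0^{(k)}$ for the corresponding $\xi_0^{(k)}$, substitute $\lambda_\alpha = \i \sigma^{-1} \accentset{\xi}{\nabla}_\alpha \sigma$ (legitimate, since \eqref{eq:xi2_b}--\eqref{eq:exactWW_Ein_b} make $\lambda_\alpha$ a Webster--Weyl solution), absorb the $\nabla_0 \sigma$ terms via the commutator \eqref{eq:CR_com1} at the price of curvature terms, and finally multiply by the powers of $\sigma, \ol{\sigma}$ dictated by \eqref{eq:Fourier_coef} to land in the correct density bundle. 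The delicate points are the bookkeeping of the gauge $\bm{\xi}_\alpha^{(0)}$ inside $\accentset{\xi}{\nabla}_\alpha$, and the restoration of the frame-dependent one-form $\nu_\alpha = \sigma^{-1} \ol{\sigma}^{-1} \nabla_\alpha(\sigma \ol{\sigma})$, which vanishes in the distinguished frame $\nabla(\sigma \ol{\sigma}) = 0$ but must be reinstated so that \eqref{eq:xi04} is manifestly contact-form independent; verifying that each resulting $\bm{\xi}_0^{(k)}$ transforms under a change of contact form according to \eqref{eq:wtd_Fourier} is the consistency check that the translation has been done correctly.

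Finally, for the converse I would reverse these manipulations: given the stated CR data and a $\sigma$ solving \eqref{eq:xi2_b}--\eqref{eq:exactWW_Ein_b} with the $\bm{\xi}_0^{(k)}$ as in \eqref{eq:xi0}, the converse half of Theorem \ref{thm:wkhlfEinsc} (via Proposition 6.4 of \cite{TaghaviChabert2023b}) yields an almost weakly half-Einstein scale $\wt{\sigma}$ with the asserted zero set and metric $\wt{g} = \sec^2 \phi \cdot \wt{g}_{\theta,\wt{\xi}}$. Re-weighting \eqref{eq:xi0} back into pseudo-hermitian components recovers \eqref{eq:coeff_lbd0}, so $\wt{\lambda}_0$ solves the ODE \eqref{eq:ODE1}, whence $\wt{g}$ has constant Ricci scalar $4 \wt{\Lambda}$ and \eqref{eq:alcstRicSc} holds. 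Thus $\wt{\sigma}$ is an almost half-Einstein scale, completing the equivalence.
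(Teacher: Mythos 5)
Your proposal is correct and follows essentially the same route as the paper: the paper likewise obtains the CR data and the Fourier form of $\wt{\lambda}_0$ from Proposition \ref{prop:al_half-Einstein} (which encapsulates the ODE \eqref{eq:ODE1} computation you describe), imports \eqref{eq:xi2_b}--\eqref{eq:exactWW_Ein_b} and the zero-set description from Theorem \ref{thm:wkhlfEinsc}, and then performs exactly the translation via \eqref{eq:lmb2xi}, re-weighting by powers of $\sigma,\ol{\sigma}$ and reinstating the $\nu_{\alpha}$ terms for CR invariance. The converse is also handled identically, by reversing the manipulations as in Theorem \ref{thm:wkhlfEinsc}.
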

	
	\begin{proof}
		Since $\wt{\sigma}$ is an almost half-Einstein scale, we already know by Proposition \ref{prop:al_half-Einstein} that the perturbation one-form $\wt{\xi}$ is determined by some CR data \begin{equation*}
			\left(\bm{\xi}_{\alpha}^{(0)}, \bm{\xi}_{\alpha}^{(-2)}, [\nabla , \bm{\xi}_{0}^{(0)}, \bm{\xi}_{0}^{(2)}], \bm{\xi}_{0}^{(4)} \right) \, .
		\end{equation*}
		Further the existence of a nowhere vanishing density $\sigma \in \Gamma(\mc{E}(1,0))$ that satisfies \eqref{eq:xi2_b} and \eqref{eq:exactWW_Ein_b} is guaranteed by Theorem \ref{thm:wkhlfEinsc}. To establish the precise form of the remaining densities, we look at the next relevant conditions on the Ricci curvature, which tells us by Proposition \ref{prop:al_half-Einstein} that the component $\wt{\lambda}_{0}$ is determined by \eqref{eq:ODE1sol} with \eqref{eq:coeff_lbd0}. For convenience, using \eqref{eq:step1}, we relabel the latter as
		\begin{align*}
			\lambda_{0}^{(0)} & =  \i \left(  \nabla_{\alpha} \lambda^{\alpha}_{(0)} - \nabla^{\alpha} \lambda_{\alpha}^{(0)} \right) + 3 \lambda_{\alpha}^{(-2)} \lambda^{\alpha}_{(2)}  +  \Rho + 6 \Re \left( \mu \right) \, , \\
			\lambda_{0}^{(2)} & =  2 \lambda_{0}^{(4)} +  \tfrac{1}{2}  \i  \nabla_{\alpha} \lambda^{\alpha}_{(2)} + \lambda_{\alpha}^{(0)} \lambda^{\alpha}_{(2)} + 2 \Re \left( \mu \right) \, , \\
			\lambda_{0}^{(4)} & = \tfrac{1}{8} \left(   4  \Rho  + 3  \i \left( \nabla_{\alpha} \lambda^{\alpha}_{(0)} -   \nabla^{\alpha} \lambda_{\alpha}^{(0)} \right) - 3 \lambda_{\alpha}^{(0)} \lambda^{\alpha}_{(0)}  - \tfrac{4}{3}  \wt{\Lambda}   \right) + \mu \, ,
		\end{align*}
		for some real constant $\wt{\Lambda}$ and complex-valued function $\mu$ on $(\mc{M},H,J)$. Using the relations \eqref{eq:lmb2xi}, these become
		\begin{subequations}
			\begin{align}
				\xi_{0}^{(0)}
				& =   \i \left(  \nabla_{\alpha} \xi^{\alpha}_{(0)} -  \nabla^{\alpha} \xi_{\alpha}^{(0)} \right) + 3 \xi_{\alpha}^{(-2)} \xi^{\alpha}_{(2)} + 6 \Re \left( \mu \right) \, , \label{eq:xi00t}\\
				\xi_{0}^{(2)} & =  2 \xi_{0}^{(4)} +  \tfrac{1}{2}  \i  \nabla_{\alpha} \xi^{\alpha}_{(2)} +  \i \sigma^{-1} \accentset{\xi}{\nabla}_{\alpha} \sigma \xi^{\alpha}_{(2)} + 2 \Re \left( \mu \right) \, , \label{eq:xi02t} \\
				\xi_{0}^{(4)} & = \tfrac{1}{2} \left( \tfrac{3}{4}  \left( \nabla_{\alpha} \left( \ol{\sigma}^{-1} \accentset{\xi}{\nabla}^{\alpha} \ol{\sigma} \right) + \nabla^{\alpha} \left( \sigma^{-1} \accentset{\xi}{\nabla}_{\alpha} \sigma \right) \right) \right.  \label{eq:xi04t} \\
				& \qquad \qquad \left. - \tfrac{3}{4} \left( \sigma^{-1} \accentset{\xi}{\nabla}_{\alpha} \sigma \right) \left( \ol{\sigma}^{-1} \accentset{\xi}{\nabla}^{\alpha} \ol{\sigma} \right)  - \tfrac{1}{3}  \wt{\Lambda}  + \Rho   \right) + \mu \, .  \nonumber
			\end{align}
		\end{subequations}
		Next, we multiply \eqref{eq:xi00t}, \eqref{eq:xi02t} and \eqref{eq:xi04t} through by $\sigma^{-1} \ol{\sigma}^{-1}$, $\ol{\sigma}^{-2}$ and $\sigma \ol{\sigma}^{-3}$ respectively, so that using \eqref{eq:gD_density}, we obtain \eqref{eq:xi00}, \eqref{eq:xi02} and \eqref{eq:xi04}, where we have added terms involving  $\nu_{\alpha} = (\sigma \ol{\sigma})^{-1} \nabla_{\alpha} (\sigma \ol{\sigma})$ to ensure CR invariance.
		
		The converse is just as in the proof of Theorem \ref{thm:wkhlfEinsc}, and we leave the details to the reader.
	\end{proof}

	\subsection{Almost pure radiation scales}
	While the Bach tensor of a conformal four-manifold is an obstruction to the existence of an almost Einstein scale, its vanishing is not necessary for the existence of an almost pure radiation scale. For our purposes, we shall need the following weaker condition.
	
	\begin{prop}\label{prop:W+E2B}
		Let $(\wt{\mc{M}},\wt{\mbf{c}},\wt{k})$ be a four-dimensional optical geometry with twisting non-shearing congruence of null geodesics that admits an almost pure radiation scale. Then $\langle \wt{k} \rangle$ is a repeated PND of the Weyl tensor, and the Bach tensor satisfies $\wt{\Bach}(\wt{k},\wt{k}) = 0$.
	\end{prop}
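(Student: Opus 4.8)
The first assertion requires no genuine work. By Definition~\ref{defn:aESc}, an almost pure radiation scale is in particular an almost weakly half-Einstein scale, so Theorem~\ref{thm:GSalpha} applies directly and tells us that $\langle\wt{k}\rangle$ is a repeated PND of the Weyl tensor. Via Lemma~\ref{lem:PetrovII} this pins $\wt{\lambda}_\alpha$ to the form \eqref{eq:solambdalf}. Moreover, the scale is a fortiori almost half-Einstein, so the four-dimensional analysis producing the ordinary differential equation \eqref{eq:ODE1} is available, and its solution \eqref{eq:ODE1sol} shows that $\wt{\lambda}_0$ is a finite Fourier polynomial supported on the modes $\e^{2k\i\phi}$, $|k|\leq 2$, with coefficients \eqref{eq:coeff_lbd0}. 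Thus the metric is of the explicit form \eqref{eq:strongNS_2}, and everything that follows is a computation with this metric.

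The plan for the Bach identity is to substitute this explicit $\wt{\lambda}_0$ into the curvature identity \eqref{eq:BachPsi2} of Appendix~\ref{app:Bach_prop}, which expresses $\wt{\Bach}(\wt{k},\wt{k})$ as a fixed linear combination of the real and imaginary parts of $\wt{\Psi}_2$ and their derivatives along $\wt{k}$. Feeding in \eqref{eq:RePsi2} and \eqref{eq:ImPsi2}, this combination becomes, up to a nonzero constant, exactly $\ddddot{\wt{\lambda}}_0 + 20\ddot{\wt{\lambda}}_0 + 64\wt{\lambda}_0 - 64\,\lambda_0^{(0)}$, where $\lambda_0^{(0)}$ is the $\phi$-independent driving term appearing in the fourth-order equation of Theorem~\ref{thm:PetrovII+Bach}. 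The key point is that the operator $\partial_\phi^4 + 20\,\partial_\phi^2 + 64 = (\partial_\phi^2+4)(\partial_\phi^2+16)$ annihilates each of the modes $\e^{\pm2\i\phi}$ and $\e^{\pm4\i\phi}$ and multiplies the zero mode by $64$. Hence, for a $\wt{\lambda}_0$ of the finite form above, $\wt{\Bach}(\wt{k},\wt{k})$ collapses to a single $\phi$-independent quantity, proportional to the difference between the zero mode of $\wt{\lambda}_0$ and the driving term $\lambda_0^{(0)}$.

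It then remains to show that this leftover quantity is zero. Comparing the half-Einstein zero mode \eqref{eq:step3a} with the driving term of Theorem~\ref{thm:PetrovII+Bach} (and using \eqref{eq:step1} to identify $\lambda_\alpha^{(-2)} = \tfrac12\lambda_\alpha$), the difference is precisely $6\,\Re(\mu)$, with $\mu$ the a priori free function of \eqref{eq:coeff_lbd0}, equivalently \eqref{eq:const}. So the Bach identity reduces to the single scalar statement $\Re(\mu)=0$, and this is exactly where the full pure-radiation hypothesis, rather than mere half-Einstein, is needed: half-Einstein leaves $\Re(\mu)$ undetermined, whereas the extra condition \eqref{eq:purad} forces the tracefree Ricci into the rank-one shape $\wt{\Phi}_{ab} = \wt{\bm{\Phi}}\,\wt{\bm{\kappa}}_a\wt{\bm{\kappa}}_b$. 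The plan is to expand the tracefree Ricci of \eqref{eq:strongNS_2} in Fourier modes, isolate the ``Levi-direction'' component that half-Einstein leaves free, and check that its vanishing, which is the content of \eqref{eq:purad}, is equivalent to $\Re(\mu)=0$; substituting back then yields $\wt{\Bach}(\wt{k},\wt{k})=0$.

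I expect this last identification to be the main obstacle, as it is a bookkeeping-heavy computation: one must track which Fourier component of the tracefree Ricci the pure-radiation condition actually kills and verify it is the same real scalar that obstructs the Bach identity. The cleanest way to organise it should be to work with the weighted CR data of Theorem~\ref{thm:hlfEinsc}, so that \eqref{eq:purad} can be read directly as a constraint relating $\bm{\xi}_0^{(4)}$ and $\mu$; the only genuinely structural input is the factorisation $(\partial_\phi^2+4)(\partial_\phi^2+16)$, which decouples the higher Fourier modes and is what makes the entire $\phi$-dependence of $\wt{\Bach}(\wt{k},\wt{k})$ drop out.
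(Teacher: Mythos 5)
Your first claim is handled exactly as in the paper: the repeated-PND statement is Theorem \ref{thm:GSalpha} applied to the (a fortiori) almost weakly half-Einstein scale, so there is nothing to add there.

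For the Bach identity you take a genuinely different route. The paper never touches the Fourier expansion of $\wt{\lambda}_0$: it writes the Schouten tensor of the pure radiation metric in the algebraic form \eqref{eq:Schouten_redEin}, observes that $\wt{k}^a\wt{k}^b\wt{\Rho}^{cd}\wt{\Weyl}_{acbd}$ dies by the symmetries and tracelessness of the Weyl tensor, and then kills $\wt{k}^a\wt{k}^b\wt{\nabla}^{c}\wt{\Cot}_{abc}$ term by term via the product rule \eqref{eq:kkDY} and the definition \eqref{eq:Cotton_Rho} of the Cotton tensor; conformal invariance extends the vanishing across the zero set. That is a short, frame-free computation using only the algebraic shape of the Ricci tensor. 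Your route --- substituting the solution \eqref{eq:ODE1sol} into \eqref{eq:BachPsi2} and exploiting the factorisation $(\partial_\phi^2+4)(\partial_\phi^2+16)$ --- is also sound, and your bookkeeping is correct: the operator annihilates the $\e^{\pm 2\i\phi}$ and $\e^{\pm4\i\phi}$ modes, and the residue is $64\cdot 6\,\Re(\mu)$ with $\mu$ as in \eqref{eq:coeff_lbd0} and \eqref{eq:const}, so everything hinges on $\Re(\mu)=0$. That fact is true, and it is exactly what the paper extracts later (in the proof of Theorem \ref{thm:pertFeffpurad}) from the single component $\wt{\Ric}(\wt{k},\cdot)=\wt{\Lambda}\wt{g}(\wt{k},\cdot)$ of \eqref{eq:puradvac}, which yields the ODE \eqref{eq:ODE2} and pins down the constant $c$ in \eqref{eq:const}; there is no circularity, since that derivation needs only the metric form \eqref{eq:strongNS_2} and not the perturbed-Fefferman identification. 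Be aware, however, that the step you defer (``expand the tracefree Ricci \ldots and check'') is precisely the step carrying all the content: as written, your argument establishes only that $\wt{\Bach}(\wt{k},\wt{k})\propto\Re(\mu)$ and asserts without computation that pure radiation kills $\Re(\mu)$. If completed, your approach buys more information --- an explicit formula for $\wt{\Bach}(\wt{k},\wt{k})$ in terms of CR data, and the observation that only the $\wt{\Ric}(\wt{k},\cdot)\propto\wt{\kappa}$ component of the pure radiation condition is actually needed --- whereas the paper's argument buys brevity and independence from any choice of frame or trivialisation.
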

	
	\begin{proof}
		Let $\wt{\sigma}$ be an almost pure radiation scale. Then $\langle \wt{k} \rangle$ is a repeated PND of the Weyl tensor by Theorem \ref{thm:GSalpha}. Let $\wt{g} = \wt{\sigma}^{-2} \wt{\bm{g}}$ be the pure radiation metric off the zero set of $\wt{\sigma}$. By \eqref{eq:puradvac}, the Schouten tensor of $\wt{g}$ can be written as
		\begin{align}\label{eq:Schouten_redEin}
			\wt{\Rho}_{a b} & = \tfrac{1}{2}\wt{\Phi} \wt{\kappa}_{a} \wt{\kappa}_{b} + \tfrac{1}{6} \wt{\Lambda} \wt{g}_{a b} \, ,
		\end{align}
		for some smooth function $\wt{\Phi}$ on $\wt{\mc{M}}$, constant $\wt{\Lambda}$ and where $\wt{\kappa} = \wt{g}(\wt{k},\cdot)$. By \eqref{eq:Bach}, we must compute the expression
		\begin{align*}
			\wt{k}^a \wt{k}^b \wt{\Bach}_{a b} & = - \wt{k}^a \wt{k}^b \wt{\nabla}^{c} \wt{\Cot}_{a b c} + \wt{k}^a \wt{k}^b \wt{\Rho}^{c d} \wt{\Weyl}_{a c b d} \, ,
		\end{align*}
		The second term of the RHS of this expression is then clearly zero, and we are left with the first term, which, by the product rule, can be re-arranged as
		\begin{align}\label{eq:kkDY}
			\wt{k}^a \wt{k}^b \wt{\nabla}^{c} \wt{\Cot}_{a b c} & = \wt{\nabla}^{c} \left( \wt{k}^a \wt{k}^b \wt{\Cot}_{a b c} \right) - \left( \wt{\nabla}^{c} \wt{k}^a \right) \wt{k}^b \wt{\Cot}_{a b c} - \left( \wt{\nabla}^{c} \wt{k}^b \right) \wt{k}^a \wt{\Cot}_{a b c} \, .
		\end{align}
		Now, using equation \eqref{eq:Cotton_Rho} for the definition of the Cotton tensor, the product rule and the fact that $\wt{\Rho}_{a b} \wt{k}^{b} = \tfrac{1}{6} \wt{\Lambda}$ and the congruence generated by $\wt{k}$ is non-shearing, we get $\wt{k}^a \wt{k}^b \wt{\Cot}_{a b c} = 0$. Thus, the first term of the RHS of \eqref{eq:kkDY} must be zero. Next, using \eqref{eq:Cotton_Rho} again and substituting in \eqref{eq:Schouten_redEin}, we find $\left( \wt{\nabla}^{c} \wt{k}^a \right) \wt{k}^b \wt{\Cot}_{a b c} = 0$, so that the second term of the right-hand side of \eqref{eq:kkDY} vanishes, and so does the third term by the symmetries of $\wt{\Cot}_{a b c}$. Hence, $\wt{\Bach}(\wt{k},\wt{k}) = 0$ off the zero set of $\wt{\sigma}$. These conformally invariant conditions extend smoothly to the zero set of $\wt{\sigma}$.
	\end{proof}

	By Proposition \ref{prop:W+E2B} and Theorem \ref{thm:PetrovII+Bach}, we immediately obtain the following corollary:
	\begin{cor}\label{cor:puradsc}
		Let $(\wt{\mc{M}},\wt{\mbf{c}},\wt{k})\longrightarrow (\mc{M},H,J)$ be a four-dimensional optical geometry with twisting non-shearing congruence of null geodesics that admits an almost pure radiation scale. Then it is locally conformally isometric to a perturbed Fefferman space $(\wt{\mc{M}}',\wt{\mbf{c}}'_{\wt{\xi}},\wt{k}')\longrightarrow (\mc{M},H,J)$ where the perturbation one-form $\wt{\xi}$ is determined by some CR data $\left(\bm{\xi}_{\alpha}^{(-2)}, \bm{\xi}_{\alpha}^{(0)}, [\nabla,\bm{\xi}_{0}^{(0)}, \bm{\xi}_{0}^{(-2)}], \bm{\xi}_{0}^{(-4)} \right)$ with
		\begin{align*}
			\bm{\xi}_{0}^{(0)} & = \i  \nabla_{\alpha} \bm{\xi}^{\alpha}_{(0)} -   \i \nabla^{\alpha} \bm{\xi}_{\alpha}^{(0)} 
			+ 3 \bm{\xi}^{\alpha}_{(2)} \bm{\xi}_{\alpha}^{(-2)}  \, .
		\end{align*} 
	\end{cor}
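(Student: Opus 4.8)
The plan is to recognise that this corollary is a direct composition of the two results that immediately precede it, so the strategy reduces to verifying that an almost pure radiation scale supplies exactly the two hypotheses demanded by Theorem \ref{thm:PetrovII+Bach}, and then transcribing that theorem's conclusion.

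First I would invoke Proposition \ref{prop:W+E2B}. By assumption $(\wt{\mc{M}},\wt{\mbf{c}},\wt{k})$ admits an almost pure radiation scale $\wt{\sigma}$, and Proposition \ref{prop:W+E2B} then tells us two things at once: that $\langle \wt{k} \rangle$ is a repeated PND of the Weyl tensor, i.e.\ $\wt{\Weyl}(\wt{k},\wt{v},\wt{k},\cdot)=0$ for every $\wt{v}\in\Gamma(\langle\wt{k}\rangle^\perp)$, and that $\wt{\Bach}(\wt{k},\wt{k})=0$. These are precisely conditions \eqref{eq:Weyl_deg} and \eqref{eq:Bach_deg}. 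Recall that the substantive content here is the computation in the proof of Proposition \ref{prop:W+E2B}, which uses the pure radiation form \eqref{eq:Schouten_redEin} of the Schouten tensor together with $\wt{\Rho}_{ab}\wt{k}^b=0$ and the symmetries of the Cotton tensor to force $\wt{k}^a\wt{k}^b\wt{\Bach}_{ab}=0$.

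Next I would feed these two verified conditions into Theorem \ref{thm:PetrovII+Bach}. Since \eqref{eq:Weyl_deg} and \eqref{eq:Bach_deg} both hold, that theorem concludes that $(\wt{\mc{M}},\wt{\mbf{c}},\wt{k})$ is locally conformally isometric to a perturbed Fefferman space $(\wt{\mc{M}}',\wt{\mbf{c}}'_{\wt{\xi}},\wt{k}')\longrightarrow(\mc{M},H,J)$, whose perturbation one-form is determined by CR data of the stated shape, with $\bm{\xi}_0^{(0)}$ given by the displayed formula. All the genuine work — solving the governing fourth-order ODE for $\wt{\lambda}_0$ and reconstructing the CR data — is already carried out there, so nothing new is required at this stage.

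The only point meriting a remark, rather than a genuine obstacle, is the labelling of the Fourier modes: Theorem \ref{thm:PetrovII+Bach} records the tuple with $\bm{\xi}_0^{(2)},\bm{\xi}_0^{(4)}$, whereas the corollary writes $\bm{\xi}_0^{(-2)},\bm{\xi}_0^{(-4)}$. These carry identical information via the reality relations $\bm{\xi}_0^{(j)}=\ol{\bm{\xi}_0^{(-j)}}$, the relabelling corresponding to the orientation of the Robinson structure selected by the pure radiation normalisation \eqref{eq:purad}. Thus I expect no hard step at all in the corollary itself; it is simply the concatenation of Proposition \ref{prop:W+E2B} and Theorem \ref{thm:PetrovII+Bach}.
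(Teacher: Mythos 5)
Your proposal is correct and coincides with the paper's own argument: the corollary is obtained precisely by combining Proposition \ref{prop:W+E2B} (which supplies conditions \eqref{eq:Weyl_deg} and \eqref{eq:Bach_deg} from the almost pure radiation scale) with Theorem \ref{thm:PetrovII+Bach}. Your observation about the sign of the Fourier-mode labels $\bm{\xi}_{0}^{(\pm 2)},\bm{\xi}_{0}^{(\pm 4)}$ is also apt; the discrepancy is purely notational and the two tuples carry the same information via the conjugation relations.
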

	
	We can make our statement regarding the CR data of the previous corollary more precise.
	\begin{thm}\label{thm:pertFeffpurad}
		A four-dimensional perturbed Fefferman space $(\wt{\mc{M}},\wt{\mbf{c}}_{\wt{\xi}},\wt{k}) \longrightarrow (\mc{M},H,J)$ admits an almost pure radiation scale $\wt{\sigma}$ if and only if $\wt{\xi}$ is determined by some CR data\linebreak $\left(\bm{\xi}_{\alpha}^{(-2)}, \bm{\xi}_{\alpha}^{(0)}, [\nabla,\bm{\xi}_{0}^{(0)}, \bm{\xi}_{0}^{(-2)}], \bm{\xi}_{0}^{(-4)} \right)$  and there exists a nowhere vanishing density $\sigma \in \Gamma(\mc{E}(1,0))$ that satisfies
		\begin{align}
			& \accentset{\xi}{\nabla}_{\bar{\alpha}} \sigma + 2\i \xi_{\bar{\alpha}}^{(2)} \sigma = 0 \, , \label{eq:xi2} \\
			& \accentset{\xi}{\nabla}_{\alpha} \accentset{\xi}{\nabla}_{\beta} \sigma + \i  \WbA_{\alpha \beta} \sigma = 0 \, , \label{eq:exactWW_Ein}	
		\end{align}
		where $\xi^{(-2)}_{\alpha} = \sigma \ol{\sigma}^{-1} \bm{\xi}_{\alpha}^{(-2)}$, $\accentset{\xi}{\nabla}_{\alpha}$ denotes any partial Webster connection gauged by $\bm{\xi}_{\alpha}^{(0)}$, and
		\begin{subequations}\label{eq:xi0b}
			\begin{align}
				& \bm{\xi}_{0}^{(0)} = \i  \nabla_{\alpha} \bm{\xi}^{\alpha}_{(0)} -   \i \nabla^{\alpha} \bm{\xi}_{\alpha}^{(0)} 	+ 3 \, \bm{\xi}^{\alpha}_{(2)} \bm{\xi}_{\alpha}^{(-2)}   \, , \label{eq:xi00b} \\
				& \bm{\xi}_{0}^{(2)} =  - \tfrac{1}{2} \i \accentset{\xi}{\nabla}^{\alpha} \bm{\xi}_{\alpha}^{(-2)} + 4 \sigma \ol{\sigma}^{-1} \bm{\xi}_{0}^{(-4)}  \label{eq:xi02b} \, , \\
				& \bm{\xi}_{0}^{(4)} =  \tfrac{1}{2}  \left(  \tfrac{3}{4}  \left( \nabla_{\alpha} \left( \ol{\sigma}^{-1} \accentset{\xi}{\nabla}^{\alpha} \ol{\sigma} \right) + \nabla^{\alpha} \left( \sigma^{-1} \accentset{\xi}{\nabla}_{\alpha} \sigma \right) - \left(\sigma^{-1} \accentset{\xi}{\nabla}_{\alpha} \sigma \right) \left( \ol{\sigma}^{-1} \accentset{\xi}{\nabla}^{\alpha} \ol{\sigma} \right) \right) \right.  \label{eq:xi04b} \\
				& \qquad \qquad  \left. - \tfrac{1}{2} \left( \nabla_{\alpha} \nu^{\alpha} + \nabla^{\alpha} \nu_{\alpha}  - \nu_{\alpha} \nu^{\alpha} \right) + \Rho \right)  \sigma^2 \ol{\sigma}^{-2} + \left( - \tfrac{1}{6}  \wt{\Lambda} + \mu \right) \sigma \ol{\sigma}^{-3} \, , \nonumber
			\end{align}
		\end{subequations}
		for some real constant $\wt{\Lambda}$ and complex-valued function $\mu$ on $(\mc{M},H,J)$ satisfying $\ol{\mu}= -\mu$, and where $\nabla_{\alpha} (\sigma \ol{\sigma}) = \nu_{\alpha} \sigma \ol{\sigma}$, and in addition
		\begin{align}\label{eq:2ndCR}
			& \accentset{\xi}{\nabla}_{\alpha} \bm{\xi}_0^{(4)} - \left( \nu_{\alpha} - 2 \i \xi_{\alpha}^{(-2)} \right) \bm{\xi}^{(4)}_0 = 0 \, . 
		\end{align}
		
		Moreover, the zero set $\wt{\mc{Z}}$ of $\wt{\sigma}$ consists of the union $\mc{Z}_+ \cup \mc{Z}_-$ of the sections $\mc{Z}_{\pm} = [\pm \i \sigma^{-1}] : \mc{M} \rightarrow \wt{\mc{M}}$. Off $\wt{\mc{Z}}$, $\wt{\sigma}$ determines a pure radiation metric  $\wt{g} = \sec^2 \phi \cdot \wt{g}_{\theta, \wt{\xi}}$, with Ricci scalar $4 \wt{\Lambda}$, where $\phi$ is the fibre coordinate determined by $\sigma$ and $\wt{g}_{\theta, \wt{\xi}}$ is the perturbed Fefferman metric associated to the contact form $\theta = (\sigma \ol{\sigma})^{-1} \bm{\theta}$.
	\end{thm}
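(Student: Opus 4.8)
The plan is to exploit the hierarchy built into Definition \ref{defn:aESc}: an almost pure radiation scale is in particular an almost half-Einstein scale, so that Theorem \ref{thm:hlfEinsc} already supplies almost all of the required structure. Concretely, in the forward direction I would first invoke Theorem \ref{thm:hlfEinsc} to produce the density $\sigma \in \Gamma(\mc{E}(1,0))$ satisfying \eqref{eq:xi2} and \eqref{eq:exactWW_Ein}, together with the contact form $\theta = (\sigma\ol{\sigma})^{-1}\bm{\theta}$, the fibre coordinate $\phi$, and the coefficient relations \eqref{eq:xi00}, \eqref{eq:xi02}, \eqref{eq:xi04} expressed in terms of the real constant $\wt{\Lambda}$ and the complex function $\mu$. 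This identifies $\wt{g} = \wt{\sigma}^{-2}\wt{\bm{g}}$ with $\sec^2\phi\cdot\wt{g}_{\theta,\wt{\xi}}$ off $\wt{\mc{Z}}$, and fixes the Fourier profile of $\wt{\lambda}_0$ through \eqref{eq:ODE1sol}--\eqref{eq:coeff_lbd0}. Everything that remains is to pin down the effect of the extra pure radiation condition \eqref{eq:purad}.

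The heart of the argument is therefore to translate \eqref{eq:purad}, equivalently the requirement \eqref{eq:purad_alt} that the tracefree Ricci tensor $\wt{\Phi}_{ab}$ equal $\wt{\bm{\Phi}}\,\wt{\bm{\kappa}}_a\wt{\bm{\kappa}}_b$, into conditions on the CR data. Working in the adapted coframe and using the curvature formulas of \cite{TaghaviChabert2022}, I would compute the components of $\wt{\Phi}_{ab}$ in the slots transverse to $\wt{\bm{\kappa}}_a\wt{\bm{\kappa}}_b$ --- those which the weakly half-Einstein and constant-scalar-curvature conditions do not already annihilate --- as Fourier polynomials in $\phi$ whose coefficients are built from $\sigma$, the $\bm{\xi}^{(k)}_\alpha$, $\mu$ and $\wt{\Lambda}$. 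Setting each such coefficient to zero, I expect two separate consequences: first, the vanishing of the $\phi$-independent part forces $\Re(\mu)=0$, i.e.\ $\ol{\mu}=-\mu$, which upon substitution into \eqref{eq:xi00}--\eqref{eq:xi04} collapses them to \eqref{eq:xi00b}, \eqref{eq:xi02b} and \eqref{eq:xi04b}; second, the vanishing of the top Fourier modes yields the first-order CR equation \eqref{eq:2ndCR} on $\bm{\xi}^{(4)}_0$. Passing back and forth between the trivialised components $\lambda^{(k)}_\bullet$ and the weighted densities $\bm{\xi}^{(k)}_\bullet$ is carried out through \eqref{eq:lmb2xi}, \eqref{eq:Fourier_coef} and \eqref{eq:gD_density}, with the connecting $\nu_\alpha$-terms inserted precisely so as to restore CR invariance, just as in the proof of Theorem \ref{thm:hlfEinsc}.

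For the converse I would run the construction in reverse: given CR data satisfying \eqref{eq:xi2}, \eqref{eq:exactWW_Ein}, \eqref{eq:xi0b} and \eqref{eq:2ndCR} with $\ol{\mu}=-\mu$, Theorem \ref{thm:wkhlfEinsc} (via Proposition 6.4 of \cite{TaghaviChabert2023b}) already furnishes an almost Lorentzian scale $\wt{\sigma}$ with zero set the union of the two sections $[\pm\i\sigma^{-1}]$ and such that $\wt{g} = \sec^2\phi\cdot\wt{g}_{\theta,\wt{\xi}}$ is weakly half-Einstein off $\wt{\mc{Z}}$. Substituting the trivialised forms of \eqref{eq:xi0b} and \eqref{eq:2ndCR} back into the curvature expressions then verifies directly that $\wt{\Phi}_{ab}$ reduces to a multiple of $\wt{\bm{\kappa}}_a\wt{\bm{\kappa}}_b$, so that $\wt{g}$ is pure radiation with Ricci scalar $4\wt{\Lambda}$. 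This direction is essentially bookkeeping and mirrors the converse in Theorem \ref{thm:hlfEinsc}.

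The main obstacle I anticipate is the explicit computation in the second paragraph: extracting, from the full tracefree Ricci tensor of the $\phi$-dependent metric \eqref{eq:strongNS_2}, precisely those component equations that are not already implied by the half-Einstein conditions, and recognising the resulting expression --- after repeated use of the commutation relations \eqref{eq:CR_com1}--\eqref{eq:CR_com2} and careful weight and gauge bookkeeping --- as the gauged CR-invariant equation \eqref{eq:2ndCR}. The delicacy lies in checking that the reality condition $\ol{\mu}=-\mu$ and equation \eqref{eq:2ndCR} together are \emph{exactly} equivalent to \eqref{eq:purad}, with no redundant or missing constraints.
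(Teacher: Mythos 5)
Your proposal matches the paper's proof in both structure and substance: the paper likewise invokes Theorem \ref{thm:hlfEinsc} to supply $\sigma$, equations \eqref{eq:xi2}, \eqref{eq:exactWW_Ein} and the coefficients \eqref{eq:xi00}--\eqref{eq:xi04}, and then imposes the two residual Ricci conditions $\wt{\Ric}(\wt{k},\cdot)=\wt{\Lambda}\wt{g}(\wt{k},\cdot)$ and $\wt{\Ric}(\wt{v},\cdot)=\wt{\Lambda}\wt{g}(\wt{v},\cdot)$, the first forcing $\Re(\mu)=0$ via compatibility of the ODE \eqref{eq:ODE2} with \eqref{eq:ODE1sol}, and the second yielding $\nabla_{\alpha}\lambda_0^{(4)}-3\i\lambda_{\alpha}\lambda_0^{(4)}=0$, which is recast as \eqref{eq:2ndCR} exactly by the weight and $\nu_{\alpha}$ bookkeeping you describe. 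The converse is handled in the paper just as you propose, by reversing the construction as in Theorems \ref{thm:wkhlfEinsc} and \ref{thm:hlfEinsc}.
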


	\begin{proof}
		Since $\wt{\sigma}$ is an almost pure radiation scale, and in particular, an almost half-Einstein scale, we know by Theorem \ref{thm:hlfEinsc} that the perturbation one-form is determined by some CR data $\left(\bm{\xi}_{\alpha}^{(-2)}, \bm{\xi}_{\alpha}^{(0)}, [\nabla , \bm{\xi}_{0}^{(0)}, \bm{\xi}_{0}^{(2)}], \bm{\xi}_{0}^{(4)} \right)$, that there exists a nowhere vanishing density $\sigma \in \Gamma(\mc{E}(1,0))$ that satisfies \eqref{eq:xi2} and \eqref{eq:exactWW_Ein}, and that $\bm{\xi}_{0}^{(0)}$, $\bm{\xi}_{0}^{(2)}$ and $\bm{\xi}_{0}^{(4)}$ must be given by \eqref{eq:xi00}, \eqref{eq:xi02} and \eqref{eq:xi04} respectively.
		
		In comparison with the case where $\wt{\sigma}$ is merely an almost half-Einstein scale, we have another two additional requirements. The first one is that $\wt{\Ric}(\wt{k}, \cdot) = \wt{\Lambda} \wt{g}(\wt{k},\cdot)$. A computation using \cite{TaghaviChabert2022} will show that this is equivalent to the component $\wt{\lambda}_0$ being a solution to 
		\begin{multline}\label{eq:ODE2}
			\cos^2 \phi \ddot{\wt{\lambda}}_0 + 3 \cos \phi \sin \phi \dot{\wt{\lambda}}_0 +  \left( 3 + 4 \cos^2 \phi \right) \wt{\lambda}_{0}  \\
			+ B_4 \cos^4 \phi  
			+ A_1 \cos^3 \phi \sin \phi
			- \tfrac{1}{2} A_2	\cos^2 \phi 
			+ \tfrac{1}{4} A_0 = 0 \, .
		\end{multline}
		where $A_1$, $A_2$ and $A_3$ are given by \eqref{eq:coeffODE1}, and
		\begin{align*}
			B_4 = \left(   3 \i \left(  \nabla^{\alpha} \lambda_{\alpha} - \nabla_{\alpha} \lambda^{\alpha} \right) - 12  \lambda_{\alpha} \lambda^{\alpha}  \right) \, .
		\end{align*} Plugging the solution \eqref{eq:ODE1sol} with coefficients \eqref{eq:coeff_lbd0} into \eqref{eq:ODE2} tells us that the function $c$ given in \eqref{eq:const} is fully determined by $c =  \i \left(  \nabla_{\alpha} \lambda^{\alpha} - \nabla^{\alpha} \lambda_{\alpha} \right) + 4  \lambda_{\alpha} \lambda^{\alpha}$. Hence $\mu$ satisfies $\Re\left(\mu\right) = 0$, and the coefficients \eqref{eq:xi0} reduce to \eqref{eq:xi0b}.

		The second requirement comes from the condition $\wt{\Ric}(\wt{v},\cdot) = \wt{\Lambda} \wt{g}(\wt{v},\cdot)$ for any $\wt{v} \in \Gamma(\langle \wt{k} \rangle^\perp)$. A brute force computation, involving our previous assumptions and manipulations, eventually shows that this is equivalent to
		\begin{align}\label{eq:step6}
			\nabla_{\alpha} \lambda_0^{(4)} - 3  \i  \lambda_{\alpha}  \lambda_0^{(4)} = 0 \, .
		\end{align}
		We now turn \eqref{eq:step6} into \eqref{eq:2ndCR} by setting $\xi_{0}^{(4)} = \lambda_{0}^{(4)}$ and $\lambda_{\alpha} = \i \sigma^{-1} \accentset{\xi}{\nabla}_{\alpha} \sigma$, multiplying it through by $\sigma \ol{\sigma}^{-3}$ and adding a term involving $\nu_{\alpha} = (\sigma \ol{\sigma})^{-1} \nabla_{\alpha} (\sigma \ol{\sigma})$ to ensure CR invariance.
		
		The converse is just as was explained in the proofs of Theorems \ref{thm:wkhlfEinsc} and \ref{thm:hlfEinsc}. 
	\end{proof}

	\begin{rem}
		We note that in the case where our conformal manifold is the canonical Fefferman space, equations \eqref{eq:xi2} and \eqref{eq:exactWW_Ein} reduce to the CR--Einstein equations \eqref{eq:linWW1} and \eqref{eq:linWW2} with $\tau = \sigma$, which is consistent with what is found in \cite{Cap2008}.
	\end{rem}

	\begin{lem}\label{lem:purad_Ric}
		Let $(\wt{\mc{M}},\wt{\mbf{c}}_{\wt{\xi}},\wt{k}) \rightarrow (\mc{M},H,J)$ be a four-dimensional perturbed Fefferman space that admits an almost pure radiation scale $\wt{\sigma}$, i.e.\ $\wt{\sigma}$ satisfies \eqref{eq:alwkEins} and \eqref{eq:alcstRicSc} for some $\wt{\Phi}_{a b}$ of the form \eqref{eq:purad_alt}. Then $\wt{\Phi}_{a b}$ is given by
		\begin{align*}
			\wt{\Phi} & = 4 \cos^2 \phi \cdot \bm{\Phi} \cdot \bm{\theta} \otimes \bm{\theta} \, ,
		\end{align*}
		for some real-valued density $\bm{\Phi}$ of weight $(-2,-2)$ on $(\mc{M},H,J)$.  In particular, $\wt{\Phi}_{a b}=0$ on the zero set of $\wt{\sigma}$.
	\end{lem}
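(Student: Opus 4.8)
The plan is to isolate the tracefree tensor $\wt{\Phi}_{a b}$ of \eqref{eq:alwkEins} in the distinguished scale $\wt{\sigma}$, and then to read off its $\phi$-dependence from the known form of the metric. By Proposition \ref{prop:scale2metric}, off the zero set $\wt{\mc{Z}}$ the density $\wt{\sigma}$ determines a pure radiation metric $\wt{g} = \wt{\sigma}^{-2} \wt{\bm{g}}$, so that its Ricci tensor obeys \eqref{eq:puradvac2} and its Schouten tensor is \eqref{eq:Schouten_redEin}. First I would evaluate \eqref{eq:alwkEins} in this very scale: since the Levi-Civita connection of $\wt{g}$ annihilates $\wt{\sigma}$, the Hessian term drops out, leaving $\tfrac{1}{2} \wt{\Phi}_{a b} \wt{\sigma} = \wt{\sigma} \left( \wt{\Rho}_{a b} \right)_\circ$. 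As $\wt{\kappa}_a \wt{\kappa}_b$ is already tracefree ($\wt{k}$ being null), \eqref{eq:Schouten_redEin} gives $\left( \wt{\Rho}_{a b} \right)_\circ = \tfrac{1}{2} \wt{\Phi}\, \wt{\kappa}_a \wt{\kappa}_b$, where $\wt{\Phi}$ is the pure radiation function. Hence $\wt{\Phi}_{a b} = \wt{\Phi}\, \wt{\kappa}_a \wt{\kappa}_b$ off $\wt{\mc{Z}}$, and everywhere by continuity.

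Next I would compute $\wt{\kappa}$. By Theorem \ref{thm:pertFeffpurad} one has $\wt{g} = \sec^2 \phi \cdot \wt{g}_{\theta, \wt{\xi}}$ off $\wt{\mc{Z}}$, and because $\theta(\wt{k}) = 0$ and $\wt{\xi}$ is semi-basic, $\wt{g}_{\theta, \wt{\xi}}(\wt{k}, \cdot) = \wt{g}_{\theta}(\wt{k}, \cdot) = 2 \theta$; hence $\wt{\kappa} = \wt{g}(\wt{k}, \cdot) = 2 \sec^2 \phi \cdot \theta$. Substituting $\theta = (\sigma \ol{\sigma})^{-1} \bm{\theta}$ then yields $\wt{\Phi}_{a b} = 4 \wt{\Phi} \sec^4 \phi \cdot (\sigma \ol{\sigma})^{-2} \bm{\theta}_a \bm{\theta}_b$. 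Setting $\bm{\Phi} := \wt{\Phi} \sec^6 \phi \cdot (\sigma \ol{\sigma})^{-2}$ puts this in the claimed shape $\wt{\Phi}_{a b} = 4 \cos^2 \phi \cdot \bm{\Phi} \cdot \bm{\theta}_a \bm{\theta}_b$, and since $(\sigma \ol{\sigma})^{-2} \in \Gamma(\mc{E}(-2,-2))$ the weight of $\bm{\Phi}$ is automatic. The entire content of the lemma thus reduces to showing that $\bm{\Phi}$ descends to $\mc{M}$, i.e.\ that $\wt{\Phi} \sec^6 \phi$ is independent of the fibre coordinate $\phi$, equivalently that $\wt{\Phi} \propto \cos^6 \phi$.

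This last point is the main obstacle. I would establish it by computing the single surviving component of the tracefree Ricci tensor, namely the coefficient $\wt{\Phi}$, from the curvature formulas of \cite{TaghaviChabert2022}, feeding in the explicit finite Fourier expansions of $\wt{\lambda}_\alpha$ (modes $0, -2$) and $\wt{\lambda}_0$ (modes $0, \pm 2, \pm 4$) obtained in the course of proving Theorem \ref{thm:pertFeffpurad}, together with the remaining pure radiation constraint \eqref{eq:step6} (its CR invariant form being \eqref{eq:2ndCR}). The difficulty is entirely one of bookkeeping: one must verify that, once all the pure radiation equations are imposed, every Fourier mode of $\wt{\Phi}$ outside the $\cos^6 \phi$ combination cancels. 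I would stress that there is no cheap equivariance shortcut: for a genuine perturbation the metric $\wt{g}_{\theta, \wt{\xi}}$ is \emph{not} $\wt{k}$-invariant (the nonzero Fourier modes of $\wt{\xi}$ obstruct $\mathsterling_{\wt{k}} \wt{g}_{\theta, \wt{\xi}} = 0$), so $\wt{k}$ fails to be conformal Killing for $\wt{\mbf{c}}_{\wt{\xi}}$ and $\wt{\bm{\kappa}}$ is itself $\phi$-dependent; the $\phi$-behaviour of $\wt{\Phi}$ therefore genuinely has to be read off from the curvature rather than inferred from invariance.

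Finally, once $\bm{\Phi}$ is seen to be a ($\phi$-independent, real) density of weight $(-2,-2)$ on $(\mc{M}, H, J)$, the explicit factor $\cos^2 \phi$ shows that $\wt{\Phi}_{a b}$ vanishes precisely where $\cos \phi = 0$. These are the fibre points $\phi = \pm \tfrac{\pi}{2}$, i.e.\ the cross-sections $\wt{\mc{Z}}_\pm = \{ [\pm \i \sigma^{-1}] \}$ making up $\wt{\mc{Z}}$, which yields the concluding statement that $\wt{\Phi}_{a b} = 0$ on the zero set of $\wt{\sigma}$.
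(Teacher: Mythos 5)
Your reduction of the lemma is sound and matches the paper's target: writing $\wt{g}=\sec^2\phi\cdot\wt{g}_{\theta,\wt{\xi}}$, so that $\wt{\kappa}=2\sec^2\phi\,\theta$ and the whole statement collapses to showing that the trivialised pure radiation coefficient is proportional to $\cos^6\phi$ in the physical scale $\wt{\sigma}$ (equivalently $\cos^2\phi$ in the non-expanding scale $\wt{\sigma}_{\theta}$, since $\wt{\sigma}=\cos\phi\cdot\wt{\sigma}_{\theta}$). The problem is that you then leave precisely this key step unproved: you propose to "verify that every Fourier mode of $\wt{\Phi}$ outside the $\cos^6\phi$ combination cancels" by recomputing the tracefree Ricci tensor from the curvature formulas of \cite{TaghaviChabert2022}, but you do not carry out that computation, so the proposal as written establishes only the easy algebraic part of the lemma.

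Moreover, your assertion that "there is no cheap equivariance shortcut" is mistaken, and this is exactly where the paper's proof differs. The lemma's hypotheses include the conformally invariant divergence condition \eqref{eq:alcstRicSc}, $\wt{\Phi}_{a}{}^{b}\wt{\nabla}_{b}\wt{\sigma}-\tfrac{1}{2}\wt{\sigma}\wt{\nabla}_{b}\wt{\Phi}_{a}{}^{b}=0$. Trivialising this with $\wt{\sigma}_{\theta}$ (for which $\wt{\Upsilon}=\tan\phi\,\d\phi$) and inserting the algebraic form \eqref{eq:purad_alt}, the geodesic, affinely parametrised, non-expanding properties of $\wt{k}$ kill every term except those along the fibre, leaving a first-order linear ODE in $\phi$ for the trivialised $\wt{\Phi}$ whose general solution is $\cos^2\phi$ times a $\phi$-independent function on $\mc{M}$; CR invariance then forces the weight $(-2,-2)$. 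No computation of the Weyl or Ricci curvature from the Fourier expansions of $\wt{\lambda}_{\alpha}$, $\wt{\lambda}_{0}$ is needed. You are right that $\wt{k}$ is not conformal Killing for a genuine perturbation, but the fibre dependence is pinned down by the hypothesis \eqref{eq:alcstRicSc} rather than by any symmetry, and overlooking this is what turns a one-line ODE argument into an unexecuted bookkeeping exercise.
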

	
	\begin{proof}
		We trivialise \eqref{eq:alcstRicSc} where the pure radiation field is given by \eqref{eq:purad_alt} with the Lorentzian scale $\wt{\sigma}_{\theta}$, and since $\Upsilon = \tan \phi \, \d \phi$, we find that
		\begin{align*}
			\dot{\wt{\Phi}} + 2 \sin \phi \cdot \cos \phi \cdot \wt{\Phi} & = 0 \, ,
		\end{align*}
		which clearly has solution $\wt{\Phi} = \cos^2 \phi \cdot \Phi$, for some real-valued function $\Phi$ on $(\mc{M},H,J)$. By CR invariance, we must deduce that $\Phi$ arises from a density $\bm{\Phi}$ of weight $(-2,-2)$.
	\end{proof}
	
	\begin{rem}
		We have omitted the rather involved expression for the CR density $\bm{\Phi}$ of weight $(-2,-2)$ which determines the pure radiation field of the metric. But Lemma \ref{lem:purad_Ric} tells us that for the metric $\wt{g}$ to satisfy the full Einstein equations, $(\wt{\Ric}_{a b})_\circ = 0$, we need only one more real constraint on the CR data, namely, the vanishing of $\bm{\Phi}$.
	\end{rem}
	
	\subsection{CR realisability and Petrov types II and D}
	We end this section by reformulating a result due to \cite{Lewandowski1990a} and \cite{Hill2008} regarding the connection between the realisability of the CR structure and the Petrov type II or D condition. We provide an alternative proof based on the ideas of Section \ref{sec:CR_geom}.
	\begin{thm}\label{thm:realisableCRPetrovII}
		Let $(\wt{\mc{M}},\wt{\mbf{c}}_{\wt{\xi}},\wt{k}) \rightarrow (\mc{M},H,J)$ be a four-dimensional perturbed Fefferman space that admits an almost pure radiation or Einstein scale. Suppose $\wt{\mbf{c}}_{\wt{\xi}}$ is of Petrov type II or D but no more degenerate. Then $(\mc{M},H,J)$ is locally realisable.
	\end{thm}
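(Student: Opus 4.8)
The plan is to extract from the hypotheses two independent CR objects --- a CR function, coming from the (almost) pure radiation scale, and a closed section of the canonical bundle, coming from the Petrov non-degeneracy --- and then invoke Jacobowitz's criterion in the form of Theorem \ref{thm:New_Jacobowitz}. First I would reduce to the pure radiation case: an Einstein scale is an almost pure radiation scale with vanishing radiation field, so under either hypothesis Theorem \ref{thm:pertFeffpurad} applies. It supplies a non-vanishing density $\sigma \in \Gamma(\mc{E}(1,0))$ solving \eqref{eq:xi2} and \eqref{eq:exactWW_Ein}, together with the datum $\bm{\xi}_0^{(4)} \in \Gamma(\mc{E}(1,-3))$ subject to \eqref{eq:2ndCR}. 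The one-form $\lambda_{\alpha} = \i \sigma^{-1} \accentset{\xi}{\nabla}_{\alpha} \sigma$ then solves the Webster--Weyl equation \eqref{eq:Webster-Weyl}, so by Theorem \ref{thm:Webster-Weyl} the base $(\mc{M},H,J)$ already admits a CR function. By the remark following Theorem \ref{thm:Jacobowitz}, it therefore suffices to produce a closed section of the canonical bundle, equivalently (Lemma \ref{lem:hol_sgm}) a non-trivial CR density of weight $(w,0)$.

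The next step is to identify the Petrov condition with the non-vanishing of $\bm{\xi}_0^{(4)}$. Since the scale is almost weakly half-Einstein, Theorem \ref{thm:GSalpha} makes $\langle \wt{k} \rangle$ a repeated principal null direction, so $\wt{\Weyl}$ is of type II, D, III, N or O, and in the adapted frame of Section \ref{sec:Petrov} one has $\wt{\Psi}_0 = \wt{\Psi}_1 = 0$. A Fourier analysis of \eqref{eq:RePsi2}--\eqref{eq:ImPsi2} shows that the $\e^{4 \i \phi}$-coefficient of $\wt{\Psi}_2$ is $-4 \lambda_0^{(4)}$; combined with Proposition \ref{prop:PetrovIII}, which characterises type III or more degenerate by $\lambda_0^{(4)} = 0$, this gives the equivalence $\wt{\Psi}_2 \equiv 0 \Leftrightarrow \bm{\xi}_0^{(4)} = 0$. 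The hypothesis that $\wt{\mbf{c}}_{\wt{\xi}}$ is of type II or D but no more degenerate is exactly $\wt{\Psi}_2 \neq 0$, whence $\bm{\xi}_0^{(4)}$ is nowhere zero on the neighbourhood of interest.

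Now I would turn \eqref{eq:2ndCR} into a CR density. Working with the Webster connection compatible with $\sigma \ol{\sigma}$ (so $\nu_{\alpha}=0$) and using \eqref{eq:step1} in the form $\xi_{\alpha}^{(-2)} = \tfrac12 \lambda_{\alpha}$ together with $\i \lambda_{\alpha} = -\sigma^{-1} \accentset{\xi}{\nabla}_{\alpha} \sigma$, equation \eqref{eq:2ndCR} collapses to $\accentset{\xi}{\nabla}_{\alpha}\big(\sigma^{-1} \bm{\xi}_0^{(4)}\big) = 0$. Thus $\chi := \sigma^{-1} \bm{\xi}_0^{(4)} \in \Gamma(\mc{E}(0,-3))$ is annihilated by the gauged holomorphic connection, and its conjugate $\ol{\chi} = \ol{\sigma}^{-1} \bm{\xi}_0^{(-4)} \in \Gamma(\mc{E}(-3,0))$ is a non-vanishing gauged CR density, $\accentset{\xi}{\nabla}_{\bar{\alpha}} \ol{\chi} = 0$. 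The pair $(\sigma, \ol{\chi})$ satisfies the hypotheses of Theorem \ref{thm:real2dens_d3} with the CR density of weight $(-3,0)$ in place of $(1,0)$, and the same degauging argument applies: by Theorem \ref{thm:lambda_exact} the gauge is exact, $\bm{\xi}_{\alpha}^{(0)} = \i \varrho^{-1} \nabla_{\alpha} \varrho$ with $\varrho = \sigma^{-1} \wh{\sigma} \ol{\wh{\sigma}}{}^{-2}$, so that $\tau := \ol{\varrho}^{-3} \ol{\chi} \in \Gamma(\mc{E}(3,0))$ is a genuine CR density, $\nabla_{\bar{\alpha}} \tau = 0$. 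By Lemma \ref{lem:hol_sgm} this yields a closed section of the canonical bundle, and together with the CR function of the first step, Theorem \ref{thm:New_Jacobowitz} gives realisability of $(\mc{M},H,J)$.

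The main obstacle is the second step: establishing the precise equivalence between the Petrov type and the single Fourier coefficient $\bm{\xi}_0^{(4)}$. This rests on the explicit curvature expressions \eqref{eq:RePsi2}--\eqref{eq:ImPsi2} and on verifying that, once the pure radiation constraints determine all lower Fourier modes of $\wt{\lambda}_0$, the component $\wt{\Psi}_2$ vanishes identically precisely when $\lambda_0^{(4)} = 0$; the one direction $\wt{\Psi}_2 \equiv 0 \Rightarrow \lambda_0^{(4)} = 0$ is immediate from the top mode, while the converse uses Proposition \ref{prop:PetrovIII}. The remaining work --- the algebraic rewriting of \eqref{eq:2ndCR}, the degauging of $\ol{\chi}$, and the weight adjustment in the final appeal to Theorem \ref{thm:real2dens_d3} --- is routine once the gauge is known to be exact.
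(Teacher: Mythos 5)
Your proposal follows essentially the same route as the paper's proof: invoke Theorem \ref{thm:pertFeffpurad} to obtain $\sigma$ and the constraint \eqref{eq:2ndCR}, identify the Petrov type II/D hypothesis with the non-vanishing of $\bm{\xi}_0^{(4)}$ (the paper reads this off directly from $\bm{\Psi}_2^{(-4)} = 4\bm{\xi}_0^{(-4)}$ in Proposition \ref{prop:Weyl}), combine \eqref{eq:xi2} with \eqref{eq:2ndCR} to get $\accentset{\xi}{\nabla}_{\alpha}\bigl(\sigma^{-1}\bm{\xi}_0^{(4)}\bigr)=0$, and conclude via the two-density realisability criterion --- the paper simply takes a cube root, setting $\tau^{-3}=\ol{\sigma}^{-1}\bm{\xi}_0^{(-4)}$ with $\tau\in\Gamma(\mc{E}(1,0))$, and applies Theorem \ref{thm:real2dens_d3} as a black box rather than unpacking the degauging by hand. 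One small slip in your final step: with $\nabla_{\bar{\alpha}}\ol{\varrho}=\i\xi_{\bar{\alpha}}\ol{\varrho}$ and $\nabla_{\bar{\alpha}}\ol{\chi}=-3\i\xi_{\bar{\alpha}}\ol{\chi}$, the degauged density is $\ol{\varrho}^{\,3}\ol{\chi}\in\Gamma(\mc{E}(-9,0))$, not $\ol{\varrho}^{\,-3}\ol{\chi}$; this does not affect the conclusion, since Lemma \ref{lem:hol_sgm} only requires a CR density of any non-zero integer weight.
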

	
	\begin{proof}
		Suppose that $(\wt{\mc{M}},\wt{\mbf{c}}_{\wt{\xi}},\wt{k})$ admits an almost pure radiation scale. By Theorem \ref{thm:pertFeffpurad}, there exists a nowhere vanishing density $\sigma \in \Gamma(\mc{E}(1,0))$ and CR data $\left(\bm{\xi}_{\alpha}^{(-2)}, \bm{\xi}_{\alpha}^{(0)}, [\nabla, \bm{\xi}_{0}^{(0)}, \bm{\xi}_{0}^{(2)}], \bm{\xi}_{0}^{(4)} \right)$ such that \eqref{eq:xi2}, \eqref{eq:exactWW_Ein} and \eqref{eq:2ndCR} hold, where $\nabla_{\alpha} (\sigma	\ol{\sigma}) = \nu_{\alpha} \sigma	\ol{\sigma}$ and $\xi_{\alpha}^{(-2)} = \sigma \ol{\sigma}^{-1} \bm{\xi}_{\alpha}^{(-2)}$. The condition that $\wt{\mbf{c}}_{\wt{\xi}}$ is of Petrov II or D implies that $\bm{\xi}_{0}^{(4)}$ is non-zero --- see \cite{Lewandowski1990} or Proposition \ref{prop:Weyl} below. Using the definition of $\nu_{\alpha}$, \eqref{eq:xi2} implies
		\begin{align}\label{eq:xi0m2}
			\accentset{\xi}{\nabla}_{\alpha} \sigma - \left( \nu_{\alpha} - 2\i \xi_{\alpha}^{(-2)} \right) \sigma & = 0 \, .
		\end{align}
		Hence, by \eqref{eq:xi0m2} and \eqref{eq:2ndCR}, we have that 
		\begin{align*}
			\accentset{\xi}{\nabla}_{\alpha} \left( \sigma^{-1} \bm{\xi}_0^{(4)} \right) & = 	\left( \accentset{\xi}{\nabla}_{\alpha} \sigma^{-1} \right) \bm{\xi}_0^{(4)} + 	\sigma^{-1} \left( \accentset{\xi}{\nabla}_{\alpha} \bm{\xi}_0^{(4)} \right) \\
			& = - \left( \nu_{\alpha} - 2 \i \xi_{\alpha}^{(-2)} \right) \sigma^{-1} \bm{\xi}^{(4)}_0 + \left( \nu_{\alpha} - 2 \i \xi_{\alpha}^{(-2)} \right) \sigma^{-1} \bm{\xi}^{(4)}_0 = 0 \, .
		\end{align*}
		Choose $\tau \in \Gamma(\mc{E}(1,0))$ such that $\tau^{-3} = \ol{\sigma}^{-1} \bm{\xi}_0^{(-4)} \in \Gamma(\mc{E}(-3,0))$. Then applying Theorem \ref{thm:real2dens_d3} proves realisability of $(\mc{M},H,J)$.
	\end{proof}
	
	\begin{rem}
		Suppose that $(\mc{M},H,J)$ is realisable. Then Theorem \ref{thm:real2dens_d3} tells us that there exist nowhere vanishing densities $\sigma$ and $\tau$ of weight $(1,0)$ that satisfy \eqref{eq:linWW1d3} and \eqref{eq:linWW2d3} for some gauge $\xi_{\alpha}^{(0)}$ respectively. Define $\nu_{\alpha}$ by $\nabla_{\alpha} (\sigma \ol{\sigma}) = \nu_{\alpha} \sigma \ol{\sigma}$. Then $\bm{\xi}_0^{(4)} := \sigma \ol{\tau}^{-3} \in \Gamma(\mc{E}(1,-3))$ satisfies \eqref{eq:2ndCR}. We can then define $\bm{\xi}^{(-2)}_{\alpha}$ by \eqref{eq:xi2}, and $\bm{\xi}^{(0)}_{0}$ and $\bm{\xi}^{(2)}_{0}$ by \eqref{eq:xi00b} and \eqref{eq:xi02b} respectively. A converse to Theorem \ref{thm:realisableCRPetrovII} would then boil down to whether these definitions are compatible with the remaining equation \eqref{eq:xi04b}.
	\end{rem}
	
	\section{Properties of the Weyl tensor and asymptotics}\label{sec:Weyl}
	In this final section, we investigate the properties of the conformal curvature of a perturbed Fefferman space whose CR data takes the form
	\begin{equation*}
		(\bm{\xi}^{(-2)}_{\alpha}, \bm{\xi}^{(0)}_{\alpha}, [\nabla, \bm{\xi}^{(0)}_{0}, \bm{\xi}^{(2)}_{0}], \bm{\xi}^{(4)}_{0}) \, .
	\end{equation*}
	The reason for such a choice is primarily dictated by the fact that the CR data of the optical geometry described by Theorem \ref{thm:PetrovII+Bach}, that is, with algebraically special Weyl and Bach tensors, is restricted in this fashion. More importantly, optical geometries that admit almost half-Einstein and almost pure radiation scales also fall into this category --- see Theorems \ref{thm:hlfEinsc} and \ref{thm:pertFeffpurad} --- a fact that we will eventually exploit to describe the conformal infinities of these geometries in terms of CR invariants.
	
	\subsection{Fourier expansion of the conformal curvature}
	The following proposition follows from a brute force computation using \cite[Appendix~A]{TaghaviChabert2022}.
	\begin{prop}\label{prop:Weyl}
		Let $(\wt{\mc{M}},\wt{\mbf{c}}_{\wt{\xi}},\wt{k}) \longrightarrow (\mc{M},H,J)$ be a perturbed Fefferman space, where the perturbation one-form $\wt{\xi}$ is determined by some CR data \begin{equation*}
			(\bm{\xi}^{(-2)}_{\alpha}, \bm{\xi}^{(0)}_{\alpha}, [\nabla, \bm{\xi}^{(0)}_{0}, \bm{\xi}^{(2)}_{0}], \bm{\xi}^{(4)}_{0}) \, ,
		\end{equation*}
		so that $\langle \wt{k} \rangle$ is a repeated PND of the Weyl tensor $\wt{\Weyl}$. In particular, $\wt{\Weyl}$ is algebraically special. Choose a nowhere vanishing density $\sigma$ of weight $(1,0)$ to trivialise $\wt{\mc{M}} \rightarrow \mc{M}$ with fibre coordinate $\phi$. Let $\wt{g}_{\theta,\wt{\xi}}$ be a perturbed Fefferman metric in $\wt{\mbf{c}}_{\wt{\xi}}$ for the contact form $\theta = (\sigma \ol{\sigma})^{-1} \bm{\theta}$. Then the Fourier expansions of the components of the self-dual part of the Weyl tensor defined by \eqref{eq:NP-Psi} are found to be
		\begin{subequations}\label{eq:Weyl_Fourier}
			\begin{align}
				\wt{\Psi}_2 & = \Psi_2^{(-4)} \e^{-4 \i \phi} + \Psi_2^{(-2)} \e^{-2 \i \phi} + \Psi_2^{(0)} \, , \\
				\wt{\Psi}_3 & = \Psi_3^{(-6)} \e^{-6 \i \phi} + \Psi_3^{(-4)} \e^{-4 \i \phi} + \Psi_3^{(-2)} \e^{-2 \i \phi} + \Psi_3^{(0)} + \Psi_3^{(2)} \e^{2 \i \phi}  \, , \\
				\wt{\Psi}_4 & = \Psi_4^{(-8)} \e^{-8 \i \phi} + \Psi_4^{(-6)} \e^{-6 \i \phi} + \Psi_4^{(-4)} \e^{-4 \i \phi}  + \Psi_4^{(-2)} \e^{-2 \i \phi}  + \Psi_4^{(0)} + \Psi_4^{(2)} \e^{2 \i \phi} + \Psi_4^{(4)} \e^{4 \i \phi} \, , 
			\end{align}
		\end{subequations}
		for some tensors $\Psi_2^{(2k)}$, $(\Psi_3^{(2k)})_{\alpha}$ and $(\Psi_4^{(2k)})_{\alpha \beta}$ on $(\mc{M},H,J)$ that correspond to the densities
		\begin{align*}
			& \bm{\Psi}_2^{(2k)} = \sigma^{k-1} \ol{\sigma}^{-k-1} \Psi_2^{(2k)}   & \in \Gamma(\mc{E}(k-1,-k-1)) \, , & & -2 \leq k \leq  0  \, ,\\
			& (\bm{\Psi}_3^{(2k)})_{\alpha} = \sigma^{k-1} \ol{\sigma}^{-k-1}  (\Psi_3^{(2k)})_{\alpha} & \in \Gamma(\mc{E}_{\alpha}(k-1,-k-1)) \, , & & -3 \leq k \leq  1  \, ,\\
			& (\bm{\Psi}_4^{(2k)})_{\alpha \beta} =   \sigma^{k-1} \ol{\sigma}^{-k-1} (\Psi_4^{(2k)})_{\alpha \beta} & \in \Gamma(\mc{E}_{\alpha \beta}(k-1,-k-1)) \, , & & -4 \leq k \leq  2  \, ,
		\end{align*}
		given by
		\begingroup
		\allowdisplaybreaks
		\begin{align*}
			\bm{\Psi}_2^{(-4)} & =  4 \bm{\xi}_0^{(-4)} \, , \\
			\bm{\Psi}_2^{(-2)} & = \i \accentset{\xi}{\nabla}^{\gamma} \bm{\xi}^{(-2)}_{\gamma} + 2  \bm{\xi}_0^{(-2)}\, , \\
			\bm{\Psi}_2^{(0)} & = - \tfrac{4 \i}{3}  F_{\alpha}{}^{\alpha} - 2 \bm{\xi}^{\gamma}_{(2)} \bm{\xi}_{\gamma}^{(-2)}  \, , \\
			\bm{\Psi}_3^{(-6)}
			& = - \tfrac{3}{2} \bm{\xi}_{\alpha}^{(-2)} \bm{\Psi}_2^{(-4)} \, , \\
			\bm{\Psi}_3^{(-4)}
			& = - \tfrac{1}{2} \bm{\xi}_{\alpha}^{(-2)} \bm{\Psi}_2^{(-2)} + \tfrac{\i}{2}  \accentset{\xi}{\nabla}_{\alpha} \bm{\Psi}_2^{(-4)}  \, , \\
			\bm{\Psi}_3^{(-2)}
			& = \tfrac{\i}{4} \accentset{\xi}{\nabla}_{\alpha}\bm{\Psi}_{2}^{(-2)} - \i \left(  \accentset{\xi}{\nabla}_0\bm{\xi}_{\alpha}^{(-2)} - \tfrac{2 \i}{3} \Rho \bm{\xi}_{\alpha}^{(-2)} - \accentset{\xi}{\nabla}_{\alpha}\bm{\xi}_{0}^{(-2)} \right)  \, , \\
			\bm{\Psi}_3^{(0)}
			& = \tfrac{1}{2}\nabla_{\alpha} F_{\beta}{}^{\beta}
			- \tfrac{3 \i}{2}  F_{0 \alpha} 
			+\tfrac{3}{2} \bm{\xi}_{\alpha}^{(-2)} \ol{\bm{\Psi}}_2^{(2)} - \i  \bm{\xi}^{\beta}_{(2)} \accentset{\xi}{\nabla}_{\beta}\bm{\xi}_{\alpha}^{(-2)}   \, , \\
			\bm{\Psi}_3^{(2)}
			& = \tfrac{\i}{4} \accentset{\xi}{\nabla}_{\alpha} \ol{\bm{\Psi}}_2^{(2)}
			+ \tfrac{1}{2} \bm{\xi}_{\alpha}^{(-2)} \ol{\bm{\Psi}}_{2}^{(4)}  \, , \\
			\bm{\Psi}_4^{(-8)}
			& = \tfrac{3}{2} \bm{\Psi}_2^{(-4)} \bm{\xi}_{\alpha}^{(-2)} \bm{\xi}_{\beta}^{(-2)} \, , \\
			\bm{\Psi}_4^{(-6)}
			& = -\tfrac{\i}{4} \bm{\Psi}_{2}^{(-4)}  \accentset{\xi}{\nabla}_{\alpha}\bm{\xi}_{\beta}^{(-2)} - \tfrac{3 \i}{4} \bm{\xi}_{\alpha}^{(-2)} \accentset{\xi}{\nabla}_{\beta} \bm{\Psi}_{2}^{(-4)} \, , \\
			\bm{\Psi}_4^{(-4)}
			& = -\tfrac{1}{8} \left( \accentset{\xi}{\nabla}_{\alpha} \accentset{\xi}{\nabla}_{\beta} \bm{\Psi}_{2}^{(-4)}
			-3 \i \WbA_{\alpha \beta} \bm{\Psi}_{2}^{(-4)} \right)
			- \bm{\xi}_{\alpha}^{(-2)} \left( (\bm{\Psi}_3^{(-2)})_{\beta}
			- \tfrac{\i}{4} \accentset{\xi}{\nabla}_{\beta}\bm{\Psi}_{2}^{(-2)} \right) \, , \\
			\bm{\Psi}_4^{(-2)}
			& =  
			\tfrac{1}{8} \left(\accentset{\xi}{\nabla}_{\alpha}  \accentset{\xi}{\nabla}_{\beta}\bm{\Psi}_{2}^{(-2)} + 4 \i \WbA_{\alpha \beta} \bm{\Psi}_{2}^{(-2)}\right) 
			+\tfrac{\i}{2}\accentset{\xi}{\nabla}_{\alpha} (\bm{\Psi}_3^{(-2)})_{\beta}
			\, , \\
			\bm{\Psi}_4^{(0)}
			& = \tfrac{\i}{4} \CQ_{\alpha \beta} + \nabla_{\alpha} F_{0\beta} + \WbA_{\alpha \beta} F_{\gamma}{}^{\gamma} 
			+2 \i \bm{\xi}_{0}^{(2)} \accentset{\xi}{\nabla}_{\alpha}\bm{\xi}_{\beta}^{(-2)} \\
			& \qquad \qquad \qquad \qquad \qquad \qquad
			+3 \i \bm{\xi}_{\alpha}^{(-2)} \left( \accentset{\xi}{\nabla}_{\beta}\bm{\xi}_{0}^{(2)}
			- \WbA_{\beta \gamma} \bm{\xi}^{\gamma}_{(2)} - \tfrac{\i}{2} \bm{\xi}_{\beta}^{(-2)} \ol{\bm{\Psi}}_{2}^{(4)} \right) 
			\, , \\
			\bm{\Psi}_4^{(2)}
			& = -\tfrac{1}{2}\accentset{\xi}{\nabla}_{\alpha} \accentset{\xi}{\nabla}_{\beta} \bm{\xi}_{0}^{(2)}
			+ \tfrac{3 \i}{4} \ol{\bm{\Psi}}_{2}^{(4)} \accentset{\xi}{\nabla}_{\alpha}\bm{\xi}_{\beta}^{(-2)} 
			+ \tfrac{5 \i}{4} \bm{\xi}_{\alpha}^{(-2)} \accentset{\xi}{\nabla}_{\beta}\ol{\bm{\Psi}}_{2}^{(4)} 
			 +\tfrac{1}{2} \bm{\xi}^{\gamma}_{(2)} \nabla_{\gamma} \WbA_{\alpha \beta} 
			+  \WbA_{\alpha \beta} \accentset{\xi}{\nabla}_{\gamma}\bm{\xi}^{\gamma}_{(2)} \, , \\
			\bm{\Psi}_4^{(4)}
			& = -\tfrac{1}{8} \left( \accentset{\xi}{\nabla}_{\alpha} \accentset{\xi}{\nabla}_{\beta}\ol{\bm{\Psi}}_{2}^{(4)}
			+ \i \WbA_{\alpha \beta} \ol{\bm{\Psi}}_{2}^{(4)} \right) \, .
		\end{align*}
		\endgroup
		Here, for $i=2,3,4$, we have $\ol{\bm{\Psi}}_i^{(2k)}= \ol{\bm{\Psi}_i^{(-2k)}}$ for any $k$, $\accentset{\xi}{\nabla}$ denotes the gauged Webster connection defined by \eqref{eq:gconn} with $\xi := \xi^{(0)}_{0} \theta + \xi^{(0)}_{\alpha} \theta^{\alpha} + \xi^{(0)}_{\bar{\alpha}} \ol{\theta}{}^{\bar{\alpha}}$, and we have defined
		\begin{align}\label{eq:F}
			\begin{aligned}
				F_{\alpha \bar{\beta}} & := \tfrac{1}{2} \left( \nabla_{\alpha} \bm{\xi}_{\bar{\beta}}^{(0)} - \nabla_{\bar{\beta}} \bm{\xi}_{\alpha}^{(0)} - \i \bm{h}_{\alpha \bar{\beta}} \bm{\xi}_{0}^{(0)} \right) \, , \\
				F_{0 \alpha} & := \tfrac{1}{2} \left( \nabla_{0} \bm{\xi}_{\alpha}^{(0)} - \nabla_{\alpha} \bm{\xi}_{0}^{(0)} +   \WbA_{\alpha \beta} \bm{\xi}_{(0)}^{\beta} \right) \, .
			\end{aligned}
		\end{align}
	\end{prop}
	
	\begin{proof}
		We only sketch the rather extensive computations. As usual, we work with a perturbed Fefferman metric and an adapted coframe as explained in Section \ref{sec-optical}. We first express the Weyl curvature, computed from \cite[Appendix~A]{TaghaviChabert2022}, in terms of the Fourier coefficients $\lambda_{\alpha}^{(2k)}$, $\lambda_{\bar{\alpha}}^{(2k)}$ and $\lambda_{0}^{(2k)}$ of the coframe one-form $\lambda$. Then, using the relations \eqref{eq:lmb2xi}, these are eliminated in favour of the coefficients $\xi_{\alpha}^{(2k)}$, $\xi_{\bar{\alpha}}^{(2k)}$ and $\xi_{0}^{(2k)}$ of the perturbation one-form $\xi$, and eventually in terms of their weighted analogues $\bm{\xi}_{\alpha}^{(2k)}$, $\bm{\xi}_{\bar{\alpha}}^{(2k)}$ and $\bm{\xi}_{0}^{(2k)}$ using the definitions \eqref{eq:Fourier_coef}.
		
		It is in fact easier to reverse the steps by starting from the densities to arrive at the unweighted components of the Weyl tensor. To illustrate the procedure, let us consider the densities $\bm{\Psi}_{3}^{(-2)}$ and $\bm{\Psi}_{4}^{(0)}$ for instance. For clarity, we rewrite
		\begin{align*}
			\bm{\Psi}_3^{(-2)}
			& = -\tfrac{1}{4} \accentset{\xi}{\nabla}_{\beta}\accentset{\xi}{\nabla}^{\beta}\bm{\xi}_{\alpha}^{(-2)} + \tfrac{3 \,\i }{2} \accentset{\xi}{\nabla}_{\alpha}\bm{\xi}_{0}^{(-2)} -\i \left( \accentset{\xi}{\nabla}_0\bm{\xi}_{\alpha}^{(-2)} - \tfrac{2\i}{3} \Rho \bm{\xi}_{\alpha}^{(-2)} \right) \, .
		\end{align*}
		First, we choose a nowhere vanishing density $\sigma$ of weight $(1,0)$, to trivialise them as $\Psi_{3}^{(-2)}$ and $\Psi_{4}^{(0)}$ respectively, i.e.\ $\bm{\Psi}_3^{(-2)} = \sigma^{-2} \Psi_3^{(-2)}$ and  $\bm{\Psi}_4^{(0)} = \sigma^{-1} \ol{\sigma}^{-1}  \Psi_4^{(0)}$. Next, working with a Webster connection $\nabla$ compatible with $\sigma \ol{\sigma}$, we note that for any tensor-valued density $\bm{f}$ of weight $(w,w')$ with trivialisation $f = \sigma^{-w} \ol{\sigma}^{-w'} \bm{f}$, we have
		\begin{align*}
			\accentset{\xi}{\nabla}_{\alpha} \bm{f} & = \sigma^{w} \ol{\sigma}^{w'} \left( \nabla_{\alpha} f - \i (w - w)'\lambda_{\alpha}^{(0)} f \right) \, , \\
			\accentset{\xi}{\nabla}_{0} \bm{f} & = \sigma^{w} \ol{\sigma}^{w'} \left( \nabla_{0} f - \i (w - w)' \left( \lambda_{0}^{(0)} + \tfrac{1}{3} \Rho \right) f \right) \, ,
		\end{align*}
		where we have used \eqref{eq:gD_density}. Similarly, we find
		\begin{multline*}
			\accentset{\xi}{\nabla}_{\alpha} \accentset{\xi}{\nabla}_{\bar{\beta}} \bm{f} = \sigma^{w} \ol{\sigma}^{w'} \left( \nabla_{\alpha} \nabla_{\bar{\beta}} f 
			 - \i (w - w)'\left( \nabla_{\alpha} \lambda_{\bar{\beta}}^{(0)} f + \lambda_{\alpha}^{(0)} \nabla_{\bar{\beta}} f + \lambda_{\bar{\beta}}^{(0)} \nabla_{\alpha} f \right) \right. \\ 
			 \left. - (w - w')^2 \lambda_{\alpha}^{(0)} \lambda_{\bar{\beta}}^{(0)} f \right) \, .
		\end{multline*}
		In addition, we note that
		\begin{align*}
			F_{\alpha}{}^{\alpha} & = \tfrac{1}{2} \left( \nabla_{\alpha} \lambda^{\alpha}_{(0)} - \nabla^{\alpha} \lambda_{\alpha}^{(0)} + \i \left( \lambda_{0}^{(0)} - \Rho \right) \right) \, , \\
			F_{0 \alpha} & = \tfrac{1}{2} \left( \nabla_{0} \lambda_{\alpha}^{(0)} - \nabla_{\alpha} \lambda_{0}^{(0)} +   \WbA_{\alpha \beta} \lambda_{(0)}^{\beta} - \CT_{\alpha} \right) \, ,
		\end{align*}
		where $\CT_{\alpha}$ is defined in equation \eqref{eq:Cartan_tensor}. It is then a simple matter of using these identities to show that the densities $\bm{\Psi}_{3}^{(-2)}$ and $\bm{\Psi}_{4}^{(0)}$ given in the proposition reduce to
		\begin{multline*}
			\Psi_{3}^{(-2)} =
			-\tfrac{1}{4}\nabla_{\beta}{\nabla^{\beta}{\lambda^{(-2)}_{\alpha}}}
			-\tfrac{\i}{2}  \lambda^{(-2)}_{\alpha} \nabla_{\beta}{\lambda_{(0)}^{\beta}}
			-\tfrac{\i}{2}\lambda^{(0)}_{\beta} \nabla^{\beta}{\lambda^{(-2)}_{\alpha}}
			-\tfrac{\i}{2}\lambda_{(0)}^{\beta} \nabla_{\beta}{\lambda^{(-2)}_{\alpha}} 
			\\ + \lambda^{(0)}_{\alpha} \lambda_{(0)}^{\beta} \lambda^{(-2)}_{\beta} 
			+ \tfrac{3 \,\i}{2} \nabla_{\alpha}{\lambda^{(-2)}_{0}}
			-3 \lambda^{(0)}_{\alpha} \lambda^{(-2)}_{0}
			-\i \nabla_{0}{\lambda^{(-2)}_{\alpha}}
			+2 \lambda^{(-2)}_{\alpha} \lambda^{(0)}_{0}
			\, ,
		\end{multline*}
		and
		\begin{multline*}
			\Psi_{4}^{(0)} = 
			- \tfrac{1}{4}  \nabla_{0} \WbA_{\alpha \beta}
			+ \tfrac{1}{2}\nabla_{\alpha}{\nabla_{0}{\lambda^{(0)}_{\beta}}}
			- \tfrac{1}{2}\nabla_{\alpha}{\nabla_{\beta}{\lambda^{(0)}_{0}}}
			+ \tfrac{1}{2}\lambda_{(0)}^{\gamma} \nabla_{\gamma}{\WbA_{\alpha \beta}} 
			\\
			+ \WbA_{\alpha \beta} \nabla_{\gamma}{\lambda_{(0)}^{\gamma}} 
			- \tfrac{1}{2}\WbA_{\alpha \beta} \nabla^{\gamma}{\lambda^{(0)}_{\gamma}}
			+ \tfrac{\i}{2} \WbA_{\alpha \beta} \lambda^{(0)}_{0} 
			+ 2 \,\i \lambda^{(2)}_{0} \nabla_{\alpha}{\lambda^{(-2)}_{\beta}} 
			+ 2 \lambda^{(0)}_{\alpha} \lambda^{(2)}_{0} \lambda^{(-2)}_{\beta} \\
			+ 3 \,\i \lambda^{(-2)}_{\beta} \nabla_{\alpha}{\lambda^{(2)}_{0}}  
			- 3 \,\i \WbA_{\alpha \beta} \lambda_{(2)}^{\gamma} \lambda^{(-2)}_{\gamma} 
			+ 6 \lambda^{(4)}_{0} \,\lambda^{(-2)}_{\alpha} \lambda^{(-2)}_{\beta} 
			\, ,
		\end{multline*}
		respectively. Omitting the details, these expressions can be shown to be precisely those found on the basis of \cite[Appendix~A]{TaghaviChabert2022}. The other cases work in the same way. 
	\end{proof}
	
	\begin{rem}
		The tensorial quantities \eqref{eq:F} should be understood as the components of the exterior derivative of the perturbation one-form $\wt{\xi}$ --- see expression \eqref{eq:curv_xi0}.
	\end{rem}
	
	\begin{rem}
		In the case of a Fefferman space, i.e.\ $\wt{\xi} =0$, or even $\wt{\xi} = \xi$ with $\d \xi = 0$, there is only one component left, namely, $\bm{\Psi}_4^{(0)} = \tfrac{\i}{4} \CQ_{\alpha \beta}$ where $\CQ_{\alpha \beta}$ is the Cartan invariant \eqref{eq:Cartan_tensor} obstructing CR flatness. Thus, we have that the Weyl tensor is of type N, and the Fefferman conformal structure is flat if and only if the CR structure is flat, as was already demonstrated in \cite{Lewandowski1988}.
	\end{rem}
	
	From \eqref{eq:Psichange}, under a change of perturbed Fefferman metric (or equivalently, change of contact form), these transform as
	\begin{subequations}\label{eq_Psidenschange}
		\begin{align}
			\wh{\bm{\Psi}}_2^{(2k)} & = \bm{\Psi}_2^{(2k)} \, ,   & - 2 \leq k \leq 0 \, ,\\
			(\wh{\bm{\Psi}}_3^{(2k)})_{\alpha} & = (\bm{\Psi}_3^{(2k)})_{\alpha} - \tfrac{3 \i}{2} \Upsilon_{\alpha} \bm{\Psi}_2^{(2k)} \, , & - 3 \leq k \leq 1 \, ,\\
			(\wh{\bm{\Psi}}_4^{(2k)})_{\alpha \beta} & = (\bm{\Psi}_4^{(2k)})_{\alpha \beta} - 2 \i \Upsilon_{\alpha} (\bm{\Psi}_3^{(2k)})_{\beta} - \tfrac{3}{2} \Upsilon_{\alpha} \Upsilon_{\beta} \bm{\Psi}_2^{(2k)} \, , & - 4 \leq k \leq 2  \, .
		\end{align}
	\end{subequations}
	These can be used to prove:
	\begin{lem}\label{lem:PetrovD}
		Let $(\wt{\mc{M}},\wt{\mbf{c}}_{\wt{\xi}}, \wt{k} ) \longrightarrow (\mc{M},H,J)$ be a perturbed Fefferman space as in Proposition \ref{prop:Weyl}. Then \begin{align}\label{eq:CRPetrovD}
			\bm{\Psi}_{\alpha \beta}^{(2k)} & := \sum_{i+j =k} \left( 4 (\bm{\Psi}_3^{(2i)} )_{\alpha} (\bm{\Psi}_3^{(2j)})_{\beta} - 6 \bm{\Psi}_2^{(2i)} (\bm{\Psi}_4^{(2j)})_{\alpha \beta} \right) \, , & - 6 \leq k \leq 2 \, ,
		\end{align}
		are CR invariant sections of $\mc{E}_{\alpha \beta}(2k-2,-2k-2)$. Further, $\bm{\Psi}^{(-12)}_{\alpha \beta}=0$.
		
		In particular, the Weyl tensor is of Petrov type D if and only if $\bm{\Psi}_{\alpha \beta}^{(2k)}=0$ for all $k$.
	\end{lem}
	
	\begin{rem}
		The transformations \eqref{eq_Psidenschange} allow us to determine which of these are CR invariant under various conformal curvature conditions such as Petrov types and so on. For instance, the density $\bm{\Psi}_4^{(4)}$ is a CR invariant for Petrov type II or D, and is nothing but the gauged Webster--Weyl operator acting on a density of weight $(1,-3)$, namely $\ol{\bm{\Psi}}_{2}^{(4)} = 4 \bm{\xi}_{0}^{(4)}$, which is known to be CR invariant in the first place.
	\end{rem}
	
	Lemma \ref{lem:PetrovD} provides relations between the densities $\bm{\Psi}_{i}^{(2k)}$ under the Petrov type D assumption. In the following proposition, which can be easily proved by inspection of the densities given in Proposition \ref{prop:Weyl}, we extend our findings to the other Petrov types --- see also Proposition \ref{prop:PetrovIII} for Petrov type III:
	\begin{prop}\label{prop:PetrovIII-N-O}
		Let $(\wt{\mc{M}},\wt{\mbf{c}}_{\wt{\xi}}, \wt{k} ) \longrightarrow (\mc{M},H,J)$ be a perturbed Fefferman space as in Proposition \ref{prop:Weyl}.
		\begin{itemize}
			\item  The Weyl tensor is of Petrov type III if and only if
			\begin{align}\label{eq:xi_PetIII}
				\bm{\xi}_{0}^{(4)} & = 0 \, , & F_{\alpha}{}^{\alpha} & = \tfrac{3\i}{2} \bm{\xi}^{\alpha}_{(2)} \bm{\xi}_{\alpha}^{(-2)} \, , & \bm{\xi}_0^{(-2)} & = -\tfrac{\i}{2} \accentset{\xi}{\nabla}^{\gamma} \bm{\xi}^{(-2)}_{\gamma} \, ;
			\end{align}
			In addition, $\bm{\Psi}_{3}^{(-6)} = \bm{\Psi}_{3}^{(-4)} = \bm{\Psi}_{3}^{(2)} = 0$ and $\bm{\Psi}_{4}^{(-8)} = \bm{\Psi}_{4}^{(-6)} = \bm{\Psi}_{4}^{(4)} = 0$.
			\item  The Weyl tensor is of Petrov type N if and only if conditions \eqref{eq:xi_PetIII} hold and
			\begin{subequations}\label{eq:xi_PetN}
				\begin{align}
					& F_{0 \alpha} = - \tfrac{1}{6} \bm{\xi}^{\beta}_{(2)} \accentset{\xi}{\nabla}_{\alpha}\bm{\xi}_{\beta}^{(-2)} + \tfrac{1}{2} \bm{\xi}^{(-2)}_{\beta} \accentset{\xi}{\nabla}_{\alpha} \bm{\xi}^{\beta}_{(2)} \, , \\
					& \accentset{\xi}{\nabla}_{\alpha} \accentset{\xi}{\nabla}^{\gamma} \bm{\xi}^{(-2)}_{\gamma} - 2\i \accentset{\xi}{\nabla}_0\bm{\xi}_{\alpha}^{(-2)} - \tfrac{4}{3} \Rho \bm{\xi}_{\alpha}^{(-2)}   = 0 \, ;
				\end{align}
				In addition, $\bm{\Psi}_{4}^{(-8)} = \bm{\Psi}_{4}^{(-6)} = \bm{\Psi}_{4}^{(-4)} = \bm{\Psi}_{4}^{(-2)} = \bm{\Psi}_{4}^{(4)} = 0$.
			\end{subequations}
			\item  The Weyl tensor vanishes identically if and only if conditions \eqref{eq:xi_PetIII} and \eqref{eq:xi_PetN} hold, and
			\begin{subequations}\label{eq:xi_PetO}
				\begin{align}
					& \bm{\xi}^{(-2)}_{\gamma} \accentset{\xi}{\nabla}_{\alpha} \accentset{\xi}{\nabla}_{\beta} \bm{\xi}^{\gamma}_{(2)} + \tfrac{1}{6} \bm{\xi}^{\gamma}_{(2)} \accentset{\xi}{\nabla}_{\alpha} \accentset{\xi}{\nabla}_{\beta} \bm{\xi}^{(-2)}_{\gamma} + \tfrac{2}{3} (\accentset{\xi}{\nabla}_{\alpha} \bm{\xi}_{\gamma}^{(-2)}) (\accentset{\xi}{\nabla}_{\beta} \bm{\xi}^{\gamma}_{(2)}) \\
					& \qquad \qquad \qquad \qquad \qquad \qquad \qquad + \tfrac{3\i}{2} \WbA_{\alpha \beta} \bm{\xi}^{\gamma}_{(2)} \bm{\xi}_{\gamma}^{(-2)} - \tfrac{\i}{4} \CQ_{\alpha \beta}  = 0 \, , \nonumber \\
					&  \accentset{\xi}{\nabla}_{\alpha} \accentset{\xi}{\nabla}_{\beta}  \accentset{\xi}{\nabla}_{\gamma} \bm{\xi}_{(2)}^{\gamma}
					+ 4 \i \WbA_{\alpha \beta} \accentset{\xi}{\nabla}_{\gamma}\bm{\xi}^{\gamma}_{(2)} + 2 \i \bm{\xi}^{\gamma}_{(2)} \nabla_{\gamma} \WbA_{\alpha \beta} 
					= 0 \, .
				\end{align}
			\end{subequations}
		\end{itemize}
	\end{prop}
	
	\begin{rem}
		The CR invariance of equations \eqref{eq:xi_PetIII}, \eqref{eq:xi_PetN} and \eqref{eq:xi_PetO} can be readily verified.
	\end{rem}
	
	Another property easy to check is that $\bm{\Psi}_2^{(0)}$ is real-valued. The interpretation of its vanishing is given next.
	\begin{prop}\label{prop:BachPsi}
		Let $(\wt{\mc{M}},\wt{\mbf{c}}_{\wt{\xi}}, \wt{k} ) \longrightarrow (\mc{M},H,J)$ be a perturbed Fefferman space as in Proposition \ref{prop:Weyl}. Then the Bach tensor satisfies $\wt{\Bach}(\wt{k},\wt{k}) = 0$ if and only if $\bm{\Psi}_2^{(0)} = 0$.
	\end{prop}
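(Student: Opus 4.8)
The plan is to show that, in the setting of Proposition~\ref{prop:Weyl}, the quantity $\wt{\Bach}(\wt{k},\wt{k})$ collapses to a single nonzero multiple of the constant Fourier mode of $\wt{\Psi}_2$. Everything rests on two ingredients already at our disposal: the finite Fourier structure of $\wt{\lambda}_0$ and $\wt{\Psi}_2$ recorded in Proposition~\ref{prop:Weyl}, and the expression for $\wt{\Bach}(\wt{k},\wt{k})$ extracted in the proof of Theorem~\ref{thm:PetrovII+Bach}.

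First I would fix a metric $\wt{g}_\theta$ in $\accentset{n.e.}{\wt{\mbf{c}}}$ of the form \eqref{eq:can_met}, so that $\wt{k}=\partial_\phi$ and $D:=\mathsterling_{\wt{k}}$ acts on a Fourier mode $\e^{k\i\phi}$ as multiplication by $k\i$. Since the perturbation one-form is determined by the CR data $\big(\bm{\xi}^{(-2)}_\alpha,\bm{\xi}^{(0)}_\alpha,[\nabla,\bm{\xi}^{(0)}_0,\bm{\xi}^{(2)}_0],\bm{\xi}^{(4)}_0\big)$, the relations \eqref{eq:lmb2xi} force $\wt{\lambda}_0$ to be a finite Fourier sum supported on modes $k\in\{0,\pm2,\pm4\}$, while by \eqref{eq:Weyl_Fourier} one has $\wt{\Psi}_2=\Psi_2^{(-4)}\e^{-4\i\phi}+\Psi_2^{(-2)}\e^{-2\i\phi}+\Psi_2^{(0)}$ with $\Psi_2^{(0)}$ real.

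Next I would invoke formula \eqref{eq:BachPsi2} of Appendix~\ref{app:Bach_prop}: exactly as in the proof of Theorem~\ref{thm:PetrovII+Bach}, under the hypothesis \eqref{eq:Weyl_deg} (which holds here) the condition $\wt{\Bach}(\wt{k},\wt{k})=0$ is equivalent to the fourth-order ODE $P(D)\wt{\lambda}_0=64\,C$, where $P(D)=D^4+20D^2+64$ and $C$ is the $\phi$-independent source appearing there. Evaluating the symbol on modes gives $P(k\i)=k^4-20k^2+64=(k^2-4)(k^2-16)$, which vanishes exactly at $k=\pm2,\pm4$ and equals $64$ at $k=0$. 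Because the modes of $\wt{\lambda}_0$ present here are precisely $0$ together with the characteristic roots $\pm2,\pm4$ of $P$, the operator annihilates every mode but the constant one, so $P(D)\wt{\lambda}_0=64\,\lambda_0^{(0)}$. Thus $\wt{\Bach}(\wt{k},\wt{k})=0$ is equivalent to the pointwise equation $\lambda_0^{(0)}=C$ on $\mc{M}$.

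Finally I would extract the $k=0$ Fourier mode of \eqref{eq:RePsi2}: since $\ddot{\wt{\lambda}}_0$ and the explicit $\e^{\pm2\i\phi}$ terms carry no constant mode, the constant part reads $\Psi_2^{(0)}=\tfrac23\big(C-\lambda_0^{(0)}\big)$, where $C$ is the very same $\phi$-independent quantity — both are the constant term of the single source in \eqref{eq:RePsi2}, exactly as in the proof of Proposition~\ref{prop:PetrovIII}, where $\Psi_2^{(0)}=0$ was shown to yield $\lambda_0^{(0)}=C$. Hence $\Psi_2^{(0)}=0\iff\lambda_0^{(0)}=C\iff\wt{\Bach}(\wt{k},\wt{k})=0$, and since $\bm{\Psi}_2^{(0)}=\sigma^{-1}\ol{\sigma}^{-1}\Psi_2^{(0)}$ is a nonvanishing density multiple of $\Psi_2^{(0)}$, the proposition follows. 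The hard part here is not the structural collapse of the fourth-order operator to a scalar on the constant mode, which is immediate from the coincidence of the available modes with the roots of $P$; it is the bookkeeping in this last step, namely matching the source $C$ of the Bach ODE with the constant term of \eqref{eq:RePsi2}, which hinges on treating the index and reality conventions consistently.
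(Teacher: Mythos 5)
Your argument is correct, but it takes a different route from the paper's. The paper's proof is a one-line comparison: the explicit formula for $\bm{\Psi}_2^{(0)}$ in Proposition \ref{prop:Weyl}, namely $\bm{\Psi}_2^{(0)}=\tfrac{4\i}{3}F_{\alpha}{}^{\alpha}-2\bm{\xi}^{\gamma}_{(2)}\bm{\xi}_{\gamma}^{(-2)}$, is (after expanding $F_{\alpha}{}^{\alpha}$ via \eqref{eq:curv_xi0}) a nonzero multiple of the difference between $\bm{\xi}_0^{(0)}$ and the right-hand side of \eqref{eq:xi00PetrovII+Bach}, and Theorem \ref{thm:PetrovII+Bach} already identified the latter equation as equivalent to $\wt{\Bach}(\wt{k},\wt{k})=0$. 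You instead bypass the closed-form expression for $\bm{\Psi}_2^{(0)}$ altogether and rerun the Fourier-mode analysis: the symbol $P(k\i)=(k^2-4)(k^2-16)$ of the fourth-order operator kills every mode of $\wt{\lambda}_0$ except the constant one, and the constant mode of \eqref{eq:RePsi2} identifies $\Psi_2^{(0)}$ with $\tfrac{2}{3}(C-\lambda_0^{(0)})$; this is sound, and the matching of the two sources $C$ that you flag as the delicate point does check out (the stray $\i$ in the displayed constant term of \eqref{eq:RePsi2} is a typo in the paper, resolved by consistency with the ODE and with \eqref{eq:coeff00}). What your route buys is independence from the curvature formulas of Proposition \ref{prop:Weyl}; what it costs is an unnecessary detour through $\wt{\lambda}_0$ — note that substituting the Fourier expansion \eqref{eq:Weyl_Fourier} of $\wt{\Psi}_2$ directly into \eqref{eq:BachPsi2} gives $\wt{\Bach}(\wt{k},\wt{k})=-16\,\Psi_2^{(0)}$ in one step, since the operator $f\mapsto\ddot f+6\i\dot f-8f$ has symbol $-(k+2)(k+4)$ and annihilates the modes $k=-2,-4$ of $\wt{\Psi}_2$ (and their conjugates in $\ol{\wt{\Psi}}_2$), which would make the proposition immediate.
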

	
	\begin{proof}
		This is immediate on comparing the expression for $\bm{\Psi}_2^{(0)}$ in Proposition \ref{prop:Weyl} with equation \eqref{eq:xi00PetrovII+Bach} in Theorem \ref{thm:PetrovII+Bach}.
	\end{proof}
	
	\subsection{Distinguished almost Lorentzian scales}
	If we now assume that the perturbed Fefferman space admits a distinguished almost Lorentzian scale as discussed in Section \ref{sec:al_Lor_sc}, the expressions for the densities of the Weyl curvature given in Proposition \ref{prop:Weyl} may simplify or vanish altogether. We shall make no attempt to provide a full account, and focus on the two specific cases below.
	
	\begin{prop}\label{prop:hEinasym}
		Let $(\wt{\mc{M}},\wt{\mbf{c}}_{\wt{\xi}}, \wt{k} ) \longrightarrow (\mc{M},H,J)$ be a perturbed Fefferman space as in Proposition \ref{prop:Weyl}, and suppose it admits an almost half-Einstein scale. Let $\sigma \in \Gamma(\mc{E}(1,0))$ be the nowhere vanishing density given in Theorem \ref{thm:hlfEinsc}. Then
		\begin{align}
			\sigma \ol{\sigma} \ol{\bm{\Psi}}_{2}^{(0)} - \ol{\sigma}^2 \ol{\bm{\Psi}}_{2}^{(2)} + \sigma^{-1} \ol{\sigma}^3  \ol{\bm{\Psi}}_{2}^{(4)} & = 0 \, . \label{eq:Psi2_hEins}
		\end{align}
	\end{prop}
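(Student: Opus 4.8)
The plan is to recognise the left-hand side of \eqref{eq:Psi2_hEins} as the value of $\ol{\wt\Psi}_2$ on the zero set $\wt{\mc Z}$ of $\wt\sigma$, and then to prove that the Weyl scalar $\wt\Psi_2$ vanishes there. First I would unwind the notation. By the first line of \eqref{eq:Weyl_Fourier} we have $\wt\Psi_2 = \Psi_2^{(-4)}\e^{-4\i\phi} + \Psi_2^{(-2)}\e^{-2\i\phi} + \Psi_2^{(0)}$, so its conjugate $\ol{\wt\Psi}_2$ carries only the frequencies $0,2,4$. The sections $\wt{\mc Z}_\pm = \{[\pm\i\sigma^{-1}]\}$ sit at $\phi = \pm\tfrac{\pi}{2}$, where $\e^{\pm2\i\phi} = -1$ and $\e^{\pm4\i\phi}=1$; hence $\ol{\wt\Psi}_2|_{\wt{\mc Z}} = \ol{\Psi_2^{(0)}} - \ol{\Psi_2^{(-2)}} + \ol{\Psi_2^{(-4)}}$. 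Converting frame components to densities through $\Psi_2^{(k)} = \sigma^{1-k/2}\ol\sigma^{1+k/2}\bm\Psi_2^{(k)}$ (the identifications recorded in Proposition \ref{prop:Weyl}) and writing $\ol{\bm\Psi}_2^{(-k)} := \ol{\bm\Psi_2^{(k)}}$, the three summands become exactly $\sigma\ol\sigma\,\ol{\bm\Psi}_2^{(0)}$, $-\ol\sigma^2\ol{\bm\Psi}_2^{(2)}$ and $\sigma^{-1}\ol\sigma^3\ol{\bm\Psi}_2^{(4)}$. Thus \eqref{eq:Psi2_hEins} is equivalent to $\wt\Psi_2|_{\wt{\mc Z}} = 0$, i.e. the Coulombic Weyl scalar vanishes at conformal infinity.

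Next I would establish $\wt\Psi_2|_{\wt{\mc Z}}=0$ from the half-Einstein data. Since $\wt\sigma$ is almost half-Einstein, Proposition \ref{prop:al_half-Einstein} gives that $\wt\lambda_\alpha$ has the form \eqref{eq:solambdalf} subject to \eqref{eq:step1}--\eqref{eq:step2}, and that $\wt\lambda_0$ is the explicit solution \eqref{eq:ODE1sol}, \eqref{eq:coeff_lbd0} of the constant-scalar-curvature equation \eqref{eq:ODE1}. I would substitute this into the decompositions \eqref{eq:RePsi2} and \eqref{eq:ImPsi2}, evaluated at $\e^{\pm2\i\phi}=-1$. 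For the imaginary part, \eqref{eq:ImPsi2} collapses to $2\,\Im\wt\Psi_2|_{\wt{\mc Z}} = \dot{\wt\lambda}_0|_{\wt{\mc Z}} - \tfrac12(\nabla_\alpha\lambda^\alpha + \nabla^\alpha\lambda_\alpha)$; evaluating \eqref{eq:ODE1} and its first two $\phi$-derivatives at $\phi=\pm\tfrac{\pi}{2}$, where the leading $\cos^2\phi$ coefficient degenerates, yields $\wt\lambda_0|_{\wt{\mc Z}} = -\tfrac{1}{12}A_0$, $\dot{\wt\lambda}_0|_{\wt{\mc Z}} = \tfrac16 A_1 = \tfrac12(\nabla_\alpha\lambda^\alpha + \nabla^\alpha\lambda_\alpha)$ by \eqref{eq:coeffODE1}, and $\ddot{\wt\lambda}_0|_{\wt{\mc Z}} = 8\wt\lambda_0|_{\wt{\mc Z}} - A_2$. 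The first two already kill the imaginary part, and feeding all three into \eqref{eq:RePsi2} at $\e^{\pm2\i\phi}=-1$ should reduce $2\,\Re\wt\Psi_2|_{\wt{\mc Z}}$ to zero after collecting the $\Rho$, $\lambda_\alpha\lambda^\alpha$ and $\wt\Lambda$ contributions.

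Equivalently, and perhaps more transparently, I could bypass \eqref{eq:RePsi2}--\eqref{eq:ImPsi2} and substitute the half-Einstein expressions \eqref{eq:xi00}--\eqref{eq:xi04} for $\bm\xi_0^{(0)}, \bm\xi_0^{(2)}, \bm\xi_0^{(4)}$ (with their conjugates $\bm\xi_0^{(-2)}=\ol{\bm\xi_0^{(2)}}$, $\bm\xi_0^{(-4)}=\ol{\bm\xi_0^{(4)}}$) directly into the formulas for $\bm\Psi_2^{(0)}, \bm\Psi_2^{(-2)}, \bm\Psi_2^{(-4)}$ of Proposition \ref{prop:Weyl}, and verify that the conjugate of \eqref{eq:Psi2_hEins}, namely $\sigma\ol\sigma\,\bm\Psi_2^{(0)} - \sigma^2\bm\Psi_2^{(-2)} + \sigma^3\ol\sigma^{-1}\bm\Psi_2^{(-4)}$, vanishes, using the defining equations \eqref{eq:xi2_b} and \eqref{eq:exactWW_Ein_b} for $\sigma$ together with the CR commutators \eqref{eq:CR_com}.

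The main obstacle is the real part. Unlike the imaginary part, it does not collapse to a single term: one must combine the gauge-curvature term $F_\alpha{}^\alpha$ in $\bm\Psi_2^{(0)}$, the divergence $\accentset{\xi}{\nabla}^\gamma\bm\xi^{(-2)}_\gamma$ in $\bm\Psi_2^{(-2)}$, and the $\Re(\mu)$ and $\wt\Lambda$ pieces carried by the half-Einstein coefficients, and to exhibit the cancellation one has to commute $\accentset{\xi}{\nabla}_\alpha$ past $\accentset{\xi}{\nabla}_{\bar\alpha}$ and use \eqref{eq:xi2_b}, \eqref{eq:exactWW_Ein_b} to trade the second derivatives of $\sigma$ for Webster torsion. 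Keeping the density weights straight when passing between the $\Psi_2^{(k)}$ and the $\bm\Psi_2^{(k)}$ is the other place where sign and factor errors are easy to introduce.
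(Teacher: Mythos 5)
Your proposal is correct in substance but follows a genuinely different route from the paper's. The paper's proof is a two-line algebraic observation: it forms the linear combination of the half-Einstein constraints \eqref{eq:xi00} and \eqref{eq:xi02} that eliminates $\Re(\mu)$ and $\bm{\xi}_0^{(4)}$, and recognises the result, term by term, as the stated combination of $\ol{\bm{\Psi}}_2^{(0)}$, $\ol{\bm{\Psi}}_2^{(2)}$, $\ol{\bm{\Psi}}_2^{(4)}$ via the explicit formulas of Proposition \ref{prop:Weyl}; no commutators, no ODE, and no evaluation on $\wt{\mc{Z}}$ are needed. (Your third paragraph, the ``bypass'' option, is essentially this proof, except that it is even more direct than you anticipate: the cancellation is a matter of matching coefficients, and \eqref{eq:xi2_b}, \eqref{eq:exactWW_Ein_b} and the commutators \eqref{eq:CR_com} play no role.) What your main route buys is the geometric content: your identification of the left-hand side of \eqref{eq:Psi2_hEins} with $\ol{\wt{\Psi}}_2$ restricted to $\wt{\mc{Z}}$ is correct — the weight bookkeeping $\Psi_2^{(k)} = \sigma^{1-k/2}\ol{\sigma}^{1+k/2}\bm{\Psi}_2^{(k)}$ and the evaluation at $\e^{\pm 2\i\phi}=-1$ check out exactly — and it makes transparent that the proposition is equivalent to the Petrov type III degeneration on $\wt{\mc{Z}}$, which the paper only extracts afterwards by combining this proposition with Proposition \ref{prop:Wasym}. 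Your boundary values $\wt{\lambda}_0|_{\wt{\mc{Z}}}=-\tfrac{1}{12}A_0$, $\dot{\wt{\lambda}}_0|_{\wt{\mc{Z}}}=\tfrac16 A_1$ and $\ddot{\wt{\lambda}}_0|_{\wt{\mc{Z}}}=8\wt{\lambda}_0|_{\wt{\mc{Z}}}-A_2$, read off from \eqref{eq:ODE1} and its first two $\phi$-derivatives at the degenerate locus, are all correct, and the imaginary part of $\wt{\Psi}_2$ does vanish there as you show.

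The one caveat is that you leave the real-part cancellation as a ``should reduce to zero'' rather than carrying it out; since that is precisely where the $\Rho$, $\lambda_\alpha\lambda^\alpha$ and divergence terms from \eqref{eq:RePsi2}, \eqref{eq:coeffODE1} and \eqref{eq:coeff_lbd0} must conspire, the argument is not complete as written. Given that the equivalence with $\wt{\Psi}_2|_{\wt{\mc{Z}}}=0$ is established, the most economical way to close this is to fall back on your third route for the real part alone — or simply adopt the paper's combination of \eqref{eq:xi00} and \eqref{eq:xi02} outright, which settles real and imaginary parts simultaneously.
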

	
	\begin{proof}
		Combining \eqref{eq:xi00} and \eqref{eq:xi02}, we obtain
		\begin{equation*}
			\sigma \ol{\sigma} \left( \bm{\xi}_{0}^{(0)} - \i \left(  \nabla_{\alpha} \bm{\xi}^{\alpha}_{(0)} -  \nabla^{\alpha} \bm{\xi}_{\alpha}^{(0)} \right) -  3 \bm{\xi}_{\alpha}^{(-2)} \bm{\xi}^{\alpha}_{(2)} \right)
			- 3 \ol{\sigma}^2 \left( \bm{\xi}_{0}^{(2)} - \tfrac{1}{2}  \i  \accentset{\xi}{\nabla}_{\alpha} \bm{\xi}^{\alpha}_{(2)} \right) + 6 \sigma^{-1} \ol{\sigma}^3 \bm{\xi}_{0}^{(4)}   = 0 \, . 
		\end{equation*}
		Hence, on comparing this equation with the expressions for $\bm{\Psi}_{2}^{(0)}$, $\bm{\Psi}_{2}^{(-2)}$ and $\bm{\Psi}_{2}^{(-4)}$ in Proposition \ref{prop:Weyl}, we deduce \eqref{eq:Psi2_hEins}.
	\end{proof}

	\begin{prop}\label{prop:pureradasym}
		Let $(\wt{\mc{M}},\wt{\mbf{c}}_{\wt{\xi}}, \wt{k} ) \longrightarrow (\mc{M},H,J)$ be a perturbed Fefferman space as in Proposition \ref{prop:Weyl}, and suppose it admits an almost pure radiation scale. Let $\sigma \in \Gamma(\mc{E}(1,0))$ be the nowhere vanishing density given in Theorem \ref{thm:pertFeffpurad}. Then
		\begin{subequations}\label{eq:Psi2_purad}
			\begin{align}
				\ol{\bm{\Psi}}_{2}^{(0)} & = 0 \, , \label{eq:Psi2_purad1} \\
				\ol{\bm{\Psi}}_{2}^{(2)} & = \sigma^{-1} \ol{\sigma}  \ol{\bm{\Psi}}_{2}^{(4)}  \, . \label{eq:Psi2_purad2}
			\end{align}
		\end{subequations}
	\end{prop}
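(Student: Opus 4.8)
The plan is to obtain both identities \eqref{eq:Psi2_purad1} and \eqref{eq:Psi2_purad2} as essentially formal consequences of results already established, rather than by a fresh curvature computation. The two ingredients I would lean on are the half-Einstein relation \eqref{eq:Psi2_hEins} of Proposition \ref{prop:hEinasym}, and the Bach-tensor characterisation of the middle Weyl coefficient in Proposition \ref{prop:BachPsi}. The crucial preliminary remark is that an almost pure radiation scale is in particular an almost half-Einstein scale (by Definition \ref{defn:aESc}), so both of these results apply to our $\wt{\sigma}$ verbatim; moreover the density $\sigma \in \Gamma(\mc{E}(1,0))$ produced by Theorem \ref{thm:pertFeffpurad} is the same as the one produced by Theorem \ref{thm:hlfEinsc}, since in each theorem $\sigma$ is the density (unique up to sign) whose zero set recovers the cross-sections $\wt{\mc{Z}}_{\pm} = \{[\pm \i \sigma^{-1}]\}$.

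With this in place, I would carry out two short steps. First, to prove \eqref{eq:Psi2_purad1}: by Proposition \ref{prop:W+E2B} the existence of the almost pure radiation scale forces $\wt{\Bach}(\wt{k},\wt{k}) = 0$, and by Proposition \ref{prop:BachPsi} this is equivalent to $\bm{\Psi}_2^{(0)} = 0$; since $\bm{\Psi}_2^{(0)}$ is real-valued, we conclude $\ol{\bm{\Psi}}_2^{(0)} = 0$. Second, since $\wt{\sigma}$ is also an almost half-Einstein scale, Proposition \ref{prop:hEinasym} supplies the relation \eqref{eq:Psi2_hEins}, namely $\sigma \ol{\sigma}\, \ol{\bm{\Psi}}_2^{(0)} - \ol{\sigma}^2 \ol{\bm{\Psi}}_2^{(2)} + \sigma^{-1} \ol{\sigma}^3 \ol{\bm{\Psi}}_2^{(4)} = 0$. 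Substituting $\ol{\bm{\Psi}}_2^{(0)} = 0$ and dividing through by the non-vanishing factor $-\ol{\sigma}^2$ gives $\ol{\bm{\Psi}}_2^{(2)} = \sigma^{-1} \ol{\sigma}\, \ol{\bm{\Psi}}_2^{(4)}$, which is precisely \eqref{eq:Psi2_purad2}.

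I do not expect a genuine obstacle, since the substantive computations have been discharged in Propositions \ref{prop:W+E2B}, \ref{prop:BachPsi} and \ref{prop:hEinasym}; the proof is a matter of assembling them. The only points demanding care are bookkeeping ones: checking that the same $\sigma$ is used throughout, and reading the overlined coefficients $\ol{\bm{\Psi}}_2^{(k)}$ with $k>0$ as the Fourier modes $\ol{\bm{\Psi}_2^{(-k)}}$ of the conjugate scalar $\ol{\wt{\Psi}}_2$ (Proposition \ref{prop:Weyl} only lists $\bm{\Psi}_2^{(-4)}$, $\bm{\Psi}_2^{(-2)}$, $\bm{\Psi}_2^{(0)}$), so that the two displayed identities are the correct transcriptions of the vanishing just obtained. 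As a consistency check one could instead substitute the pure radiation coefficients \eqref{eq:xi00b}--\eqref{eq:xi02b} directly into the formulas for $\bm{\Psi}_2^{(-4)}$, $\bm{\Psi}_2^{(-2)}$, $\bm{\Psi}_2^{(0)}$ of Proposition \ref{prop:Weyl}, but this merely reproves what those propositions already encode and so I would relegate it to a remark, if included at all.
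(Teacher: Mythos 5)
Your proof is correct, and it assembles the same ingredients the paper relies on, but packages them slightly differently. For \eqref{eq:Psi2_purad1} you argue via Proposition \ref{prop:W+E2B} together with Proposition \ref{prop:BachPsi}; the paper's own proof obtains \eqref{eq:Psi2_purad1} by direct inspection of the formula for $\bm{\Psi}_2^{(0)}$ in Proposition \ref{prop:Weyl} using the pure-radiation coefficient \eqref{eq:xi00b}, but it explicitly records your Bach-tensor route as an alternative, so that half of your argument is anticipated verbatim. For \eqref{eq:Psi2_purad2} the paper again inspects the explicit expression for $\bm{\Psi}_2^{(-2)}$ against \eqref{eq:xi02b}, whereas you deduce it formally by substituting $\ol{\bm{\Psi}}_2^{(0)}=0$ into the half-Einstein identity \eqref{eq:Psi2_hEins} and cancelling the non-vanishing factor $\ol{\sigma}^2$. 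Since \eqref{eq:Psi2_hEins} was itself derived by combining \eqref{eq:xi00} and \eqref{eq:xi02}, the two derivations are logically equivalent; your version has the small advantage of avoiding any re-inspection of the curvature formulas, at the cost of routing through the half-Einstein case. Your bookkeeping points are also handled correctly: an almost pure radiation scale is by definition an almost half-Einstein scale, the density $\sigma$ in Theorem \ref{thm:pertFeffpurad} is produced by the same mechanism as in Theorem \ref{thm:hlfEinsc} (both are pinned down by the zero set $\wt{\mc{Z}}_{\pm}=\{[\pm\i\sigma^{-1}]\}$), and $\ol{\bm{\Psi}}_2^{(0)}$ is indeed the complex conjugate of $\bm{\Psi}_2^{(0)}$, so its vanishing follows from Proposition \ref{prop:BachPsi} even without invoking reality. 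No gaps.
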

	
	\begin{proof}
		On inspection of $\bm{\Psi}_{2}^{(0)}$, $\bm{\Psi}_{2}^{(-2)}$ and $\bm{\Psi}_{2}^{(-4)}$ in Proposition \ref{prop:Weyl}, conditions \eqref{eq:Psi2_purad1} and \eqref{eq:Psi2_purad2} are seen to follow from \eqref{eq:xi00b} and \eqref{eq:xi02b} respectively. Equation \eqref{eq:Psi2_purad1} can also be obtained from Proposition \ref{prop:BachPsi} and Proposition \ref{prop:W+E2B}.
	\end{proof}
	
	\subsection{Asymptotics}\label{sec:asym}
	We now investigate the behaviour of the Weyl tensor on the zero set of some distinguished almost Lorentzian scale.
	\begin{prop}\label{prop:Wasym}
		Let $(\wt{\mc{M}},\wt{\mbf{c}}_{\wt{\xi}}, \wt{k} ) \longrightarrow (\mc{M},H,J)$ be a perturbed Fefferman space as in Proposition \ref{prop:Weyl}. Choose any nowhere vanishing density  $\sigma$ of weight $(1,0)$ and let $\wt{\mc{Z}}$ be the union $\wt{\mc{Z}}_+ \cup \wt{\mc{Z}}_-$ of the sections $\wt{\mc{Z}}_{\pm} := [\pm \i \sigma^{-1}] : \mc{M} \rightarrow \wt{\mc{M}}$. Then the Weyl tensor is
		\begin{enumerate}
			\item of Petrov type D or more degenerate on $\wt{\mc{Z}}$ if and only if 
			\begin{align}\label{eq:PetrovD_asymp}
				&	\sum_{k=-5}^{2} (-1)^{k} \sigma^{-k+1} \ol{\sigma}^{k+1} \bm{\Psi}_{\alpha \beta}^{(2k)} = 0\, ;
			\end{align}
			\item of Petrov type III or more degenerate on $\wt{\mc{Z}}$ if and only if 
			\begin{align}\label{eq:PetrovIII_asymp}
				& \sigma^3 \ol{\sigma}^{-1} \bm{\Psi}_{2}^{(-4)} - \sigma^2 \bm{\Psi}_{2}^{(-2)} +	\sigma \ol{\sigma} \bm{\Psi}_{2}^{(0)}  = 0 \, ;
			\end{align}
			\item of Petrov type N or more degenerate on $\wt{\mc{Z}}$ if and only if \eqref{eq:PetrovIII_asymp} holds, and
			\begin{align}\label{eq:PetrovN_asymp}
				& - \sigma^{4} \ol{\sigma}^{-2} \bm{\Psi}_3^{(-6)}  + \sigma^{3} \ol{\sigma}^{-1} \bm{\Psi}_3^{(-4)} - \sigma^{2}  \bm{\Psi}_3^{(-2)} + \sigma \ol{\sigma} \bm{\Psi}_3^{(0)} -  \ol{\sigma}^{2} \bm{\Psi}_3^{(2)} = 0 \, ;
			\end{align}
			\item of Petrov type O on $\wt{\mc{Z}}$ if and only if \eqref{eq:PetrovIII_asymp} and  \eqref{eq:PetrovN_asymp} holds, and
			\begin{equation}\label{eq:PetrovO_asymp}
				\sigma^{5} \ol{\sigma}^{-3} \bm{\Psi}_4^{(-8)} - \sigma^{4} \ol{\sigma}^{-2} \bm{\Psi}_4^{(-6)} + \sigma^{3} \ol{\sigma}^{-1} \bm{\Psi}_4^{(-4)} - \sigma^{2} \bm{\Psi}_4^{(-2)} \\
				+ \sigma \ol{\sigma} \bm{\Psi}_4^{(0)} - \ol{\sigma}^{2} \bm{\Psi}_4^{(2)} + \sigma \ol{\sigma}^{3} \bm{\Psi}_4^{(4)} = 0 \, .
			\end{equation}
		\end{enumerate}
	\end{prop}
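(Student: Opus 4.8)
The plan is to read off each Petrov-type condition from the table of Section~\ref{sec:Petrov} and to evaluate it on $\wt{\mc{Z}}$ by means of the Fourier expansions \eqref{eq:Weyl_Fourier} of Proposition~\ref{prop:Weyl}. The first step is to locate $\wt{\mc{Z}}$ in the trivialisation determined by $\sigma$: the fibre point over a given base point at coordinate $\phi$ is $[\e^{\i \phi} \sigma^{-1}]$, so the sections $\mc{Z}_\pm = \{[\pm \i \sigma^{-1}]\}$ sit precisely at $\phi = \pm \tfrac{\pi}{2}$. Since every Fourier index appearing in \eqref{eq:Weyl_Fourier} is \emph{even}, we have $\e^{k \i \phi}|_{\phi = \pm \pi/2} = (\pm \i)^k = (-1)^{k/2}$, which is independent of the sheet; hence each condition obtained below holds simultaneously on $\mc{Z}_+$ and $\mc{Z}_-$, and descends to a single identity on $\mc{M}$.

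Next, because $\langle \wt{k} \rangle$ is a repeated PND we already have $\wt{\Psi}_0 = \wt{\Psi}_1 = 0$, so the residual Petrov type is governed by $\wt{\Psi}_2, \wt{\Psi}_3, \wt{\Psi}_4$ through the table of Section~\ref{sec:Petrov}. I would use that in the Petrov specialisation lattice the types $N$ and $O$ lie below $D$, and verify the elementary algebraic fact that the bilinear quantity $4 (\wt{\Psi}_3)_\alpha (\wt{\Psi}_3)_\beta - 6 \wt{\Psi}_2 (\wt{\Psi}_4)_{\alpha \beta}$ vanishes exactly for types $D$, $N$ and $O$ (it equals $4 (\wt{\Psi}_3)_\alpha (\wt{\Psi}_3)_\beta \neq 0$ in type III), so that its vanishing characterises `type $D$ or more degenerate', while $\wt{\Psi}_2 = 0$, $\wt{\Psi}_2 = \wt{\Psi}_3 = 0$ and $\wt{\Psi}_2 = \wt{\Psi}_3 = \wt{\Psi}_4 = 0$ characterise `type III, N, O or more degenerate' respectively.

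For parts (2)--(4) the computation is then purely formal: I would evaluate $\wt{\Psi}_2, \wt{\Psi}_3, \wt{\Psi}_4$ at $\phi = \pm \tfrac{\pi}{2}$, picking up the factor $(-1)^{k/2}$ on the $k$-th mode, and replace each Fourier coefficient by its density via the inverse of the relations in Proposition~\ref{prop:Weyl}, namely $\Psi_i^{(k)} = \sigma^{1 - k/2}\, \ol{\sigma}^{1 + k/2}\, \bm{\Psi}_i^{(k)}$. Collecting signs and $\sigma$-powers then reproduces \eqref{eq:PetrovIII_asymp}, \eqref{eq:PetrovN_asymp} and \eqref{eq:PetrovO_asymp}. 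For part (1) I would apply the same evaluation to $4 (\wt{\Psi}_3)_\alpha (\wt{\Psi}_3)_\beta - 6 \wt{\Psi}_2 (\wt{\Psi}_4)_{\alpha \beta}$; its $k$-th Fourier mode is, up to the common weight factor $\sigma^{2 - k/2} \ol{\sigma}^{2 + k/2}$, precisely the CR-invariant density $\bm{\Psi}^{(k)}_{\alpha \beta} = \sum_{i + j = k} \bigl( 4 (\bm{\Psi}_3^{(i)})_\alpha (\bm{\Psi}_3^{(j)})_\beta - 6 \bm{\Psi}_2^{(i)} (\bm{\Psi}_4^{(j)})_{\alpha \beta} \bigr)$ from the lemma preceding the proposition, and using $\bm{\Psi}^{(-12)}_{\alpha \beta} = 0$ to trim the lowest mode yields \eqref{eq:PetrovD_asymp} (up to an overall nonvanishing factor $\sigma \ol{\sigma}$, which does not affect the vanishing condition).

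The main obstacle is the type $D$ bookkeeping in part (1): I must check that the Cauchy product of the two Fourier series organises mode-by-mode into the $\bm{\Psi}^{(k)}_{\alpha \beta}$, which requires that the $\sigma$-weights of the two summand types, $\Psi_3^{(i)} \Psi_3^{(j)}$ and $\Psi_2^{(i)} \Psi_4^{(j)}$, coincide within each mode $i + j = k$ --- they do, both contributing $\sigma^{2 - k/2} \ol{\sigma}^{2 + k/2}$ --- so that the vanishing condition is genuinely a CR-invariant statement on $\mc{M}$ rather than a trivialisation-dependent one. Everything else amounts to evaluating $(\pm \i)^k$ and matching exponents.
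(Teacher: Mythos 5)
Your proposal is correct and follows essentially the same route as the paper: identify $\wt{\mc{Z}}$ with the hypersurfaces $\phi = \pm\tfrac{\pi}{2}$, evaluate the Fourier expansions \eqref{eq:Weyl_Fourier} there, convert the coefficients to densities via $\Psi_i^{(k)} = \sigma^{1-k/2}\ol{\sigma}^{1+k/2}\bm{\Psi}_i^{(k)}$, and read off the Petrov conditions from the table of Section \ref{sec:Petrov}. Your treatment of part (1) — organising the Cauchy product of the Fourier series into the invariants $\bm{\Psi}^{(k)}_{\alpha\beta}$, trimming the $k=-12$ mode, and factoring out $\sigma\ol{\sigma}$ — supplies exactly the detail the paper compresses into ``the computation of \eqref{eq:PetrovD_asymp} is similar.''
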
  
	
	\begin{proof}
		In terms of the fibre coordinate, $\wt{\mc{Z}}$ consists of the hypersurfaces $\phi = \pm \tfrac{\pi}{2}$. The expressions \eqref{eq:Weyl_Fourier} evaluated at $\phi = \pm \tfrac{\pi}{2}$ simply correspond to \eqref{eq:PetrovIII_asymp}, \eqref{eq:PetrovN_asymp} and \eqref{eq:PetrovO_asymp}. The computation of  \eqref{eq:PetrovD_asymp} is similar, and follows from formula \eqref{eq:CRPetrovD}.
	\end{proof}
	
	Our next step is to identify $\wt{\mc{Z}}$ as given in Proposition \ref{prop:Wasym} as the zero set of some almost Lorentzian scale $\wt{\sigma}$ --- this is possible by \cite[Proposition~6.1]{TaghaviChabert2023b}, see also Section \ref{sec:prel_zero_set}. Assuming that $\wt{\sigma}$ is in fact an almost half-Einstein scale, and comparing equation \eqref{eq:Psi2_hEins} from Proposition \ref{prop:hEinasym} and equation \eqref{eq:PetrovIII_asymp} from Proposition \ref{prop:Wasym}, we obtain:
	\begin{cor}
		Let $(\wt{\mc{M}},\wt{\mbf{c}}_{\wt{\xi}}, \wt{k} ) \longrightarrow (\mc{M},H,J)$ be a perturbed Fefferman space as in Proposition \ref{prop:Weyl}, and suppose it admits an almost half-Einstein scale with zero set $\wt{\mc{Z}}$. Then the Weyl tensor is of Petrov type III or more degenerate on $\wt{\mc{Z}}$.
	\end{cor}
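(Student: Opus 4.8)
The plan is to exploit the fact that both ingredients needed have just been established, so that the corollary reduces to a single algebraic comparison carried out pointwise on $\wt{\mc{Z}}$. The starting observation is that, by Proposition~\ref{prop:Wasym}(2), the Weyl tensor is of Petrov type III or more degenerate on $\wt{\mc{Z}}$ if and only if the relation~\eqref{eq:PetrovIII_asymp},
\[
\sigma^3 \ol{\sigma}^{-1} \bm{\Psi}_{2}^{(-4)} - \sigma^2 \bm{\Psi}_{2}^{(-2)} + \sigma \ol{\sigma}\, \bm{\Psi}_{2}^{(0)} = 0 \, ,
\]
holds, where $\sigma$ is the density of weight $(1,0)$ cutting out $\wt{\mc{Z}}$. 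Hence the entire content of the proof is to produce this identity from the half-Einstein hypothesis, and no new computation with the Weyl components is required.

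First I would feed the almost half-Einstein scale $\wt{\sigma}$ into Proposition~\ref{prop:hEinasym}, noting that the density $\sigma$ it produces (via Theorem~\ref{thm:hlfEinsc}) is exactly the one determining $\wt{\mc{Z}}$. This yields the constraint~\eqref{eq:Psi2_hEins},
\[
\sigma \ol{\sigma}\, \ol{\bm{\Psi}}_{2}^{(0)} - \ol{\sigma}^2 \ol{\bm{\Psi}}_{2}^{(2)} + \sigma^{-1} \ol{\sigma}^3 \ol{\bm{\Psi}}_{2}^{(4)} = 0 \, .
\]
The key move is then simply to take the complex conjugate of this equation. Under conjugation the barred Fourier densities become the unbarred ones through the reality structure of the Weyl scalar, namely $\ol{\bm{\Psi}}_{2}^{(k)} = \ol{\bm{\Psi}_{2}^{(-k)}}$ (the mode at $+k$ of $\ol{\wt{\Psi}}_2$ is the conjugate of the mode at $-k$ of $\wt{\Psi}_2$, as read off from the expansion $\wt{\Psi}_2 = \Psi_2^{(-4)}\e^{-4\i\phi} + \Psi_2^{(-2)}\e^{-2\i\phi} + \Psi_2^{(0)}$), together with $\ol{\bm{\Psi}_2^{(0)}} = \bm{\Psi}_2^{(0)}$ since $\bm{\Psi}_2^{(0)}$ is real, as noted just above Proposition~\ref{prop:Wasym}. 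Conjugating the scalar factors as $\ol{\sigma\ol{\sigma}} = \sigma\ol{\sigma}$, $\ol{\ol{\sigma}^2} = \sigma^2$ and $\ol{\sigma^{-1}\ol{\sigma}^3} = \sigma^3 \ol{\sigma}^{-1}$, the conjugate of~\eqref{eq:Psi2_hEins} becomes verbatim~\eqref{eq:PetrovIII_asymp}, and the corollary follows at once.

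The hard part is not analytic at all, but is rather the bookkeeping of the conjugation at the level of weighted densities. I would verify that the weights match on both sides, i.e.\ that $\ol{\bm{\Psi}_2^{(-k)}} \in \Gamma(\mc{E}(\tfrac{k}{2}-1,-\tfrac{k}{2}-1))$ agrees with the bundle declared for the $k$-th barred mode, so that the identification $\ol{\bm{\Psi}}_2^{(k)} = \ol{\bm{\Psi}_2^{(-k)}}$ is legitimate as an equality of sections; a quick check of the three relevant cases $k=0,2,4$ confirms this. Once these identifications are in place the two displayed relations coincide exactly, the argument is entirely pointwise on $\wt{\mc{Z}}$, and no further hypothesis on the CR data is needed, which is precisely why the conclusion holds without assuming the pure radiation or Einstein condition.
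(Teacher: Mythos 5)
Your proof is correct and is essentially the paper's own argument: the paper's proof is literally "comparing \eqref{eq:Psi2_hEins} from Proposition \ref{prop:hEinasym} and \eqref{eq:PetrovIII_asymp} from Proposition \ref{prop:Wasym}", and your conjugation of \eqref{eq:Psi2_hEins} using $\ol{\bm{\Psi}}_2^{(k)}=\ol{\bm{\Psi}_2^{(-k)}}$ and the reality of $\bm{\Psi}_2^{(0)}$ is exactly that comparison, spelled out. No gaps.
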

	
	The causal properties of the zero set of an almost half-Einstein scale are given in \cite[Proposition~7.1]{TaghaviChabert2023b}. This allows us to prove the following:
	\begin{thm}\label{thm:asymconfflat}
		Suppose $(\wt{\mc{M}},\wt{\mbf{c}}_{\wt{\xi}},\wt{k})$ admits an almost pure radiation scale $\wt{\sigma}$ with zero set $\wt{\mc{Z}}$. Then $\wt{\mbf{c}}_{\wt{\xi}}$ is conformally flat on $\wt{\mc{Z}}$, i.e.\ $\wt{\Weyl}=0$ on $\wt{\mc{Z}}$.
	\end{thm}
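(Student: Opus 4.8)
The plan is to prove that $\wt{\Weyl}$ vanishes identically along $\wt{\mc{Z}}$, which is precisely the assertion that $\wt{\mbf{c}}_{\wt{\xi}}$ is conformally flat there, i.e.\ of Petrov type O on $\wt{\mc{Z}}$. Since $\langle\wt{k}\rangle$ is a repeated PND by Theorem \ref{thm:GSalpha} (so that $\wt{\Psi}_0=\wt{\Psi}_1=0$ identically), Proposition \ref{prop:Wasym}(4) reduces the statement to the three asymptotic identities \eqref{eq:PetrovIII_asymp}, \eqref{eq:PetrovN_asymp} and \eqref{eq:PetrovO_asymp}. A useful bookkeeping device, which I would set up first, is to note that these are literally the vanishing of $\wt{\Psi}_2$, $\wt{\Psi}_3$ and $\wt{\Psi}_4$ of \eqref{eq:Weyl_Fourier} evaluated on the cross-sections $\phi=\pm\tfrac{\pi}{2}$, effected by the substitutions $\e^{\pm 2\i\phi}\mapsto -1$, $\e^{\pm 4\i\phi}\mapsto 1$, and so on; the reality relations between $\bm{\Psi}_i^{(k)}$ and $\bm{\Psi}_i^{(-k)}$ then let one treat $\wt{\mc{Z}}_+$ and $\wt{\mc{Z}}_-$ at once.

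First I would dispose of the $\wt{\Psi}_2$-part. Since an almost pure radiation scale is in particular an almost half-Einstein scale, the preceding Corollary already gives that $\wt{\Weyl}$ is of Petrov type III or more degenerate on $\wt{\mc{Z}}$; equivalently, feeding the relations of Proposition \ref{prop:pureradasym} (which give $\bm{\Psi}_2^{(0)}=0$ and express $\bm{\Psi}_2^{(-2)}$ in terms of $\bm{\Psi}_2^{(-4)}$) into \eqref{eq:PetrovIII_asymp} makes the three terms cancel. This settles \eqref{eq:PetrovIII_asymp}.

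The heart of the argument is the verification of the $\wt{\Psi}_3$-identity \eqref{eq:PetrovN_asymp} and the $\wt{\Psi}_4$-identity \eqref{eq:PetrovO_asymp}. Here I would insert the explicit Fourier densities of Proposition \ref{prop:Weyl} and impose the pure radiation CR data of Theorem \ref{thm:pertFeffpurad}: the Webster--Weyl equation \eqref{eq:exactWW_Ein} rewritten as $\accentset{\xi}{\nabla}_{\alpha}\accentset{\xi}{\nabla}_{\beta}\sigma=-\i\WbA_{\alpha\beta}\sigma$, its $\bar\alpha$-companion \eqref{eq:xi2} giving $\xi_\alpha^{(-2)}=\tfrac{\i}{2}\sigma^{-1}\accentset{\xi}{\nabla}_\alpha\sigma$, the explicit forms \eqref{eq:xi0b} of $\bm{\xi}_0^{(0)},\bm{\xi}_0^{(2)},\bm{\xi}_0^{(4)}$, and crucially the extra second-order constraint \eqref{eq:2ndCR}. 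The decisive simplification I expect to come from \eqref{eq:2ndCR}, which---exactly as exploited in the proof of Theorem \ref{thm:realisableCRPetrovII}---gives $\accentset{\xi}{\nabla}_{\alpha}\!\bigl(\sigma^{-1}\bm{\xi}_0^{(4)}\bigr)=0$; combined with the previous two relations this should collapse each weighted sum in \eqref{eq:PetrovN_asymp} and \eqref{eq:PetrovO_asymp} into gauged covariant derivatives of $\sigma^{-1}\bm{\xi}_0^{(4)}$ and of $\sigma$ that vanish, with the commutators \eqref{eq:CR_com} used to reorder the iterated derivatives in $\bm{\Psi}_4^{(-4)},\bm{\Psi}_4^{(-2)}$ and to absorb the torsion terms $\WbA_{\alpha\beta}$.

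I expect the $\wt{\Psi}_4$-identity \eqref{eq:PetrovO_asymp} to be the main obstacle: it is a sum of seven terms, several of which (notably $\bm{\Psi}_4^{(-4)}$, $\bm{\Psi}_4^{(-2)}$, $\bm{\Psi}_4^{(0)}$) carry iterated gauged derivatives of $\bm{\xi}_0^{(-2)}$, $\bm{\xi}_0^{(-4)}$ and the curvature quantities $F_{0\alpha}$ and $\CQ_{\alpha\beta}$, so the cancellation is not term-by-term but emerges only after repeated application of \eqref{eq:2ndCR}, the commutators \eqref{eq:CR_com}, and the reality $\ol{\mu}=-\mu$ special to the pure radiation case. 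Organising the computation around the single covariantly constant density $\sigma^{-1}\bm{\xi}_0^{(4)}$ (and its conjugate) is what I would rely on to keep the algebra tractable and to make the vanishing transparent.
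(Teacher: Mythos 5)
Your route is genuinely different from the paper's, and as it stands it has a real gap: the entire content of the theorem beyond the $\wt{\Psi}_2$-level is deferred to a computation you do not carry out. Your disposal of \eqref{eq:PetrovIII_asymp} via Proposition \ref{prop:pureradasym} is correct (the three terms do cancel once $\bm{\Psi}_2^{(0)}=0$ and $\bm{\Psi}_2^{(-2)}=\sigma\ol{\sigma}^{-1}\bm{\Psi}_2^{(-4)}$ are inserted). But for \eqref{eq:PetrovN_asymp} and \eqref{eq:PetrovO_asymp} you only state an expectation that the seven-term $\wt{\Psi}_4$-sum, which involves $\CQ_{\alpha\beta}$, $F_{0\alpha}$ and iterated gauged derivatives of $\bm{\xi}_0^{(-2)}$ and $\bm{\xi}_0^{(-4)}$, will collapse after "repeated application" of \eqref{eq:2ndCR} and the commutators \eqref{eq:CR_com}. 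That is precisely the part that needs to be proved, and it is not evident that the listed constraints close up algebraically without further differential consequences (e.g.\ derivatives of \eqref{eq:2ndCR} and Bianchi-type identities for $\CQ_{\alpha\beta}$). A proof that announces the hardest cancellation as an expectation is not a proof.

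The paper avoids this computation entirely. It differentiates the defining equation \eqref{eq:alwkEins} once more and commutes derivatives to obtain the integrability condition $\wt{\Weyl}_{abcd}\wt{\nu}^{d}+\wt{\Cot}_{cab}\wt{\sigma}=\wt{T}_{abc}\wt{\sigma}-\wt{\Phi}\wt{\kappa}_{c}\wt{\kappa}_{[a}\wt{\nu}_{b]}$ with $\wt{\nu}_{a}=\wt{\nabla}_{a}\wt{\sigma}$; on $\wt{\mc{Z}}$ both $\wt{\sigma}$ and (by Lemma \ref{lem:purad_Ric}) $\wt{\Phi}$ vanish, leaving $\wt{\Weyl}_{abcd}\wt{\nu}^{d}=0$ there. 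It then splits into cases according to the causal character of $\wt{\mc{Z}}$ (Proposition 7.1 of \cite{TaghaviChabert2023b}): if $\wt{\Lambda}\neq 0$ then $\wt{\nu}$ is non-null and annihilating the Weyl tensor forces $\wt{\Weyl}=0$; if $\wt{\Lambda}=0$ then $\wt{\nu}$ is null and would have to be a quadruple PND, which is excluded because $\wt{\nu}$ is tangent to the null hypersurface $\wt{\mc{Z}}$ while $\wt{k}$ is transverse to it. Note that this last step uses genuinely geometric (transversality/causal) information that your purely algebraic scheme would have to reproduce from the constraint equations; this is a further reason to doubt that the term-chasing closes without additional input. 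If you want to keep your approach, you must actually exhibit the cancellations in \eqref{eq:PetrovN_asymp} and \eqref{eq:PetrovO_asymp}; otherwise, the integrability-condition argument is both shorter and complete.
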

	
	\begin{proof}
		Let us take another covariant derivative of the defining equation \eqref{eq:alwkEins} for $\wt{\sigma}$ with \eqref{eq:purad_alt} and \eqref{eq:alcstRicSc}. Then commuting the covariant derivatives, we derive the integrability condition
		\begin{align*}
			\wt{\Weyl}_{a b c d} \wt{\nu}^d + \wt{\Cot}_{c a b} \wt{\sigma} & = \wt{T}_{a b c} \wt{\sigma} - \wt{\Phi} \wt{\kappa}_{c} \wt{\kappa}_{[a} \wt{\nu}_{b]} \, ,
		\end{align*}
		for some trace-free tensor $\wt{T}_{a b c}$ regular on $\wt{\mc{M}}$, and where $\wt{\nu}_{a} = \wt{\nabla}_{a} \wt{\sigma}$. By Lemma \ref{lem:purad_Ric}, we already know that $\wt{\Phi} = 0$ on $\wt{\mc{Z}}$. Hence
		\begin{align}\label{eq:WeylonZ}
			\wt{\Weyl}_{a b c d} \wt{\nu}^d & = 0 \, , & \mbox{on $\wt{\mc{Z}}$.}
		\end{align}
		There are two cases to consider:
		\begin{itemize}
			\item  If $\wt{\Lambda} \neq 0$, then by \cite[Proposition~7.1]{TaghaviChabert2023b}, $\wt{\mc{Z}}$ is spacelike or timelike, and so $\wt{\nu}_a$ is non-null. It is easy to verify that in dimension four, at any point, given such a non-null vector, the Weyl tensor generically takes the form $\wt{\Weyl}_{a b c d} = \left(\wt{\nu}_{[a} \wt{\Weyl}_{b] c d} + \wt{\nu}_{[c} \wt{\Weyl}_{d] a b} \right)_\circ$ for some trace-free $\wt{\Weyl}_{a b c}=\wt{\Weyl}_{a [b c]}$ satisfying $\wt{\Weyl}_{[a b c]} = 0$. Condition \eqref{eq:WeylonZ} now implies that $\wt{\Weyl}=0$ on $\wt{\mc{Z}}$.
			\item If $\wt{\Lambda} = 0$, then by \cite[Proposition~7.1]{TaghaviChabert2023b}, $\wt{\mc{Z}}$ is null, and so is $\wt{\nu}_a$. Hence \eqref{eq:WeylonZ} implies that $\wt{\nu}_a$ is a quadruple PND of $\wt{\Weyl}$ on $\wt{\mc{Z}}$. This would mean that $\wt{k}$ degenerates to a quadruple PND of $\wt{\Weyl}$ and so would be proportional to $\wt{\nu}^a$ on $\wt{\mc{Z}}$. But this is impossible since $\wt{\nu}^a$ is tangent to the null hypersurface $\wt{\mc{Z}}$ while $\wt{k}^{a}$ is transverse to it. Hence $\wt{\Weyl}=0$ on $\wt{\mc{Z}}$.
		\end{itemize}
		This ends the proof.
	\end{proof}
	
	As a consequence of Proposition \ref{prop:pureradasym}, Theorem \ref{thm:asymconfflat} and Proposition \ref{prop:Wasym}, we obtain:
	\begin{prop}\label{prop:confEins}
		Let $(\wt{\mc{M}},\wt{\mbf{c}}_{\wt{\xi}}, \wt{k} ) \longrightarrow (\mc{M},H,J)$ be a perturbed Fefferman space as in Proposition \ref{prop:Weyl}, and suppose it admits an almost pure radiation scale $\wt{\sigma}$. Then
		\begin{subequations}\label{eq:strasympEin}
			\begin{equation}
				\bm{\Psi}_2^{(0)} = 0 \, , 
			\end{equation}
			\begin{equation}
				\sigma^3 \ol{\sigma}^{-1} \bm{\Psi}_{2}^{(-4)} - \sigma^2 \bm{\Psi}_{2}^{(-2)}  = 0 \, , 
			\end{equation}
			\begin{equation}
				- \sigma^{4} \ol{\sigma}^{-2} \bm{\Psi}_3^{(-6)}  + \sigma^{3} \ol{\sigma}^{-1} \bm{\Psi}_3^{(-4)} - \sigma^{2}  \bm{\Psi}_3^{(-2)} + \sigma \ol{\sigma} \bm{\Psi}_3^{(0)} -  \ol{\sigma}^{2} \bm{\Psi}_3^{(2)} = 0 \, , 
			\end{equation}
			\begin{equation}
				\sigma^{5} \ol{\sigma}^{-3} \bm{\Psi}_4^{(-8)} - \sigma^{4} \ol{\sigma}^{-2} \bm{\Psi}_4^{(-6)} + \sigma^{3} \ol{\sigma}^{-1} \bm{\Psi}_4^{(-4)} - \sigma^{2} \bm{\Psi}_4^{(-2)} 
				+ \sigma \ol{\sigma} \bm{\Psi}_4^{(0)} - \ol{\sigma}^{2} \bm{\Psi}_4^{(2)} + \sigma \ol{\sigma}^{3} \bm{\Psi}_4^{(4)} = 0 \, ,
			\end{equation}
		\end{subequations}
		where $\sigma$ is the nowhere vanishing density of weight $(1,0)$ given in Theorem \ref{thm:pertFeffpurad}.
	\end{prop}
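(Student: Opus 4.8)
Proposition \ref{prop:confEins} asserts three vanishing conditions on the CR base space, and the plan is to obtain these as direct consequences of evaluating the Weyl curvature at the zero set $\wt{\mc{Z}}$ and combining this with the algebraic Petrov-degeneracy facts forced by the pure radiation condition. The starting point is that $\wt{\sigma}$ is an almost pure radiation scale, so its zero set $\wt{\mc{Z}}$ is, as before, the union $\mc{Z}_\pm = \{ [\pm \i \sigma^{-1}] \}$, corresponding to the fibre coordinate values $\phi = \pm \frac{\pi}{2}$. By Theorem \ref{thm:asymconfflat}, the conformal structure $\wt{\mbf{c}}_{\wt{\xi}}$ is conformally flat on $\wt{\mc{Z}}$, i.e. $\wt{\Weyl} = 0$ at $\phi = \pm \frac{\pi}{2}$. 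This is the single most important input: it says that \emph{every} component of the Weyl tensor, in particular each of $\wt{\Psi}_2$, $\wt{\Psi}_3$ and $\wt{\Psi}_4$, must vanish on $\wt{\mc{Z}}$.

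The plan then is simply to evaluate the Fourier expansions \eqref{eq:Weyl_Fourier} from Proposition \ref{prop:Weyl} at $\phi = \pm \frac{\pi}{2}$. Recall $\e^{\pm 2 \i \cdot \frac{\pi}{2}} = \e^{\pm \i \pi} = -1$, so $\e^{2k\i\phi}\big|_{\phi = \pi/2} = (-1)^k$. Setting each of $\wt{\Psi}_2$, $\wt{\Psi}_3$ and $\wt{\Psi}_4$ to zero at $\phi = \frac{\pi}{2}$ gives, after folding in the density weights $\bm{\Psi}^{(k)} = \sigma^{\frac{k}{2}-1}\ol{\sigma}^{-\frac{k}{2}-1}\Psi^{(k)}$ to pass from the coframe components to the weighted tensors on $\mc{M}$, exactly the three conditions \eqref{eq:PetrovIII_asymp}, \eqref{eq:PetrovN_asymp} and \eqref{eq:PetrovO_asymp} of Proposition \ref{prop:Wasym}, the alternating signs arising precisely from the $(-1)^k$ factors. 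This is the content of the first statement of Proposition \ref{prop:Wasym}, so these three equations hold on $\wt{\mc{Z}}$ for \emph{any} almost Lorentzian scale with the given zero set.

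The remaining step, and the only place where the \emph{pure radiation} hypothesis (rather than mere vanishing of the Weyl tensor on $\wt{\mc{Z}}$) enters, is to sharpen the first of these three. By Proposition \ref{prop:pureradasym}, the pure radiation scale forces $\ol{\bm{\Psi}}_2^{(0)} = 0$ identically on $\mc{M}$ (equation \eqref{eq:Psi2_purad1}), not merely on $\wt{\mc{Z}}$; since $\bm{\Psi}_2^{(0)}$ is real-valued, this gives $\bm{\Psi}_2^{(0)} = 0$. Substituting $\bm{\Psi}_2^{(0)} = 0$ into \eqref{eq:PetrovIII_asymp} removes the last term $\sigma\ol{\sigma}\bm{\Psi}_2^{(0)}$, leaving the first displayed equation of \eqref{eq:strasympEin}. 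The second and third equations of \eqref{eq:strasympEin} are then identical to \eqref{eq:PetrovN_asymp} and \eqref{eq:PetrovO_asymp} unchanged. I therefore expect no genuine obstacle here: the result is essentially a bookkeeping assembly of Theorem \ref{thm:asymconfflat}, Proposition \ref{prop:Wasym}, and Proposition \ref{prop:pureradasym}. The only point requiring minor care is checking the sign conventions in the substitution $\phi = \pm\frac{\pi}{2}$ and confirming consistency between the $\phi = +\frac{\pi}{2}$ and $\phi = -\frac{\pi}{2}$ sheets, which follows because replacing $\phi$ by $-\phi$ leaves each $(-1)^k$ factor fixed.
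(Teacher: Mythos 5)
Your proof is correct and follows essentially the same route the paper intends: Theorem \ref{thm:asymconfflat} gives conformal flatness on $\wt{\mc{Z}}$, Proposition \ref{prop:Wasym} converts this into the three algebraic conditions \eqref{eq:PetrovIII_asymp}--\eqref{eq:PetrovO_asymp}, and Proposition \ref{prop:pureradasym} removes the $\sigma\ol{\sigma}\bm{\Psi}_2^{(0)}$ term. The paper offers only the one-line justification ``combining Proposition \ref{prop:pureradasym} and Proposition \ref{prop:Wasym}'', so your write-up simply makes explicit the appeal to Theorem \ref{thm:asymconfflat} that the paper leaves implicit.
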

	Following Penrose \cite{Penrose1986}, we shall call the system of equations \eqref{eq:strasympEin} the \emph{strong asymptotic Einstein condition}, regardless of whether our manifold admits an almost Einstein scale or not.
	
	\begin{rem}
		Drawing from \cite{Curry2018}, we may think of $(\wt{\mc{M}},\wt{\mbf{c}}_{\wt{\xi}},\wt{k})$ admitting an almost half-Einstein scale $\wt{\sigma}$ with zero set $\wt{\mc{Z}}$ as the conformal compactification of some algebraically special optical geometry $(\wt{\mc{M}}', \wt{g},\wt{k})$ with twisting non-shearing congruence of null geodesics, where $\wt{\mc{M}}' = \wt{\mc{M}} \setminus \wt{\mc{Z}}$ and $\wt{g}=\wt{\sigma}^{-2} \wt{\bm{g}}$ is a half-Einstein metric. We can thus view $\wt{\mc{M}}'$ as the \emph{bulk} of some manifold $\wt{\mc{M}}$ with boundary $\wt{\mc{Z}}$, at least locally.
		
		Note however that these results are very much \emph{local}, and assumptions on the spacetime $(\wt{\mc{M}}',\wt{g},\wt{k})$ such as being \emph{asymptotically simple} \cite{Penrose1963} may be too strong. Indeed, it is demonstrated in \cite{Mason1998} that this condition might be too stringent in our context: the only algebraically special asymptotically simple Ricci-flat Lorentzian four-manifold is Minkowski space.
		
		Nevertheless, one may assume that the geodesics of the congruence generated by $\wt{k}$ reach only some portions of conformal infinity.  One could then use the above results to describe the asymptotic properties of such spacetimes, but only along the congruence. Thus, from Theorem \ref{thm:asymconfflat}, we may say that $\wt{\mc{M}}'$ is \emph{asymptotically conformally flat along the geodesics of $\wt{k}$}, and by \cite[Proposition~7.1]{TaghaviChabert2023b}, more specifically \emph{asymptotically flat, de Sitter or anti Sitter along the geodesics of $\wt{k}$} depending on the sign of the Ricci scalar. For instance, the Taub--NUT metric \cite{Taub1951,Newman1963} is well-known to be \emph{globally} non-asymptotically flat since the topology of its conformal infinity is $S^3$, but it is asymptotically flat along the geodesics of its twisting non-shearing congruences of null geodesics.
	\end{rem}

	\appendix
	
	\section{Adapted CR coframes of the second kind}\label{app:coframe2}
	The aim of this appendix is to explain the relation between the two kinds of adapted CR coframes used in Section \ref{sec:CR3} in the context of a contact CR three-manifold $(\mc{M},H,J)$. Here, any choice of nowhere vanishing section ${f}_{\alpha}$ of ${H}^{(1,0)}$ determines a unique nowhere vanishing real-valued vector field ${f}_0$ by the relation
	\begin{align}\label{eq:ffbf0}
		[{f}_{\alpha}, \overline{{f}}_{\bar{\beta}}] & = - \i {\delta}_{\alpha \bar{\beta}} {f}_0 \, .
	\end{align}
	We remind the reader that the indices $\alpha$,$\bar{\beta}$ only take the value $1$, which is what allows us to make the normalisation in \eqref{eq:ffbf0} with $\delta_{1 \bar{1}} = 1$. The triple $( {f}_0, {f}_{\alpha}, \overline{{f}}_{\bar{\alpha}})$ will be referred to as an \emph{adapted frame of the second kind}. Its dual coframe, which we shall denote $( \omega^0, {\omega}^{\alpha}, \overline{{\omega}}^{\bar{\alpha}})$, will be referred to as an \emph{adapted coframe of the second kind}. Writing $\omega := \omega^0$, the commutation relation \eqref{eq:ffbf0} is then equivalent to the structure equations
	\begin{align*}
		\d \omega & = \i \delta_{\alpha \bar{\beta}} {\omega}^{\alpha} \wedge \overline{{\omega}}{}^{\bar{\beta}} +  \omega \wedge \left( {\Gamma}_{\alpha}{\omega}^{\alpha}  + {\Gamma}_{\bar{\alpha}} {\omega}^{\bar{\alpha}} \right) \, , \\
		\d {\omega}^{\alpha}
		& = \Xi_{\beta}^{\alpha} \omega \wedge {\omega}^{\beta} + \Xi_{\bar{\beta}}^{\alpha} \omega \wedge \overline{{\omega}}{}^{\bar{\beta}} \, , \\
		\d \ol{{\omega}}^{\bar{\alpha}}
		& = \Xi_{\bar{\beta}}^{\bar{\alpha}} \omega \wedge \ol{{\omega}}^{\bar{\beta}} + \Xi_{\beta}^{\bar{\alpha}} \omega \wedge {\omega}{}^{\beta} \, ,
	\end{align*}
	for some complex-valued functions $\Gamma_{\alpha}$, $\Xi_{\beta}^{\alpha}$, $\Xi_{\bar{\beta}}^{\alpha}$, where $\Gamma_{\bar{\alpha}} = \ol{\Gamma_{\alpha}}$, $\Xi_{\bar{\beta}}^{\bar{\alpha}} = \ol{\Xi_{\beta}^{\alpha}}$ and $\Xi_{\beta}^{\bar{\alpha}} = \ol{\Xi_{\bar{\beta}}^{\alpha}}$. Note that by definition, $({f}_{\alpha})$ and $({\omega}^{\alpha})$  are necessarily unitary with respect to the Levi form of $\omega$. As before, we raise and lower the indices freely using $\delta_{\alpha \bar{\beta}}$ and its inverse, i.e.\ $v^{\alpha} = \delta^{\alpha \bar{\beta}} v_{\bar{\beta}}$, and so on.

	Now define
	\begin{align*}
		\theta & = \omega \, , &
		{\theta}^{\alpha} & = {\omega}^{\alpha} - \i {\Gamma}^{\alpha} \theta \, , &
		\ol{{\theta}}{}^{\bar{\alpha}} & = \ol{{\omega}}{}^{\bar{\alpha}} + \i \ol{{\Gamma}}{}^{\bar{\alpha}} \theta \, ,
	\end{align*}
	or equivalently,
	\begin{align*}
		\ell & = {f}_0 + \i {\Gamma}^{\alpha} {f}_{\alpha} - \i \ol{{\Gamma}}{}^{\bar{\alpha}} {f}_{\bar{\alpha}} \, , &
		{e}_{\alpha} & = {f}_{\alpha} \, , & \ol{{e}}_{\bar{\alpha}} & = \ol{{f}}_{\bar{\alpha}} \, .
	\end{align*}
	Then $(\theta, {\theta}^{\alpha}, \ol{{\theta}}{}^{\bar{\alpha}})$ is a \emph{unitary} coframe of the first kind with dual $(\ell, {e}_{\alpha}, \ol{{e}}_{\bar{\alpha}})$, and by inspection of the structure equations \eqref{eq:structure_CR} with $h_{\alpha \bar{\beta}} = \delta_{\alpha \bar{\beta}}$, we obtain the relation
	\begin{align*}
		\Gamma_{\alpha} & = \Gamma_{\alpha \bar{\beta}}{}^{\bar{\beta}} \, , & 	\Gamma_{\bar{\alpha}} & = \Gamma_{\bar{\alpha} \beta}{}^{\beta} \, , \\
		\Xi_{\beta}^{\alpha} & = -   {\Gamma}_{0 \gamma}{}^{\gamma} \delta_{\beta}^{\alpha} -  \i e_{\beta} ({\Gamma}^{\alpha}{}_{\gamma}{}^{\gamma})  \, , & \Xi_{\bar{\beta}}^{\bar{\alpha}} & = - {\Gamma}_{0 \bar{\gamma}}{}^{\bar{\gamma}} \delta_{\bar{\beta}}^{\bar{\alpha}} +  \i e_{\bar{\beta}} ({\Gamma}^{\bar{\alpha}}{}_{\bar{\gamma}}{}^{\bar{\gamma}})  \, , \\
		\Xi_{\bar{\beta}}^{\alpha} & =  {\WbA}^{\alpha}{}_{\bar{\beta}} -  \i e_{\bar{\beta}}({\Gamma}^{\alpha}{}_{\gamma}{}^{\gamma})  \, , & \Xi_{\beta}^{\bar{\alpha}} & = {\WbA}^{\bar{\alpha}}{}_{\beta} +  \i e_\beta ({\Gamma}^{\bar{\alpha}}{}_{\gamma}{}^{\gamma})   \, . 
	\end{align*}
	
	The above transformation, from a coframe of the second kind to a unitary coframe of the first kind, is clearly invertible. In addition, any such coframe determines the nowhere vanishing section $\theta \wedge \theta^1 = \omega \wedge \omega^1$ of the canonical bundle. Conversely, any nowhere vanishing local section $\zeta$ of the canonical bundle determines a unique contact form $\theta$ by the volume normalisation condition \eqref{eq:volnorm}. The remaining freedom is to choose the admissible coframe $(\theta^\alpha)$ such that $\zeta = \theta \wedge \theta^1$.
	
	In summary:
	\begin{lem}
		Locally, there is a one-to-one correspondence between
		\begin{itemize}
			\item unitary adapted coframes of the first kind,
			\item adapted coframes of the second kind,
			\item nowhere vanishing sections of the canonical bundle.
		\end{itemize}
	\end{lem}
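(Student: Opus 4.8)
The plan is to realise the three objects as the two ends and the midpoint of a pair of compatible local bijections, and then compose. The unifying observation is that, in dimension three, each of the three structures is encoded by the \emph{same} piece of data, namely a single non-vanishing section of $H^{(1,0)}$ (equivalently, one non-vanishing complex function relative to a fixed background frame). First I would set up the bijection between adapted coframes of the second kind and unitary adapted coframes of the first kind; then the bijection between the latter and non-vanishing sections of the canonical bundle; and finally check that the resulting triangle commutes.

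For the first bijection, the explicit substitution displayed just above the lemma already provides a forward map sending a coframe of the second kind $(\omega,\omega^{\alpha},\ol{\omega}{}^{\bar\alpha})$, with its distinguished $f_0$ and structure functions $\Gamma_\alpha$, to the unitary first-kind coframe $(\theta,\theta^\alpha,\ol\theta{}^{\bar\alpha})$ via $\theta=\omega$ and $\theta^\alpha=\omega^\alpha-\i\Gamma^\alpha\theta$. To invert it I would start from a unitary first-kind coframe with dual frame $(\ell,e_\alpha,\ol e_{\bar\alpha})$ and simply set $f_\alpha:=e_\alpha$; since the Levi form is $\delta_{\alpha\bar\beta}$, the bracket relation \eqref{eq:ffbf0} then determines a \emph{real} non-vanishing vector field $f_0$ and hence the whole second-kind frame. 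Both maps act as the identity on the common $(1,0)$-datum $f_\alpha=e_\alpha$ and merely shuffle the transverse direction by the (determined) quantities $\Gamma_\alpha$, so they are manifestly mutually inverse; this is the content of the remark in the text that the transformation is ``clearly invertible.''

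For the second bijection I would send a unitary first-kind coframe to $\zeta:=\theta^0\wedge\theta^1$, which is a non-vanishing section of $\mc{C}=\bigwedge^2\Ann(H^{(0,1)})$ because both $\theta$ and $\theta^1$ annihilate $H^{(0,1)}$. Conversely, given a non-vanishing $\zeta\in\Gamma(\mc{C})$, the volume-normalisation condition \eqref{eq:volnorm} produces a unique contact form $\theta$ with Reeb field $\ell$, and then $\theta^1$ is recovered as the unique form with $\theta^1(\ell)=0$ and $\theta\wedge\theta^1=\zeta$. The one genuine verification here --- and the step I expect to be the main (if mild) obstacle --- is that this $\theta^1$ is actually \emph{unitary}: inserting $\ell\hook\zeta=\theta^1$ and $\ell\hook\ol\zeta=\ol\theta{}^{\bar 1}$ into \eqref{eq:volnorm} and comparing with $\d\theta=\i h_{1\bar1}\theta^1\wedge\ol\theta{}^{\bar1}$ forces $h_{1\bar1}=1$ (the Levi form being positive definite here), so that $(\theta,\theta^1)$ is indeed a unitary coframe. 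One must also track the residual $\U(1)$ freedom: rescaling $\theta^1\mapsto \e^{\i\varphi}\theta^1$ rescales $\zeta\mapsto \e^{\i\varphi}\zeta$ while leaving \eqref{eq:volnorm} invariant, since $\zeta$ and $\ol\zeta$ enter together, so the phases match up bijectively and no information is lost.

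Finally, to see that the two bijections are compatible --- i.e.\ that the canonical section attached to a unitary first-kind coframe agrees with the one attached to the corresponding second-kind coframe --- I would use $\theta\wedge\theta^1=\omega\wedge(\omega^1-\i\Gamma^1\omega)=\omega\wedge\omega^1$, which is exactly the identity $\theta^0\wedge\theta^1=\omega\wedge\omega^1$ noted in the text. Composing the two bijections then yields the desired three-way local correspondence, completing the proof.
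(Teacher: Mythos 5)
Your proposal is correct and follows essentially the same route as the paper: the explicit substitution $\theta=\omega$, $\theta^\alpha=\omega^\alpha-\i\Gamma^\alpha\theta$ for the first bijection, the assignment $\zeta=\theta^0\wedge\theta^1$ with inverse given by the volume-normalisation condition \eqref{eq:volnorm} for the second, and the compatibility identity $\theta\wedge\theta^1=\omega\wedge\omega^1$. Your explicit check that volume normalisation forces $h_{1\bar 1}=1$ is a welcome detail that the paper leaves implicit.
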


	\section{Properties of the Bach tensor}\label{app:Bach_prop}
	The computations necessary for establishing Proposition \ref{prop:PetrovIII2Bach} and Theorem \ref{thm:PetrovII+Bach} are collected in the present appendix. Let $(\wt{\mc{M}},\wt{\mbf{c}},\wt{k})$ be an optical geometry of dimension four equipped with a twisting non-shearing congruence of null geodesics. We refer the reader to Section \ref{sec-optical} for the general setup induced from such a geometry, that is, we work with a metric $\wt{g}$ in $\accentset{n.e}{\wt{\mbf{c}}}$. This means that $\wt{\kappa} = \wt{g}(\wt{k},\cdot)$ is the pullback of a contact form on the leaf space $(\mc{M}, H, J)$ and
	\begin{align}\label{eq:spNkap}
		\wt{\nabla}_{a} \wt{\kappa}_b & = \wt{\tau}_{a b} + 2 \wt{E}_{(a} \wt{\kappa}_{b)} \, ,
	\end{align}
	for some two-form $\wt{\tau}_{a b}$ and one-form $\wt{E}_{a}$ such that $\wt{k}^{a} \wt{\tau}_{a b}= \wt{k}^{a} \wt{E}_{a} = 0$. In particular,
	\begin{align}
		\wt{k}^{b} \wt{\nabla}_{b} \wt{\kappa}_{a} & = 0 \, , \label{eq:affgeod} \\
		\wt{\nabla}^{a} \wt{\kappa}_{a} & = 0 \, , \label{eq:divfree}
	\end{align}
	as expected since the geodesics of $\wt{k}$ are affinely parametrised and the congruence is non-expanding.
	
	We assume from the outset that the Weyl tensor satisfies
	\begin{align}\label{eq:reprincW0}
		\wt{\Weyl}(\wt{k},\wt{v},\wt{k},\cdot) & = 0 \, , & \mbox{for any $\wt{v} \in \Gamma( \langle \wt{k} \rangle^\perp)$,}
	\end{align}
	which is equivalently to
	\begin{align}\label{eq:reprincW}
		\wt{\Weyl}(\wt{k},\wt{v},\wt{u},\wt{w}) & = 0 \, , & \mbox{for any $\wt{u},\wt{v},\wt{w} \in \Gamma( \langle \wt{k} \rangle^\perp )$.}
	\end{align}
	This can be re-expressed as $\wt{k}^{d} \wt{\Weyl}_{d a b c} = \wt{\kappa}_{a} \wt{\phi}_{[b c]} - \wt{\kappa}_{[b} \wt{\phi}_{c] a}$ for some $\wt{\phi}_{a b}$ satisfying $\wt{k}^{a} \wt{\phi}_{a [b} \wt{\kappa}_{c]} = 0$, and this tells us that
	\begin{align}\label{eq:reprincW2}
		\wt{k}^{d} \wt{\Weyl}_{d a b c} \wt{k}^{e} \wt{\Weyl}_{e}{}^{a b c} & = 0 \, .
	\end{align}
	In addition, since $\wt{\nabla}_{[a} \wt{\tau}_{b c]} = 0$, equation \eqref{eq:spNkap} gives
	\begin{align}
		\wt{\nabla}_{a} \wt{\tau}_{b c}
		& = \wt{\Riem}_{b c}{}^{d}{}_{a} \wt{\kappa}_{d} + 2 \wt{\tau}_{b c} \wt{E}_{a} + 2 \wt{\nabla}_{[b} \wt{E}_{c]} \wt{\kappa}_{a} \, . \label{eq:Ntau}
	\end{align}
	
	Using the definition of the Bach tensor \eqref{eq:Bach}, we are now in the position to compute
	\begin{align}\label{eq:Bkk}
		\wt{k}^a \wt{k}^b	\wt{\Bach}_{a b} & = - \wt{k}^a \wt{k}^b \wt{\nabla}^{c} \wt{\Cot}_{a b c} + \wt{k}^a \wt{k}^b \wt{\Rho}^{c d} \wt{\Weyl}_{a c b d} \, ,
	\end{align}
	under our given assumptions. We break down the computation of the first term of \eqref{eq:Bkk} into a number of steps. We first note that
	\begin{align}
		\wt{k}^a \wt{k}^b \wt{\nabla}^{c} \wt{\Cot}_{a b c} & = 	\wt{\nabla}^{c} \left( \wt{k}^a \wt{k}^b \wt{\Cot}_{a b c} \right) - \left( \wt{\nabla}^{c} \wt{k}^a \right) \wt{k}^b \wt{\Cot}_{a b c} - \left( \wt{\nabla}^{c} \wt{k}^b \right) \wt{k}^a \wt{\Cot}_{a b c} \, , \label{eq:kkNY}
	\end{align}
	Using \eqref{eq:Y=divW}, the product rule, multiple instances of \eqref{eq:spNkap} and the fact that $\langle \wt{k} \rangle$ is a repeated PND of the Weyl tensor, we find
	\begin{align}
		\wt{k}^a \wt{k}^b \wt{\Cot}_{a b c}
		& = \wt{\nabla}^d (\wt{k}^a \wt{k}^b \wt{\Weyl}_{d a b c}) - \tfrac{3}{2} \wt{\tau}^{d b} \wt{k}^a \wt{\Weyl}_{d b a c} \, . \label{eq:kkY}
	\end{align}
	By \eqref{eq:reprincW}, we get $\wt{k}^a \wt{k}^b \wt{\Cot}_{a b [c} \wt{\kappa}_{d]} = 0$.
	This fact, together with \eqref{eq:spNkap} and \eqref{eq:kkY}, tells us that
	\begin{align}
		\left( \wt{\nabla}^{c} \wt{k}^b \right) \wt{k}^a \wt{\Cot}_{a b c}
		& = - \wt{\tau}^{b c} \wt{k}^a \wt{\Cot}_{a b c} \, , \label{eq:NkkY1}
	\end{align}
	and using the fact that $\wt{\Cot}_{[a b c]} = 0$,
	\begin{align}
		\left( \wt{\nabla}^{c} \wt{k}^a \right) \wt{k}^b \wt{\Cot}_{a b c}
		& = - \tfrac{1}{2} \wt{\tau}^{b c} \wt{k}^a \wt{\Cot}_{a b c}  \, . \label{eq:NkkY2}
	\end{align}
	Hence, equation \eqref{eq:kkNY} simplifies to
	\begin{align}
		\wt{k}^a \wt{k}^b \wt{\nabla}^{c} \wt{\Cot}_{a b c} & = 	\wt{\nabla}^{c} \left( \wt{k}^a \wt{k}^b \wt{\Cot}_{a b c} \right) + \tfrac{3}{2} \wt{\tau}^{b c} \wt{k}^a \wt{\Cot}_{a b c} \, . \label{eq:kkNY2}
	\end{align}
	For the last term, we obtain
	\begin{align*}
		\wt{\tau}^{b c} \wt{k}^a \wt{\Cot}_{a b c}
		& = \wt{\nabla}^{d} (\wt{\tau}^{b c} \wt{k}^a \wt{\Weyl}_{d a b c}) + 2  \wt{\Rho}^{b d} \wt{k}^{c} \wt{k}^a \wt{\Weyl}_{d a b c} - \wt{\tau}^{d a} \wt{\tau}^{b c} \wt{\Weyl}_{d a b c} \, ,
	\end{align*}
	where we have used \eqref{eq:Y=divW}, the product rule, equations \eqref{eq:Ntau}, \eqref{eq:spNkap}, \eqref{eq-Riem_decomp},  and \eqref{eq:reprincW2}. Plugging this expression and \eqref{eq:kkY} into \eqref{eq:kkNY2} yields
	\begin{equation*}
		\wt{k}^a \wt{k}^b \wt{\nabla}^{c} \wt{\Cot}_{a b c}
		= 	\wt{\nabla}^{c} \wt{\nabla}^d (\wt{k}^a \wt{k}^b \wt{\Weyl}_{d a b c}) + 3 \wt{\nabla}^{d} (\wt{\tau}^{b c} \wt{k}^a \wt{\Weyl}_{d a b c}) 
		+ 3  \wt{\Rho}^{b d} \wt{k}^{c} \wt{k}^a \wt{\Weyl}_{d a b c} - \tfrac{3}{2} \wt{\tau}^{d a} \wt{\tau}^{b c} \wt{\Weyl}_{d a b c} \, ,
	\end{equation*}
	and therefore, equation \eqref{eq:Bkk} becomes
	\begin{equation}
		\wt{k}^a \wt{k}^b	\wt{\Bach}_{a b} = - \wt{\nabla}^{c} \wt{\nabla}^d (\wt{k}^a \wt{k}^b \wt{\Weyl}_{d a b c}) - 3 \wt{\nabla}^{d} ( \wt{\tau}^{b c} \wt{k}^a \wt{\Weyl}_{ d a b c} ) 
		+ \tfrac{3}{2} \wt{\tau}^{d a} \wt{\tau}^{b c} \wt{\Weyl}_{d a b c}  - 2 \wt{k}^a \wt{k}^b \wt{\Rho}^{c d} \wt{\Weyl}_{a c b d} \, . \label{eq:Bkk2}
	\end{equation}
	Using the symmetries of the Weyl tensor, and \eqref{eq:NP-Psi} for the definition of $\wt{\Psi}_2$, we find
	\begin{align*}
		\wt{k}^a \wt{k}^b \wt{\Weyl}_{d a b c} & =
		- \wt{\kappa}_d \wt{\kappa}_c (\wt{\Psi}_2 + \ol{\wt{\Psi}}_2) \, , &
		\wt{\tau}^{b c} \wt{k}^a \wt{\Weyl}_{d a b c} & =
		-2 \i \wt{\kappa}_{d} (\wt{\Psi}_2 - \ol{\wt{\Psi}}_2) \, , \\
		\wt{\tau}^{d a} \wt{\tau}^{b c} \wt{\Weyl}_{d a b c} & =
		- 4 (\wt{\Psi}_2 + \ol{\wt{\Psi}}_2) \, .
	\end{align*}
	Finally, substituting these into \eqref{eq:Bkk2} and using the product rule together with \eqref{eq:affgeod} and \eqref{eq:divfree}, we finally conclude 
	\begin{align}\label{eq:BachPsi2}
		\wt{k}^a \wt{k}^b	\wt{\Bach}_{a b}
		& = (\ddot{\wt{\Psi}}_2 + \ddot{\ol{\wt{\Psi}}}_2) + 6 \i (\dot{\wt{\Psi}}_2 - \dot{\ol{\wt{\Psi}}}_2)  - 8 (\wt{\Psi}_2 + \ol{\wt{\Psi}}_2)  \, ,
	\end{align}
	where we have made use of the fact that $\wt{\Rho}^{a b} \wt{\kappa}_a \wt{\kappa}_b = 1$.
	
	\section{Relation to the Debney--Kerr--Schild metric}\label{app:others}
	In this section, we provide the appropriate coframe and coordinate transformations to relate a perturbed Fefferman space $(\wt{\mc{M}},\wt{\mbf{c}}_{\wt{\xi}},\wt{k}) \rightarrow (\mc{M},H,J)$ that admits an almost half-Einstein scale, to the `standard' metric of \cite{Debney1969,Stephani2003}. Note that the relation between the latter and the form of the metric obtained in \cite{Hill2008} is already given in \cite{Zhang2013}. In that reference, it is rightly pointed out that the equations on the CR base space are simplest when given in the Debney--Kerr--Schild coordinates. But these lack obvious invariance. There is therefore a trade-off when expressing the Einstein equations in an invariant manner.
	
	The reader should refer to Theorem \ref{thm:hlfEinsc} for the notation used in the next paragraphs. Since $\i \sigma^{-1} \accentset{\xi}{\nabla}_{\alpha} \sigma$ is a solution to \eqref{eq:exactWW_Ein_b}, there exists, by Theorem \ref{thm:lambda_exact}, a density $\wh{\sigma}$ of weight $(1,0)$ such that
	\begin{align*}
		\i \sigma^{-1} \accentset{\xi}{\nabla}_{\alpha} \sigma & =  \i \left( \wh{\sigma}^{-1} \ol{\wh{\sigma}}{}^{2} \right) \nabla_{\alpha} \left( \wh{\sigma} \ol{\wh{\sigma}}{}^{-2} \right) \, ,
	\end{align*}
	Using the definition of $\accentset{\xi}{\nabla}_{\alpha}$ and \eqref{eq:xi2_b}, this allows us to express $\xi_{\alpha}^{(0)}$ and $\xi_{\alpha}^{(-2)}$ as
	\begin{align*}
		\xi_{\alpha}^{(0)} & = \i \left( \wh{\sigma}{}^{-1} \ol{\wh{\sigma}}{}^{2} \sigma \right) \nabla_{\alpha} \left( \wh{\sigma} \ol{\wh{\sigma}}{}^{-2} \sigma{}^{-1} \right) \, , \\
		\xi_{\alpha}^{(-2)}
		& = \tfrac{\i}{2} \left( \wh{\sigma}{}^{-1} \ol{\wh{\sigma}}{}^{2} \sigma \ol{\sigma} \right) \nabla_{\alpha} \left( \wh{\sigma} \ol{\wh{\sigma}}{}^{-2} \sigma{}^{-1} \ol{\sigma}^{-1}\right) \, .
	\end{align*}
	Note that $\wh{\sigma} \ol{\wh{\sigma}}{}^{-2} \sigma{}^{-1} \in \Gamma(\mc{E}(0,-2))$ and $\wh{\sigma} \ol{\wh{\sigma}}{}^{-2} \sigma{}^{-1} \ol{\sigma}^{-1} \in \Gamma(\mc{E}(0,-3))$. Substituting these into \eqref{eq:xi0}, we can derive similar expressions for $\bm{\xi}_{0}^{(0)}$, $\bm{\xi}_{0}^{(2)}$ and $\bm{\xi}_{0}^{(4)}$.
	
	Let us write
	\begin{align*}
		\wh{\sigma} & = w \sigma \, , & \mbox{where} & &  w & = \mr{\varrho} \e^{\i \mr{\phi}} \, ,
	\end{align*}
	for some smooth real-valued functions $\mr{\varrho} > 0$ and $\mr{\phi} \in [-\pi,\pi)$. This induces a change of contact form $\wh{\theta} = \mr{\varrho}^{-2} \theta$, and, with reference to Appendix \ref{app:coframe2}, there are unitary adapted coframes $(\wh{\theta},\wh{\theta}^{1},\ol{\wh{\theta}}{}^{\bar{1}})$ and $(\wh{\omega},\wh{\omega}^{1},\ol{\wh{\omega}}{}^{\bar{1}})$ of the first and second kinds respectively, related by
	\begin{align*}
		\wh{\theta} & = \wh{\omega} \, , &
		\wh{\theta}^{1} & = \wh{\omega}^1 - \i \wh{\Gamma}^{1} \wh{\omega} \, , 
	\end{align*}
	where $\wh{\Gamma}_{\bar{1}}$ is the connection one-form of the (partial) Webster connection preserving $\wh{\theta}$. In addition, as guaranteed by Theorem \ref{thm:lambda_exact},
	\begin{align*}
		\wh{\omega}^1 & = \d \zeta \, ,
	\end{align*}
	for some CR function $\zeta$.  We can re-express
	\begin{align*}
		\wh{\xi}_{\alpha}^{(0)} = 2 \i \wh{\Gamma}_{\alpha} + \i w^{-1} \wh{\nabla}_{\alpha} w \, ,
	\end{align*}
	and similar formulae can be derived for the remaining CR data.
	
	We now trivialise the Fefferman bundle with $\wh{\sigma}$. This induces a change of perturbed Fefferman metric $\wt{g}_{\wh{\theta},\wt{\xi}} = \mr{\varrho}^{-2} \wt{g}_{\theta,\wt{\xi}}$, a change of fibre coordinate $\wh{\phi} = \phi + \mr{\phi}$, and a change of one-form $\wt{\lambda}=\wt{\omega}_{\theta} + \wt{\xi}$ as
	\begin{align*}
		\wh{\wt{\lambda}} & = \wt{\lambda} - \tfrac{1}{2} \i \Upsilon_{1} \theta^{1} + \tfrac{1}{2} \i \Upsilon_{\bar{1}} \ol{\theta}{}^{\bar{1}} - \tfrac{1}{2} \Upsilon_{1} \Upsilon^{1} \theta \, ,
	\end{align*}
	where $\Gamma_1 = \nabla_{1} \log \mr{\varrho}$. Defining
	\begin{align*}
		\wh{\wt{\lambda}}_{\omega} & = \wh{\wt{\lambda}} + \tfrac{1}{2} \i \Gamma_1 \wh{\omega}^1 - \tfrac{1}{2} \i \Gamma_{\bar{1}} \wh{\omega}^{\bar{1}} + \tfrac{1}{2} \Gamma_1 \Gamma^1 \wh{\omega} \, ,
	\end{align*}
	we can then write out our metric in one of the forms given in \cite{Hill2008} as
	\begin{align*}
		\wt{g} & = \sec^2 (\wh{\phi} - \mr{\phi}) \cdot \mr{\varrho}^2 \cdot \left( 4 \wh{\theta} \odot \wh{\wt{\lambda}}_{\wt{\omega}} + 2 \wh{\omega}^1 \odot \ol{\wh{\omega}}{}^{\bar{1}} \right) \, ,
	\end{align*}
	with some obvious changes in the notation.
	
	Our next step is to introduce a radial coordinate by the formula
	\begin{align*}
		r & = \mr{\varrho} \tan (\wh{\phi} - \mr{\phi}) \, , & \mbox{i.e.} & & \frac{\mr{\varrho} + \i r}{\sqrt{r^2 + \mr{\varrho}^2}} & = \e^{\i (\wh{\phi} - \mr{\phi})} \, ,
	\end{align*}
	so that
	\begin{align*}
		r^2 + \mr{\varrho}^2 & = \mr{\varrho}^2 \sec^2 ( \wh{\phi} - \mr{\phi}) \, , &
		\d  \wh{\phi} - \d \mr{\phi} & = \frac{1}{r^2 + \mr{\varrho}^2} \left( \mr{\varrho} \d r - r \d \mr{\varrho} \right) \, .
	\end{align*}
	It is then a straightforward matter to re-express the metric in terms of the radial coordinate $r$. We may also check that the asymptotic properties of the `unphysical' metric $\wt{g}_{\wh{\theta}, \wt{\xi}}$ as $r$ tends to $\pm \infty$ are consistent with the findings of Section \ref{sec:asym}.

	To make contact with the Debney--Kerr--Schild form of the metric \cite{Debney1969,Stephani2003}, we rescale the one-forms $\wh{\wt{\lambda}}$ and $\wh{\wt{\xi}}$ as 
	\begin{align*}
		\wh{\wt{\lambda}}{}'_{\omega} & = \mr{\varrho} \cdot \sec^2 ( \wh{\phi} - \mr{\phi}) \cdot \wt{\wh{\lambda}}_{\omega} \, , & 	\wt{\xi}' & = \mr{\varrho} \sec^2 ( \wh{\phi} - \mr{\phi}) \wt{\xi} \, . 
	\end{align*}
	so that
	\begin{align*}
		\wh{\wt{\lambda}}{}'_{\omega} & = \d r - r \d \log \mr{\varrho}  + \frac{r^2 + \mr{\varrho}^2}{\mr{\varrho}} \left( \omega_{\wh{\theta}} - \tfrac{1}{3} \wh{\Rho} \wh{\theta} \right) + \wt{\xi}'  \, ,
	\end{align*}
	Then the metric $\wt{g}$ takes the form
	\begin{align*}
		\wt{g} & = 4 \mr{\varrho} \cdot \wh{\omega} \odot \wh{\wt{\lambda}}{}'_{\omega} + 2 (r^2 + \mr{\varrho}^2)  \wh{\omega}^{1} \odot \ol{\wh{\omega}}{} ^{\bar{1}}  \, .
	\end{align*}
	We may also choose a local transverse coordinate $u$ and complex-valued function $Z$ on $\mc{M}$ so that
	\begin{align*}
		\wh{\omega} = \frac{1}{2 \mr{\varrho} } \left(\d u + Z \d \zeta + \ol{Z} \d \ol{\zeta} \right) \, .
	\end{align*}
	Dropping the hats, we can now recognise metric \eqref{eq:Debmet} given in Section \ref{sec:intro}.
	
	\section*{Acknowledgments}
	The author is grateful to Jerzy Lewandowski, who sadly passed away in October 2024, for his support, both as a colleague and as a friend. The author also would like to thank Rod Gover for helpful and stimulating conversations. Finally, the author is thankful to the anonymous referee for their careful review and pertinent comments, notably for suggesting a simpler proof of Theorem \ref{thm:real2dens_d3} and for bringing references \cite{Curry2019a,Curry2021} to his attention. 
	
	\bibliographystyle{amsplain}
	
	\providecommand{\bysame}{\leavevmode\hbox to3em{\hrulefill}\thinspace}
	\providecommand{\MR}{\relax\ifhmode\unskip\space\fi MR }
	\providecommand{\MRhref}[2]{%
		\href{http://www.ams.org/mathscinet-getitem?mr=#1}{#2}
	}
	\providecommand{\href}[2]{#2}

\end{document}